\def\centerarc[#1](#2)(#3:#4:#5){\draw[#1] ($(#2)+({#5*cos(#3)},{#5*sin(#3)})$) arc (#3:#4:#5);}
\providecommand \@dotsep{5} \def\listtodoname{List of Todos} \def\listoftodos{\@starttoc{tdo}\listtodoname} \makeatother 
\patchcmd{\@startsection}{\@afterindenttrue}{\@afterindentfalse}{}{}             
\patchcmd{\part}{\bfseries}{\bfseries\LARGE}{}{}
\patchcmd{\section}{\scshape}{\bfseries}{}{}\renewcommand{\@secnumfont}{\bfseries} 
\patchcmd{\@settitle}{\uppercasenonmath\@title}{\large}{}{}
\patchcmd{\@setauthors}{\MakeUppercase}{}{}{}
\theoremstyle{plain}
\newtheorem{thm}{Theorem}[section] 
\newaliascnt{lemma}{thm}\newtheorem{lemma}[lemma]{Lemma}\aliascntresetthe{lemma}
\newaliascnt{cor}{thm}\newtheorem{cor}[cor]{Corollary}\aliascntresetthe{cor}
\newaliascnt{prop}{thm}\newtheorem{prop}[prop]{Proposition}\aliascntresetthe{prop}
\newtheorem{thmA}{Theorem} 
\newtheorem*{thm*}{Theorem}
\newtheorem*{lem*}{Lemma}
\newtheorem*{cor*}{Corollary}
\theoremstyle{definition}
\newaliascnt{df}{thm}\newtheorem{df}[df]{Definition}\aliascntresetthe{df}
\newaliascnt{rem}{thm}\newtheorem{rem}[rem]{Remark}\aliascntresetthe{rem}
\newaliascnt{ex}{thm}\newtheorem{ex}[ex]{Example}\aliascntresetthe{ex}
\newaliascnt{conj}{thm}\aliascntresetthe{conj}
\newaliascnt{problem}{thm}\newtheorem{problem}[problem]{Problem}\aliascntresetthe{problem}
\newtheorem*{df*}{Definition}
\newtheorem*{ex*}{Example}
\newtheorem*{rem*}{Remark}
\theoremstyle{remark}
\DeclareRobustCommand{\gobblefour}[5]{}    
\DeclareFontFamily{OT1}{pzc}{}                                
\DeclareFontShape{OT1}{pzc}{m}{it}{<-> s * [1.10] pzcmi7t}{}
\DeclareMathAlphabet{\mathpzc}{OT1}{pzc}{m}{it}
\DeclareSymbolFont{sfoperators}{OT1}{bch}{m}{n} \DeclareSymbolFontAlphabet{\mathsf}{sfoperators} \makeatletter\def\operator@font{\mathgroup\symsfoperators}\makeatother 
\DeclareSymbolFont{cmletters}{OML}{cmm}{m}{it}              
\DeclareSymbolFont{cmsymbols}{OMS}{cmsy}{m}{n}
\DeclareSymbolFont{cmlargesymbols}{OMX}{cmex}{m}{n}
\DeclareMathSymbol{\myjmath}{\mathord}{cmletters}{"7C}     \let\jmath\myjmath 
\DeclareMathSymbol{\myamalg}{\mathbin}{cmsymbols}{"71}     
\DeclareMathSymbol{\mycoprod}{\mathop}{cmlargesymbols}{"60}
\DeclareMathSymbol{\myalpha}{\mathord}{cmletters}{"0B}     \let\alpha\myalpha 
\DeclareMathSymbol{\mybeta}{\mathord}{cmletters}{"0C}      \let\beta\mybeta
\DeclareMathSymbol{\mygamma}{\mathord}{cmletters}{"0D}     \let\gamma\mygamma
\DeclareMathSymbol{\mydelta}{\mathord}{cmletters}{"0E}     \let\delta\mydelta
\DeclareMathSymbol{\myepsilon}{\mathord}{cmletters}{"0F}   \let\epsilon\myepsilon
\DeclareMathSymbol{\myzeta}{\mathord}{cmletters}{"10}      \let\zeta\myzeta
\DeclareMathSymbol{\myeta}{\mathord}{cmletters}{"11}       \let\eta\myeta
\DeclareMathSymbol{\mytheta}{\mathord}{cmletters}{"12}     \let\theta\mytheta
\DeclareMathSymbol{\myiota}{\mathord}{cmletters}{"13}      \let\iota\myiota
\DeclareMathSymbol{\mykappa}{\mathord}{cmletters}{"14}     \let\kappa\mykappa
\DeclareMathSymbol{\mylambda}{\mathord}{cmletters}{"15}    \let\lambda\mylambda
\DeclareMathSymbol{\mymu}{\mathord}{cmletters}{"16}        \let\mu\mymu
\DeclareMathSymbol{\mynu}{\mathord}{cmletters}{"17}        \let\nu\mynu
\DeclareMathSymbol{\myxi}{\mathord}{cmletters}{"18}        \let\xi\myxi
\DeclareMathSymbol{\mypi}{\mathord}{cmletters}{"19}        \let\pi\mypi
\DeclareMathSymbol{\myrho}{\mathord}{cmletters}{"1A}       \let\rho\myrho
\DeclareMathSymbol{\mysigma}{\mathord}{cmletters}{"1B}     \let\sigma\mysigma
\DeclareMathSymbol{\mytau}{\mathord}{cmletters}{"1C}       \let\tau\mytau
\DeclareMathSymbol{\myupsilon}{\mathord}{cmletters}{"1D}   \let\upsilon\myupsilon
\DeclareMathSymbol{\myphi}{\mathord}{cmletters}{"1E}       \let\phi\myphi
\DeclareMathSymbol{\mychi}{\mathord}{cmletters}{"1F}       \let\chi\mychi
\DeclareMathSymbol{\mypsi}{\mathord}{cmletters}{"20}       \let\psi\mypsi
\DeclareMathSymbol{\myomega}{\mathord}{cmletters}{"21}     \let\omega\myomega
\DeclareMathSymbol{\myvarepsilon}{\mathord}{cmletters}{"22}\let\varepsilon\myvarepsilon
\DeclareMathSymbol{\myvartheta}{\mathord}{cmletters}{"23}  \let\vartheta\myvartheta
\DeclareMathSymbol{\myvarpi}{\mathord}{cmletters}{"24}     \let\varpi\myvarpi
\DeclareMathSymbol{\myvarrho}{\mathord}{cmletters}{"25}    \let\varrho\myvarrho
\DeclareMathSymbol{\myvarsigma}{\mathord}{cmletters}{"26}  \let\varsigma\myvarsigma
\DeclareMathSymbol{\myvarphi}{\mathord}{cmletters}{"27}    \let\varphi\myvarphi
\DeclareMathOperator{\Hom}{Hom}
\DeclareMathOperator{\Aut}{Aut}
\DeclareMathOperator{\Sym}{Sym}
\DeclareMathOperator{\sign}{sign}
\DeclareMathOperator{\rk}{rk}
\DeclareMathOperator{\pr}{pr}
\DeclareMathOperator{\eq}{eq}
\DeclareMathOperator{\colim}{colim\,}
\DeclareMathOperator{\Pastures}{{Pastures}}
\DeclareMathOperator{\Emb}{{Emb}}
\DeclareMathOperator{\USL}{{USL}}
\newcommand\B{{\mathbb B}}
\newcommand\C{{\mathbb C}}
\newcommand\D{{\mathbb D}}
\newcommand\E{{\mathbb E}}
\newcommand\F{{\mathbb F}}
\newcommand\G{{\mathbb G}}
\renewcommand\H{{\mathbb H}}
\newcommand\K{{\mathbb K}}
\renewcommand\P{{\mathbb P}}
\newcommand\Q{{\mathbb Q}}
\newcommand\R{{\mathbb R}}
\renewcommand\S{{\mathbb S}}
\newcommand\T{{\mathbb T}}
\newcommand\U{{\mathbb U}}
\newcommand\V{{\mathbb V}}
\newcommand\W{{\mathbb W}}
\newcommand\Z{{\mathbb Z}}
\newcommand\bI{{\mathbf I}}
\newcommand\bJ{{\mathbf J}}
\newcommand\cA{{\mathcal A}}
\newcommand\cC{{\mathcal C}}
\newcommand\cD{{\mathcal D}}
\newcommand\cE{{\mathcal E}}
\newcommand\cF{{\mathcal F}}
\newcommand\cH{{\mathcal H}}
\newcommand\cL{{\mathcal L}}
\newcommand\cN{{\mathcal N}}
\newcommand\cR{{\mathcal R}}
\newcommand\cS{{\mathcal S}}
\newcommand\cX{{\mathcal X}}
\newcommand\Funpm{{\F_1^\pm}}
\newcommand\id{\textup{id}}
\newcommand\octa{{\scalebox{0.7}{$\diamondsuit$}}}
\renewcommand{\min}{\textup{min}}
\newcommand{\fundtype}{\textup{fund}}
\newcommand{\rank}{\textup{rk}}
\renewcommand\geq{\geqslant}
\renewcommand\leq{\leqslant}
\renewcommand\check{\checkmark}
\newcommand\norep{\ensuremath{\tikz[x=6pt,y=6pt,thick]{\draw(0,0)--(1,1);\draw(1,0)--(0,1);}}}
\newcommand{\FF}[1]{\ensuremath{F_{\ref{#1}}}}
\newcommand{\gen}[1]{\langle #1 \rangle}
\newcommand{\genn}[1]{\langle\!\!\langle #1 \rangle\!\!\rangle}
\newcommand{\past}[2]{#1\!\sslash\!#2}
\newcommand{\pastgenn}[3]{#1(#2)\!\sslash\!\langle\!\!\langle #3 \rangle\!\!\rangle}
\newcommand{\cross}[5]{\mathchoice{\scalebox{1.3}{$\big[$}\,\raisebox{1pt}{$\begin{matrix}{\scalebox{0.9}{$#1$}}\hspace{-5pt}&{\scalebox{0.9}{$#2$}}\\[-2pt]{\scalebox{0.9}{$#3$}}\hspace{-5pt}&{{\scalebox{0.9}{$#4$}}}\end{matrix}$}\,\scalebox{1.3}{$\big]$}_{#5}}{\big[\begin{smallmatrix}{#1}&{#2}\\{#3}&{#4}\end{smallmatrix}\big]_{#5}}{}{}}   
\newcommand{\crossinv}[5]{\mathchoice{\scalebox{1.3}{$\big[$}\,\raisebox{1pt}{$\begin{matrix}{\scalebox{0.9}{$#1$}}\hspace{-5pt}&{\scalebox{0.9}{$#2$}}\\[-2pt]{\scalebox{0.9}{$#3$}}\hspace{-5pt}&{{\scalebox{0.9}{$#4$}}}\end{matrix}$}\,\scalebox{1.3}{$\big]$}_{#5}^{-1}}{\big[\begin{smallmatrix}{#1}&{#2}\\{#3}&{#4}\end{smallmatrix}\big]_{#5}^{-1}}{}{}}   
\renewcommand{\setminus}{\backslash}
\newcommand{\minor}[2]{\setminus #1 / #2}
\newcommand{\hyperplus}{\mathrel{\,\raisebox{-1.1pt}{\larger[-0]{$\boxplus$}}\,}}
\renewcommand\emptyset\varnothing
\newcommand\ol{\texorpdfstring{\textbf{Oliver:} }{Oliver: }}
\newcommand\mb{\texorpdfstring{\textbf{Matt:} }{Matt: }}
\title{Foundations of matroids\\[10pt] \normalsize Part 2: Further theory, examples, and computational methods}
\author{Matthew Baker}
\address{\rm Matthew Baker, School of Mathematics, Georgia Institute of Technology, Atlanta, USA}
\email{mbaker@math.gatech.edu}
\author{Oliver Lorscheid}
\address{\rm Oliver Lorscheid, University of Groningen, the Netherlands, and IMPA, Rio de Janeiro, Brazil}
\email{oliver@impa.br}
\author{Tianyi Zhang}
\address{\rm Tianyi Zhang, School of Mathematics, Georgia Institute of Technology, Atlanta, USA}
\email{kafuka@gatech.edu}
\begin{document}

\begin{abstract}
 In this sequel to ``Foundations of matroids - Part 1,'' we establish several presentations of the foundation of a matroid in terms of small building blocks. For example, we show that the foundation of a matroid $M$ is the colimit of the foundations of all embedded minors of $M$ isomorphic to one of the matroids \ $U^2_4$, \ $U^2_5$, \ $U^3_5$, \ $C_5$, \ $C_5^\ast$, \ $U^2_4\oplus U^1_2$, \ $F_7$, \ $F_7^\ast$, \ and we show that this list is minimal. We establish similar minimal lists of building blocks for the classes of 2-connected and 3-connected matroids. We also establish a presentation for the foundation of a matroid in terms of its lattice of flats. Each of these presentations provides a useful method to compute the foundation of certain matroids, as we illustrate with a number of concrete examples. Combining these techniques with other results in the literature, we are able to compute the foundations of several interesting classes of matroids, including whirls, rank-2 uniform matroids, and projective geometries. In an appendix, we catalogue various `small' pastures which occur as foundations of matroids, most of which were found with the assistance of a computer, and we discuss some of their interesting properties.
\end{abstract}

\maketitle

\begin{small} \tableofcontents \end{small}


\section*{Introduction}
\label{introduction}

We assume throughout this Introduction that the reader is familiar with the basic theory of matroid representations over pastures, as described for example in \cite{Baker-Lorscheid20}. We refer the reader to the detailed introduction to \cite{Baker-Lorscheid20} for the definitions of, and motivation for, some of the concepts mentioned below (including pastures, the foundation of a matroid, and universal cross-ratios); see also \autoref{section: background} and \autoref{section: the foundation of a matroid} below for a brief summary.

\medskip

In \cite{Baker-Lorscheid20}, the authors initiated a systematic study of the \emph{foundation} of a matroid $M$, an algebraic object canonically attached to $M$ which governs the representability of $M$ over arbitrary pastures.
In particular, the foundation $F_M$ determines the set of projective equivalence classes of representations of $M$ over partial fields, as well as the set of reorientation classes of orientations of $M$.
More precisely, for any pasture $P$, the set of (weak) $P$-representations of $M$, modulo rescaling equivalence, is canonically identified with the set of pasture homomorphisms from $F_M$ to $P$.
Some advantages of the foundation over the earlier concepts of ``inner Tutte group'', due to Dress and Wenzel, or ``universal partial field'', due to Pendavingh and van Zwam, include:
\begin{itemize}
\item Unlike the inner Tutte group, the foundation also has an additive structure rather than just a multiplicative one. This additive structure is crucial for determining the representations of $M$.
\item If $M$ is not representable over any field, the universal partial field does not exist, but the foundation of $M$ is defined for every matroid $M$. And it carries information about representations of $M$ over interesting pastures such as the signed or tropical hyperfields, in addition to partial fields.
\item Unlike both the inner Tutte group and universal partial field, the foundation of $M$ can be characterized intrinsically through a universal property (as the unique object representing a certain functor). Both the inner Tutte group and universal partial field, by contrast, are defined in terms of generators and relations; for the foundation, such characterizations are merely descriptions.
\item The category of pastures is much more flexible and robust than the categories of fuzzy rings or partial fields; for example, it admits arbitrary limits and colimits and has both an initial and final object. In particular, one has a tensor product operation which is lacking in the earlier theories, but plays an important role in the representation theory of matroids.
\end{itemize}

\medskip

In \cite{Baker-Lorscheid20}, we made use of Tutte's homotopy theory and the companion results of Gelfand--Rybnikov--Stone to give an explicit presentation for $F_M$ by generators (``universal cross ratios'') and relations (the ``GRS relations''), as well as a formula expressing $F_M$ as the colimit of $F_N$ over an explicit and universal set of ``small'' embedded minors $N$ of $M$.
In the case of ternary matroids, or more generally matroids without $U^2_5$ or $U^3_5$ minors (which we refer to as ``large uniform minors''), we proved that $F_M$ decomposes as a tensor product of an explicit finite set of basic ``building blocks'' $\{ \U,\ \D,\ \H,\ \F_3,\ \F_2 \}$.
Using this ``structure theorem for matroids without large uniform minors'', we were able to give new proofs and generalizations of a number of results in the matroid theory literature, for example the Lee--Scobee theorem that a matroid is both ternary and orientable if and only if it is dyadic.

\medskip

In the present paper, we continue this study, complementing the results of \cite{Baker-Lorscheid20} with new theoretical insights which also serve as computational tools that we apply to numerous concrete examples of interest. 
We now summarize several of these specific enhancements to the material in \cite{Baker-Lorscheid20}.

\subsection*{Foundations of direct sums}
The following result (\autoref{thm: foundations of direct sums}) on the foundation of the direct sum $M\oplus N$ of two matroids was stated without proof in \cite{Baker-Lorscheid20}:

\begin{thmA}\label{thmA}
 $F_{M\oplus N}=F_M\otimes F_N$.
\end{thmA}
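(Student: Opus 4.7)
The plan is to establish the isomorphism via the universal property of the foundation rather than by a direct comparison of presentations. Recall that $F_M$ represents the functor $h_M \colon \Pastures \to \Sets$ sending a pasture $P$ to the set of rescaling-equivalence classes of weak $P$-representations of $M$. Since the tensor product of pastures is the categorical coproduct, it satisfies $\Hom(F_M \otimes F_N, P) = \Hom(F_M, P) \times \Hom(F_N, P)$. Thus, by the Yoneda lemma, it suffices to exhibit a natural bijection $h_{M\oplus N}(P) \cong h_M(P) \times h_N(P)$ for every pasture $P$.

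To construct this bijection, I would exploit the basic fact that a subset $B \subseteq E_M \sqcup E_N$ is a basis of $M \oplus N$ if and only if $B = B_M \sqcup B_N$ with $B_M$ a basis of $M$ and $B_N$ a basis of $N$. Given weak $P$-representations $\varphi_M$ and $\varphi_N$ (i.e.\ Grassmann--Plücker functions), define
\[
 \varphi(B_M \sqcup B_N) \;:=\; \varphi_M(B_M)\cdot \varphi_N(B_N),
\]
extended by the appropriate sign convention after fixing an ordering of $E_M \sqcup E_N$. Conversely, given a weak $P$-representation $\varphi$ of $M \oplus N$, fix base points $B_M^0$ and $B_N^0$ and normalize so that $\varphi(B_M^0 \sqcup B_N^0) = 1$; then set $\varphi_M(B_M) := \varphi(B_M \sqcup B_N^0)$ and $\varphi_N(B_N) := \varphi(B_M^0 \sqcup B_N)$. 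These operations are mutually inverse up to rescaling equivalence, since any two normalizations differ by a global scalar, and any rescaling of the ground set of $M \oplus N$ restricts componentwise to rescalings on $E_M$ and $E_N$ (and vice versa).

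The main obstacle will be verifying that these constructions genuinely produce Grassmann--Plücker functions, which reduces to showing that the three-term Plücker relations of $M \oplus N$ decouple into those of $M$ and of $N$. The key observation is that every Plücker relation for $M \oplus N$ centered at a corank-$2$ independent set $B$ and four additional elements $a,b,c,d$ involves only summands $\varphi(B \cup \{x,y\})$ with $B \cup \{x,y\}$ a basis; writing $B = B' \sqcup B''$ and analyzing the basis condition shows that any such relation with a nonzero contribution must have all four swap elements lying in a single component, so the relation reduces to a Plücker relation for $M$ alone or for $N$ alone. Once this decoupling is established — together with the observation that the product of a GP function of $M$ with one of $N$ vanishes on non-bases exactly where required — the naturality of the bijection is immediate, and Yoneda yields $F_{M\oplus N} \cong F_M \otimes F_N$.
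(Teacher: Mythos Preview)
Your high-level strategy---establish a functorial bijection between $P$-representations of $M\oplus N$ and pairs of $P$-representations of $M$ and $N$, then invoke Yoneda---is exactly the paper's strategy. The paper, however, implements it through a different cryptomorphism: rather than Grassmann--Pl\"ucker functions, it uses modular systems of hyperplane functions. Since every hyperplane of $M\oplus N$ has the form $H_1\sqcup E_N$ or $E_M\sqcup H_2$, a hyperplane function on $M\oplus N$ is literally a hyperplane function on one summand extended by zero on the other; the two constructions are then \emph{visibly} inverse, and the only work is checking that the modular-triple condition is preserved.

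Your Pl\"ucker route is viable, but there is a gap in the claim that the two maps are mutually inverse. The composite $(\varphi_M,\varphi_N)\mapsto \varphi\mapsto (\varphi_M',\varphi_N')$ recovers the input up to a scalar, as you say. The other composite, however, sends $\varphi$ to $\varphi'(B_M\sqcup B_N)=\varphi(B_M\sqcup B_N^0)\cdot\varphi(B_M^0\sqcup B_N)$, and you must show this equals $\varphi(B_M\sqcup B_N)$ after normalizing. That multiplicativity identity
\[
\varphi(B_M\sqcup B_N)\,\varphi(B_M^0\sqcup B_N^0)\;=\;\varphi(B_M\sqcup B_N^0)\,\varphi(B_M^0\sqcup B_N)
\]
is not automatic: it comes from the \emph{two-term} Pl\"ucker relations arising in the mixed $2{+}2$ case (two swap elements in $E_M$, two in $E_N$), which force $\varphi(B_M,B_N)\varphi(B_M',B_N')=\varphi(B_M,B_N')\varphi(B_M',B_N)$ for single-element exchanges, followed by a basis-exchange chain argument. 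Your sentence about normalizations differing by a global scalar and rescalings restricting componentwise addresses well-definedness on equivalence classes, not this step. Relatedly, your assertion that a Pl\"ucker relation ``with a nonzero contribution must have all four swap elements lying in a single component'' is slightly off: in the $2{+}2$ case the individual terms can be nonzero, but the relation collapses to $a-a+0\in N_P$---harmless for the forward direction, yet precisely the source of the identity you need for the converse. The paper's hyperplane formulation buys exactly the avoidance of this lemma, since no base point or normalization is needed there.
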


\subsection*{Fundamental presentations}

A minor embedding $N\hookrightarrow M$ induces a morphism $F_N\to F_M$ between the respective foundations. It is shown in \cite{Baker-Lorscheid20} that the foundation of $M$ is the colimit of the foundations of all embedded minors $N\hookrightarrow M$ on at most $7$ elements. We explore this result in more depth in the present paper, with the goal of making it both more precise and more generalizable.

Let $\cC$ be a class of isomorphism types of matroids and $\cE_{M,\cC}$ the diagram of all embedded minors $N\hookrightarrow M$ with isomorphism type in $\cC$ together with all minor embeddings $N\hookrightarrow N'$ between such minors. Taking foundations and the induced morphisms yields a diagram $F(\cE_{M,\cC})$ of pastures.

It is shown in \cite{Baker-Lorscheid20} that $F_M$ is the colimit of $F(\cE_{M,\cC_0'})$ for a certain list $\cC_0'$ of matroids on up to $7$ elements, but this list fails to be minimal. We reduce this list to $\cC_0 = \{ U^2_4, \ U^2_5, \ U^3_5, \ C_5, \ C_5^\ast, \ U^2_4\oplus U^1_2, \ F_7, \ F_7^\ast \}$, call $\cE_M=\cE_{M,\cC_0}$ the \emph{fundamental diagram of $M$}, and establish the \emph{fundamental presentation of $M$} (\autoref{thm: fundamental presentation}):

\begin{thmA}\label{thmB}
 $F_M=\colim F(\cE_{M})$.
\end{thmA}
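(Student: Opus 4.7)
The plan is to start from the presentation already established in \cite{Baker-Lorscheid20}, which expresses $F_M = \colim F(\cE_{M,\cC_0'})$ for a larger list $\cC_0' \supseteq \cC_0$ of isomorphism types of matroids on at most $7$ elements. Since $\cC_0 \subseteq \cC_0'$, the inclusion of diagrams $\cE_M = \cE_{M,\cC_0} \hookrightarrow \cE_{M,\cC_0'}$ induces a canonical morphism of pastures
\[
\Phi \colon \colim F(\cE_M) \longrightarrow F_M,
\]
and the goal is to show that $\Phi$ is an isomorphism. The existence of an inverse amounts to showing that the smaller diagram $\cE_M$ already carries enough information to reconstruct all the generators and relations of $F_M$ coming from embedded minors with iso type in $\cC_0' \setminus \cC_0$.

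The core step is an \emph{absorption} argument at the level of individual iso types. For each $N \in \cC_0' \setminus \cC_0$ and each embedded minor $N \hookrightarrow M$, I would show that the canonical map $F_N \to F_M$ factors through $\colim F(\cE_M)$ in a way that is compatible with all chains $N' \hookrightarrow N \hookrightarrow N''$ in $\cE_{M,\cC_0'}$. By a standard cofinality/refinement argument for colimits, this reduces to the purely local statement: for each $N$ with iso type in $\cC_0' \setminus \cC_0$, the foundation $F_N$ is itself the colimit of $F(\cE_{N,\cC_0})$. Once this local statement is known, functoriality of embedded minors and naturality of the Part 1 presentation patch the local isomorphisms together into an inverse of $\Phi$.

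The local statement reduces to a finite case analysis, one for each iso type $N \in \cC_0' \setminus \cC_0$. For each such $N$, the plan is: (i) list the embedded minors of $N$ with iso type in $\cC_0$ and the embeddings between them; (ii) compute $F_N$ directly from the presentation by universal cross-ratios and GRS relations from \cite{Baker-Lorscheid20}; (iii) compute $\colim F(\cE_{N,\cC_0})$; and (iv) verify that the natural map between them is an isomorphism. In the cases where $N$ has no $U^2_5$ or $U^3_5$ minor, the structure theorem of \cite{Baker-Lorscheid20} writes $F_N$ as a tensor product of the basic building blocks $\{\U,\D,\H,\F_3,\F_2\}$; since each building block is accounted for by a distinguished small minor in $\cC_0$ (for instance $\U$ by $U^2_4$, $\D$ by $U^2_4 \oplus U^1_2$, $\H$ by $C_5$ or $C_5^\ast$, and $\F_2,\F_3$ by $F_7$ and $F_7^\ast$), steps (iii)--(iv) reduce to matching tensor factors against minor embeddings. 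In particular, any regular $N$ contributes trivially ($F_N = \Funpm$) and can be dropped from the diagram without changing the colimit.

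The main obstacle will be the bookkeeping in this case analysis, especially for matroids $N \in \cC_0' \setminus \cC_0$ that \emph{do} contain a $U^2_5$ or $U^3_5$ minor, where the tensor product structure theorem is unavailable and one must argue directly from the GRS presentation that every relation in $F_N$ already holds in the pasture generated by its $\cC_0$-minors. A secondary subtlety is to ensure that the factorizations $F_N \to \colim F(\cE_M)$ obtained via different embedded $\cC_0$-minors $N' \hookrightarrow N$ are mutually consistent, so that they assemble into a single well-defined morphism; this will follow from the fact that all the relevant relations live, by construction, in the colimit along a common subdiagram of $\cE_M$.
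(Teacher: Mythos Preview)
Your overall strategy matches the paper's proof exactly: start from the larger list $\cC_0'$ of \cite[Thm.~4.23]{Baker-Lorscheid20}, and for each isomorphism type $N\in\cC_0'\setminus\cC_0$ verify the local statement $F_N\simeq\colim F(\cE_{N,\cC_0})$, which lets you drop $N$ from the diagram without changing the colimit. The paper carries this out by a direct case analysis of the extra types, which are $U^0_1\oplus U^2_4$, $U^1_1\oplus U^2_4$, $U^1_1\oplus C_5^\ast$, the two parallel extensions of $C_5$, and the parallel extension of $U^3_5$.

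There is, however, a factual slip in your parenthetical that would derail the execution: the foundations of $C_5$, $C_5^\ast$, and $U^2_4\oplus U^1_2$ are all $\U$, not $\H$ or $\D$; and $F_7,F_7^\ast$ both have foundation $\F_2$, not $\F_3$. In fact every matroid in $\cC_0'$ has foundation $\Funpm$, $\U$, $\V$, or $\F_2$, so the building blocks $\D$, $\H$, $\F_3$ never appear in this argument, and the structure theorem is not the right tool here. The paper instead proceeds concretely: for each extra $N$ it writes down the small diagram $\cE_N$ (typically a string or cycle of copies of $\U$ connected by isomorphisms), observes that the colimit is $\U$ (or $\V$ in the $U^3_5$ case), and identifies this with $F_N$ using that series/parallel extensions and adding a (co)loop preserve the foundation. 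The one genuinely nontrivial case is the parallel extension of $U^3_5$, where one must check that the map $\U^{\otimes 4}\to\V$ coming from the four surviving $U^2_4$-minors is surjective; this is exactly the obstacle you anticipated, and the paper handles it by an explicit cross-ratio computation.
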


In fact, $\cC_0$ is the unique minimal set such that \autoref{thmB} holds for all matroids $M$. 

\medskip

We also derive from \autoref{thmB} a description of $F_M$ in terms of the lattice of flats of $M$ (\autoref{thm: fundamental presentation}). For this, we establish some technical refinements of the famous ``Scum Theorem'' which may be of independent interest (\autoref{prop: embedded minor surject onto upper sublattices} and \autoref{lemma: going up}). In particular, a minor embedding $N\hookrightarrow M$ defines a sublattice $\Lambda_N$ of the lattice of flats of $M$, and the induced morphism $F_N\to F_M$ depends only on the corresponding lattice embedding.

The \emph{fundamental lattice diagram of $M$} is the diagram $\cL_M$ of all sublattices that stem from embedded minors of types $U^2_4$, $U^2_5$, $U^3_5$, $C_5$, $F_7$ and $F_7^\ast$, together with all inclusions of sublattices. Taking foundations yields the \emph{fundamental lattice presentation $F(\cL_M)$ of $M$} (\autoref{thm: fundamental lattice presentation}): 

\begin{thmA}\label{thmC}
 $F_M=\colim F(\cL_M)$.
\end{thmA}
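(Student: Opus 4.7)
The plan is to derive Theorem C from Theorem B by reindexing the colimit along the sublattice functor $(N \hookrightarrow M) \mapsto \Lambda_N$, and then pruning the redundant types from $\cC_0$. The crucial ingredient, asserted in the introduction as a consequence of the refined Scum Theorem (the proposition on embedded minors surjecting onto upper sublattices together with the going-up lemma), is that the morphism $F_N \to F_M$ induced by an embedded minor depends only on $\Lambda_N$. Granted this, $F(\cE_M)$ descends to a diagram indexed by sublattices of the lattice of flats of $M$ that arise from embedded minors with types in $\cC_0$, and the problem reduces to showing that the two types $C_5^\ast$ and $U^2_4 \oplus U^1_2$ present in $\cC_0$ but absent from the list defining $\cL_M$ contribute nothing new.

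The lattice reindexing is then formal: the assignment $(N \hookrightarrow M) \mapsto \Lambda_N$ defines a functor from $\cE_M$ to the poset of sublattices, and the dependence result above guarantees that $F(\cE_M)$ is pulled back from a well-defined diagram $\widetilde{F}$ on this poset. Since every sublattice appearing in $\cL_M$ is represented by at least one object of $\cE_M$, the canonical comparison $\colim F(\cE_M) \to \colim \widetilde{F}$ is an isomorphism. It remains to see that restricting $\widetilde{F}$ to sublattices coming from the reduced type list leaves the colimit unchanged. For $U^2_4 \oplus U^1_2$, Theorem A gives $F_{U^2_4 \oplus U^1_2} = F_{U^2_4} \otimes F_{U^1_2} = F_{U^2_4}$, since $F_{U^1_2} = \Funpm$; moreover, each embedded minor of this type canonically restricts to an embedded $U^2_4$-minor producing the same map into $F_M$, so its term in the colimit is absorbed by that of the $U^2_4$-minor, whose sublattice is contained in $\Lambda_{U^2_4 \oplus U^1_2}$. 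The case of $C_5^\ast$ is analogous, resting either on the triviality of its foundation or on a refinement of its sublattice to sublattices arising from retained types.

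Assembling the pieces yields $F_M = \colim F(\cE_M) = \colim F(\cL_M)$. The most delicate step will be the lattice-only dependence of $F_N \to F_M$: although announced in the introduction, its verification requires a careful transport of universal cross-ratios and the GRS relations through the isomorphisms supplied by the refined Scum Theorem, and this will be significantly harder than the essentially formal colimit manipulations in the remainder of the plan, particularly for the non-uniform matroids $C_5, F_7, F_7^\ast$ where the universal cross-ratios depend on fine structural data of the minor embedding.
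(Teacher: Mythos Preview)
Your approach is essentially the paper's: start from Theorem~B, reindex the colimit along the sublattice map $(N\hookrightarrow M)\mapsto\Lambda_N$, and prune the two types $C_5^\ast$ and $U^2_4\oplus U^1_2$. Your treatment of $U^2_4\oplus U^1_2$ via Theorem~A agrees with the paper's argument that the lattice of $U^2_4\oplus U^1_2$ contains a unique upper $U^2_4$-sublattice, and the corresponding foundation map is an isomorphism $\U\to\U$.

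There is, however, a genuine gap in your handling of $C_5^\ast$. You offer two alternatives: ``the triviality of its foundation'' or ``a refinement of its sublattice to sublattices arising from retained types.'' The first is simply false: $C_5^\ast$ is a parallel extension of $U^2_4$, so $F_{C_5^\ast}\simeq\U$, not $\Funpm$. The second gestures in the right direction but misses the point. There is no refinement or proper inclusion involved: because $C_5^\ast$ has a pair of parallel elements, its simplification is $U^2_4$, and hence its lattice of flats \emph{equals} that of $U^2_4$. Under the reindexing $N\mapsto\Lambda_N$, every embedded $C_5^\ast$-minor already lands on a sublattice of type $U^2_4$, so this type vanishes automatically from $\cL_M'$---no separate argument is needed.

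Finally, your concern that the lattice-only dependence of $F_N\to F_M$ is the hardest step is misplaced. This is immediate from the description of universal cross ratios in terms of modular quadruples of hyperplanes: the induced map is $\cross{H_1}{H_2}{H_3}{H_4}{}\mapsto\cross{H_1}{H_2}{H_3}{H_4}{}$, which visibly depends only on the inclusion $\Lambda_N\subset\Lambda_M$. No special care is required for $C_5$, $F_7$, or $F_7^\ast$.
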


\autoref{thmC} has some useful variants; cf.\ \autoref{thm: fundamental lattice presentation by upper sublattices of small rank} and \autoref{thm: variant of foundation as colimit of all upper sublattices of small rank}.

\subsection*{Fundamental types}
More generally, we show (\autoref{thm: fundamental type}) that for each class $\cC$ of matroids, there is a unique minimal set $\cC_0$ of isomorphism classes of matroids in $\cC$ such that the foundation of every matroid $M$ in $\cC$ is the colimit of the foundations of all embedded minors of $M$ whose isomorphism type is in $\cC_0$. We call $\cC_0$ the \emph{fundamental type} of $\cC$. 

We determine the fundamental type for several classes of matroids, with the most interesting examples being the classes of $2$-connected matroids (\autoref{thm: fundamental presentation for 2-connected matroids}) and $3$-connected matroids (\autoref{thm: fundamental presentation for 3-connected matroids}). Using the standard nomenclature from Oxley's book \cite{Oxley92}, we have:

\begin{thmA}\label{thmD}
 The fundamental type of the class of all $2$-connected matroids is 
 \[
  \cC_0 \ = \ \{ U^2_4, \ U^2_5, \ U^3_5, \ C_5, \ C_5^\ast, \ F_7, \ F_7^\ast \},
 \] 
 i.e.\ $F_M=\colim F(\cE_{M,\cC_0})$ for every $2$-connected matroid $M$.
\end{thmA}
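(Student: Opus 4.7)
The plan is to deduce \autoref{thmD} from \autoref{thmB} by showing that the object $U^2_4\oplus U^1_2$ becomes redundant once one restricts to $2$-connected matroids, and then verifying the minimality of the remaining list. Write $\cC_0^+:=\cC_0\cup\{U^2_4\oplus U^1_2\}$ for the list appearing in \autoref{thmB}, so that $F_M=\colim F(\cE_{M,\cC_0^+})$ for every matroid $M$.

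For the adequacy statement $F_M=\colim F(\cE_{M,\cC_0})$, I would first observe that $F_{U^1_2}$ is the initial pasture, since the unique (up to rescaling equivalence) representation of $U^1_2$ over any pasture $P$ makes $\Hom(F_{U^1_2},P)$ a singleton. Combined with \autoref{thmA}, this yields a canonical isomorphism $F_{U^2_4\oplus U^1_2}\cong F_{U^2_4}$. Now take any embedded minor $\iota\colon N\hookrightarrow M$ with $N\cong U^2_4\oplus U^1_2$; writing $E(N)=A\sqcup B$ with $A$ the ground set of the $U^2_4$-summand, the restriction $N':=N\setminus B$ is an embedded minor of $M$ of isomorphism type $U^2_4$, and the inclusion $N'\hookrightarrow N$ induces exactly the above isomorphism $F_{N'}\cong F_N$. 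Since $U^2_4$ is the unique isomorphism type in $\cC_0^+$ appearing as a proper embedded minor of $N$, and since $N$ itself is not an embedded minor of any larger matroid in $\cC_0^+$ (neither $F_7$ nor $F_7^\ast$ contains a direct-sum minor of the right shape), the vertex $N$ in the diagram $\cE_{M,\cC_0^+}$ has a single incoming arrow from $N'$ and no outgoing arrows. A standard cofinality (``absorbing vertex'') argument then shows that the inclusion of diagrams $\cE_{M,\cC_0}\hookrightarrow\cE_{M,\cC_0^+}$ induces an isomorphism on colimits, yielding $F_M=\colim F(\cE_{M,\cC_0})$.

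For minimality, I note that every matroid in $\cC_0$ is $2$-connected, so $\cC_0$ is a valid candidate. For each $N\in\cC_0$ I would exhibit a $2$-connected matroid $M$ (typically $M=N$ itself) witnessing that $\colim F(\cE_{M,\cC_0\setminus\{N\}})\not\cong F_M$. The case $N=U^2_4$ is immediate, since the reduced diagram is empty and $F_{U^2_4}$ is nontrivial. The cases $N\in\{U^2_5,U^3_5,C_5,C_5^\ast\}$ reduce to showing that the colimit of the finitely many embedded $U^2_4$-minors of $N$ (the only proper minors of $N$ with type in $\cC_0\setminus\{N\}$) is a strict quotient of $F_N$; this follows by inspecting the explicit presentation of $F_N$ by cross-ratios and GRS relations recalled earlier.

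The main obstacle is the minimality argument for $N=F_7$ and $N=F_7^\ast$: these matroids contain a profusion of small embedded minors (many $U^2_4$'s, together with any additional minors from $\cC_0$ that happen to appear), so one might worry that the colimit of this rich diagram already captures $F_N$. The hard part will be to exhibit an explicit generator or relation of $F_{F_7}$ (respectively $F_{F_7^\ast}$) which is \emph{not} expressible in terms of the data carried by the smaller minors in $\cC_0\setminus\{F_7,F_7^\ast\}$. This requires a careful case-by-case analysis of the presentations of $F_{F_7}$ and $F_{F_7^\ast}$ developed elsewhere in the paper, and is the technical crux of \autoref{thmD}.
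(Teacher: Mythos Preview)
Your adequacy argument has a genuine gap that breaks the proof. The claim that an embedded minor $N\cong U^2_4\oplus U^1_2$ has a \emph{single} incoming arrow from $N':=N\setminus B$ is incorrect on two counts. First, $N\setminus B$ is not an embedded minor in the paper's sense: the set $B$ (the ground set of the $U^1_2$-summand) is a cocircuit of $N$, hence not coindependent, so $N\setminus B$ does not appear in $\cE_{M,\cC_0^+}$. Second, and more importantly, the actual embedded $U^2_4$-minors of $N$ are the \emph{two} minors $N\minor{a}{b}$ and $N\minor{b}{a}$ (delete one element of $U^1_2$ and contract the other); see \autoref{rem: fundamental diagram of U12+U24}. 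The whole purpose of the vertex $N$ in the diagram is to identify these two copies of $\U$, and removing $N$ can leave them in distinct components. Indeed, your cofinality argument never invokes $2$-connectedness of $M$; if it were valid, it would equally show that $U^2_4\oplus U^1_2$ is redundant for \emph{all} matroids, contradicting \autoref{rem: fundamental diagram of U12+U24}, where for $M=U^2_4\oplus U^1_2$ the reduced colimit is $\U\otimes\U\not\cong\U$.

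You also have the difficulty inverted. The minimality of $F_7$ and $F_7^\ast$ is immediate: all proper minors of $F_7$ are regular (it is binary, so has no $U^2_4$-minors), whence $\cE_{F_7,\cC_0\setminus\{F_7\}}$ is empty and its colimit is $\Funpm\neq\F_2=F_{F_7}$; dually for $F_7^\ast$. The genuinely hard part is adequacy, i.e.\ showing that for a $2$-connected $M$ the two $U^2_4$-minors sitting inside each embedded $U^2_4\oplus U^1_2$ are already connected through \emph{other} $2$-connected minors. The paper does this by first classifying the minimal $2$-connected extensions of $U^2_4\oplus U^1_2$ via the Cunningham--Edmonds tree decomposition (\autoref{lemma: minimal 2-connected extensions of U24+U12}): up to duality they are specific series--parallel extensions of $U^2_5$. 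It then proves, by an induction on $\#E_M$ using a chain of embedded $D_6$-minors (\autoref{lemma: cross ratios in D6}), that the cross-ratio identifications imposed by $N$ are already consequences of relations carried by these $2$-connected minors. This is where $2$-connectedness is used essentially, and it is the step your proposal is missing.
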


\begin{thmA}\label{thmE}
 The fundamental type of the class of all $3$-connected matroids is 
 \[
  \cC_0 \ = \ \{ U^2_4, \ U^2_5, \ U^3_5, \ W^3, \ Q_6, \ P_6, \ F_7, \ F_7^\ast \}, 
 \]
 i.e.\ $F_M=\colim F(\cE_{M,\cC_0})$ for every $3$-connected matroid $M$.
\end{thmA}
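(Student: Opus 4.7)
The plan is to build on \autoref{thmD}: every 3-connected matroid $M$ is in particular 2-connected, so \autoref{thmD} immediately gives $F_M = \colim F(\cE_{M,\cC_0'})$ for the 2-connected list $\cC_0' = \{U^2_4, U^2_5, U^3_5, C_5, C_5^\ast, F_7, F_7^\ast\}$. Comparing $\cC_0'$ with the asserted $\cC_0 = \{U^2_4, U^2_5, U^3_5, W^3, Q_6, P_6, F_7, F_7^\ast\}$, the discrepancy is that the two non-3-connected types $C_5$ and $C_5^\ast$ must be removed while the three non-uniform 3-connected matroids on six elements $W^3, Q_6, P_6$ must be added. The proof therefore reduces to a cofinality argument on the diagram $\cE_{M,\cC_0'}$: one must show that every embedded minor $N \hookrightarrow M$ of type $C_5$ or $C_5^\ast$ can be replaced in a cofinal way by embedded minors $N \hookrightarrow N' \hookrightarrow M$ with $N'$ of a type in $\cC_0$.

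The central matroid-theoretic input is a \emph{3-connectification lemma} for embedded minors: if $M$ is 3-connected with $|E(M)| \geq 6$ and $N \hookrightarrow M$ is an embedded minor with $N \in \{C_5, C_5^\ast\}$, then the embedding factors through an embedded minor $N'$ of $M$ which is 3-connected and has at most $6$ elements. By Tutte's classification of 3-connected matroids on at most six elements (as recorded in \cite{Oxley92}), any such $N'$ is isomorphic to one of $U^2_4, U^2_5, U^3_5, U^2_6, U^3_6, U^4_6, W^3, Q_6, P_6$. The expected method of proof is to take a 2-separation $(A,B)$ of $N$ and argue, using 3-connectivity of $M$, that there is an element $e \in E(M) \setminus E(N)$ whose reinsertion (by un-contracting or un-deleting) yields an embedded minor $N' \hookrightarrow M$ in which this 2-separation is destroyed; iterating and stopping at the first 3-connected $N'$ bounds $|E(N')|$ by $6$ because of the structure of $C_5$ and $C_5^\ast$. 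A standard categorical cofinality argument then replaces the colimit over $\cE_{M,\cC_0'}$ by the colimit over the sub-diagram spanned by $\cC_0 \cup \{U^2_6, U^3_6, U^4_6\}$.

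To eliminate the three remaining uniform matroids on six elements, one applies \autoref{thmB} to $U^2_6, U^3_6, U^4_6$ individually: each of their foundations is a colimit of the foundations of their own embedded minors of types in $\cC_0'$, and since any embedded minor of a uniform matroid is again uniform (up to direct summands handled by 3-connectivity of the ambient $M$), these reduce to $U^2_4, U^2_5, U^3_5$, already in $\cC_0$. Thus the morphisms $F_{U^i_6} \to F_M$ factor through $\cE_{M,\cC_0}$, yielding $F_M = \colim F(\cE_{M,\cC_0})$. For minimality of $\cC_0$, for each $T \in \cC_0$ the witness is $M = T$ itself: its proper embedded minors of types in $\cC_0 \setminus \{T\}$ are strictly smaller, and the explicit computations of $F_T$ carried out earlier in the paper and in \cite{Baker-Lorscheid20} show that $F_T$ is not the colimit of these smaller foundations.

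The main obstacle is the 3-connectification lemma. The delicate point is that the specific embedding data $N = M \setminus F / G$ must be preserved: extending $N$ to a larger embedded minor $N'$ means shrinking $F$ or $G$, and one cannot freely invoke Seymour's Splitter Theorem, which operates on abstract minors rather than embedded ones. I expect this will require a careful case analysis of the two isomorphism types $C_5, C_5^\ast$ and their 2-separations, tracking how 3-connectivity of $M$ forces at least one un-contraction or un-deletion to destroy each 2-separation, while controlling the growth of $|E(N')|$. A bound of $6$ is tight in view of the six-element members of $\cC_0$, and the fact that exactly $W^3, Q_6, P_6$ arise (and not the uniform $U^i_6$) in the cofinal family should reflect which of these non-uniform 3-connected 6-element extensions are forced by the structure of $C_5$ and $C_5^\ast$ inside $M$.
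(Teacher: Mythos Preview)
Your overall strategy is the same as the paper's: reduce from the 2-connected fundamental type via \autoref{prop: fundamental type of subclasses with U24}, so that the problem becomes identifying the minimal 3-connected extensions of $C_5$ and $C_5^\ast$. However, the execution of the key step is where your proposal parts ways from the paper and leaves a real gap.

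Your ``3-connectification lemma'' asserts that every embedded $C_5$ or $C_5^\ast$-minor of a 3-connected $M$ factors through a 3-connected embedded minor $N'$ on at most $6$ elements, and you propose to prove this by iteratively destroying 2-separations. You acknowledge this is the main obstacle and leave it as an expected ``careful case analysis.'' The paper does \emph{not} obtain the bound $6$ directly. Instead it invokes the Bixby--Coullard strengthening of the Splitter Theorem (cited as \cite[Theorem 12.3.6]{Oxley-Whittle98}) to show that any minimal 3-connected extension of $U^2_4=C_5/e$ using the element $e$ has at most $4$ elements beyond $U^2_4$, hence at most $8$ elements total. An exhaustive computer search over all matroids with up to $8$ elements then shows that the minimal 3-connected extensions of $C_5$ are precisely $W^3$, $Q_6$, and $P_6$. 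The paper explicitly flags this as the only place in the body of the paper requiring computer assistance, so your hoped-for hand analysis with bound $6$ is not supplied here and would need independent justification.

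A secondary issue: your detour eliminating $U^2_6$, $U^3_6$, $U^4_6$ via \autoref{thmB} is unnecessary. Since $N'$ must contain the embedded $N\in\{C_5,C_5^\ast\}$, and uniform matroids have no $C_5$- or $C_5^\ast$-minors, the $U^i_6$ simply cannot arise as $N'$. The relevant six-element candidates are exactly $W^3$, $Q_6$, $P_6$ from the outset; no further argument about their foundations is needed.
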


The proofs of these results are quite non-trivial: \autoref{thmD} requires a detailed analysis of the minimal 2-connected extensions of $U^2_4\oplus U^1_2$, which is achieved through the Cunningham-Edmonds tree decomposition for 2-connected matroids and a rather elaborate induction; \autoref{thmE} uses a strengthening of Seymour's splitter theorem due to Bixby--Coullard and an exhaustive computer search on matroids up to 8 elements.\footnote{While a number of results in this paper were verified using the Macaulay2 package \textsc{Pastures}, the explicit determination of the fundamental type for 3-connected matroids is the only place in the paper (outside the Appendix) which actually requires a computer-assisted proof.}

As with \autoref{thmC}, every fundamental type has a corresponding fundamental type of sublattices; cf., for example, \autoref{thm: fundamental lattice presentation for 3-connected matroids}.

\subsection*{Map of examples} 
We illustrate how to use the various fundamental presentations of the foundation in concrete examples. The list in \autoref{table: list of examples} shows which fundamental presentation of $F_M$ enters which example.

\begin{table}[htb]
 \caption{List of which fundamental presentation appears in which example}
 \label{table: list of examples}
 \begin{tabular}{|c|c|c|c|}
  \hline 
  fundamental presentation & ref.\ to result & matroid & ref.\ to example \\
  \hline \hline 
  by cross ratios & \autoref{thm: fundamental presentation of foundations in terms of bases} & $U^2_5$ & \autoref{subsection: foundations of the uniform matroid U25} \\
  \hline 
  $F_M=\colim F(\cE_M)$ & \autoref{thmB} / \autoref{thm: fundamental presentation} & $Q_6$ & \autoref{subsection: foundation of Q6} \\
  \hline 
  $F_M=\colim F(\cL_M)$ & \autoref{thmC} / \autoref{thm: fundamental lattice presentation} & $AG(2,3)\setminus e$ & \autoref{subsection: foundation of AG23-e} \\
  \hline 
  $F_M=\colim F(\cE^{(2)}_M)$ & \autoref{thmD} / \autoref{thm: fundamental presentation for 2-connected matroids} & whirls & \autoref{subsection: foundation of whirls} \\
  \hline 
  $F_M=\colim F(\cE^{(3)}_M)$ & \autoref{thmE} / \autoref{thm: fundamental presentation for 3-connected matroids} & $F_7^-$ & \autoref{subsection: foundation of F7-} \\
  \hline 
  $F_M=\colim F(\cL^{(3)}_M)$ & \autoref{thm: fundamental lattice presentation for 3-connected matroids} & $P_7$ & \autoref{subsection: foundation of P7} \\
  \hline 
  $F_M=\colim F(\cL^{(\leq3)}_M)$ & \autoref{thm: fundamental lattice presentation by upper sublattices of small rank} & $T_8$ & \autoref{subsection: foundation of T8} \\
  \hline 
 \end{tabular}
\end{table}

Moreover, we draw on results from the literature to determine that the foundation of the uniform matroid $U^2_{k+3}$ is Semple's $k$-regular partial field (\autoref{prop: U_k is the foundation of U2k+3}), the foundation of the $d$-dimensional projective space $PG(d,q)$ is the finite field $\F_q$ (\autoref{prop: foundation of projective spaces}), and the foundation of any non-Desarguesian projective plane is $\K$ (\autoref{prop: foundation of non-Desarguesian projective plane}).

\subsection*{A note on the examples appearing in this paper}

While our techniques can be used, in principle, to algorithmically compute the foundation of any matroid, the complexity of these computations increases exponentially with the size of the matroid. 
So for ``large'' matroids, such direct calculations are best done with the aid of a computer. This has been systematically implemented by Justin Chen and the third author, who have written the Macaulay2 package \textsc{Pastures} and an accompanying paper \cite{Chen-Zhang}.

\medskip

Rather than relying exclusively on computer calculations in order to present interesting examples, we have decided to work out some computations of foundations ``by hand'' in this paper, partly in order to illustrate some of the different theoretical techniques presented here. For this, we systematically make use of tactical shortcuts, leaning on known results about the representability of the above-mentioned matroids in order to make the calculations human-readable. A typical computation of the foundation $F_M$ of a matroid $M$ in our list of examples might proceed as follows: 
\begin{enumerate}
 \item We leverage known results about the representability of $M$ to gain information about which pastures can possibly appear as $F_M$. In many cases, $M$ is without large uniform minors,
  which allows us to apply the structure theorem for this class of matroids.
 \item We write down the ``fundamental diagram'' of $M$ and observe, in the cases of interest, that it is connected.
 This implies that $F_M$ does not decompose into the tensor product of several non-trivial factors. 
 \item\label{proof3}
 We extract sufficient relations between the cross ratios of $M$ from the fundamental presentation of $F_M$ to narrow down the possibilities of $F_M$ to a unique pasture, which concludes the proof.
\end{enumerate}

\subsection*{Inductive approach.} We also sometimes employ an inductive approach to computing the foundation of a matroid in terms of the foundations of its minors. This approach is illustrated, for example, in the computations of the foundations of $F_7^-$ (\autoref{prop: foundation of the non-Fano matroid}) and $T_8$ (\autoref{prop: foundation of T8}).
The inductive approach also underlies the theoretical fact that we can compute the fundamental type of a class of matroids in terms of minimal extensions of the matroids appearing in certain other fundamental types (\autoref{prop: fundamental type of subclasses with U24}). 

\medskip

At the end of \autoref{subsection: fundamental presentation for subclasses}, we formulate some open problems concerning fundamental types.

\subsection*{List of foundations}

In \autoref{appendix: some interesting foundations}, we describe some interesting foundations, which we found partly based on theory and partly by using the Macaulay2 package \textsc{Pastures} developed by Chen and the third author in \cite{Chen-Zhang}. 

By \autoref{thm: foundations of direct sums} and a result whose proof appears in \cite{Baker-Lorscheid-Walsh-Zhang}, the foundation of a direct sum or $2$-sum decomposes into a tensor product of the foundations of the $3$-connected components of the matroid. We therefore concentrate on foundations of $3$-connected matroids.

There are two infinite families of foundations discussed in \autoref{appendix: some interesting foundations}: the foundations of uniform matroids, which by \autoref{prop: U_k is the foundation of U2k+3} include Semple's $k$-regular partial fields,
and finite fields, which by \autoref{prop: foundation of projective spaces} are foundations of projective geometries.

We continue by describing various other pastures which occur as foundations, including the foundations of all 3-connected matroids on up to $8$ elements. We see that many of the partial fields described in \cite{Pendavingh-vanZwam10a} occur as the foundation of a matroid, while others occur as the universal partial field but not necessarily the foundation.

We conclude the appendix with some remarks on 
foundations of non-representable matroids.

\subsection*{Acknowledgements}

We thank Nathan Bowler for sharing many helpful insights with us, Zach Walsh for his help with \autoref{FKrasner}, and Peteris Silins for his feedback. We thank Steffen M\"uller and Willard Verschoore de la Houssaije for their help with the determination of the $S$-units of various universal rings in the appendix, partially using the software developed in \cite{Verschoore24}. The first author was supported by NSF grant DMS-2154224 and a Simons Fellowship in Mathematics. The second author was supported by Marie Sk{\l}odowska Curie Fellowship MSCA-IF-101022339.


\section{Background}
\label{section: background}

In this section we give a quick reminder of some basic notions from \cite{Baker-Lorscheid20}, such as pastures, matroid representations, universal cross ratios, and the foundation of a matroid. We refer the reader to \cite{Baker-Lorscheid20} and \cite{Baker-Lorscheid21} for a more extensive discussion of these notions.

\subsection{Pastures}
\label{subsection: pastures}

A \emph{pointed monoid} is a (multiplicatively written and commutative) monoid $A$ with neutral element $1$ and absorbing element $0$, i.e.\ $1\cdot a=a$ and $0\cdot a=0$ for all $a\in A$. We denote by $P^\times$ the subgroup of invertible elements. A \emph{morphism of pointed monoids} is a multiplicative map $f:A_1\to A_2$ that preserves $1$ and $0$. We denote by $\Sym_3(A)$ the quotient of $A^3$ by the permutation action of $S_3$ on the coefficients of $(a,b,c)\in A^3$. We denote the equivalence classes in $\Sym_3(A)$ by $a+b+c=[(a,b,c)]$.

A \emph{pasture} is a pointed monoid $P$ such that $P^\times=P-\{0\}$ together with an involution $a\mapsto-a$ and a nonempty subset $N_P$ of $\Sym_3(P)$, called the \emph{null set of $P$}, such that: 
\begin{enumerate}
\item For all $a+b+c\in N_P$ and $d\in P$ we have $da+db+dc\in N_P$. 
\item $a+b+0\in N_P$ if and only if $b=-a$. 
\end{enumerate}
It follows that $-0=0$, $(-1)^2=1$, and $-(-a)=a$. 

A \emph{pasture morphism} is a morphism $f:P_1\to P_2$ of pointed monoids such that $f(a)+f(b)+f(c)\in N_{P_2}$ if $a+b+c\in N_{P_1}$. It follows that $f(-a)=-f(a)$. We denote the category of pastures by $\Pastures$.

We write $0$ for $0+0+0$ and $a+b$ for $a+b+0$. Note that if $a+0\in N_P$, then $a=0$. We also write $a+b-c$ for $a+b+(-c)$ and $a+b=c$ if $a+b-c\in N_P$. Note also that the inversion $a\mapsto-a$ is determined by the null set $N_P$.

\subsubsection{Examples}
Some first examples of importance are:
\begin{itemize}
 \item the \emph{regular partial field} $\Funpm=\{0,1,-1\}$ with null set $N_{\Funpm}=\{0,\ 1+(-1)\}$, which is an initial object in $\Pastures$;
 \item the \emph{Krasner hyperfield} $\K=\{0,1\}$ with null set $N_\K=\{0,\ 1+1,\ 1+1+1\}$, which is a terminal object in $\Pastures$;
 \item the field with two elements $\F_2=\{0,1\}$ with null set $N_{\F_2}=\{0,\ 1+1\}$;
 \item the field with three elements $\F_3=\{0,1,-1\}$ with null set 
 \[
  N_{\F_3} \ = \ \{0,\ 1+(-1),\ 1+1+1,\ (-1)+(-1)+(-1)\};
 \]
 \item the \emph{sign hyperfield} $\S=\{0,1,-1\}$ with null set 
 \[
  N_\S \ = \ \{0, 1+(-1),\ 1+1+(-1),\ 1+(-1)+(-1)\}.
 \]
\end{itemize}

In fact, every partial field and every hyperfield defines a pasture. Given a partial field $(G,R)$, where $R$ is a ring and $G$ is a subgroup of $R^\times$ that contains $-1$, we define the associated pasture as $P=G\cup\{0\}$ with null set $N_P=\{a+b-c\mid a+b=c\text{ in }R\}$. Given a hyperfield $F$, we define the associated pasture as $P=F$ (as a multiplicative monoid) with null set $N_P=\{a+b-c\mid c\in a\hyperplus b\text{ in }F\}$. In this text, we consider partial fields and hyperfields as pastures. Note that the respective notions of morphisms for partial fields and for hyperfields coincide with the corresponding notion of pasture morphisms.

\subsubsection{Free algebras and quotients}
Given a pasture $P$ and indeterminants $x_i$, indexed by $i\in I$, the \emph{free algebra $P\gen{x_i\mid i\in I}$} is the pasture whose unit group is the product of $P^\times$ with the free abelian group $\gen{x_i\mid i\in I}$ generated by the $x_i$, and whose null set consists of all elements of the form $da+db+dc$ where $a+b+c\in N_P$ and $d\in P\gen{x_i\mid i\in I}$.

Given a pasture $P$ and a subset $\{a_j+b_j+c_j\mid j\in J\}$ of $\Sym^3(P)$, where we assume that $a_i,b_i\in P^\times$, we define $\past{P}{\genn{a_j+b_j+c_j\mid j\in J}}$ as the following pasture: its monoid is the quotient monoid $\overline{P}=P/\sim$ of $P$ by the equivalence relation generated by the relations $da_j\sim -db_j$ for all $d\in P$ and all $j\in J$ for which $c_j=0$. We write $[a]$ for the class of $a\in P$ in $\overline P$. The null set of $\past{P}{\genn{a_j+b_j+c_j\mid j\in J}}$ consists of all expressions of the form $[a]+[b]+[c]$ with $a+b+c\in N_P$, together with all expressions $[da_j]+[db_j]+[dc_j]$ for $d\in P$ and $j\in J$. 

This allows us to write every pasture in the form 
\[
 P \ = \ \pastgenn\Funpm{x_i\mid i\in I}{a_j+b_j+c_j\mid j\in J}
\]
by choosing generators and relations. 
Indeed, to see that this is always possible, note that for any pasture $P$ we have 
\[
 P \ \cong \ \pastgenn\Funpm{x_a\mid a\in P}{S},
\]
where $S$ consists of all binary relations $x_{a_1}\cdots x_{a_k} - 1 = 0$ corresponding to multiplicative relations of the form $a_1\cdots a_k = 1$ in $P$, together with all ternary relations $x_a + x_b + x_c = 0$ corresponding to $a+b+c \in N_P$. It is easy to see that the map sending $a$ to $x_a$ is an isomorphism of pastures.


The following pastures, which we present via generators and relations, all play an important role in the theory of partial-field representations of matroids:

\begin{itemize}
 \item the \emph{near-regular partial field} $\U=\pastgenn{\Funpm}{x,y}{x+y-1}$;
 \item the \emph{dyadic partial field} $\D=\pastgenn{\Funpm}{x}{x-1-1}$;
 \item the \emph{hexagonal partial field} $\H=\pastgenn{\Funpm}{\zeta_6}{\zeta_6^3+1,\ \zeta_6+\zeta_6^{-1}-1}$;
 \item the \emph{golden ratio partial field} $\G=\pastgenn{\Funpm}{x}{x^2+x-1}$.
\end{itemize}

\subsubsection{Categorical properties}
The category of pastures is complete and cocomplete, as proven in \cite{Creech21}. For the present purposes, it suffices to understand finite colimits. To start with, we describe the \emph{coproduct}, or \emph{tensor product},  of two pastures $P_1$ and $P_2$. As a pointed monoid, the tensor product is 
\[
 P_1\otimes P_2 \ = \ \{0\}\cup (P_1^\times \oplus P_2^\times) \, / \, \{\pm(1,1)\};
\]
we write $a_1\otimes a_2=\{(a_1,a_2),(-a_1,-a_2)\}$ for its cosets (if $a_1\neq0$ and $a_2\neq0$), and define $a_1\otimes 0=0=0\otimes a_2$. Its null set is
\begin{align*}
 N_{P_1\otimes P_2} \ &= \ \Big\{ a\otimes d+b\otimes d+c\otimes d \in \Sym_3(P_1\otimes P_2) \, \Big| \, a+b+c\in N_{P_1},\ d\in P_2 \Big\} \\
                      & \, \cup \; \ \Big\{ d\otimes a+d\otimes b+d\otimes c\in \Sym_3(P_1\otimes P_2) \, \Big| \, d\in P_1,\ a+b+c\in N_{P_2} \Big\}.
\end{align*}
The tensor product comes with the canonical inclusions $\iota_1:P_1\to P_1\otimes P_2$ and $\iota_2:P_2\to P_1\otimes P_2$ that are defined by $\iota_1(a)=a\otimes1$ and $\iota_2(b)=1\otimes b$, respectively. It satisfies the universal property of the coproduct: for every pair of morphisms $f_1:P\to Q$ and $f_2:P_2\to Q$ into a pasture $Q$, there is a unique morphism $f:P_1\otimes P_2\to Q$ such that $f_1=f\circ\iota_1$ and $f_2=f\circ\iota_2$.

The tensor product $P_1\otimes\dotsb\otimes P_n$ of pastures $P_1,\dotsc,P_n$ is their categorical coproduct. It can be derived from the pairwise tensor product as
\[
 P_1\otimes\dotsb\otimes P_n \ = \ (((P_1\otimes P_2)\otimes P_3)\dotsb \otimes P_{n-1})\otimes P_n.
\]
When $n=0$, we define the empty tensor product to be the initial pasture $\Funpm$.


The \emph{colimit} of a finite diagram of $\cD$ of pastures $\{P_i\}_{i\in I}$ and pasture morphisms $\{f_j:P_{s_j}\to P_{t_j}\}_{j\in J}$ is the quotient
\[
 \colim\cD \ = \ \past{\bigotimes_{i\in I} \ P_i \ }{ \ \genn{1\otimes\dotsb a_{s_j} \dotsb \otimes 1 \ - \ 1\otimes\dotsb \underbrace{f_j(a_{s_j})}_{\in \ P_{t_j}} \dotsb \otimes 1\mid j\in J,\ a_{s_j}\in P_{s_j}}}
\]
of the coproduct $\bigotimes P_i$. 

\begin{ex}
 Consider the diagram
\[
 \begin{tikzcd}[column sep=30pt]
  \cD \ = \ \Big( \ \H \ar[r,shift left=3pt,"\id"] \ar[r,shift right=3pt,"f"'] & \H \ \Big)
 \end{tikzcd}
\]
where $\H=\pastgenn\Funpm{\zeta_6}{\zeta_6^3+1,\ \zeta_6+\zeta_6^{-1}-1}$ is the hexagonal partial field and $f:\H\to\H$ is defined by $f(\zeta_6)=\zeta_6^{-1}$. Then
\begin{multline*}
 \colim\cD \ = \ \past{\H\otimes\H \ }{ \, \genn{\zeta_6\otimes1-1\otimes \id(\zeta_6),\ \zeta_6\otimes1-1\otimes f(\zeta_6)}} \\ 
             \underset{\tiny (\zeta_6\otimes1\sim1\otimes\zeta_6)}= \ \past{\H \, }{\genn{\zeta_6-\zeta_6^{-1}}} \ \underset{\tiny (\zeta_6\sim\zeta_6^3\sim-1)}= \ \F_3.
\end{multline*}
\end{ex}

\subsection{Matroid representations}
\label{subsection: matroid representations}

Throughout the text, $M$ denotes a matroid of rank $r$ with ground set $E=\{1\dotsc,n\}$. We write $\bI=(e_1,\dotsc,e_s)$ for elements of $E^s$ and $I=|\bI|$ for the subset $\{e_1,\dotsc,e_s\}$ of $E$. We write $\bI f=(e_1,\dotsc,e_s,f)$ and $I f=\{e_1,\dotsc,e_s,f\}$. 

Let $P$ be a pasture. A \emph{weak Grassmann-Pl\"ucker function of rank $r$ with values in $P$} is a function $\Delta:E^r\to P$ such that
\begin{enumerate}[label = (GP\arabic*)]\setcounter{enumi}{-1}
 \item The support $\{ |\bI| \; : \; \Delta(\bI)\neq0 \}$ of $\Delta$ is the set of bases of a rank $r$ matroid on $E$.
  \item $\Delta$ is \emph{alternating}, i.e.\ $\Delta(e_{\sigma(1)},\dotsc,e_{\sigma(r)})=\sign(\sigma)\cdot\Delta(e_1,\dotsc,e_r)$ for all $e_1,\dotsc,e_r\in E$ and $\sigma\in S_r$; and
 \item $\Delta$ satisfies the \emph{$3$-term Pl\"ucker relations}
       \[
        \Delta(\bJ e_1e_2) \cdot \Delta(\bJ e_3e_4) \ - \ \Delta(\bJ e_1 e_3) \cdot \Delta(\bJ e_2 e_4) \ + \ \Delta(\bJ e_1 e_4) \cdot \Delta(\bJ e_2 e_3) \ \in \ N_P
       \]
       for all $\bJ\in E^{r-2}$ and $e_1,\dotsc,e_4\in E$.
\end{enumerate}

A \emph{$P$-representation of $M$} is a weak Grassmann-Pl\"ucker function $\Delta:E^r\to P$ such that $\Delta(\bI)\neq0$ if and only if $|\bI|$ is a basis of $M$.
 
A trivial but useful observation is that every matroid $M$ has a unique $\K$-representation $\Delta:E^r\to\K$, given by setting $\Delta(\bI)=1$ if $|\bI|$ is a basis of $M$ and $\Delta(\bI)=0$ if not. This yields an identification of matroids with weak Grassmann-Pl\"ucker functions $\Delta:E^r\to\K$.

We say that $M$ is \emph{$P$-representable} if it has a $P$-representation $\Delta:E^r\to P$. This streamlines and extends other notions of representability: $M$ is representable over a partial field $(G,R)$ (in the usual sense) if and only if $M$ is $P$-representable, where $P$ is the pasture associated with $(G,R)$. In particular, $M$ is regular if and only if $M$ is $\Funpm$-representable, $M$ is binary if and only if $M$ is $\F_2$-representable, and $M$ is ternary if and only if $M$ is $\F_3$-representable. Moreover, $M$ is orientable if and only if $M$ is $\S$-representable.

\subsection{Universal cross ratios and the inner Tutte group}
\label{subsection: universal cross ratios}

Let $M$ be a matroid. We denote by $\Omega_M$ the collection of all tuples $(I,a,b,c,d)$, where $I \subset E$ is an $(r-2)$-subset and $a,b,c,d\in E$ such that $I ac$, $I ad$, $I bc$, $I bd$ are bases of $M$. We denote by $\Omega_M^\octa$ the subset of \emph{non-degenerate} tuples $(I,a,b,c,d)$, for which also $I ab$ and $I cd$ are bases of $M$. 

Let $\T_M$ be the Tutte group $M$, which is generated by symbols $-1$ and $T_\bI/T_\bJ$ for $\bI,\bJ\in E^r$ such that $|\bI|$ and $|\bJ|$ are bases of $M$; see \cite{Wenzel91} for details. Let $\T_M^{(0)}$ be the inner Tutte group, which consists of the multi-degree zero elements of $\T_M$.

For $(I,a,b,c,d)\in\Omega_M$, we define the \emph{universal cross ratio} as the element
\[
 \cross abcdI \ = \ \frac{T_{\bI ac}T_{\bI bd}}{T_{\bI ad}T_{\bI bc}}
\]
of the inner Tutte group $\T^{(0)}_M$ of $M$, where $\bI$ is any ordering of $I$. Note that $\cross abcdI$ does not depend on the ordering of $\bI$. A fundamental fact is the following:

\begin{prop}[{\cite[Prop.\ 6.4]{Wenzel91}}]\label{prop: cross ratios generated the inner Tutte group}
 The inner Tutte group $T_M^{(0)}$ is generated by $-1$ and the universal cross ratios $\cross abcdI$ for $(I,a,b,c,d)\in \Omega_M^\octa$.
\end{prop}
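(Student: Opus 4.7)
The approach is to peel off the result in two stages: first show that the inner Tutte group is generated by $-1$ together with the \emph{general} cross ratios $\cross abcdI$ for arbitrary $(I,a,b,c,d)\in\Omega_M$, and then upgrade this to cross ratios indexed only by the non-degenerate set $\Omega_M^\octa$.

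\textbf{Step 1: General cross ratios generate $\T_M^{(0)}$.} By definition, $\T_M$ is generated by $-1$ and the symbols $T_\bI/T_\bJ$ with $|\bI|,|\bJ|$ bases. A multi-degree zero element is a product $\prod_k (T_{\bI_k}/T_{\bJ_k})^{\pm 1}$ whose formal sum of basis characteristic vectors vanishes in $\Z^E$. I would induct on the ``complexity'' of such a product (for example, the sum of symmetric differences $|I_k\triangle J_k|$ over $k$). The base step is when every ratio $T_\bI/T_\bJ$ has $|I\triangle J|=2$: then $I=Kac$ and $J=Kbd$ for some $K\in E^{r-2}$, but for this ratio to contribute to a vanishing multi-degree, it must be paired with other ratios so that each element is balanced. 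Symmetric basis exchange then lets one rearrange and cancel ratios until each surviving contribution factors as a product of expressions of the shape $\cross abcdI$. The inductive step uses basis exchange to rewrite any $T_\bI/T_\bJ$ with $|I\triangle J|\geq 4$ as a telescoping product of nearer-distance ratios, reducing the complexity.

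\textbf{Step 2: Reduce from $\Omega_M$ to $\Omega_M^\octa$.} The task now is to show that every degenerate cross ratio $\cross abcdI$, meaning $Iab$ or $Icd$ is not a basis, is a product of non-degenerate ones (and $\pm 1$). The key tool is the multiplicative cocycle identity
\[
 \cross abcdI \ = \ \cross abcxI \cdot \cross axcdI
\]
(valid whenever both sides are defined, i.e.\ whenever $Iax,Ibx,Icx,Idx$ are bases), together with the symmetry relations $\cross abcdI\cdot\cross bacdI = 1$ and $\cross abcdI = \cross cdabI$. Given $(I,a,b,c,d)\in\Omega_M\setminus\Omega_M^\octa$, the plan is to find an element $x\in E$ such that $Ix$ is independent and both intermediate cross ratios on the right are non-degenerate; if $x\notin I$ cannot be found, one uses a covering ``swap'' inside $I$ via a second cocycle-type identity that replaces one coordinate of $I$ by a suitable $y\in E\setminus I$ so as to break the degeneracy of $Iab$ or $Icd$.

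\textbf{Main obstacle.} Step 1 is a fairly mechanical piece of matroid bookkeeping, but Step 2 is the crux: one must guarantee the existence of the auxiliary element $x$ (or $y$) making the two factors non-degenerate. This is a purely matroid-theoretic ``enough bases'' statement, and it will fail trivially in pathological small cases unless one allows the empty product (which is why the proposition only guarantees generation, not that \emph{every} cross ratio is non-degenerate). The argument will require a careful case distinction on whether the obstruction to $Iab$ being a basis lies in $I$ or involves $\{a,b\}$, and an exchange lemma ensuring that one can always move through a chain of non-degenerate configurations \emph{within the lattice of bases}. In rank-$r$ matroids with $r\leq 1$ or very small ground sets the claim is vacuous, so one should dispose of those as a base case and then handle the general case using the standard basis-exchange graph connectivity for matroids of rank $\geq 2$.
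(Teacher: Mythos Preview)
The paper does not prove this proposition; it is stated with a citation to Wenzel's paper and used as a black box. So there is no ``paper's proof'' to compare against.

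That said, your sketch misidentifies where the difficulty lies. Your Step~2 --- reducing from $\Omega_M$ to $\Omega_M^\octa$ --- is not the crux but essentially trivial: by the defining relations of the Tutte group (which include the degenerate two-term Pl\"ucker relations), every degenerate cross ratio is equal to $1$. Concretely, if $(I,a,b,c,d)\in\Omega_M$ with, say, $Iab$ not a basis, then the Pl\"ucker relation on $I;a,b,c,d$ has a vanishing term and collapses to $T_{Iac}T_{Ibd}=T_{Iad}T_{Ibc}$ in $\T_M$, so $\cross abcdI=1$. This is exactly relation \eqref{R0} in the paper's presentation of the foundation, and it holds already at the level of $\T_M$. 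There is no need for auxiliary elements $x$, cocycle identities, or case analysis on where the obstruction lies.

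The actual content is entirely in your Step~1, and there your outline is too vague to count as a proof. Inducting on $\sum_k |I_k\triangle J_k|$ is a reasonable organizing device, but the sentence ``symmetric basis exchange then lets one rearrange and cancel ratios until each surviving contribution factors as a product of expressions of the shape $\cross abcdI$'' hides the entire argument. You have not said how a degree-zero word in the $T_\bI/T_\bJ$ is actually rewritten as a product of cross ratios, nor why the telescoping you invoke in the inductive step preserves the degree-zero constraint while strictly lowering the complexity. Wenzel's proof handles this carefully via the structure of the basis-exchange graph; if you want to reconstruct it, that is where the work goes.
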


Let $\Theta_M$ be the collection of all \emph{modular quadruples of hyperplanes for $M$}, which are tuples $(H_1,H_2,H_3,H_4)$ of hyperplanes $H_1,\dotsc,H_4$ of $M$ such that $F=H_1\cap\dotsb\cap H_4$ is a flat of corank $2$ and such that $F=H_i\cap H_j$ for $i\in\{1,2\}$ and $j\in\{3,4\}$. Let $\Theta_M^\octa$ be the subset of \emph{non-degenerate} quadruples $(H_1,\dots,H_4)$, for which also $F=H_1\cap H_2=H_3\cap H_4$. 

Let $\gen{-}$ be the closure operator of $M$. Then the association 
\[
 \begin{array}{cccc}
  \Psi: & \Omega_M    & \longrightarrow & \Theta_M \\
        & (I,a,b,c,d) & \longmapsto     & \big(\gen{Ia},\gen{Ib},\gen{Ic},\gen{Id}\big)
 \end{array}
\]
is a surjection, which restricts to a surjection $\Omega_M^\octa\to\Theta_M^\octa$. It follows from \cite[Lemma 1.4]{Dress-Wenzel89} (see also \cite[Prop.\ 3.6]{Baker-Lorscheid20}) that $\cross abcdI=\cross{a'}{b'}{c'}{d'}{I'}$ as elements of $\T_M^{(0)}$ if $\Psi(I,a,b,c,d)=\Psi(I',a',b',c',d')$. This allows us to define $\cross{H_1}{H_2}{H_3}{H_4}{}=\cross abcdI$ whenever $(H_1,H_2,H_3,H_4)\in\Theta_M$ with $(H_1,H_2,H_3,H_4)=\Psi(I,a,b,c,d)$.

\subsection{Fundamental elements and hexagons}
\label{subsection: fundamental elements and hexagons}

Let $P$ be a pasture. A \emph{fundamental pair of $P$} is an ordered pair $(a,b)$ of elements $a,b\in P^\times$ such that $a+b-1\in N_P$. A \emph{fundamental element of $P$} is an element $a\in P^\times$ that appears in a fundamental pair $(a,b)$. 
Every fundamental pair $(a,b)$ defines the set 
\[\textstyle
 \Xi(a,b) \ = \ \Big\{ \ (a,b), \quad (b,a), \quad (\frac 1a, -\frac ba), \quad (-\frac ba,\frac 1a), \quad (\frac 1b,-\frac ab), \quad (-\frac ab,\frac 1b) \ \Big\}
\]
of fundamental pairs, which are not necessarily pairwise distinct. We call such a set $\Xi(a,b)$ a \emph{hexagon in $P$}, which refers to the way of illustrating the involved fundamental elements as 
 \[
 \beginpgfgraphicnamed{tikz/fig44}
  \begin{tikzpicture}[baseline={([yshift=-.5ex]current bounding box.center)},x=0.9cm,y=0.9cm]
   \draw[line width=3pt,color=gray!30,fill=gray!30,bend angle=20] (60:1.4) to[bend left] (180:1.4) to[bend left] (300:1.4) to[bend left] cycle;
   \draw[line width=3pt,color=gray!40,fill=gray!40,bend angle=20] (0:1.4) to[bend left] (120:1.4) to[bend left] (240:1.4) to[bend left] cycle;
   \draw[line width=3pt,color=gray!30,dotted,bend angle=20] (60:1.4) to[bend left] (180:1.4) to[bend left] (300:1.4) to[bend left] cycle;
   \node (-1) at (0,0) {\small \textcolor{black!60}{{$\mathbf{-1\ }$}}};
   \node (a) at (120:2) {$a$};
   \node (b) at ( 60:2) {$b$};
   \node (c) at (180:2) {$\frac 1a$};
   \node (d) at (  0:2) {$\frac 1b$};
   \node (e) at (240:2) {$-\frac ba$};
   \node (f) at (300:2) {$-\frac ab$};
   \path (a) edge node[auto] {$+$} (b);
   \path (b) edge node[auto] {$*$} (d);
   \path (d) edge node[auto] {$+$} (f);
   \path (f) edge node[auto] {$*$} (e);
   \path (e) edge node[auto] {$+$} (c);
   \path (c) edge node[auto] {$*$} (a);
  \end{tikzpicture}
 \endpgfgraphicnamed
 \]
where an edge with label $*$ indicates that its vertices multiply to $1$ and an edge with label $+$ indicates that its vertices $x$ and $y$ add up to $1$, i.e.\ $x+y-1\in N_P$. The vertices of the triangles multiply to $-1$.

A basic observation (\cite[Prop.~3.6]{Baker-Lorscheid21}) is that the hexagons of $P$ are in bijective correspondence with $N_P^\times/P^\times$, where $N_P^\times$ is the subset of all terms $a+b+c\in N_P$ with $abc\in P^\times$. Since every element in $N_P-N_P^\times$ is of the form $a-a+0$ for some $a\in P$, the null set $N_P$ is determined by the hexagons in $P$.

\section{The foundation of a matroid}
\label{section: the foundation of a matroid}

The foundation of a matroid is defined in \cite{Baker-Lorscheid21b}, and further applications to the representation theory of matroids are developed in \cite{Baker-Lorscheid20} and \cite{Baker-Lorscheid21}. In this section, we recall the definition of the foundation and its fundamental presentation in terms of generators and relations, which forms the basis for nearly all results in this paper.

\subsection{The foundation}
\label{subsection: the foundation}

Using \cite[Cor.\ 7.13]{Baker-Lorscheid21b}, we can phrase the definition of the foundation of a matroid $M$ as follows.

\begin{df}
 The \emph{foundation of $M$} is the pasture $F_M$ with unit group $F_M^\times=\T_M^{(0)}$ and whose null set is generated by the elements $\cross abcdI+\cross acbdI-1$ for all $(I,a,b,c,d)\in\Omega_M^\octa$.
\end{df}

We can rescale a representation $\Delta:E^r\to P$ of $M$ by an element $t=(d,t_1,\dotsc,t_n)\in (P^\times)^{n+1}$ via 
\[
 t.\Delta(\bI) \ = \ d\cdot\Big(\prod_{e\in|\bI|} t_e\Big)\cdot\Delta(\bI),
\]
which defines a group action of $(P^\times)^{n+1}$ on the set of $P$-representations of $M$. We denote by $\cX_M(P)$ the set of equivalence classes under this action, which we call the \emph{$P$-rescaling classes of $M$}. The characterizing property of $F_M$ (which can also be taken as the \emph{definition} of $F_M$) is the following:

\begin{thm}[{\cite[Cor.\ 7.28]{Baker-Lorscheid21b}}]\label{thm: the foundation represents the space of rescaling classes}
 Let $M$ be a matroid with foundation $F_M$, and let $P$ be a pasture. Then there is a canonical bijection $\Hom(F_M,P)\to\cX_M(P)$ which is functorial in $P$.
\end{thm}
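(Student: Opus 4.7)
The plan is to construct the canonical bijection explicitly in both directions: given a rescaling class of $P$-representations, produce a pasture morphism $F_M\to P$; given a pasture morphism $F_M\to P$, produce a $P$-representation of $M$ that is well-defined up to rescaling.  Once these two constructions are shown to be mutually inverse, functoriality in $P$ will follow immediately from their pointwise nature.

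\textbf{From $\cX_M(P)$ to $\Hom(F_M,P)$.}  Given a $P$-representation $\Delta\colon E^r\to P$ of $M$, I define a map $\varphi_\Delta$ on the generators $\cross abcdI$ of $F_M^\times=\T_M^{(0)}$ by
\[
 \varphi_\Delta\bigl(\cross abcdI\bigr) \ = \ \frac{\Delta(\bI ac)\,\Delta(\bI bd)}{\Delta(\bI ad)\,\Delta(\bI bc)},
\]
where $\bI$ is any ordering of $I$; the alternating property of $\Delta$ makes the right-hand side independent of this choice.  Since each cross ratio is a degree-zero monomial in the $T_{\bJ}$'s, rescaling $\Delta$ by $t=(d,t_1,\dotsc,t_n)$ multiplies numerator and denominator by the same factor, so $\varphi_\Delta$ depends only on the class $[\Delta]\in\cX_M(P)$.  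By \autoref{prop: cross ratios generated the inner Tutte group}, this assignment determines $\varphi_\Delta$ on all of $\T_M^{(0)}$, and compatibility with the defining multiplicative relations of the inner Tutte group is a direct calculation from the formulas.  To see that $\varphi_\Delta$ is a pasture morphism, it remains to check that each generator $\cross abcdI+\cross acbdI-1$ of the null set of $F_M$ is sent into $N_P$; this follows from the 3-term Pl\"ucker relation for $(I,a,b,c,d)\in\Omega_M^\octa$ by dividing through by $\Delta(\bI ad)\,\Delta(\bI bc)\in P^\times$.

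\textbf{From $\Hom(F_M,P)$ to $\cX_M(P)$.}  Given $\varphi\in\Hom(F_M,P)$, I must lift $\varphi$ to an actual $P$-representation $\Delta$.  Fix a reference ordered basis $\bI_0$ and set $\Delta(\bI_0)=1$.  For any other ordered basis $\bJ$ connected to $\bI_0$ by a chain of elementary basis exchanges, I use the values $\varphi\bigl(\cross abcdI\bigr)$ on the relevant cross ratios to propagate the value of $\Delta$ along the chain, and finally extend to $E^r$ by alternation.  The crux is that the resulting $\Delta(\bJ)$ does not depend on the chosen chain of exchanges.  This is the content of Tutte's homotopy theorem together with \autoref{prop: cross ratios generated the inner Tutte group}: every multiplicative relation among cross ratios in $\T_M^{(0)}$ is generated by relations around non-degenerate quadruples in $\Omega_M^\octa$, each of which is respected by $\varphi$.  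Once $\Delta$ is defined, the 3-term Pl\"ucker relations hold because $\varphi$ maps the defining null relations of $F_M$ into $N_P$.

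\textbf{Mutual inverses, functoriality, and the main obstacle.}  The composition $\Hom(F_M,P)\to\cX_M(P)\to\Hom(F_M,P)$ is the identity by construction, since the forward map recovers on each cross-ratio generator precisely the ratio used in the backward step.  Conversely, two $P$-representations yielding the same cross ratios agree on every degree-zero monomial in the $T_{\bJ}$'s, which is exactly the condition for them to be rescaling-equivalent.  Functoriality in $P$ is transparent, since both constructions commute with postcomposition by a pasture morphism.  The main obstacle is the well-definedness of the backward map: translating the combinatorial content of Tutte's homotopy theorem (and the Gelfand--Rybnikov--Stone refinement used in \cite{Baker-Lorscheid21b}) into the statement that the cross-ratio generators and their sum relations fully encode the data of a $P$-representation of $M$ up to rescaling.
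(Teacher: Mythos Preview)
First, note that the paper does not give its own proof of this theorem; it is imported from \cite[Cor.~7.28]{Baker-Lorscheid21b}. There is therefore no in-paper argument to compare against, so I evaluate your sketch on its own terms.

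Your forward direction is essentially sound. One shortcut worth flagging: $\T_M^{(0)}$ is not \emph{presented} by generators and relations on cross ratios, so ``compatibility with the defining multiplicative relations of the inner Tutte group is a direct calculation'' hides a step. The clean route is to define a group homomorphism $\T_M\to P^\times$ via $T_\bI/T_\bJ\mapsto\Delta(\bI)/\Delta(\bJ)$ (well-defined by the very relations defining $\T_M$) and then restrict to degree zero.

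The backward direction, however, has a genuine gap. A cross ratio is a degree-zero ratio of \emph{four} Pl\"ucker coordinates; a single basis exchange $Iac\leadsto Iad$ corresponds to the element $T_{\bI ad}/T_{\bI ac}\in\T_M$, which has nonzero multi-degree and hence does not lie in $\T_M^{(0)}=F_M^\times$ at all. Consequently there is no way to ``propagate $\Delta$ along a chain of exchanges using cross ratios'': already for $M=U^2_4$, fixing $\Delta(\bI_0)=1$ together with all cross-ratio values does not pin down any other $\Delta(\bJ)$---the residual freedom is exactly the $(P^\times)^{n+1}$-rescaling you are trying to quotient by, and it must be \emph{spent}, not computed. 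The mechanism that actually works is structural: the quotient $\T_M/\T_M^{(0)}$ embeds via the multi-degree into $\Z^E$ and is therefore free abelian, so $\varphi:\T_M^{(0)}\to P^\times$ admits a non-canonical lift $\tilde\varphi:\T_M\to P^\times$, and one sets $\Delta(\bI)=\tilde\varphi(T_\bI/T_{\bI_0})$. The defining relations of $\T_M$ then yield the degenerate (two-term) Pl\"ucker relations for free, and the null-set relations of $F_M$ give the non-degenerate three-term ones. Your invocation of Tutte's homotopy theorem is misplaced here: it controls relations \emph{inside} $\T_M^{(0)}$ and is what underlies the forward direction; in the backward direction $\varphi$ is already given on $\T_M^{(0)}$, and the obstacle is extending it beyond, for which homotopy considerations are irrelevant.
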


This has the following immediate consequence, which makes the foundation a useful tool for studying matroid representations:

\begin{cor}\label{cor: M is representable over P iff there is a morphism from F_M to P}
 The matroid $M$ is $P$-representable if and only if there is a morphism $F_M\to P$. \qed
\end{cor}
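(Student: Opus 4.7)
The plan is to derive this corollary as an immediate consequence of the preceding \autoref{thm: the foundation represents the space of rescaling classes}. The key observation is that the statement ``$M$ is $P$-representable'' unpacks, by the definition given in \autoref{subsection: matroid representations}, to ``there exists a $P$-representation $\Delta \colon E^r \to P$ of $M$,'' which in turn is equivalent to the assertion that the set $\cX_M(P)$ of $P$-rescaling classes of $M$ is nonempty. So the whole task is to translate nonemptiness of $\cX_M(P)$ into nonemptiness of $\Hom(F_M,P)$.

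This translation is provided directly by the canonical bijection $\Hom(F_M,P) \to \cX_M(P)$ of \autoref{thm: the foundation represents the space of rescaling classes}: a bijection of sets preserves nonemptiness in both directions, so $\cX_M(P) \neq \emptyset$ if and only if $\Hom(F_M,P) \neq \emptyset$, which is precisely the existence of a pasture morphism $F_M \to P$. Combining the two equivalences, $M$ is $P$-representable if and only if there is a morphism $F_M \to P$.

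There is no real obstacle here: the content of the corollary is entirely absorbed into the preceding theorem. The only things to double-check when writing the proof are (a) that one allows $\cX_M(P)$ to be empty in the statement of the theorem (so that the bijection is vacuously between empty sets when $M$ fails to be $P$-representable), and (b) that the set-theoretic convention ``nonempty iff there exists an element'' is applied correctly on both sides. A one-sentence proof referencing \autoref{thm: the foundation represents the space of rescaling classes} therefore suffices.
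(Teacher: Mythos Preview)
Your proposal is correct and matches the paper's approach: the corollary is stated with a \qed and no proof, precisely because it follows immediately from \autoref{thm: the foundation represents the space of rescaling classes} via the nonemptiness argument you give.
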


The foundation behaves well with respect to several natural matroid constructions. For instance, we recall from \cite[Prop.\ 4.8 and 4.9]{Baker-Lorscheid20}:

\begin{prop}\label{prop: foundation of dual matroids and simplifications}
 Let $M$ be a matroid with foundation $F_M$. Then the foundations of the dual matroid $M^\ast$ and of the simplification of $M$ are canonically isomorphic to $F_M$.
\end{prop}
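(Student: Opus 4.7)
The plan is to invoke the characterizing property of the foundation from \autoref{thm: the foundation represents the space of rescaling classes}: for every pasture $P$, the set $\Hom(F_M,P)$ is canonically in bijection with the set $\cX_M(P)$ of $P$-rescaling classes of $P$-representations of $M$, naturally in $P$. By the Yoneda lemma, to produce canonical isomorphisms $F_{M^\ast}\cong F_M$ and $F_{\simp(M)}\cong F_M$ it is enough to exhibit bijections $\cX_M(P)\cong\cX_{M^\ast}(P)$ and $\cX_{\simp(M)}(P)\cong\cX_M(P)$ that are functorial in $P$.

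For the dual, I would appeal to the standard Plücker duality. After fixing a total order on $E$, a weak $P$-representation $\Delta\colon E^r\to P$ of $M$ determines $\Delta^\ast\colon E^{n-r}\to P$ by
\[
 \Delta^\ast(e_1,\dotsc,e_{n-r}) \ = \ \sign(\sigma)\cdot\Delta(f_1,\dotsc,f_r),
\]
whenever $(e_1,\dotsc,e_{n-r},f_1,\dotsc,f_r)$ is a permutation of $E$ of sign $\sigma$. Then $\Delta^\ast$ is alternating, its support equals $\cB(M^\ast)=\{E\setminus B\mid B\in\cB(M)\}$, and the 3-term Plücker relations for $\Delta^\ast$ follow from those for $\Delta$ via the classical combinatorial identity converting a dual Plücker relation into a signed specialization of a primal one. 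A rescaling of $\Delta$ by $t=(d,t_1,\dotsc,t_n)$ corresponds to a rescaling of $\Delta^\ast$ by a naturally associated dual tuple $t^\ast$, so $\Delta\mapsto\Delta^\ast$ descends to a natural bijection $\cX_M(P)\cong\cX_{M^\ast}(P)$.

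For the simplification, write $M_0=\simp(M)$ with ground set $E_0\subseteq E$ and let $\pi\colon E\to E_0\cup\{\ast\}$ send each non-loop to the chosen representative of its parallel class and each loop to $\ast$. I claim that every weak $P$-representation $\Delta\colon E^r\to P$ of $M$ has the form
\[
 \Delta(e_1,\dotsc,e_r) \ = \ \Big(\prod_{i=1}^r \lambda_{e_i}\Big)\cdot\Delta_0\bigl(\pi(e_1),\dotsc,\pi(e_r)\bigr),
\]
for a unique weak $P$-representation $\Delta_0$ of $M_0$ (obtained by restriction to $E_0^r$) and unique scalars $\lambda_e\in P$ satisfying $\lambda_e=0$ for loops, $\lambda_e=1$ for $e\in E_0$, and $\lambda_e\in P^\times$ otherwise; the scalar $\lambda_e$ for a non-loop $e\in E\setminus E_0$ is recovered as the ratio $\Delta(\bI')/\Delta(\bI)$, where $\bI$ is any ordered basis of $M$ containing $\pi(e)$ and $\bI'$ is obtained by swapping $\pi(e)$ for $e$. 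Conversely, any such choice yields a valid $P$-representation of $M$, since the common factor $\prod\lambda_{e_i}$ distributes across the 3-term Plücker relations and the cases where two parallel elements or a loop appear among the $e_i$ are automatically killed by alternation of $\Delta_0$ or by $\lambda_e=0$. The ambiguity of the $\lambda_e$ for non-loops $e\in E\setminus E_0$ is precisely absorbed by the rescaling action at coordinate $e$, which gives the desired natural bijection $\cX_{M_0}(P)\cong\cX_M(P)$.

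The main obstacle in both constructions is sign bookkeeping, most notably verifying the 3-term Plücker relations for $\Delta^\ast$, which is the classical manipulation underlying the duality of exterior powers; I expect this to be the only nonroutine step. Alternation, support identification, and compatibility with the rescaling action in the simplification step are all straightforward once the constructions are phrased through the representability viewpoint provided by \autoref{thm: the foundation represents the space of rescaling classes}.
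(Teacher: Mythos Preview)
The paper does not prove this proposition; it merely recalls it from \cite[Props.~4.8 and 4.9]{Baker-Lorscheid20}. Your Yoneda argument via \autoref{thm: the foundation represents the space of rescaling classes}—exhibiting natural bijections $\cX_{M}(P)\cong\cX_{M^\ast}(P)$ and $\cX_{M}(P)\cong\cX_{\simp(M)}(P)$ for every pasture $P$—is correct and self-contained. The only step you flag as nonroutine, namely that $\Delta\mapsto\Delta^\ast$ carries weak $P$-representations to weak $P$-representations, is exactly the weak-matroid duality theorem over tracts/pastures established in \cite{Baker-Bowler19}, so you may simply cite it. By contrast, the cited source argues through the explicit presentation of $F_M$ by universal cross ratios (cf.\ \autoref{thm: fundamental presentation of foundations in terms of bases}): duality identifies $\Omega_M^\octa$ with $\Omega_{M^\ast}^\octa$ and matches each relation type with one of the same type for $M^\ast$, while passing to the simplification leaves the set $\Theta_M$ of modular quadruples of hyperplanes—and hence all cross ratios and relations—unchanged. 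Your functorial route is cleaner conceptually; the cited route has the advantage of describing the isomorphism explicitly on generators, which is what the present paper actually uses downstream when it manipulates cross ratios under duality (e.g., \autoref{lemma: cross ratios in D6}) and under series/parallel extension throughout \autoref{section: presentations of the foundation}.
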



\subsection{A presentation of the foundation by cross ratios}
\label{subsection: presentation of the foundation by cross ratios}

We recall the description of the foundation of a matroid in terms of generators and relations from \cite[Thm.\ 4.21]{Baker-Lorscheid20}. This result is derived from Gelfand, Rybnikov, and Stone's description of a complete set of relations between the universal cross ratios in \cite{Gelfand-Rybnikov-Stone95}, which is itself a consequence of Tutte's homotopy theorem; cf.\ \cite{Tutte58a}.

\begin{thm}\label{thm: fundamental presentation of foundations in terms of bases}
 Let $M$ be a matroid with foundation $F_M$. Then
 \[\textstyle
  F_M \ = \ \Funpm \, \big\langle \, \cross {e_1}{e_2}{e_3}{e_4}J \, \big| \, (J;e_1,\dotsc,e_4)\in\Omega_M \, \big\rangle \, \sslash \, S,
 \]
 where $S$ is defined by the multiplicative relations
 \[\tag{R--}\label{R-}
  -1=1 
 \]
 if the Fano matroid $F_7$ or its dual $F_7^\ast$ is a minor of $M$;
 \[\tag{R$\sigma$}\label{Rs}
  \cross {e_1}{e_2}{e_3}{e_4}J \ = \ \cross {e_2}{e_1}{e_4}{e_3}J \ = \ \cross {e_3}{e_4}{e_1}{e_2}J \ = \ \cross {e_4}{e_3}{e_2}{e_1}J
 \]
 for all $(J;e_1,\dotsc,e_4)\in\Omega_M^\octa$;
 \[\tag{R0}\label{R0}
  \cross {e_1}{e_2}{e_3}{e_4}J \ = \ 1
 \]
 for all degenerate $(J;e_1,\dotsc,e_4)\in\Omega_M$;
 \[\tag{R1}\label{R1}
  \cross {e_1}{e_2}{e_4}{e_3}J \ = \ \crossinv {e_1}{e_2}{e_3}{e_4}J
 \]
 for all $(J;e_1,\dotsc,e_4)\in\Omega_M^\octa$;
 \[\tag{R2}\label{R2}
  \cross {e_1}{e_2}{e_3}{e_4}J \cdot \cross {e_1}{e_3}{e_4}{e_2}J  \cdot \cross {e_1}{e_4}{e_2}{e_3}J \ = \ -1
 \]
 for all $(J;e_1,\dotsc,e_4)\in\Omega_M^\octa$;
 \[\tag{R3}\label{R3}
  \cross {e_1}{e_2}{e_3}{e_4}{J} \cdot \cross {e_1}{e_2}{e_4}{e_5}J \cdot \cross {e_1}{e_2}{e_5}{e_3}J \ = \ 1
 \]
 for all $e_1,\dotsc,e_5\in E$ and $J\subset E$ such that each of $(J;e_1,e_2,e_3,e_4)$, $(J;e_1,e_2,e_4,e_5)$ and $(J;e_1,e_2,e_5,e_3)$ is in $\Omega_M$;
 \[\tag{R4}\label{R4}
  \cross {e_1}{e_2}{e_3}{e_4}{Je_5} \cdot \cross {e_1}{e_2}{e_4}{e_5}{Je_3} \cdot \cross {e_1}{e_2}{e_5}{e_3}{Je_4} \ = \ 1
 \]
 for all $e_1,\dotsc,e_5\in E$ and $J\subset E$ such that $(Je_5;e_1,e_2,e_3,e_4)$, $(Je_3;e_1,e_2,e_4,e_5)$ and $(Je_4;e_1,e_2,e_5,e_3)$ are in $\Omega_M$; 
 \[\tag{R5}\label{R5}
  \cross {e_1}{e_2}{e_3}{e_4}{Je_5} \ = \ \cross {e_1}{e_2}{e_3}{e_4}{Je_6} 
 \]
 for all $e_1,\dotsc,e_6\in E$ and $J\subset E$ such that $\gen{Je_5}=\gen{Je_6}$ and such that $(Je_5;e_1,e_2,e_3,e_4)$ and $(Je_6;e_1,e_2,e_3,e_4)$ are in $\Omega_M^\octa$; 
 \\[7pt]
 as well as the additive Pl\"ucker relations
 \[\tag{R+}\label{R+}
  \cross {e_1}{e_2}{e_3}{e_4}J + \cross {e_1}{e_3}{e_2}{e_4}J  \ = \ 1
 \]
 for all $(J;e_1,\dotsc,e_4)\in\Omega_M^\octa$.
\end{thm}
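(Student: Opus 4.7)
The plan is to prove the theorem by combining two essentially independent ingredients: a multiplicative presentation of the inner Tutte group $\T_M^{(0)}$ in terms of universal cross ratios, and the defining property that the null set of $F_M$ is generated by the additive relations (R+). Since $F_M^\times = \T_M^{(0)}$ by construction and a pasture is determined by its pointed monoid together with its null set, these two pieces will together yield the desired presentation.

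For the multiplicative part, I would start from \autoref{prop: cross ratios generated the inner Tutte group}, which asserts that $\T_M^{(0)}$ is multiplicatively generated by $-1$ and the universal cross ratios $\cross abcdI$ for $(I,a,b,c,d)\in\Omega_M^\octa$. The relations (R--), (R$\sigma$), (R1), (R2), (R3), (R4), and (R5) can each be checked directly in $\T_M^{(0)}$ by manipulating the defining expression $\cross abcdI = T_{\bI ac}T_{\bI bd}/T_{\bI ad}T_{\bI bc}$; in particular (R5) reflects the flat-invariance encoded by the surjection $\Psi\colon\Omega_M\to\Theta_M$ of \autoref{subsection: universal cross ratios}. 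The substantive input is the Gelfand--Rybnikov--Stone theorem \cite{Gelfand-Rybnikov-Stone95}, which, relying on Tutte's homotopy theorem \cite{Tutte58a}, asserts that these relations are a \emph{complete} set. The relation (R0) plays the auxiliary role of extending cross-ratio notation from $\Omega_M^\octa$ to all of $\Omega_M$, so that the presentation can be stated uniformly.

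For the additive part, by definition $N_{F_M}$ is generated as a null set by the elements $\cross abcdI + \cross acbdI - 1$ ranging over $(I,a,b,c,d)\in\Omega_M^\octa$, which is precisely (R+). One further needs to verify that the 3-term Pl\"ucker relations for \emph{degenerate} tuples are already subsumed by the multiplicative relations: if, say, $|Iab|$ is not a basis, then $\Delta(\bI ab)=0$, so the Pl\"ucker relation collapses to $-\Delta(\bI ac)\Delta(\bI bd)+\Delta(\bI ad)\Delta(\bI bc) \in N_{F_M}$, which by the second pasture axiom is equivalent to $\cross abcdI = 1$, i.e., to (R0). Hence no additional additive relations are needed, and combining the two parts yields the stated presentation.

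The main obstacle is the Gelfand--Rybnikov--Stone theorem itself: establishing that (R--) through (R5) \emph{suffice} to generate all multiplicative relations among cross ratios in $\T_M^{(0)}$ is where the depth of the argument lies, since it requires connecting arbitrary algebraic identities to combinatorial deformations via Tutte's homotopy theorem. A secondary subtlety is keeping careful track of the sign $-1$ (which collapses in the presence of an $F_7$ or $F_7^\ast$ minor via (R--)) and of the degenerate/non-degenerate dichotomy throughout, so that the presentation gives back exactly the originally defined foundation rather than an overly large or overly small pasture.
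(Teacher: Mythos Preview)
Your approach is essentially the same as the paper's: the paper does not give an independent proof of this theorem but simply recalls it from \cite[Thm.\ 4.21]{Baker-Lorscheid20}, noting (exactly as you do) that it is derived from the Gelfand--Rybnikov--Stone description of the relations among universal cross ratios, which in turn rests on Tutte's homotopy theorem. Your two-step decomposition---GRS for the multiplicative structure of $\T_M^{(0)}=F_M^\times$, plus the defining Pl\"ucker relations (R+) for the null set---matches the paper's stated derivation precisely.
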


\begin{rem}
It is worth noting that it is possible to have $-1=1$ in $F_M$ even if $M$ has no $F_7$ or $F_7^*$-minor; cf.\ \autoref{F22a} and \autoref{F53c} for examples. 
\end{rem}

\subsection{The universal partial field}
\label{subsection: the universal partial field}

The universal partial field of a matroid is introduced in \cite{Pendavingh-vanZwam10a} and \cite{Pendavingh-vanZwam10b} as a tool for studying representations of matroids over partial fields, in a similar way as the foundation can be used to study representations over arbitrary pastures. In fact, the first two authors show in \cite[Lemma 7.48]{Baker-Lorscheid21b} that the universal partial field can be derived in a functorial way from the foundation of the matroid. 

Since this result is formulated in the language of ordered blueprints, and a direct interpretation in terms of pastures would require additional results from \cite[sections 7.2 and 7.3]{Baker-Lorscheid21b}, we include in this section an exposition that is independent of \cite{Baker-Lorscheid21b}. We cite certain results about pastures from \cite{Baker-Lorscheid21}, but the proofs of those results do not require any knowledge of the theory of ordered blueprints.


\begin{df}
 Let $P$ be a pasture. The \emph{universal ring of $P$} is the ring
 \[
  R_P \ = \ \Z[P^\times]\,/\,\gen{N_P},
 \]
 where we consider the elements $a+b+c$ of the null set $N_P$ as elements of the group ring $\Z[P^\times]$. The association $a\mapsto [a]$ defines a multiplicative map $\rho_P:P\to R_P$ that satisfies $\rho_P(a)+\rho_P(b)+\rho_P(c)=0$ for all $a+b+c\in N_P$.
 
 A \emph{mock partial field} is a pasture $P$ with $R_P\neq0$. For a mock partial field $P$, we define its \emph{associated partial field $\Pi(P)$} as the pasture
 \[
  \Pi(P) \ = \ \past{\overline{P}}{\genn{a+b+c\mid a+b+c=0\text{ in }R_P}},
 \]
 where $\overline{P}=\rho_P(P)$ is the image of $\rho_P:P\to R_P$. The map $\rho_P$ restricts to a surjective pasture morphism $\pi_P:P\to\Pi(P)$.
\end{df}

The following summarizes Lemmas 2.12 and 2.14 of \cite{Baker-Lorscheid21}.

\begin{lemma}\label{lemma: associated partial field of a mock partial field}
 Let $P$ be a pasture and $f:P\to P'$ a pasture morphism into a partial field $P'$. Then $P$ is a mock partial field, and there is a unique morphism $\Pi(f):\Pi(P)\to P'$ such that $f=\pi_P\circ\Pi(f)$. In particular, $\Pi$ extends to a functor from mock partial fields to the subcategory of partial fields in such a way that
 \[
  \Hom(\Pi(P), \ P') \ = \ \Hom(P, \ P')
 \]
 are naturally identified. Moreover, $\pi_P:P\to\Pi(P)$ is an isomorphism if and only if $P$ is a partial field.
\end{lemma}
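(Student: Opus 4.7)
The strategy is to work entirely through the universal ring. First I would observe that any pasture morphism $f:P\to P'$ induces a ring homomorphism $\tilde f\colon R_P\to R_{P'}$: the multiplicative map $f$ extends $\Z$-linearly to $\Z[P^\times]\to\Z[(P')^\times]\to R_{P'}$, and every generator $a+b+c$ of $N_P$ is sent to $f(a)+f(b)+f(c)\in N_{P'}$, which is zero in $R_{P'}$. When $P'$ is a partial field with underlying ring $R$, unwinding the definitions shows that $R_{P'}$ is canonically identified with the subring of $R$ generated by $(P')^\times\cup\{-1\}$; in particular $1\neq 0$ in $R_{P'}$. Since $\tilde f(1)=1$, this forces $1\neq 0$ in $R_P$, so $R_P\neq 0$ and $P$ is a mock partial field.

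Next, I would construct $\Pi(f)\colon \Pi(P)\to P'$. Before doing this, I would dispatch the easier implication of the ``iff'' statement: if $P$ is a partial field, then $\rho_P\colon P\to R_P$ is (up to the canonical identification above) the inclusion of $P$ into the ring generated by $P^\times$, hence is injective and carries $N_P$ precisely onto the additive relations in $R_P$ among elements of $\overline P$. So $\pi_P\colon P\to\Pi(P)$ is an isomorphism whenever $P$ is a partial field. In particular, for our partial field $P'$ we may identify $P'=\overline{P'}\subseteq R_{P'}$. Restricting $\tilde f$ to the submonoid $\overline P=\rho_P(P)\subseteq R_P$ now yields a map $\overline P\to \overline{P'}=P'$, and this is the desired $\Pi(f)$: by construction $\Pi(f)([a])=f(a)$, so $\Pi(f)\circ\pi_P=f$.

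To finish, I would verify that $\Pi(f)$ is a pasture morphism: multiplicativity is automatic because it is the restriction of a ring map, and the null-set condition follows from the same ``pass through $R_{P'}$'' argument; namely, $[a]+[b]+[c]\in N_{\Pi(P)}$ means $\rho_P(a)+\rho_P(b)+\rho_P(c)=0$ in $R_P$, so $\rho_{P'}(f(a))+\rho_{P'}(f(b))+\rho_{P'}(f(c))=0$ in $R_{P'}$, which for the partial field $P'$ is precisely the statement $f(a)+f(b)+f(c)\in N_{P'}$. Uniqueness of $\Pi(f)$ is immediate from the surjectivity of $\pi_P$, and the identification $\Hom(\Pi(P),P')=\Hom(P,P')$ natural in $P'$ is then formal (send $g\mapsto g\circ\pi_P$, with inverse $f\mapsto\Pi(f)$), from which functoriality of $\Pi$ also follows. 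For the remaining direction of the ``iff'', one notes that $\Pi(P)$ is itself always a partial field, with partial field datum $(\overline{P}^\times,R_P)$ and $R_P\neq 0$; so if $\pi_P$ is an isomorphism, then $P\cong\Pi(P)$ is a partial field.

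The one genuinely delicate point is the tautological-looking identification $R_{P'}\cong$ ``the subring of $R$ generated by $(P')^\times\cup\{-1\}$'' used at the very start. It is needed both to see that $R_{P'}\neq 0$ (so that the mock-partial-field conclusion follows) and to see that the ternary condition $\rho_{P'}(x)+\rho_{P'}(y)+\rho_{P'}(z)=0$ in $R_{P'}$ really pulls back to $x+y+z\in N_{P'}$ rather than to something weaker. I would handle this by checking directly from the description of the null set of a partial-field pasture (given in \autoref{subsection: pastures}) that the defining relations of $R_{P'}$ in $\Z[(P')^\times]$ are precisely the relations already holding in the ambient ring, so the natural surjection $R_{P'}\twoheadrightarrow R$ onto the subring generated by $(P')^\times$ is an isomorphism; the rest of the argument then runs smoothly.
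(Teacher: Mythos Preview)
The paper does not give its own proof of this lemma; it simply cites Lemmas~2.12 and~2.14 of \cite{Baker-Lorscheid21}. So there is no in-paper argument to compare against, and your proof stands on its own.

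Your argument is correct and well organized. One simplification is worth pointing out: the step you flag as ``genuinely delicate''---the identification of $R_{P'}$ with the subring of $R$ generated by $(P')^\times$---is stronger than what you actually need, and its full strength is not entirely obvious. You invoke it for three purposes: to conclude $R_{P'}\neq 0$; to see that $\rho_{P'}(x)+\rho_{P'}(y)+\rho_{P'}(z)=0$ in $R_{P'}$ forces $x+y+z\in N_{P'}$; and to see that $\rho_{P'}$ is injective (needed for well-definedness of $\Pi(f)$ and for the ``$P'$ partial field $\Rightarrow$ $\pi_{P'}$ iso'' direction). All three follow already from the mere \emph{existence} of the natural ring map $R_{P'}\to R$, which is immediate since every generator of $\langle N_{P'}\rangle$ vanishes in $R$. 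Indeed, the composite $P'\xrightarrow{\rho_{P'}}R_{P'}\to R$ is the inclusion, giving injectivity of $\rho_{P'}$ and nonvanishing of $R_{P'}$; and any three-term relation holding in $R_{P'}$ pushes forward to one in $R$, which by definition of the partial-field null set lies in $N_{P'}$. Whether $R_{P'}\to R$ is injective onto its image is a separate question that your argument can bypass entirely.
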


The universal partial field $\P_M$ of a representable matroid is defined in \cite[section 4]{Pendavingh-vanZwam10a} as the partial field generated by the cross ratios inside the bracket ring $\B_M$, which
itself is the ring generated by symbols $T_B$, together with their multiplicative inverses, for all bases $B$ of $M$, modulo the ideal generated by the $3$-term Pl\"ucker relations. 

The following result gives an independent interpretation of the universal partial field $\P_M$ as $\Pi(F_M)$. 

\begin{prop}\label{prop: the universal partial field as quotient of the foundation}
 Let $M$ be a representable matroid with foundation $F_M$ and universal partial field $\P_M$. Then $F_M$ is a mock partial field, and $\P_M$ is canonically isomorphic to $\Pi(F_M)$. In particular, composing a morphism $\P_M\to P$ with $\rho_{F_M}:F_M\to\Pi(F_M)=\P_M$ defines a bijection
 \[
  \Hom(\P_M, \ P) \ = \ \Hom(F_M,\ P)
 \]
 for every partial field $P$.
\end{prop}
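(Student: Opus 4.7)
The plan is to identify $\P_M$ and $\Pi(F_M)$ by showing they corepresent the same functor on the full subcategory of partial fields in $\Pastures$, and then to conclude by Yoneda's lemma. First I would verify that $F_M$ is a mock partial field: since $M$ is representable, \autoref{cor: M is representable over P iff there is a morphism from F_M to P} provides a pasture morphism $F_M\to P$ into some partial field $P$, and \autoref{lemma: associated partial field of a mock partial field} then guarantees both that $F_M$ is a mock partial field (so that $\Pi(F_M)$ is defined) and that the universal property $\Hom(\Pi(F_M),Q)=\Hom(F_M,Q)$ holds for every partial field $Q$.

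Next I would combine this with the characterising property of the foundation (\autoref{thm: the foundation represents the space of rescaling classes}) to obtain
\[
 \Hom(\Pi(F_M), \ Q) \ = \ \Hom(F_M, \ Q) \ = \ \cX_M(Q)
\]
for every partial field $Q$, functorially in $Q$. On the other hand, the defining property of the Pendavingh--van Zwam universal partial field, as set up in \cite{Pendavingh-vanZwam10a}, gives an analogous identification $\Hom(\P_M,Q)=\cX_M(Q)$: a partial-field homomorphism out of $\P_M$ amounts to the projective-equivalence class of a $Q$-representation of $M$, and for partial fields the notions of projective and rescaling equivalence coincide. Hence $\Pi(F_M)$ and $\P_M$ corepresent the same functor on the category of partial fields, and Yoneda supplies a canonical isomorphism $\Pi(F_M)\cong\P_M$. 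The final claim of the proposition follows by chaining the displayed bijections with $\Pi(F_M)\cong\P_M$.

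The main obstacle is ensuring that the middle step delivers the \emph{canonical} isomorphism, not merely some abstract one. Concretely, one has to trace through the construction of $\P_M$ inside the bracket ring $\B_M$ and check that its generators (the cross ratios regarded as elements of $\B_M$) are precisely the images of the universal cross ratios $\cross abcdI$ generating $F_M$. Once this compatibility is in place, the map $F_M\to\P_M$ sending each universal cross ratio to its incarnation in $\B_M$ is well-defined --- because every defining relation of $F_M$ from \autoref{thm: fundamental presentation of foundations in terms of bases} descends from a Pl\"ucker identity in $\B_M$ --- and by \autoref{lemma: associated partial field of a mock partial field} it factors uniquely through $\Pi(F_M)\to\P_M$. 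Surjectivity is clear on generators, and injectivity follows from the corepresentation argument, so this factorisation is precisely the Yoneda isomorphism produced above.
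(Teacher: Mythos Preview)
Your argument is sound but takes a different route from the paper. The paper proceeds by a direct identification: it observes that the universal ring $R_{F_M}$ sits inside the bracket ring $\B_M$, that the map $F_M\to\B_M$ carries universal cross ratios to Pendavingh--van Zwam's cross ratios, and hence that $\Pi(F_M)$ and $\P_M$ coincide as multiplicative submonoids of $\B_M$; it then checks that the null sets agree by noting that both are generated by the $3$-term Pl\"ucker relations. The final $\Hom$-bijection is read off from \autoref{lemma: associated partial field of a mock partial field}.

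Your Yoneda approach is more conceptual, but it rests on an input the paper does not itself supply: the identification $\Hom(\P_M,Q)=\cX_M(Q)$ for partial fields $Q$. This is indeed in the Pendavingh--van Zwam theory, but it is a theorem there rather than the definition (the paper recalls $\P_M$ as \emph{constructively} defined inside $\B_M$), so you should cite it precisely rather than call it the ``defining property''. Your final paragraph, where you trace the generators through $\B_M$ to pin down canonicity, is essentially the paper's direct argument; what your Yoneda detour buys is that injectivity of $\Pi(F_M)\to\P_M$ comes for free from corepresentability rather than from inspecting the defining ideal. Both approaches ultimately lean on the same computation with cross ratios, but yours packages the conclusion more functorially at the cost of importing a stronger external statement about $\P_M$.
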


\begin{proof}
 A representation of $M$ over a field $k$ induces a morphism $F_M\to k$, so \autoref{lemma: associated partial field of a mock partial field} implies that $F_M$ is a mock partial field.
 
 Comparing the definitions of the universal ring $R_M=R_{F_M}$ and the bracket ring $\B_M$ from \cite[section 4.1]{Pendavingh-vanZwam10a} shows that $R_P$ is the subring of $\B_M$ generated by the image of $\pi_{F_M}:F_M\to R_M$, considered as a submonoid of $R_M\subset \B_M$. 
 
 The foundation $F_M$ is generated over $\Funpm$ by the universal cross ratios of $M$. The map $\pi_{F_M}:F_M\to\B_M$ sends universal cross ratios of $F_M$ to cross ratios of $\B_M$ in the sense of \cite[section 4.2]{Pendavingh-vanZwam10a}. Since the universal partial field $\P_M$ is generated by such cross ratios together with $-1$ in $\B_M$, the partial field $\Pi(F_M)$ agrees with $\P_M$ as multiplicative submonoids of $\B_M$. 
 
 A comparison of $\gen{N_P}$ with the defining ideal of the bracket ring $\B_M$ shows that all defining relations stem from (possibly degenerate) $3$-term Pl\"ucker relations for $M$. This means that the null sets of $\Pi(F_M)$ and $\P_M$ are equal and the identity map $\Pi(F_M)\to\P_M$ is an isomorphism of pastures.
 
 The equality $\Hom(\P_M, P) = \Hom(F_M,P)$ follows at once from \autoref{lemma: associated partial field of a mock partial field}.
\end{proof}


\section{First examples}
\label{section: first examples}

We determine the foundation for some first classes of matroids: we recall the result for binary and regular matroids from \cite{Baker-Lorscheid21b} and determine the foundation of uniform matroids of rank $2$.

\subsection{The foundation of binary and regular matroids}
\label{subsection: foundations of binary and regular matroids}

By relation \eqref{R0}, the foundation $F_M$ of a matroid $M$ without any $U^2_4$-minors is a quotient of $\Funpm$. In particular, the foundation of a binary matroid is either $\Funpm$ or $\F_2$; cf.\ \cite[Thm.\ 7.32]{Baker-Lorscheid21b}. The foundations of both the Fano matroid $F_7$ and its dual $F_7^\ast$ are isomorphic to $\F_2$. A matroid is regular if and only it has foundation $\Funpm$; cf.\ \cite[Thm.\ 7.35]{Baker-Lorscheid21b}. 

\subsection{The foundation of \texorpdfstring{$U^2_4$}{U(2,4)}}
\label{subsection: foundations of the uniform matroid U24}

We review the account from \cite[section 4.5]{Baker-Lorscheid20}. The uniform matroid $U^2_4$, with ground set $E=\{1,2,3,4\}$, has universal cross ratios
 \[
  x \ = \ \cross1234{}, \qquad y \ = \ \cross1324{}, \qquad \cross1243{}, \qquad \cross1342{}, \qquad \cross1423{}, \qquad \cross1432{}
 \]
 (up to the relations \eqref{Rs}), which satisfy the relations
 \[
  \cross1243{} \ = \ x^{-1}, \qquad \cross1342{} \ = \ y^{-1}, \qquad \cross1423{} \ = \ -x^{-1}y, \qquad \cross1432{} \ = \ -xy^{-1} \qquad 
 \]
 (using the relations \eqref{R1} and \eqref{R2}) and $x+y=1$ (relation \eqref{R+}). Thus the foundation of $U^2_4$ is 
 \[
  \U \ = \ \pastgenn\Funpm{x,y}{x+y-1}.
 \]

\subsection{The foundation of \texorpdfstring{$U^2_5$}{U(2,5)}}
\label{subsection: foundations of the uniform matroid U25}

In this section, we establish a fact which was stated without proof in \cite[Prop. 5.4]{Baker-Lorscheid20}. The assertion is that the foundations of $U^2_5$ and $U^3_5$ are isomorphic to the pasture
\[
 \V \ = \ \pastgenn{\Funpm}{x_1,\dotsc,x_5}{x_i+x_{i-1}x_{i+1}-1|i=1,\dotsc,5},
\]
where the subscript $i$ has to be read `modulo $5$,' i.e.,\ $x_0=x_5$ and $x_6=x_1$.

\begin{prop}\label{prop: foundation of U25}
 The foundation of $U^2_5$ is isomorphic to $\V$.
\end{prop}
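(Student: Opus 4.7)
The plan is to apply the fundamental presentation of \autoref{thm: fundamental presentation of foundations in terms of bases} directly to $M = U^2_5$ and simplify. Since the rank is $r = 2$, the indexing set $J$ in every cross ratio $\cross abcdJ$ has cardinality $r-2 = 0$, so $J = \emptyset$ and we drop it from the notation. Among the GRS relations, \eqref{R4} is vacuous (it requires $|J| = r-3 = -1$), \eqref{R5} is vacuous since $U^2_5$ has no parallel elements so $\gen{e_5} \neq \gen{e_6}$ whenever $e_5 \neq e_6$, \eqref{R0} does not arise since every $4$-tuple of distinct elements is non-degenerate, and \eqref{R-} does not apply since $U^2_5$ has no $F_7$ or $F_7^\ast$ minor. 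Hence $F_{U^2_5}$ is presented by the symbols $\cross abcd{}$ (for distinct $a,b,c,d \in \{1,\dots,5\}$) modulo only \eqref{Rs}, \eqref{R1}, \eqref{R2}, \eqref{R3}, and \eqref{R+}.

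Restricted to any $4$-subset $E_i := E \setminus \{i\}$, the relations \eqref{Rs}, \eqref{R1}, \eqref{R2}, \eqref{R+} are exactly those that present the foundation $\U = \pastgenn{\Funpm}{x,y}{x+y-1}$ of the $U^2_4$-minor on $E_i$. Thus, upon choosing a single cross ratio $x_i$ on $E_i$ as generator and setting $y_i := 1 - x_i$, every cross ratio supported on $E_i$ becomes a monomial in $\{x_i, y_i, -1\}$. I would make the cyclically coherent choice $x_i := \cross{i+1}{i-1}{i+2}{i-2}{}$ with indices mod $5$, although any choice compatible with the $\Z/5\Z$-symmetry of $U^2_5$ works.

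The remaining content resides in the instances of \eqref{R3}, which couple cross ratios across different $4$-subsets. Choosing $(e_1,\ldots,e_5)$ to be a cyclic rotation of $(1,\ldots,5)$ matched to the generators $x_i$, a single \eqref{R3} relation reduces—after rewriting each factor via \eqref{Rs}, \eqref{R1}, \eqref{R2} in terms of the appropriate $x_j$'s—to the product identity $x_{i-1} \cdot x_{i+1} = y_i = 1 - x_i$, i.e.\ $x_i + x_{i-1}x_{i+1} = 1$. Letting $i$ run over $\{1,\dots,5\}$ yields all five pentagonal relations defining $\V$ and produces a surjective pasture morphism $\V \to F_{U^2_5}$. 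One then verifies that every remaining instance of \eqref{R3} is a consequence of these five pentagonal relations together with the $\U$-relations on each $E_i$, using the transitive $S_5$-action on ordered $5$-tuples to reduce to one representative per orbit.

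\textbf{Main obstacle.} The combinatorial bookkeeping in the last paragraph is the crux. One has to identify the precise cyclic family of generators $x_i$ so that a carefully chosen \eqref{R3} instance collapses exactly to the pentagonal relation $x_i + x_{i-1}x_{i+1} = 1$ (and not to some more complicated identity involving $-1$), and then show that no independent further relations are produced by other \eqref{R3} instances or by interactions between distinct $4$-subsets. An alternative route that sidesteps this bookkeeping is to construct explicitly a weak Grassmann-Pl\"ucker function $\Delta : E^2 \to \V$ representing $U^2_5$ (with entries given by the generators $x_i$ and appropriate products thereof), invoke \autoref{thm: the foundation represents the space of rescaling classes} to obtain a morphism $F_{U^2_5} \to \V$, and conclude by checking that this morphism is mutually inverse to the surjection $\V \to F_{U^2_5}$ constructed above.
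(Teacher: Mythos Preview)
Your proposal is correct and follows essentially the same approach as the paper: apply \autoref{thm: fundamental presentation of foundations in terms of bases}, observe that \eqref{R-}, \eqref{R0}, \eqref{R4}, \eqref{R5} are vacuous for $U^2_5$, choose cyclically symmetric generators $x_i$ (your choice differs from the paper's only by the involution $x_i\leftrightarrow y_i$), and extract the pentagonal relations from \eqref{R3}. The paper carries out the step you flag as the ``main obstacle'' by observing that the relations \eqref{R3} fall into exactly two $\Z/5\Z$-orbits, indexed by the $2$-subset $\{e_1,e_2\}$: one orbit (bases of the form $\{i,i+1\}$) produces the five pentagonal relations $y_i=x_{i-1}x_{i+1}$, and a direct substitution shows the other orbit (bases $\{i,i+2\}$) is redundant once these hold---no appeal to the full $S_5$-action or to an auxiliary $\V$-representation is needed.
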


\begin{proof}
 For the proof, we choose $E= \Z / 5\Z = \{1,\dotsc,5\}$ as the ground set for $U^2_5$. 
 Since the set of bases consist of precisely all $2$-subsets of $E$, we obtain for each tuple $(\emptyset,i,j,k,l)$ with pairwise distinct $i,j,k,l\in E$ a cross ratio
 \[
  \cross ijkl \ = \ \cross ijkl\emptyset.
 \]
 
 Using the relations of type \eqref{Rs}, \eqref{R1}, and \eqref{R2}, we conclude that the foundation $F$ of $U^2_5$ is generated by the cross ratios of the form
 \[
  x_i = \cross{i+1}{i+2}{i+4}{i+3}{} \qquad \text{and} \qquad y_i \ = \ \cross{i+1}{i+4}{i+2}{i+3}{} 
 \]
 for $i=1,\dotsc,5$, which satisfy the relations $x_i+y_i-1\in N_F$ by \eqref{R+}. Since $U^2_5$ has no (dual) Fano minor, the relation \eqref{R-} does not occur. Since every $2$-subset of $E$ is a basis, there are no relations of type \eqref{R0}. Since the rank of $U^2_5$ is smaller than $3$, there are no relations of types \eqref{R4} and \eqref{R5}.
 
 We show in the following that the relations of type \eqref{R3} come down to $y_i=x_{i-1}x_{i+1}$, which concludes the proof of our claim. These relations are of the form
 \[
  \cross ijkl{} \cdot \cross ijlm{} \cdot \cross ijmk{} \ = \ 1
 \]
 for $\{i,j,k,l,m\}=E$. Note that permuting $\{i,j\}$ and $\{k,l,m\}$ yields the same relation, up to permuting the factors and possibly taking inverses of all factors. This leaves us with 10 relations of type \eqref{R3}, one for each basis $\{i,j\}\subset E$. 
 
 More precisely, we choose the following representatives for each basis. For a basis of the form $\{i,i+1\}$, we consider
 \begin{align*}
  1 \ &= \ \cross{i}{i+1}{i+2}{i+3}{} \cdot \cross{i}{i+1}{i+3}{i+4}{} \cdot \cross{i}{i+1}{i+4}{i+2}{} \\
      &= \ x_{i+4}^{-1} \cdot x_{i+2}^{-1} \cdot y_{i+3}
 \end{align*}
 where we use the relations of types \eqref{Rs} and \eqref{R1} to express the cross ratios in terms of the $x_i$ and $y_i$. This yields at once the desired relation $y_i=x_{i-1}x_{i+1}$ and thus $x_i+x_{i-1}x_{i+1}-1\in N_F$. 
 
 We are left with showing that the additional $5$ relations for bases of the forms $\{i,i+2\}$ do not endow additional relations between the $x_i$. We consider the following representatives for these relations:
 \begin{align*}
  1 \ &= \ \cross{i}{i+2}{i+3}{i+4}{} \cdot \cross{i}{i+2}{i+4}{i+1}{} \cdot \cross{i}{i+2}{i+1}{i+3}{} \\
      &= \ y_{i+1}^{-1} \cdot (-x_{i+3}^{-1} \ y_{i+3}) \cdot (-x_{i+4}^{-1} \ y_{i+4}),
 \end{align*}
 where we use the relations of types \eqref{Rs}, \eqref{R1} and \eqref{R2} to express the cross ratios in terms of the $x_i$ and $y_i$. When we substitute $y_i$ by $x_{i-1}x_{i+1}$ in this relation, we find that 
 \[
  y_{i+1}^{-1} \cdot (-x_{i+3}^{-1}\ y_{i+3}) \cdot (-x_{i+4}^{-1}\ y_{i+4}) \ = \ x_i^{-1} \ x_{i+2}^{-1} \ x_{i+3}^{-1} \ x_{i+2} \ x_{i+4} \ x_{i+4}^{-1} \ x_{i+3} \ x_i \ = \ 1
 \]
 holds already in $F$, which shows that the latter family of relations do not imply additional relations between the cross ratios $x_1,\dotsc,x_5$. This concludes the proof.
\end{proof}

\begin{cor}\label{cor: foundation of U35}
 The foundation of $U^3_5$ is isomorphic to $\V$.
\end{cor}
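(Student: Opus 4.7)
The plan is to deduce this corollary immediately from the preceding proposition together with the functorial behavior of the foundation under matroid duality. Specifically, I would invoke \autoref{prop: foundation of dual matroids and simplifications}, which states that for any matroid $M$ the foundations $F_M$ and $F_{M^\ast}$ are canonically isomorphic.

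The key observation is simply that $U^3_5$ is the dual of $U^2_5$: both matroids have ground set of size $5$, and since $U^2_5$ has every $2$-subset as a basis, its dual has every $3$-subset as a basis, which is exactly $U^3_5$. Therefore
\[
 F_{U^3_5} \ = \ F_{(U^2_5)^\ast} \ \cong \ F_{U^2_5} \ \cong \ \V,
\]
where the first isomorphism is \autoref{prop: foundation of dual matroids and simplifications} and the second is \autoref{prop: foundation of U25}.

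There is no real obstacle here; the entire content of the corollary is the identification $U^3_5 = (U^2_5)^\ast$ combined with previously established structural facts. No direct computation with the cross ratios of $U^3_5$ is needed, although one could alternatively redo the computation carried out in the proof of \autoref{prop: foundation of U25} by the same case analysis. That route would be much longer and would essentially repeat the bookkeeping with the relations \eqref{Rs}, \eqref{R1}, \eqref{R2}, \eqref{R3}, and \eqref{R+}, only now on the $3$-element bases of $U^3_5$, so the duality argument is clearly preferable.
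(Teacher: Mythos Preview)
Your proof is correct and matches the paper's argument exactly: the paper also deduces the corollary immediately from \autoref{prop: foundation of U25} by taking duals, invoking \autoref{prop: foundation of dual matroids and simplifications}.
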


\begin{proof}
 This follows at once from \autoref{prop: foundation of U25} by taking duals; cf.\ \autoref{prop: foundation of dual matroids and simplifications} and \cite[Prop.\ 4.8]{Baker-Lorscheid20}.
\end{proof}


\subsection{The foundation of \texorpdfstring{$U^2_n$}{U(2,n)}}
\label{subsection: foundation of U2n}

In this section, we identify the foundation of the uniform matroid $U^2_{k+3}$ with Semple's $k$-regular partial field, cf.\ \cite{Semple97}, \cite{Semple98}. Interestingly, this gives a different presentation of $\V$ from \autoref{prop: foundation of U25}, a fact that we comment on further at the end of this section.

Semple's $k$-regular partial field is defined as $\cR_k=\big(\cA_k,\Q(\alpha_1,\dotsc,\alpha_k)\big)$ where $\alpha_1,\dotsc,\alpha_k$ are algebraically independent elements over $\Q$ and 
\[
 \cA_k \ = \ \bigg\{ \, \prod_{i=1}^s (\beta_i-\gamma_i)^{n_i} \, \bigg| \, s\geq0,\, \beta_i,\gamma_i\in\{0,1,\alpha_1,\dotsc,\alpha_k\},\, n_i\in\Z \, \bigg\}.
\]
Note that $\cA_k$ contains $0=0-0$ and $-a=(0-1)a$ for every $a\in\cA_k$. Thus $\cR_k$ is indeed a partial field. We define $\U_k$ as the pasture associated with $\cR_k$, and (by abuse of terminology) refer to it also as the $k$-regular partial field.

Note that the $0$-regular partial field $\U_0$ coincides with the regular partial field $\Funpm$, and the $1$-regular partial field $\U_1$ is isomorphic to the near-regular partial field $\U$. 

An explicit description of the pasture $\U_k$ in terms of generators and relations is as follows.

\begin{lemma}\label{lemma: presentation of the k-regular partial field}
 Let $k\geq0$ and define $\alpha_{-1}=0$ and $\alpha_0=1$. Then
 \[
  \U_k \ = \ \pastgenn\Funpm{\alpha_j-\alpha_i\mid -1\leq i<j\leq k,\ j\neq 0}{S_k},
 \]
 where $S_k$ consists of the terms
 \[
  \frac{\alpha_i-\alpha_l}{\alpha_j-\alpha_l} \ + \ \frac{\alpha_i-\alpha_j}{\alpha_l-\alpha_j} \ - \ 1 
 \]
 for all $-1\leq i<j<l\leq k$ and 
 \[
  \frac{(\alpha_i-\alpha_l)(\alpha_j-\alpha_m)}{(\alpha_i-\alpha_m)(\alpha_j-\alpha_l)} \ + \ \frac{(\alpha_i-\alpha_j)(\alpha_l-\alpha_m)}{(\alpha_i-\alpha_m)(\alpha_l-\alpha_j)} \ - \ 1 
 \]
 for all $-1\leq i<j<l<m\leq k$.
\end{lemma}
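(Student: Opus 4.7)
The plan is to construct a pasture isomorphism $\phi_k\colon P_k\to \U_k$, where $P_k$ denotes the pasture defined by the right-hand side of the asserted equality. The morphism $\phi_k$ sends each formal generator $\alpha_j - \alpha_i$ to its evaluation in $\cA_k$, extended multiplicatively. I would then verify bijectivity separately on the multiplicative unit group and on the null set.

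For well-definedness, I would clear denominators in each relation in $S_k$. The three-index relation reduces to the trivial polynomial identity $(\alpha_i - \alpha_l) = (\alpha_i - \alpha_j) + (\alpha_j - \alpha_l)$, while the four-index relation reduces to the Plücker identity
\[
(\alpha_i - \alpha_l)(\alpha_j - \alpha_m) \ - \ (\alpha_i - \alpha_j)(\alpha_l - \alpha_m) \ = \ (\alpha_i - \alpha_m)(\alpha_j - \alpha_l),
\]
both of which hold as identities in $\Q[\alpha_1, \dotsc, \alpha_k]$. To see that $\phi_k$ is an isomorphism on units, observe that every relation in $S_k$ has the form $a + b - 1$ with $a, b \neq 0$, so by the construction of quotient pastures these relations impose only null-set conditions and leave the underlying monoid unchanged. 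Hence $P_k^\times$ is $\{\pm 1\}$ times the free abelian group on the generators. On the $\U_k$-side, $\Q[\alpha_1, \dotsc, \alpha_k]$ is a UFD in which the linear polynomials $\alpha_j - \alpha_i$ (for $-1 \leq i < j \leq k$, $j \neq 0$) are pairwise non-associate irreducibles, so $\cA_k$ is free abelian on the same generators together with $\{\pm 1\}$. Thus $\phi_k$ is a bijection of unit groups.

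The main step is to show $\phi_k$ is a bijection of null sets. Since the null set of a pasture is determined by its fundamental pairs (and each hexagon is automatic from the pasture axioms once one of its six pairs lies in the null set), it suffices to classify all fundamental pairs $(x, y)$ in $\U_k$ up to hexagon equivalence. Writing $x = A/D$ and $y = B/D$ with $A, B, D$ pairwise coprime products of $\pm(\alpha_j - \alpha_i)$ in $\Q[\alpha_1, \dotsc, \alpha_k]$, the identity $x + y = 1$ becomes $A + B = D$. Letting $T \subseteq \{-1, 0, 1, \dotsc, k\}$ collect the indices appearing in any factor of $A \cdot B \cdot D$, I would argue that necessarily $|T| \in \{3, 4\}$: the case $|T| = 3$ yields exactly the three-index family of $S_k$ (only three distinct irreducibles are available, and pairwise coprimality forces $A, B, D$ each to be a single linear form, whose signs are pinned down by the identity $(\alpha_i - \alpha_l) = (\alpha_i - \alpha_j) + (\alpha_j - \alpha_l)$); and $|T| = 4$ yields the four-index family via a short case analysis on which of the six available linear forms can appear in each polynomial, together with the Plücker identity above.

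The main obstacle is ruling out $|T| \geq 5$. My plan is to proceed by induction on $|T|$: for $|T| \geq 5$, one picks an index $t \in T$ appearing in a minimal number of factors of $A \cdot B \cdot D$ and specializes $\alpha_t$ to some $\alpha_s$ with $s \in T\setminus\{t\}$, thereby killing that minimal set of factors in exactly one of $A$, $B$, $D$ (by coprimality) and reducing the identity to a coprime fundamental-pair identity on $|T|-1$ indices; careful bookkeeping of the factors that survive specialization (especially avoiding degenerate specializations where the surviving polynomials acquire new common factors) should yield a contradiction. An alternative route is a multivariable analogue of the Mason-Stothers inequality applied to pairwise-coprime products of linear forms, which should directly bound $|T|$ by $4$. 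The remaining tasks --- verifying the polynomial tautologies that encode the relations in $S_k$, applying the UFD property to get free abelian unit groups, and enumerating solutions on three or four indices --- are routine by comparison.
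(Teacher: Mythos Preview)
Your overall strategy---build the tautological map $\phi_k$, check it on units via the UFD structure of $\Q[\alpha_1,\dotsc,\alpha_k]$, then classify all fundamental pairs of $\U_k$---is sound, but it is a genuinely different route from the paper's. The paper does not classify fundamental pairs from scratch; it simply invokes Semple's theorem \cite[Thm.~3.2]{Semple97}, which already lists all fundamental elements of $\cR_k$, and then groups them into hexagons to read off $S_k$. What you call ``the main obstacle'' (ruling out $|T|\geq 5$) \emph{is} Semple's theorem, so your proposal amounts to reproving that result.

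Both of your suggested attacks on $|T|\geq 5$ have gaps as stated. For the specialization argument: if you choose $s$ so that $(\alpha_t-\alpha_s)$ divides, say, $A$, then after $\alpha_t\mapsto\alpha_s$ you obtain $A'=0$ and hence $B'=D'$, which is not a smaller fundamental-pair identity but a bare equality---you would need a separate argument to extract a contradiction from it. If instead you choose $s$ so that $(\alpha_t-\alpha_s)$ does \emph{not} divide $ABD$, then the surviving factors $(\alpha_t-\alpha_r)\mapsto(\alpha_s-\alpha_r)$ can coincide with factors already present in another of $A,B,D$, destroying the pairwise coprimality you need for the inductive hypothesis; the ``careful bookkeeping'' you allude to is exactly the substance of the proof. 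For the Mason--Stothers route: after restricting to a generic line you get univariate $A(t)+B(t)=D(t)$ with $\max\deg\leq N_0(ABD)-1$, but the known solutions at $|T|=3$ ($d=1$, $N=3$) and $|T|=4$ ($d=2$, $N=6$) are far from saturating this bound, so the inequality alone does not exclude $|T|=5$; you would need to combine it with further structural constraints on which linear forms can appear. Either route can presumably be completed, but neither is the short argument your sketch suggests; the paper's shortcut is precisely to cite Semple.
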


\begin{proof}
 It is clear from the definition of $\cR_k$ that it is generated by the terms $\alpha_j-\alpha_i$ with $-1\leq i<j\leq k$ and $-1$ as a multiplicative monoid, where we can exclude $\alpha_0-\alpha_{-1}=1$ from the generating set. Since the multiplicative monoid of $\U_k$ agrees with that of $\cR_k$ by definition, this shows that $\U_k$ is generated by the terms $\alpha_j-\alpha_i$ with $-1\leq i<j\leq k$ and $j>0$ over $\Funpm$.
 
 The null set $N_{\U_k}$ of $\U_k$ is a union of $\U_k^\times$-orbits, and by \cite[Prop.\ 3.6]{Baker-Lorscheid21}, $N_{\U_k}/\U_k^\times$ is in bijection with the set of hexagons in $\U_k$. We show that the hexagons in $\U_k$ correspond to the terms in $S_k$.
 
 Semple determines in \cite[Thm.\ 3.2]{Semple97} the fundamental elements of $\cR_k$ as the elements
  \[
  \frac{\alpha_i-\alpha_l}{\alpha_j-\alpha_l}
 \]
 for pairwise distinct $i,j,l\in\{-1,\dotsc,k\}$ and 
 \[
  \frac{(\alpha_i-\alpha_l)(\alpha_j-\alpha_m)}{(\alpha_i-\alpha_m)(\alpha_j-\alpha_l)}
 \]
 for all pairwise distinct $i,j,l,m\in\{-1,\dotsc,k\}$. Note that we can permute $\{i,j,l,m\}$ with elements of the Klein-four group $V=\gen{(ij)(lm),(il)(jm)}$ in the second expression without changing the fundamental element. Thus we can assume that $i$ is the smallest index among $i,j,l,m$.
 
 These fundamental elements are grouped into the following hexagons. The first type of fundamental element appears in the hexagon
 \[
 \beginpgfgraphicnamed{tikz/fig12}
  \begin{tikzpicture}[baseline={([yshift=-.5ex]current bounding box.center)},x=1.0cm,y=1.0cm]
   \draw[line width=3pt,color=gray!30,fill=gray!30,bend angle=20] (60:1.4) to[bend left] (180:1.4) to[bend left] (300:1.4) to[bend left] cycle;
   \draw[line width=3pt,color=gray!40,fill=gray!40,bend angle=20] (0:1.4) to[bend left] (120:1.4) to[bend left] (240:1.4) to[bend left] cycle;
   \draw[line width=3pt,color=gray!30,dotted,bend angle=20] (60:1.4) to[bend left] (180:1.4) to[bend left] (300:1.4) to[bend left] cycle;
   \node (-1) at (0,0) {\small \textcolor{black!60}{{$\mathbf{-1\ }$}}};
   \node (a) at (120:2) {$\tfrac{\alpha_i-\alpha_l}{\alpha_j-\alpha_l}$};
   \node (b) at ( 60:2) {$\tfrac{\alpha_i-\alpha_j}{\alpha_l-\alpha_j}$};
   \node (c) at (180:2) {$\tfrac{\alpha_j-\alpha_l}{\alpha_i-\alpha_l}$};
   \node (d) at (  0:2) {$\tfrac{\alpha_l-\alpha_j}{\alpha_i-\alpha_j}$};
   \node (e) at (240:2) {$\tfrac{\alpha_j-\alpha_i}{\alpha_j-\alpha_i}$};
   \node (f) at (300:2) {$\tfrac{\alpha_l-\alpha_i}{\alpha_l-\alpha_i}$};
   \path (a) edge node[auto] {$+$} (b);
   \path (b) edge node[auto] {$*$} (d);
   \path (d) edge node[auto] {$+$} (f);
   \path (f) edge node[auto] {$*$} (e);
   \path (e) edge node[auto] {$+$} (c);
   \path (c) edge node[auto] {$*$} (a);
  \end{tikzpicture}
 \endpgfgraphicnamed
 \]
 and the second type of fundamental element appears in the hexagon
 \[
 \beginpgfgraphicnamed{tikz/fig13}
  \begin{tikzpicture}[baseline={([yshift=-.5ex]current bounding box.center)},x=1.8cm,y=1.0cm]
   \draw[line width=3pt,color=gray!30,fill=gray!30,bend angle=20] (60:1.4) to[bend left] (180:1.4) to[bend left] (300:1.4) to[bend left] cycle;
   \draw[line width=3pt,color=gray!40,fill=gray!40,bend angle=20] (0:1.4) to[bend left] (120:1.4) to[bend left] (240:1.4) to[bend left] cycle;
   \draw[line width=3pt,color=gray!30,dotted,bend angle=20] (60:1.4) to[bend left] (180:1.4) to[bend left] (300:1.4) to[bend left] cycle;
   \node (-1) at (0,0) {\small \textcolor{black!60}{{$\mathbf{-1\ }$}}};
   \node (a) at (120:2) {$\tfrac{(\alpha_i-\alpha_l)(\alpha_j-\alpha_m)}{(\alpha_i-\alpha_m)(\alpha_j-\alpha_l)}$};
   \node (b) at ( 60:2) {$\tfrac{(\alpha_i-\alpha_j)(\alpha_l-\alpha_m)}{(\alpha_i-\alpha_m)(\alpha_l-\alpha_j)}$};
   \node (c) at (180:2) {$\tfrac{(\alpha_i-\alpha_m)(\alpha_j-\alpha_l)}{(\alpha_i-\alpha_l)(\alpha_j-\alpha_m)}$};
   \node (d) at (  0:2) {$\tfrac{(\alpha_i-\alpha_m)(\alpha_l-\alpha_j)}{(\alpha_i-\alpha_j)(\alpha_l-\alpha_m)}$};
   \node (e) at (240:2) {$\tfrac{(\alpha_i-\alpha_j)(\alpha_m-\alpha_l)}{(\alpha_i-\alpha_l)(\alpha_m-\alpha_j)}$};
   \node (f) at (300:2) {$\tfrac{(\alpha_i-\alpha_l)(\alpha_m-\alpha_j)}{(\alpha_i-\alpha_j)(\alpha_m-\alpha_l)}$};
   \path (a) edge node[auto] {$+$} (b);
   \path (b) edge node[auto] {$*$} (d);
   \path (d) edge node[auto] {$+$} (f);
   \path (f) edge node[auto] {$*$} (e);
   \path (e) edge node[auto] {$+$} (c);
   \path (c) edge node[auto] {$*$} (a);
  \end{tikzpicture}
 \endpgfgraphicnamed
 \]
 The terms in $S_k$ correspond to the first rows in these hexagons, where we can assume that $i<j<l$ and $i<j<l<m$, respectively, after rotating or reflecting the hexagon. This shows that $S_k$ generates the null set $N_{\U_k}$, and concludes the proof.
\end{proof}

\begin{prop}\label{prop: U_k is the foundation of U2k+3}
 Let $k\geq0$. Then the foundation of $U^2_{k+3}$ is isomorphic to $\U_k$.
\end{prop}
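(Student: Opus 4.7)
The plan is to construct mutually inverse pasture morphisms between $F_{U^2_{k+3}}$ and $\U_k$. Label the ground set of $U^2_{k+3}$ by $E=\{-1,0,1,\dots,k,\infty\}$ and consider the natural $\U_k$-representation $\Delta$ of $U^2_{k+3}$ given by the $2\times(k+3)$ matrix whose columns are $(1,\alpha_i)^{T}$ for $i=-1,0,\dots,k$ and $(0,1)^{T}$ for $\infty$ (using $\alpha_{-1}=0$ and $\alpha_0=1$). By \autoref{thm: the foundation represents the space of rescaling classes}, $\Delta$ yields a pasture morphism $\psi\colon F_{U^2_{k+3}}\to\U_k$ sending each cross ratio $\cross{a}{b}{c}{d}{\emptyset}$ to its value $\Delta(a,c)\Delta(b,d)/(\Delta(a,d)\Delta(b,c))$ under $\Delta$.

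A direct bracket calculation yields, for $1\le i<j\le k$, the identities
\[
\psi\bigl(\cross{\infty}{-1}{0}{j}{\emptyset}\bigr)=\alpha_j,\quad
\psi\bigl(\cross{\infty}{0}{-1}{j}{\emptyset}\bigr)=1-\alpha_j,\quad
\psi\bigl(\cross{\infty}{i}{-1}{j}{\emptyset}\bigr)=\frac{\alpha_i-\alpha_j}{\alpha_i}.
\]
Using these, I define a candidate inverse $\varphi\colon\U_k\to F_{U^2_{k+3}}$ on the generators of $\U_k$ from \autoref{lemma: presentation of the k-regular partial field} by setting $\varphi(\alpha_j):=\cross{\infty}{-1}{0}{j}{\emptyset}$, $\varphi(\alpha_j-\alpha_0):=-\cross{\infty}{0}{-1}{j}{\emptyset}$, and $\varphi(\alpha_j-\alpha_i):=-\cross{\infty}{-1}{0}{i}{\emptyset}\cdot\cross{\infty}{i}{-1}{j}{\emptyset}$ for $1\le i<j\le k$. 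The critical step is to verify that $\varphi$ respects the relations in $S_k$. For a type-1 relation $\tfrac{\alpha_i-\alpha_l}{\alpha_j-\alpha_l}+\tfrac{\alpha_i-\alpha_j}{\alpha_l-\alpha_j}=1$, after cancellation and application of Rs, R1, R2, R3 from \autoref{thm: fundamental presentation of foundations in terms of bases}, the images under $\varphi$ of the two summands reduce to $\cross{\infty}{l}{j}{i}{\emptyset}$ and $\cross{\infty}{j}{l}{i}{\emptyset}$ respectively, which form a complementary pair on the $U^2_4$-minor $\{i,j,l,\infty\}$; their sum equals $1$ by R+. Each type-2 relation reduces analogously to R+ on a $U^2_4$-minor $\{i,j,l,m\}\subset\{-1,0,\dots,k\}$.

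Once $\varphi$ is established as a pasture morphism, the composition $\psi\circ\varphi$ acts as the identity on generators of $\U_k$, hence is $\id_{\U_k}$. Conversely, $\varphi\circ\psi$ acts as the identity on the cross-ratio generators of $F_{U^2_{k+3}}$ after the R-relations, so it is also the identity, and $\psi$ and $\varphi$ are mutually inverse. The main obstacle is the detailed verification that $\varphi$ is well-defined: the case distinctions for boundary indices $i,j\in\{-1,0\}$ and the systematic application of the R-relations to rewrite products of cross ratios as single ones are somewhat involved, but each relation in $S_k$ does ultimately pull back to an instance of R+ on some $U^2_4$-minor of $U^2_{k+3}$.
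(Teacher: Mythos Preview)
Your approach is correct in outline and genuinely different from the paper's. The paper does not construct $\varphi$ and $\psi$ explicitly; instead it cites van Zwam's theorem that the universal partial field of $U^2_{k+3}$ is $\cR_k$, uses that $\P_M=\Pi(F_M)$ is a quotient of $F_M$, invokes the Dress--Wenzel computation $\T^{(0)}_{U^2_{k+3}}\simeq\{\pm1\}\times\Z^{\binom{k+3}{2}-(k+3)}$ to see that this quotient is a bijection on unit groups, and finally compares counts of fundamental elements ($6\binom{k+3}{4}$ on each side) to match the null sets. Your direct construction is more elementary and self-contained---it does not require either of those external results---at the cost of the cross-ratio bookkeeping you allude to.

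Two places in your write-up deserve more care. First, the claim that $\varphi\bigl(\tfrac{\alpha_i-\alpha_l}{\alpha_j-\alpha_l}\bigr)$ simplifies to $\cross{\infty}{l}{j}{i}{\emptyset}$ via \eqref{Rs}--\eqref{R3} is the entire content of well-definedness, and you have only asserted it; the case $1\le i<j<l$ already requires a nontrivial chain of \eqref{R3}-identities to collapse the four-term product into a single cross ratio. Second, your sentence ``$\varphi\circ\psi$ acts as the identity on the cross-ratio generators \dots\ after the R-relations'' is too vague. The clean argument is: your reduction (once established) shows that $\varphi$ sends each type-1 and type-2 fundamental element of $\U_k$ to the \emph{specific} cross ratio in $F_{U^2_{k+3}}$ whose $\psi$-image is that element; hence $\varphi\circ\psi$ fixes one representative cross ratio from every $U^2_4$-minor, and since $\varphi\circ\psi$ is a pasture morphism and the remaining cross ratios on that minor are determined by \eqref{Rs}, \eqref{R1}, \eqref{R2}, it fixes them all. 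That, together with $\psi\circ\varphi=\id$, gives the isomorphism.
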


\begin{proof}
 Van Zwam shows in \cite[Thm.\ 3.3.24]{vanZwam09} that the universal partial field $\P_M$ of $M=U^2_{k+3}$ is $\cR_k$ or, translated into the language of pastures, the $k$-regular partial field $\U_k$. By \autoref{prop: the universal partial field as quotient of the foundation}, we have 
 \[
  \P_M \ = \ \Pi(F_M) \ = \ \past{F_M}{\gen{a-b\mid a-b\in\gen{N_M}_\Z}},
 \]
 where $\gen{N_M}_\Z$ is the ideal of the group ring $\Z[F_M^\ast]$ generated by all terms $c+d+e$ in the null set $N_M$ of $F_M$. Moreover, \cite[Cor.\ 7.13]{Baker-Lorscheid21b} identifies the unit group $F_M^\times$ of the foundation with the inner Tutte group $\T^{(0)}_M$ of $M$.
 
 Dress and Wenzel show in \cite[Thm.\ 8.1]{Dress-Wenzel89} that 
 \[
  \T^{(0)}_M \ \simeq \ \{\pm1\} \times \Z^{\binom{k+3}{2}-(k+3)}
 \]
 for $M=U^2_{k+3}$. And by definition, we see that $\cR_k^\times$ is the product of $\{\pm 1\}$ with the free abelian group with basis $\alpha_i-\alpha_j$, where $(i,j)$ range over all pairs of integers with $-1\leq i< j\leq k$ and $j\neq0$. Thus the free rank of $\cR_k^\times$ is 
 \[
  \#\bigg\{(i,j)\in\Z^2 \, \bigg| \, -1\leq i<j\leq k,\ j\neq 0\bigg\} \ = \Bigg(\sum_{i=-1}^{k-1} (k-i)\Bigg)-1 \ = \ \binom{k+3}{2}-(k+3),
 \]
 which equals the free rank of $\T^{(0)}$. We conclude that the surjective group homomorphism
 \[
  \T^{(0)} \ \simeq \ F_M^\times \ \longrightarrow \ \U_k^\times \ \simeq \ \cR_k^\times
 \]
 is an isomorphism of groups, and thus the quotient map $F_M\to\U_k$ is a bijection.
 
 In order to show that the respective null sets agree, we count the number of fundamental elements in both pastures and show that $\U_k$ does not have more fundamental elements than $F_M$. The fundamental elements of $F_M$ are the cross ratios $\cross abcd{}$ of $M$. Since every $2$-subset of $U^2_n$ is independent, every $4$-tuple of pairwise different elements of $M$ yields a cross ratio. Modulo the relations \eqref{Rs}, which identifies the cross ratios for $4$ permutations of $a$, $b$, $c$ and $d$, we count $n(n-1)(n-2)(n-3)/4=6\cdot\binom n4$ cross ratios. 
 
 We have to exclude further identifications of cross ratios to ensure that this is the correct count. Relation \eqref{R-} does not appear since $U^2_n$ has no minors of types $F_7$ and $F_7^\ast$. Relation \eqref{R0} does not appear since all $2$-subsets are independent. Relation \eqref{R4} does not appear since $U^2_n$ has no minors of rank $3$ and relation \eqref{R5} does not appear since $U^2_n$ does not have minors with parallel elements. Relations \eqref{Rs}, \eqref{R1} and \eqref{R2} express that every $U^2_4$-minor leads to $6$ \textit{a priori} distinct cross ratios, which entered our count already, and \eqref{R3} are relations that are imposed by rank $2$ minors on $5$ elements, which are all isomorphic to $U^2_5$. By \autoref{prop: foundation of U25}, none of the cross ratios of distinct $U^2_4$-minors are identified in the foundation of $U^2_5$. By \cite[Thm.~4.23]{Baker-Lorscheid20} (also cf.\ \autoref{thm: fundamental presentation} for a more concise version), the foundation of $U^2_n$ is the colimit of the foundations of all embedded minors of types $U^2_4$ and $U^2_5$, which shows that there are no further identifications by minors on more than $5$ elements.
 
 The fundamental elements of $\U_k$ are described in \autoref{lemma: presentation of the k-regular partial field}: for each $3$-subset of $\{-1,0,1,\dotsc,k\}$, there are $6$ fundamental elements of the form
 \[
  \frac{\alpha_i-\alpha_l}{\alpha_j-\alpha_l},
 \]
 and for every $4$-subset of $\{-1,0,1,\dotsc,k\}$, there are $6$ fundamental elements of the form
 \[
  \frac{(\alpha_i-\alpha_l)(\alpha_j-\alpha_m)}{(\alpha_i-\alpha_m)(\alpha_j-\alpha_l)},
 \]
 Since $n=k+3$, the number of fundamental elements in $\U_k$ is at (most equal) to $6\cdot\Big(\,\binom{n-1}3+\binom{n-1}4\,\Big)=6\cdot\binom n4$, which is the number of fundamental elements in $F_M$. This shows that the bijection $F_M\to\U_k$ is an isomorphism of pastures, as desired.
\end{proof}


Semple characterizes all automorphisms of the $k$-regular partial field $\U_k$ in \cite[Thm.\ 4.2]{Semple97}. The group structure of $\Aut(\U_k)$ is, however, not visible from this description. Coupling Semple's result with our methods yields the following result:

\begin{prop}\label{prop: automorphism group of the k-regular partial field}
 The automorphism group of $\U$ is isomorphic to the symmetric group $S_3$, and for $k\geq 2$, the automorphism group of $\U_k$ is isomorphic to $S_{k+3}$.
\end{prop}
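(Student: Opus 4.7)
The strategy exploits the identification $\U_k \cong F_{U^2_{k+3}}$ from Proposition \ref{prop: U_k is the foundation of U2k+3}. By functoriality of the foundation, the matroid automorphism group $\Aut(U^2_{k+3}) = S_{k+3}$ acts on $\U_k$, producing a natural homomorphism
\[
 \phi_k \colon S_{k+3} \ \longrightarrow \ \Aut(\U_k),
\]
which sends $\sigma$ to the automorphism permuting universal cross ratios via $\cross{a}{b}{c}{d}{} \mapsto \cross{\sigma(a)}{\sigma(b)}{\sigma(c)}{\sigma(d)}{}$. The plan is to determine the kernel and image of $\phi_k$, using Semple's classification in \cite[Thm.~4.2]{Semple97} to count the right-hand side.

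Consider first $k=1$, so that $M = U^2_4$. Relation \eqref{Rs} forces the three double transpositions $(12)(34)$, $(13)(24)$, $(14)(23)$ to fix every cross ratio of $U^2_4$, so the Klein four-group $V_4$ lies in $\ker \phi_1$. Conversely, the transposition $(12)$ sends $x = \cross{1}{2}{3}{4}{}$ to $\cross{2}{1}{3}{4}{} = \cross{1}{2}{4}{3}{} = x^{-1}$ via \eqref{Rs} and \eqref{R1}, which differs from $x$ in $\U$; analogous computations for the remaining cosets, combined with the fact that $V_4$ is the unique normal subgroup of $S_4$ of order $4$, show $\ker \phi_1 = V_4$. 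Hence $\phi_1$ descends to an injection $S_4/V_4 \cong S_3 \hookrightarrow \Aut(\U)$, and since Semple's classification produces exactly the six anharmonic transformations of $\U$, this injection is an isomorphism.

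Now let $k \geq 2$, so $n = k+3 \geq 5$. The plan is to prove injectivity of $\phi_k$ directly and then match cardinalities. For injectivity, suppose $\sigma \neq \id$, pick $a \in E$ with $b = \sigma(a) \neq a$, and choose additional elements $c, d, e \in E$ (available since $n \geq 5$). Under the explicit correspondence between cross ratios of $U^2_{k+3}$ and the fundamental elements described in Lemma \ref{lemma: presentation of the k-regular partial field} and the proof of Proposition \ref{prop: U_k is the foundation of U2k+3}, one verifies that $\cross{a}{c}{d}{e}{}$ and its image under $\sigma$ correspond to distinct generators of $\U_k$, contradicting the supposition that $\sigma$ acts trivially. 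Since Semple's theorem delivers exactly $(k+3)!$ automorphisms of $\U_k$, parametrised by permutations of the configuration $\{0,1,\infty,\alpha_1,\dotsc,\alpha_k\}$, and $|S_{k+3}| = (k+3)!$, the injection $\phi_k$ is forced to be an isomorphism.

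The main obstacle is the injectivity step for $k \geq 2$: one must systematically translate between the cross-ratio generators of $F_{U^2_{k+3}}$ and the presentation of $\U_k$ from Lemma \ref{lemma: presentation of the k-regular partial field} in order to certify that \emph{every} non-identity permutation actually separates some pair of fundamental elements. A secondary point is reconciling Semple's set-theoretic description of $\Aut(\U_k)$ with the group structure coming from composition; this comes essentially for free once $\phi_k$ is known to be an injective homomorphism landing in a group of the correct cardinality, since one need not appeal to Semple's group law but only to his count.
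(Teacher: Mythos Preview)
Your strategy coincides with the paper's: construct $\phi_k$ by functoriality, handle $k=1$ via the Klein four-group kernel, and for $k\geq 2$ combine injectivity of $\phi_k$ with Semple's count. One point of precision: the paper extracts from \cite[Thm.~4.2]{Semple97} only the \emph{upper bound} $\#\Aut(\U_k)\leq(k+3)!$---the four cases there give at most $3\cdot(k+2)!+k\cdot(k+2)!$ parametrised maps whose pairwise distinctness is not asserted, and indeed fails for $k=1$---whereas you claim Semple delivers \emph{exactly} $(k+3)!$; since you already have injectivity, the upper bound is all that is needed and your argument closes. Your injectivity step is fine once you make explicit that $c,d,e$ are chosen distinct from both $a$ and $b=\sigma(a)$: then $\{a,c,d,e\}$ omits $b$ while $\sigma(\{a,c,d,e\})$ contains it, so the two cross ratios lie over different $4$-subsets and are distinct by the fundamental-element count in the proof of Proposition~\ref{prop: U_k is the foundation of U2k+3}.
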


\begin{proof}
 The functoriality of the foundation with respect to minor embeddings, and in particular with respect to automorphisms, yields a group homomorphism 
 \[
  \varphi_k: \ S_{k+3} \ = \ \Aut(U^2_{k+3}) \ \longrightarrow \ \Aut(\U_{k}).
 \]
 In \cite[Prop.\ 5.6]{Baker-Lorscheid20}, we have shown that $\varphi_1:S_4=\Aut(U^2_4)\to \Aut(\U)$ is surjective with kernel $V=\gen{(12)(34),(14)(23)}$, i.e.\ $\Aut(\U)\simeq S_3$, which yields the first claim. 
 
 If $k\geq 2$, then every permutation $\sigma\in S_{k+3}$ that fixes $j$, $k$ and $l$ sends the cross ratio $\cross ijkl{}$ to $\cross {\sigma(i)}jkl{}$, which differs from $\cross ijkl{}$ if $\sigma(i)\neq i$. Since $k+3\geq 5$ for $k\geq 2$, we find such a cross ratio, which shows that $\varphi_k$ is injective.
 
 Semple's description of the automorphisms of $\cR_k$ in \cite[Thm.\ 4.2]{Semple97} determines all possible images of $\alpha_1,\dotsc,\alpha_k$. From the outset, it is not clear that the corresponding automorphisms are pairwise distinct---and they are not for $k=1$--- but this description gives an upper bound for $\#\Aut(\cR_k)$. There are $4$ cases in \cite[Thm.\ 4.2]{Semple97}: the first $3$ cases describe (up to) $(k+2)!$ automorphisms, the fourth case contains an additional element, which leads to (up to) $k\cdot(k+2)!$ automorphisms. We deduce the upper bound
 \[
  \#\Aut(\U_k) \ = \ \#\Aut(\cR_k) \ \leq \ 3\cdot(k+2)! + k\cdot (k+2)! \ = \ (k+3)!.
 \]
 Since $\varphi_k$ is injective for $k\geq2$ and $\# S_{k+3}=(k+3)!$, we conclude that $\varphi_k$ is surjective as well and thus an isomorphism.
\end{proof}

\begin{rem}\label{rem: comparison between V and U_2}
 Comparing \autoref{prop: U_k is the foundation of U2k+3} with \autoref{prop: foundation of U25} shows that $\U_2\simeq\V$, which yields an arguably more symmetric description of the $2$-regular partial field $\U_2$. An explicit isomorphism $\varphi:\V\to\U_2$ is given by sending the generators $x_1,\dotsc,x_5$ of $\V$ to
 \[
  \varphi(x_1) =\alpha, \quad \varphi(x_2) = 1-\beta, \quad \varphi(x_3) = \tfrac\beta\alpha, \quad \varphi(x_4) = \tfrac{\alpha-\beta}{\alpha(1-\beta)}, \quad \varphi(x_5) = \tfrac{1-\alpha}{1-\beta},
 \]
  where we write $\alpha=\alpha_1$ and $\beta=\alpha_2$ for better readability. It is also possible to deduce $\Aut(\V)\simeq S_5$ without too much effort directly from the definition of $\V$, without reference to Semple's and Pendavingh--van Zwam's results. 
  
  It seems desirable to develop more ``symmetric'' descriptions of the higher $k$-regular partial fields $\U_k$, in the vein of the isomorphisms $\U_1\simeq\U$ and $\U_2\simeq\V$.  One obstacle towards this goal is that van Zwam's result $\P_{U^2_{k+3}}\simeq\cR_k$ relies on partial field techniques for which it is not clear how to generalize them to pastures. 
\end{rem}

\subsection{Finite projective spaces}
\label{subsection: finite projective spaces}

Let $PG(d,q)$ be the $d$-dimensional projective space over the finite field $\F_q$ with $q$ elements, which is a rank $d+1$ matroid on $q^d+\dotsb+q+1$ elements. Results on the inner Tutte group of $GP(d,q)$ and the universal partial of a certain minor allow us to deduce that the foundation of $PG(d,q)$ is $\F_q$. By a similar type of reasoning, we determine the foundation of any non-Desarguesian planes as $\K$.

The \emph{extended rank $3$ Dowling geometry $Q_q^+$ of $\F_q^\times$} is the restriction of $PG(2,q)$ to $L_1\cup L_2\cup L_3\cup e$, where $L_1$, $L_2$ and $L_3$ are three lines in $PG(2,q)$ with empty intersection and $e$ is a point that lies on neither of these lines.

\begin{prop}\label{prop: foundation of projective spaces}
 The foundation of the $d$-dimensional projective space $PG(d,q)$ is $\F_q$ for all $d\geq2$ and all prime powers $q$.
\end{prop}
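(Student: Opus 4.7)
The plan is to show that the canonical pasture morphism $\varphi \colon F_{PG(d,q)} \to \F_q$, which exists by \autoref{cor: M is representable over P iff there is a morphism from F_M to P} because $PG(d,q)$ is $\F_q$-representable, is an isomorphism. I would verify this separately on the level of unit groups and on the level of null sets.

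For the unit groups, I would appeal to the classical computation of the inner Tutte group of projective spaces (essentially due to Dress and Wenzel): one has $\T^{(0)}_{PG(d,q)} \cong \F_q^\times$ for $q \geq 3$. For $q = 2$, the matroid $PG(d,2)$ contains $F_7 = PG(2,2)$ as a restriction, so by relation \eqref{R-} of \autoref{thm: fundamental presentation of foundations in terms of bases} we have $-1 = 1$ in $F_{PG(d,2)}$, trivializing its unit group to $\{1\} = \F_2^\times$. Either way, since by definition $F_{PG(d,q)}^\times = \T^{(0)}_{PG(d,q)}$, the map $\varphi$ restricts to a surjective homomorphism of finite groups of equal cardinality, and is therefore an isomorphism on units.

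For the null sets, I would use that the null set of a pasture is determined, modulo the trivial terms $a + (-a) + 0$, by its hexagons. Every element $x \in \F_q \setminus \{0, 1\}$ appears as the cross ratio of four distinct collinear points in the standard $\F_q$-representation of $PG(d,q)$, so there exists $(I, a, b, c, d) \in \Omega_{PG(d,q)}^\octa$ with $\varphi(\cross abcdI) = x$. By \autoref{thm: fundamental presentation of foundations in terms of bases}, the relation $\cross abcdI + \cross acbdI - 1$ lies in $N_{F_{PG(d,q)}}$ and maps under $\varphi$ to the hexagonal relation $x + (1-x) - 1$ in $N_{\F_q}$. Since $\varphi$ is bijective on units, the lifts $\cross abcdI$ and $\cross acbdI$ are uniquely determined, and as $x$ ranges over $\F_q \setminus \{0, 1\}$ every hexagon of $\F_q$ arises as the image of a hexagon of $F_{PG(d,q)}$.

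The main obstacle will be ruling out extra hexagons in $F_{PG(d,q)}$, i.e.\ confirming that distinct hexagons there cannot both map to the same hexagon in $\F_q$. This is forced by the unit-group bijection together with the parameterization of hexagons by fundamental pairs. As an alternative check, one can invoke \autoref{prop: the universal partial field as quotient of the foundation} together with the known fact (due to Pendavingh--van Zwam) that the universal partial field of the extended rank-$3$ Dowling geometry $Q_q^+ \subseteq PG(2,q) \subseteq PG(d,q)$ equals $\F_q$, giving $\Pi(F_{PG(d,q)}) = \F_q$ directly and complementing the direct null-set argument to conclude $F_{PG(d,q)} = \F_q$.
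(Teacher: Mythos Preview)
Your proof is correct and shares its backbone with the paper's: both invoke the Dress--Wenzel computation $\T^{(0)}_{PG(d,q)}\cong\F_q^\times$ to pin down the unit group, and both handle $q=2$ via the $F_7$-minor. Where you diverge is in the null-set step. You argue directly that every $x\in\F_q\setminus\{0,1\}$ is realized as a cross ratio of four collinear points, so every hexagon of $\F_q$ lifts to $F_{PG(d,q)}$; combined with the unit bijection this forces an isomorphism. The paper instead passes through the universal partial field: it uses van Zwam's result that $\P_{Q_q^+}=\F_q$ and the minor chain $Q_q^+\hookrightarrow PG(2,q)\hookrightarrow PG(d,q)$ to sandwich $\P_M$ between two copies of $\F_q$, whence $\P_M=\F_q$, and then concludes $F_M=\P_M$ from the cardinality match. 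Your ``alternative check'' is essentially this argument.

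Two small comments. First, your phrase ``ruling out extra hexagons in $F_{PG(d,q)}$'' slightly misidentifies the issue: a bijective pasture morphism is automatically injective on hexagons, so $F_M$ cannot have \emph{more} hexagons than $\F_q$; the genuine content is that it has \emph{enough}, which is exactly what your cross-ratio surjectivity provides (and you do supply it). Second, your claim that $\varphi$ is surjective on units is not quite immediate from the abstract isomorphism $\T^{(0)}_M\cong\F_q^\times$; it follows, however, from the cross-ratio surjectivity you establish in the next paragraph, so the argument is complete once reordered. Your direct route avoids invoking $Q_q^+$ and makes the null-set verification more explicit than the paper's somewhat compressed final sentence.
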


\begin{proof}
 By \cite[Thm.\ 3.6]{Dress-Wenzel90}, the inner Tutte group of $M=PG(d,q)$ is $\F_q^\times$, which equals the unit group of the foundation $F_M$ of $M$ by \cite[Cor.\ 7.13]{Baker-Lorscheid21b}. By \cite[Lemma 7.48]{Baker-Lorscheid21b}, the universal partial field $\P_M$ of $M$ is a quotient of $F_M$, and thus can have at most $q$ elements. By \cite[p.\ 660]{Oxley92}, $PG(2,q)$ is representable over $\F_q$, so there is a morphism $\P_M\to\F_q$.
 
 The extended rank $3$ Dowling geometry $N=Q_q^+$ of $\F_q^\times$ is a minor of $PG(2,q)$ whose universal partial field is $\P_N=\F_q$ by \cite[Thm.\ 3.3.25]{vanZwam09}. Since $PG(2,q)$ is a minor of $PG(d,q)$, we get morphisms $\F_q=\P_N\to\P_M\to\F_q$, which are isomorphisms since $\P_M$ has at most $q$ elements and since every homomorphism from a field into a partial field is injective. Thus $\P_M=\F_q$, which shows that $\P_M$ is the trivial quotient of $F_M$ and therefore $F_M=\P_M=\F_q$.
\end{proof}

The fact (\cite[Thm.\ 3.3.25]{vanZwam09}) that the universal partial field of extended Dowling geometries $Q_q^+$ is $\F_q$ suggests the following:

\begin{problem}
 Is $\F_q$ the foundation of the extended rank $3$ Dowling geometry $Q_q^+$ of $\F_q^\times$? More generally, what are the minimal matroids with foundation $\F_q$?
\end{problem}

The answer is yes for $\F_2$, $\F_3$ and $\F_4$, but the extended Dowling geometries $Q_3^+$ and $Q_4^+$ are not minimal for their respective foundations; cf.\ \autoref{Ffinitefields}.

\begin{prop}\label{prop: foundation of non-Desarguesian projective plane}
 The foundation of a non-Desarguesian projective plane is $\K$.
\end{prop}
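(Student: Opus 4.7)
The plan is to show that $F_M = \K$ by verifying that the canonical morphism $F_M \to \K$ is an isomorphism. Since $\K = \{0,1\}$ with $N_\K = \{0,\ 1+1,\ 1+1+1\}$, this reduces to checking (a) that $F_M^\times$ is trivial, so that in particular $-1 = 1$ in $F_M$ and every universal cross ratio equals $1$, and (b) that $1+1+1 \in N_{F_M}$.

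The heart of the proof is step (a), for which I would prove that the inner Tutte group $\T^{(0)}_M$ is trivial; by \cite[Cor.\ 7.13]{Baker-Lorscheid21b} this is equivalent to $F_M^\times = \{1\}$. The strategy, in the spirit of \cite[Thm.\ 3.6]{Dress-Wenzel90}, is the following. In any projective plane, von Staudt's theorem identifies cross ratios with the multiplicative part of a coordinatizing planar ternary ring, so that the quotient of the free abelian group on cross-ratio symbols by the Gelfand--Rybnikov--Stone relations (R0)--(R5) and (R$\sigma$) recovers the multiplicative structure of this ternary ring. When $M$ is non-Desarguesian, one can exhibit two triangles $\{A,B,C\}$ and $\{A',B',C'\}$ in perspective from a point $O$ whose axial intersections $X = BC \cap B'C'$, $Y = CA \cap C'A'$ and $Z = AB \cap A'B'$ are \emph{not} collinear. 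This non-collinearity amounts to an additional identity among cross ratios, not implied by (R0)--(R5); following the Dress--Wenzel template, this extra identity forces a specific nontrivial cross-ratio generator to collapse to $1$. Because any two ordered $5$-point general-position configurations in a projective plane can be connected by a chain of (R3)--(R5) moves (equivalently, by the flag-transitivity of the plane's combinatorial automorphism group), the collapse propagates to every cross-ratio generator, yielding $\T^{(0)}_M = \{1\}$.

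Once step (a) is in hand, step (b) is a one-line computation. Since $M$ has order $n \geq 9$, one may pick five points $e, e_1, e_2, e_3, e_4$ of $M$ in general position, giving a tuple $(\{e\}; e_1, e_2, e_3, e_4) \in \Omega_M^\octa$. The additive Pl\"ucker relation (R+) from \autoref{thm: fundamental presentation of foundations in terms of bases} then reads $\cross{e_1}{e_2}{e_3}{e_4}{\{e\}} + \cross{e_1}{e_3}{e_2}{e_4}{\{e\}} - 1 \in N_{F_M}$, which, using that both cross ratios equal $1$ by (a) and that $-1 = 1$, simplifies to $1+1+1 \in N_{F_M}$. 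Combined with $1+1 \in N_{F_M}$, which follows automatically from $-1 = 1$ and the pasture axioms, these recover precisely the generators of $N_\K$, so the canonical morphism $F_M \to \K$ is an isomorphism.

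The main obstacle is clearly step (a): extracting from a single failure of Desargues' theorem a complete collapse of $\T^{(0)}_M$. In practice this will require a careful bookkeeping of the (R3) and (R4) relations generated by a well-chosen Desargues configuration, together with a connectivity or transitivity argument on $5$-point general-position configurations to propagate the collapse throughout $M$. The rest of the argument is essentially formal, once one believes the triviality of the inner Tutte group.
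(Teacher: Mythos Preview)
Your overall architecture matches the paper's: first establish that $\T^{(0)}_M$ is trivial (equivalently $F_M^\times=\{1\}$), then conclude $F_M=\K$.

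For step (a), the paper simply cites \cite[Thm.\ 3.7]{Dress-Wenzel90} (you cite 3.6, which is the Desarguesian case). Your attempt to sketch the Dress--Wenzel argument has a genuine error: you invoke ``flag-transitivity of the plane's combinatorial automorphism group'' to propagate the collapse of one cross ratio to all others, but non-Desarguesian planes need not be flag-transitive---indeed, some have trivial automorphism group. The propagation in Dress--Wenzel is not group-theoretic; it proceeds entirely through the relations (R3)--(R5) among cross ratios, exploiting that any two $U^2_4$-configurations in a projective plane are linked by chains of such relations. Your von Staudt remark is also imprecise: in a non-Desarguesian plane the coordinatizing ternary ring is not a skew field, and there is no clean identification of cross ratios with a multiplicative group. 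As written, step (a) is not a proof; you should either cite Dress--Wenzel directly or redo the propagation argument without appealing to automorphisms.

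For step (b), your approach is correct and differs from the paper's. The paper observes that the only pastures with trivial unit group are $\F_2$ and $\K$, and rules out $\F_2$ by noting that a non-Desarguesian plane is not representable over any field. You instead directly exhibit a non-degenerate $(\{e\};e_1,e_2,e_3,e_4)\in\Omega_M^\octa$ (which exists since a plane of order $\geq 9$ contains a $5$-arc) and read off $1+1+1\in N_{F_M}$ from (R+). Both arguments are short; yours is slightly more self-contained, while the paper's makes the non-representability obstruction explicit.
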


\begin{proof}
 By \cite[Thm.\ 3.7]{Dress-Wenzel90}, the inner Tutte group of a non-Desarguesian projective plane is trivial. There are only two pastures $P$ with $P^\times=\{1\}$, which are $\F_2$ and $\K$. A non-Desarguesian plane cannot be represented over any field since Desargues's theorem is violated, so we conclude that its foundation is $\K$.
\end{proof}

\section{The foundation of a direct sum}
\label{subsection: foundation of direct sums}

Our goal in this section is to prove:

\begin{thm}\label{thm: foundations of direct sums}
 Let $M_1$ and $M_2$ be matroids. Then $F_{M_1\oplus M_2} \simeq F_{M_1}\otimes F_{M_2}$.
\end{thm}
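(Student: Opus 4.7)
The plan is to apply the universal property of the foundation (\autoref{thm: the foundation represents the space of rescaling classes}): for every pasture $P$, morphisms $F_M \to P$ are in natural bijection with the set $\cX_M(P)$ of rescaling equivalence classes of $P$-representations of $M$. Combined with the universal property of the tensor product as the coproduct in $\Pastures$, namely $\Hom(F_{M_1} \otimes F_{M_2}, P) = \Hom(F_{M_1}, P) \times \Hom(F_{M_2}, P)$, this reduces \autoref{thm: foundations of direct sums} to producing a bijection
\[
 \cX_{M_1 \oplus M_2}(P) \ \simeq \ \cX_{M_1}(P) \ \times \ \cX_{M_2}(P)
\]
that is natural in $P$, after which the Yoneda lemma supplies the desired isomorphism of pastures.

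Fixing orderings of the ground sets $E_1, E_2$ and writing $E = E_1 \sqcup E_2$, $r = r_1 + r_2$, I would first build the forward map. Choose an ordered basis $\bI_i^0$ of $M_i$ for $i=1,2$. For a $P$-representation $\Delta\colon E^r \to P$ of $M_1 \oplus M_2$, define
\[
 \Delta_1(\bJ_1) \ = \ \Delta(\bJ_1 \bI_2^0) \qquad \text{and} \qquad \Delta_2(\bJ_2) \ = \ \Delta(\bI_1^0 \bJ_2)
\]
for $\bJ_i \in E_i^{r_i}$. For the inverse, given representations $\Delta_1, \Delta_2$ of $M_1, M_2$, define $\Delta\colon E^r \to P$ by $\Delta(\bI) = 0$ unless $\bI$ has exactly $r_i$ entries in $E_i$, and otherwise set
\[
 \Delta(\bI) \ = \ \sign(\sigma_\bI) \cdot \Delta_1(\bI_1) \cdot \Delta_2(\bI_2),
\]
where $\sigma_\bI$ is the shuffle permutation taking $\bI$ to the concatenation $\bI_1 \bI_2$ of its $E_1$- and $E_2$-subtuples.

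The main verifications are: that both constructions produce weak Grassmann--Pl\"ucker functions, that they are mutually inverse up to rescaling, and that they are compatible with the rescaling action. The support condition follows at once from the characterization of the bases of $M_1 \oplus M_2$ as unions of bases of $M_1$ and $M_2$, and the alternating property is transparent in either direction. The principal technical point is the verification of the 3-term Pl\"ucker relations for the $\Delta$ built from a pair $(\Delta_1, \Delta_2)$: given $\bJ \in E^{r-2}$ and four pivots $e_1, \dots, e_4 \in E$, one splits into cases by how many pivots lie in $E_1$. When all four pivots lie in a single $E_i$, the relation reduces to a 3-term Pl\"ucker relation for $\Delta_i$ scaled by a common nonzero value of $\Delta_{3-i}$; in the mixed cases (one-three or two-two), a short counting argument on how $\bJ$ must be partitioned between $E_1$ and $E_2$ for the six tuples $\bJ e_i e_j$ to be bases shows that either at least one factor vanishes in every summand of the relation, or the two surviving summands coincide as products in $P$ and the Pl\"ucker relation holds tautologically.

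Finally, a rescaling $(d, (t_e)_{e \in E})$ of $\Delta$ induces the rescalings $\bigl(d \prod_{e \in |\bI_2^0|} t_e,\, (t_e)_{e \in E_1}\bigr)$ and $\bigl(\prod_{e \in |\bI_1^0|} t_e,\, (t_e)_{e \in E_2}\bigr)$ on $\Delta_1$ and $\Delta_2$, and conversely every pair of rescalings on the two factors arises this way up to a compatible global scalar; hence both constructions descend to mutually inverse bijections on rescaling classes, manifestly natural in $P$. I expect the main obstacle to lie in the careful bookkeeping of signs produced by the shuffle permutations $\sigma_\bI$, so that the Pl\"ucker identities hold on the nose rather than merely up to sign, together with the verification that the two constructions are genuine inverses once one has quotiented by rescaling (rather than just on the level of Grassmann--Pl\"ucker functions before quotienting).
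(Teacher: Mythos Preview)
Your approach is correct and follows the same high-level strategy as the paper --- establish a natural bijection of representation spaces and apply Yoneda together with the coproduct property of $\otimes$ --- but it differs in the cryptomorphism used to set up the bijection. The paper works with modular systems of $P$-hyperplane functions (the hyperplane axiomatization of weak $P$-matroids from \cite[Thm.~2.16]{Baker-Lorscheid20}): since every hyperplane of $M_1\oplus M_2$ is of the form $H_1\sqcup E_2$ or $E_1\sqcup H_2$, a modular system for the direct sum splits on the nose into a pair of modular systems for the summands, and the only verification is a two-case check that modular triples in $M_1\oplus M_2$ come from modular triples in one of the $M_i$. Your direct Grassmann--Pl\"ucker route works as well, but the bookkeeping you anticipate is genuine: beyond the shuffle signs in the mixed Pl\"ucker cases, the step you flag --- that the two constructions are mutually inverse on rescaling classes --- rests on the multiplicativity identity $\Delta(\bJ_1\bI_2^0)\cdot\Delta(\bI_1^0\bJ_2)=\Delta(\bI_1^0\bI_2^0)\cdot\Delta(\bJ_1\bJ_2)$ for all bases $\bJ_i$ of $M_i$, which one proves by a basis-exchange induction using precisely the two-two mixed case of the $3$-term Pl\"ucker relation (where the first term vanishes and the remaining two terms are forced to be equal). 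The hyperplane argument sidesteps both the sign tracking and this induction entirely, at the cost of importing an extra cryptomorphism.
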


Before giving the proof, we note that \autoref{thm: foundations of direct sums} is a special case of a more general result on the foundation of generalized parallel connections proved in \cite{Baker-Lorscheid-Walsh-Zhang}. However, the case considered here is substantially simpler, so it seems worthwhile to give a self-contained proof. A related result, also proved in \cite{Baker-Lorscheid-Walsh-Zhang}, is that the foundation of a $2$-sum of matroids is also the tensor product of the foundations. 

We begin with the following well-known facts about direct sums:

\begin{prop} \cite[4.2.15]{Oxley92} \label{prop:hyperplanes of the direct sum}
Let $M_1,M_2$ be matroids on $E_1$ and $E_2$, respectively. A subset $H$ of $E(M_1 \oplus M_2) = E_1 \sqcup E_2$ is a hyperplane of $M_1 \oplus M_2$ if and only if it satisfies one of the following:
\begin{enumerate}
\item $H\cap E_1$ is a hyperplane in $M_1$ and $H$ contains $E_2$.
\item $H\cap E_2$ is a hyperplane in $M_2$ and $H$ contains $E_1$.
\end{enumerate}
\end{prop}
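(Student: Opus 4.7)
The plan is to derive this directly from two standard structural properties of direct sums of matroids: (i) the flats of $M_1 \oplus M_2$ are precisely the disjoint unions $F_1 \sqcup F_2$ with $F_i$ a flat of $M_i$, and (ii) the rank function is additive on a direct sum, so $r_{M_1 \oplus M_2}(F_1 \sqcup F_2) = r_{M_1}(F_1) + r_{M_2}(F_2)$. Both facts follow immediately from the definition of the direct sum via independent sets, $\cI(M_1 \oplus M_2) = \{I_1 \sqcup I_2 \mid I_i \in \cI(M_i)\}$.

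A hyperplane is by definition a flat of corank one, i.e.\ of rank $r_1 + r_2 - 1$, where $r_i$ denotes the rank of $M_i$. Writing such a flat as $H = H_1 \sqcup H_2$ with $H_i$ a flat of $M_i$, the identity
\[
 r_{M_1}(H_1) \, + \, r_{M_2}(H_2) \;=\; r_1 + r_2 - 1,
\]
together with the inequalities $r_{M_i}(H_i) \leq r_i$, forces exactly one of the two summands to drop by one while the other attains its maximum value. A flat of maximum rank in $M_i$ must equal $E_i$, since $E_i$ is the only flat of rank $r_i$ in $M_i$: if $r_{M_i}(F) = r_i$, then every element $e \in E_i$ satisfies $r_{M_i}(F \cup e) = r_i = r_{M_i}(F)$, so $e$ lies in the closure of $F$, hence $F = E_i$. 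Consequently, either $H_1$ is a hyperplane of $M_1$ and $H_2 = E_2$ (case 1), or $H_2$ is a hyperplane of $M_2$ and $H_1 = E_1$ (case 2). The converse is immediate: any set of either form is a disjoint union of flats, hence a flat, and has rank $r_1 + r_2 - 1$ by additivity of rank.

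The only substantive input is the characterization of flats in a direct sum (fact (i)), which one verifies by checking that the closure of $A_1 \sqcup A_2$ in $M_1 \oplus M_2$ equals the disjoint union of the closures of $A_i$ in $M_i$; this in turn follows from the circuit description of a direct sum, in which every circuit of $M_1 \oplus M_2$ is a circuit of exactly one $M_i$. No significant obstacle arises; the argument is a textbook computation, and indeed the statement is exactly the form quoted from Oxley.
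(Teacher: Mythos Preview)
Your proof is correct. The paper does not give its own proof of this statement; it simply cites \cite[4.2.15]{Oxley92} as a known fact (and similarly cites the rank additivity formula you use as \cite[4.2.13]{Oxley92}). Your argument is precisely the standard textbook derivation one would expect, combining the flat decomposition of a direct sum with rank additivity.
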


\begin{prop} \cite[4.2.13]{Oxley92} \label{prop:rank function of the direct sum}
With notation as in the previous proposition, if $r,r_1,r_2$ are the rank functions of $M_1 \oplus M_2$, $M_1$, and $M_2$, respectively, then for any subset $X$ of $E(M_1 \oplus M_2)$ we have:
\[
r(X) = r_1(X\cap E_1) +r_2(X\cap E_2).
\]
\end{prop}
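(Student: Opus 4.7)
The plan is to argue directly from the definition of the direct sum of matroids and the characterization of rank as the maximum size of an independent subset. Recall that the independent sets of $M_1\oplus M_2$ are exactly the disjoint unions $I_1\sqcup I_2$ with $I_i\in\cI(M_i)$, and $E(M_1\oplus M_2)=E_1\sqcup E_2$. Given $X\subseteq E_1\sqcup E_2$, write $X_i=X\cap E_i$ for $i=1,2$.

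For the inequality $r(X)\leq r_1(X_1)+r_2(X_2)$, I would take any independent set $I\subseteq X$ in $M_1\oplus M_2$ of maximal size, so $|I|=r(X)$. Setting $I_i:=I\cap E_i$, the decomposition $I=I_1\sqcup I_2$ expresses $I$ as a disjoint union with $I_i\subseteq X_i$. The characterization of independence in the direct sum implies $I_i$ is independent in $M_i$, so $|I_i|\leq r_i(X_i)$, and summing gives the desired bound.

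For the reverse inequality, I would choose $J_i\subseteq X_i$ independent in $M_i$ with $|J_i|=r_i(X_i)$. Since $E_1$ and $E_2$ are disjoint, $J_1$ and $J_2$ are automatically disjoint, and by the description of independent sets in the direct sum, $J_1\sqcup J_2$ is independent in $M_1\oplus M_2$ and contained in $X$. Hence $r(X)\geq |J_1|+|J_2|=r_1(X_1)+r_2(X_2)$, completing the proof.

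There is no substantial obstacle here; the result is a direct unwinding of the definitions, and the only thing to be careful about is the disjointness of $E_1$ and $E_2$, which ensures that set-theoretic unions agree with the disjoint unions appearing in the definition of independence for $M_1\oplus M_2$.
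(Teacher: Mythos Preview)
Your proof is correct and is the standard argument. The paper does not actually prove this proposition; it is simply cited from Oxley's textbook as a background fact, so there is no approach in the paper to compare against.
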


Let $F$ be a pasture and let $M$ be a matroid. 
Let $\cX^I_{M}(F)$ denote the set of (weak) $F$-representations of $M$ up to isomorphism, and let
$\cX^R_{M}(F)$ denote the set of rescaling equivalence classes of $F$-representations of $M$.
In order to prove \autoref{thm: foundations of direct sums}, it suffices to show that for every pasture $F$ there is a natural bijection from $\cX^I_{M_1 \oplus M_2}(F)$ to $\cX^I_{M_1}(F) \times \cX^I_{M_2}(F)$, and that these bijections are functorial in $F$.
Indeed, this immediately implies the same thing with $\cX^I$ replaced by $\cX^R$, and (in light of the universal property of tensor products) it also implies that both $F_{M_1 \oplus M_2}$ and $F_{M_1} \otimes F_{M_2}$ represent the functor $\cX^R_{M_1 \oplus M_2}(F)$. Hence $F_{M_1 \oplus M_2}$ and $F_{M_1} \otimes F_{M_2}$ are (canonically) isomorphic.\footnote{So, in fact, what we are really proving is that the \emph{universal pasture} (cf.~\cite[Definition 2.17 and Theorem 2.18]{Baker-Lorscheid20}) of the direct sum is the tensor product of the universal pastures.}

\begin{proof}[Proof of \autoref{thm: foundations of direct sums}]
As discussed above, it suffices to define a bijection which is functorial in $F$ from $\cX^I_{M}(F)$ to $\cX^I_{M_1}(F) \times \cX^I_{M_2}(F)$.
We use the description of $\cX^I_M(F)$ in terms of modular systems of $F$-hyperplane functions for $M$ given in \cite[Theorem 2.16]{Baker-Lorscheid20}. (This can be thought of as a cryptomorphic description of weak $F$-matroids in terms of hyperplanes.)

Given a modular system $\cH = \{ f_H \}_{H \in \cH(M)}$ of $F$-hyperplane functions for $M$, we can define corresponding sets of $F$-hyperplane functions $\cH_i$ for $i=1,2$ by the formulas $f_{H_1}(e) = f_{H_1 \oplus E_2}(e)$ and $f_{H_2}(e) = f_{E_1 \oplus H_2}(e)$.
Conversely, given a modular system $\cH_i = \{ f_H : E \to F \}_{H \in \cH(M_i)}$ of $F$-hyperplane functions for $M_i$ ($i=1,2$), we can define (using \autoref{prop:hyperplanes of the direct sum}) a corresponding set of $F$-hyperplane functions $\cH$ for $M := M_1 \oplus M_2$ by the formula $f_{H_1 \oplus E_2}(e) = 0$ if $e \in E_2$ and $f_{H_1 \oplus E_2}(e) = f_{H_1}(e)$ if $e \in E_1$, and similarly $f_{E_1 \oplus H_2}(e) = 0$ if $e \in E_1$ and $f_{E_1 \oplus H_2}(e) = f_{H_2}(e)$ if $e \in E_2$.

It is easy to see that these constructions are inverse to one another and functorial in $F$, so it suffices to prove that each construction yields a modular system.
We verify that the second map yields a modular system, and leave the similar verification for the first map to the reader.

Suppose $H_1,H_2,H_3$ are a modular triple of hyperplanes for $M$, with mutual pairwise intersection equal to the corank 2 flat $P$. Up to symmetry (swapping the roles of $E_1$ and $E_2$), there are just two cases to consider:

\begin{enumerate}
 \item Suppose $E_1$ is contained in $H_1$ and $E_2$ is contained in $H_2$. By symmetry, we can assume that $H_3$ contains $E_1$. Then, $H_3 \cap H_1 = P$ contains $E_1$, and hence $H_2$ contains $E_1$ as well, which contradicts the assumption that $E_2 \subseteq H_2$. So this case does not occur.
 \item Suppose $E_1$ is contained in both $H_1$ and $H_2$. Then $E_1$ is contained in $H_3$ as well, and so by \autoref{prop:rank function of the direct sum},
 the restrictions $H_1', H_2', H_3'$ of $H_1,H_2,H_3$ to $E_2$ form a modular triple of hyperplanes of $M_2$.
 Consequently, we know that the functions $f_{H_i'}$ are linearly dependent, which implies that the original functions $f_{H_i}$ are also linearly dependent.
\end{enumerate}
\end{proof}

\subsection{Indecomposable foundations}
\label{subsection: indecomposable foundations}

We conclude this section with a discussion about \emph{indecomposable foundations}, which are foundations $F_M$ that are not isomorphic to a non-trivial tensor product $P_1\otimes P_2$ of two pastures. In this context, {\em non-trivial} means that neither $P_1$ nor $P_2$ is a quotient of $\Funpm$, i.e.,\ neither is isomorphic to a pasture in $\{ \Funpm, \F_2, \F_3, \K \}$. The reason for excluding quotients $Q$ of $\Funpm$ is that $Q\otimes Q=Q$ for such a pasture.

\autoref{thm: foundations of direct sums} tells us that the direct sum decomposition $M=M_1\oplus M_2$ of a matroid induces a tensor decomposition $F_{M}\simeq F_{M_1}\otimes F_{M_2}$ of its foundation. In fact, the same is true for the decomposition into a $2$-sum $M=M_1\oplus_2 M_2$: its foundation is $F_{M}\simeq F_{M_1}\otimes F_{M_2}$; cf.\ \cite{Baker-Lorscheid-Walsh-Zhang}.

The foundation of many $3$-connected matroids is indecomposable. For example:

\begin{prop}\label{prop: indecomposable quarternary foundations}
 The foundation of a $3$-connected quarternary matroid is indecomposable.
\end{prop}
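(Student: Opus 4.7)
The plan is to combine a classification of the possible foundations of $3$-connected quaternary matroids with case-by-case indecomposability verification. Since $M$ is quaternary, there is a pasture morphism $F_M \to \F_4$; since $M$ is $3$-connected, Kahn's uniqueness theorem for $\F_4$-representations, Pendavingh and van Zwam's classification of universal partial fields in this setting (cf.\ \cite{vanZwam09}), and \autoref{prop: the universal partial field as quotient of the foundation} together constrain $F_M$ to a short explicit list. The expected entries are $\Funpm$, $\F_2$, the near-regular partial field $\U$, the foundation $\V$ of $U^2_5$, $\F_4$, and the foundations of the few other small $3$-connected quaternary matroids (such as whirls) whose foundations are computed elsewhere in the paper. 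Pastures that admit no morphism to $\F_4$, such as $\F_3$ or $\D$, are ruled out automatically.

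I would then verify indecomposability for each pasture on this list. The cases $\Funpm$ and $\F_2$ are immediate, since both are quotients of $\Funpm$ and hence indecomposable by the convention fixed at the start of this subsection. For $\F_4$: since $|\F_4^\times|=3$ is prime and $-1=1$ in $\F_4$, the equivalence $\{\pm(1,1)\}$ in a tensor decomposition $\F_4 \simeq P_1 \otimes P_2$ collapses, so $|P_1^\times|\cdot|P_2^\times|=3$; hence one factor has trivial unit group and therefore lies in $\{\F_2,\K\}$, which is a quotient of $\Funpm$.

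For $\U$ and $\V=\U_2$ (and more generally any $\U_k$ appearing in the list) I would use a unit-rank-plus-fundamental-pair argument. The key observation is that any fundamental pair $(a,b)$ in $P_1\otimes P_2$ must have both entries in the image of a single coprojection $\iota_i \colon P_i \to P_1\otimes P_2$, because every null element of $P_1 \otimes P_2$ is scaled from a null element of one of the factors. Applied to $\U$, whose unique fundamental pair is $(x,y)$, this forces $x,y \in \iota_1(P_1)$ (say), so $P_1$ surjects onto the subpasture of $\U$ generated by $x$ and $y$, which is all of $\U$; the rank of $\U^\times = \{\pm 1\}\times\Z^2$ is already exhausted by $P_1^\times$, so $P_2^\times$ is torsion and $P_2$ is a quotient of $\Funpm$. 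For $\V$ (and $\U_k$ in general), the same principle, applied to the fundamental relations recorded in \autoref{lemma: presentation of the k-regular partial field}, forces the generators $\alpha_j-\alpha_i$ to partition into two sets $S_1,S_2$ such that each fundamental relation involves generators from only one side; the triple relation $\frac{\alpha_i-\alpha_l}{\alpha_j-\alpha_l}+\frac{\alpha_i-\alpha_j}{\alpha_l-\alpha_j}-1=0$ links $\alpha_j-\alpha_l$, $\alpha_i-\alpha_l$, $\alpha_i-\alpha_j$, and a transitivity argument over all triples $(i,j,l)$ collapses the partition into a single class.

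The main obstacle is the classification step, which imports substantial external matroid-theoretic results. A more self-contained alternative would avoid the full classification by exploiting the uniqueness of the $\F_4$-representation directly: the decomposition $\Hom(F_M,\F_4)=\Hom(P_1,\F_4)\times\Hom(P_2,\F_4)$, combined with Kahn's theorem (up to Frobenius), yields tight numerical constraints on the two factors. Turning those constraints into a proof that one $P_i$ is a quotient of $\Funpm$ requires additional structural input on pastures admitting exactly one morphism to $\F_4$, which is why the classification route appears more tractable.
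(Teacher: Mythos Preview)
Your main route---classifying the foundations of $3$-connected quaternary matroids and then checking indecomposability case by case---does not work as stated, because no such finite classification exists. The list you propose ($\Funpm$, $\F_2$, $\U$, $\V$, $\F_4$, and ``a few others'') is far too short: already among small $3$-connected quaternary matroids the paper exhibits foundations such as $\H$, $\G$, the Dowling lift of $\F_4$, $\FF{F22a}$, $\FF{F53a}$, $\K_2$, $\P_4$, $\H_4$, and the Pappus foundation (see \autoref{table: morphisms of foundations into other pastures}). There is no analogue here of the Hydra hierarchy for $\F_5$; the class of pastures admitting a morphism to $\F_4$ is simply not governed by a short explicit list. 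So the classification step is a genuine gap, not a citation.

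The irony is that your ``alternative'' in the last paragraph is exactly the paper's argument, and the missing structural input you worry about is already in your hands. You correctly observe that every fundamental pair of $P_1\otimes P_2$ lies in the image of a single $\iota_i$. Combine this with the fact that $F_M$ is \emph{generated} by its universal cross ratios, which are fundamental elements: it follows that each $P_i$ is generated by its own fundamental elements, so in a nontrivial decomposition each $P_i$ contains at least one. Now compose any morphism $F_M\to\F_4$ with $\iota_i$: a fundamental element of $P_i$ must land on a fundamental element of $\F_4$, and the only ones are $a$ and $a+1$, each of which generates $\F_4^\times$. Hence each $P_i\to\F_4$ is surjective; composing with the Frobenius automorphism yields at least two distinct morphisms $P_i\to\F_4$, and therefore at least four morphisms $F_M\to\F_4$. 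Kahn's theorem then says $M$ is not $3$-connected. No classification is needed.
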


\begin{proof}
 Let $M$ be a quarternary matroid and let $F_M$ be its foundation. Suppose that $F_M=P_1\otimes P_2$ is a non-trivial decomposition into pastures $P_1$ and $P_2$, neither of which is a quotient of $\Funpm$. Since $F_M$ is generated by the universal cross ratios of $M$, which are fundamental elements, both $P_1$ and $P_2$ are generated by their respective fundamental elements. In particular, each factor contains a fundamental element.
 
 The fundamental elements of $\F_4=\{0,1,a,a+1\}$ are $a$ and $a+1$. Therefore the composition of any morphism $F_M\to\F_4$ with the canonical inclusion $P_i\to F_M$ (for $i=1,2$) is a surjection $P_i\twoheadrightarrow\F_4$. Composing this surjection with the nontrivial field automorphism $\F_4\to\F_4$ yields two additional morphisms $P_i\twoheadrightarrow\F_4$.
 
 The universal property of the tensor product can be expressed as a bijection 
 \[
 \Hom(F_M,\F_4)=\Hom(P_1,\F_4)\times\Hom(P_2,\F_4).
 \] 
 The previous paragraph thus guarantees that there are at least $4$ distinct morphisms $F_M\to\F_4$. By a well-known theorem of Kahn (\cite{Kahn88}), $M$ cannot be $3$-connected, which establishes our claim.
\end{proof}

\begin{problem}
 Instead of using Kahn's theorem to deduce Proposition~\ref{prop: indecomposable quarternary foundations}, can one give a direct proof of the proposition using the theory of foundations and deduce Kahn's theorem as a corollary?
\end{problem}
  
It is not true in general that the foundation of a $3$-connected matroid is indecomposable (this insight was shared with us by Nathan Bowler); cf.\ \autoref{Fdyadic} for a $3$-connected matroid with foundation $\D\otimes\D$. In so far, we wonder:

\begin{problem}
 What are necessary and sufficient (or even just sufficient) conditions on a matroid $M$ for its foundation to be indecomposable? For example, is the foundation of a $4$-connected matroid indecomposable?
\end{problem}

One can refine the notion of indecomposable foundations using the fundamental diagram of a matroid. Namely, given a diagram of pastures $\cF$ with connected components $\cF_1,\dotsc,\cF_r$, we have $\colim\cF=\otimes_{i=1}^r\colim\cF_i$. Embedded minors of type $F_7$ and $F_7^\ast$ appear as isolated points in the fundamental diagram $\cE_M$, and we call them the \emph{trivial connected components of $\cE_M$}. We call $\cE_M$ \emph{essentially connected} if it has at most one nontrivial connected component. In other words, $\cE_M$ is essentially connected if all vertices corresponding to embedded $U^2_4$-minors of $M$ lie in the same connected component of $\cE_M$. Since the foundation $\F_2$ of $F_7$ and $F_7^\ast$ is a trivial tensor factor, it follows that the foundation of a matroid $M$ with essentially connected fundamental diagram is indecomposable. We can therefore sharpen the previous problem as follows:

\begin{problem}
 What are necessary and sufficient (or even just sufficient) conditions on a matroid $M$ for its fundamental diagram to be essentially connected? 
\end{problem}

The fundamental diagram of Bowler's ternary spike mentioned above is not essentially connected, so the condition that $M$ is ternary is not sufficient.


\section{Fundamental presentations}
\label{section: presentations of the foundation}

In this section, we develop presentations of the foundation in terms of minors and in terms of sublattices of the lattice of flats of a matroid.


\subsection{The fundamental presentation by embedded minors}
\label{subsection: presentation of the foundation by embedded minors}

The relations between universal cross ratios from \autoref{thm: fundamental presentation of foundations in terms of bases} involve only few elements of the ground set, which means that these relations stem from minors of fairly small sizes. This has already been noted in \cite[Thm.\ 4.23]{Baker-Lorscheid20}, which describes the foundation of a matroid as the colimit of the foundations of its embedded minors of small size. We improve upon this result by narrowing down the list of embedded minors that we need to consider to a minimal possible set.

Let $M$ be a matroid with ground set $E$. An \emph{embedded minor of $M$} is a minor $M\minor JI$ of $M$ with a fixed choice of subsets $I$ and $J$ of $E$, where $I$ is coindependent and $J$ is independent. Note that every minor of $M$ can be expressed in this form; this is a consequence of the Scum Theorem, cf.\ \cite[section 1.3]{Baker-Lorscheid20}. 

A \emph{minor embedding} $N\hookrightarrow M$ is an isomorphism $N\simeq M\minor JI$ of $N$ with an embedded minor $M\minor JI$ of $M$. In particular, every embedded minor comes with a tautological minor embedding $M\minor JI\hookrightarrow M$.

Every minor embedding $N\simeq M\minor JI\hookrightarrow M$ induces a morphism of foundations:
\[
 \begin{array}{ccrcl}
  F_N & \stackrel\sim\longrightarrow & F_{M\minor JI} & \longrightarrow & F_M \\[5pt]
      &        & \cross abcd{J'}  & \longmapsto     & \cross abcd{J'\cup J}
 \end{array}
\]
where $(J';a,b,c,d)$ varies through all tuples in $\Omega_{M\minor JI}^\octa$; cf.\ \cite[Prop.\ 4.9]{Baker-Lorscheid20}. Note that this pasture morphism is in general not injective.

\begin{df} \label{df:fundamental diagram}
 Let $M$ be a matroid. The \emph{fundamental diagram of $M$} is the diagram $\cE_M$ of all embedded minors $N=M\minor JI$ of $M$ of types
 \[
  \begin{aligned}
   & U^2_4, && \text{($4$ elements)} \\
   &C_5,\quad C_5^\ast,\quad U^2_5,\quad U^3_5, &\hspace{2cm} & \text{($5$ elements)} \\
   & U^1_2\oplus U^2_4, &&\text{($6$ elements)} \\
   & F_7,\quad F_7^\ast && \text{($7$ elements),}
  \end{aligned}
 \]
 together with all minor embeddings. We denote by $F(\cE_M)$ the family of foundations of all embedded minors in $\cE_M$, together with the pasture morphisms that are induced by the minor embeddings.
 We write $\cS := \{ U^2_4, U^2_5, U^3_5, C_5, C_5^\ast, U^2_4\oplus U^1_2, F_7, F_7^\ast \}$, and refer to embedded minors of $M$ isomorphic to some matroid in $\cS$ as the \emph{special embedded minors} of $M$.
\end{df}

\begin{rem}
By the results in \autoref{subsection: foundations of the uniform matroid U24}, any minor embedding $U^2_4 \hookrightarrow M$ induces an isomorphism $\U\stackrel\sim\to F_M$ if $M$ is any of $C_5$, $C_5^\ast$, or $U^1_2\oplus U^2_4$. This follows for the series extension $C_5$ and the parallel extension $C_5^\ast$ at once from \cite[Prop.\ 4.9]{Baker-Lorscheid20}. For $M=U^1_2\oplus U^2_4$, this follows from \autoref{thm: foundations of direct sums}: since $U^1_2$ is regular, its foundation is $\Funpm$, and therefore the canonical inclusion $\U\to F_M=\Funpm\otimes\U=\U$ is an isomorphism.
\end{rem}

\begin{thm}\label{thm: fundamental presentation}
 Let $M$ be a matroid with foundation $F_M$ and fundamental diagram $\cE_M$. Then the canonical morphism $\colim F(\cE_M)\to F_M$ is an isomorphism.
\end{thm}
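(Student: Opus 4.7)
The plan is to construct, using the explicit cross-ratio presentation from \autoref{thm: fundamental presentation of foundations in terms of bases}, an inverse $\psi\colon F_M\to\colim F(\cE_M)$ to the canonical morphism $\phi\colon\colim F(\cE_M)\to F_M$. The morphism $\phi$ itself arises from the universal property of the colimit applied to the family of morphisms $F_N\to F_M$ induced by the embeddings $N\hookrightarrow M$ of all $N\in\cE_M$.

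Surjectivity of $\phi$ is essentially automatic: $F_M$ is generated over $\Funpm$ by the universal cross ratios $\cross abcdJ$, and for each non-degenerate tuple $(J;a,b,c,d)\in\Omega_M^\octa$ the embedded minor $N_{J,a,b,c,d}:=M\minor{E\setminus(J\cup\{a,b,c,d\})}{J}$ is isomorphic to $U^2_4\in\cS$ and hence lies in $\cE_M$. Its foundation $\U$ already contains $\cross abcd\emptyset$, and the induced morphism $F_{N_{J,a,b,c,d}}\to F_M$ sends this element to $\cross abcdJ$. To define $\psi$, I would send each non-degenerate generator $\cross abcdJ\in F_M$ to the image of $\cross abcd\emptyset\in F_{N_{J,a,b,c,d}}$ in the colimit, and send each degenerate generator to $1$. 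Well-definedness of $\psi$ is then equivalent to verifying that every defining relation in the presentation of $F_M$ already holds in $\colim F(\cE_M)$.

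The verification proceeds by matching each family of relations with an embedded minor of a type in $\cS$. Relations (R0) hold by construction; (R$\sigma$), (R1), (R2), and (R+) all live inside a single embedded $U^2_4$-minor, hence hold already in the image of the corresponding $\U$; (R--) is forced by any embedded $F_7$-minor or $F_7^\ast$-minor, whose foundation is $\F_2$; in the generic case (R3) is a defining relation of $F_{U^2_5}=\V$ (cf.\ the proof of \autoref{prop: foundation of U25}), and (R4) is a defining relation of $F_{U^3_5}=\V$. Relation (R5), which identifies the cross ratios computed after contracting $J\cup\{e_5\}$ and $J\cup\{e_6\}$ when $\gen{Je_5}=\gen{Je_6}$, must be transmitted through a minor containing both of the resulting $U^2_4$-minors; the auxiliary matroids $U^2_4\oplus U^1_2$, $C_5$, and $C_5^\ast$ are included in $\cS$ precisely to supply such intermediate minors in the various combinatorial configurations of parallel and coparallel classes that can arise.

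The main obstacle, I expect, is the careful case analysis of the degenerate subcases of (R3), (R4), and especially (R5), in which some of the pairs or triples required to be bases of the relevant contraction in fact fail to be so. In such configurations the naive embedded minor on the support of the relation need not lie in $\cS$, but the required identification of cross ratios can still be derived from foundations of minors in $\cS$, either via the auxiliary matroids $C_5$, $C_5^\ast$, $U^2_4\oplus U^1_2$, or by reduction to (R0) forcing the affected cross ratios to $1$ so that the relation becomes a trivial identity among $1$'s. Once all relations have been checked, both compositions $\phi\circ\psi$ and $\psi\circ\phi$ act as the identity on generators---of $F_M$ in one direction, and of each $F_N\subseteq\colim F(\cE_M)$ (which is generated by cross ratios mapped back to themselves) in the other---so $\phi$ and $\psi$ are mutually inverse isomorphisms.
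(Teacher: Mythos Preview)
Your strategy---build an inverse $\psi$ by sending each cross ratio to its image under the corresponding $U^2_4$-minor and then verify all relations of \autoref{thm: fundamental presentation of foundations in terms of bases}---is sound, and you correctly identify that the degenerate instances of \eqref{R3}, \eqref{R4}, and \eqref{R5} are the crux. But the proposal stops precisely where the work begins: you say the auxiliary matroids $C_5$, $C_5^\ast$, $U^2_4\oplus U^1_2$ are ``included in $\cS$ precisely to supply such intermediate minors,'' yet you never verify that they actually suffice. For \eqref{R5} in particular, the minimal minor supporting the relation is a parallel extension of an arbitrary rank-$3$ matroid on $5$ elements having a $U^2_4$-minor; there are several isomorphism types of such $6$-element matroids (e.g.\ $U^1_1\oplus C_5^\ast$, the two non-isomorphic parallel extensions of $C_5$, the parallel extension of $U^3_5$), none of which lies in $\cS$. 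Showing that the relation \eqref{R5} nevertheless follows from relations living in minors of type in $\cS$ requires a genuine case analysis that you have not supplied.

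The paper takes a different route that sidesteps reproving everything from scratch. It invokes the earlier result \cite[Thm.~4.23]{Baker-Lorscheid20}, which already gives $F_M\simeq\colim F(\cE_M')$ for a \emph{larger} diagram $\cE_M'$ (containing, beyond $\cS$, the matroids $U^0_1\oplus U^2_4$, $U^1_1\oplus U^2_4$, and all parallel extensions of rank-$3$ matroids on $5$ elements with a $U^2_4$-minor). The proof then removes each extra isomorphism type $N$ from $\cE_M'$ by checking, one $N$ at a time, that $F_N\simeq\colim F(\cE_N)$; this is a finite, explicit computation for each of the half-dozen $6$-element matroids listed above, carried out by drawing their fundamental diagrams and observing that all arrows among the foundations $\U$ and $\V$ are isomorphisms (or, in the $U^3_5$-extension case, that enough arrows are jointly epimorphic). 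Your approach and the paper's are ultimately equivalent in content, but the paper's reduction isolates exactly the new cases and dispatches them concretely, whereas your sketch leaves the entire verification as an exercise.
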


\begin{proof}
 In addition to the matroids listed in the definition of the fundamental diagram, the description $F_M=\colim F(\cE_M')$ in \cite[Thm.\ 4.23]{Baker-Lorscheid20} requires considering embedded minors of types $U^i_1\oplus U^2_4$ for $i=0,1$, as well as all embedded minors that are parallel extensions of rank $3$ matroids on $5$ elements with a $U^2_4$-minor.
 
 The claim of this theorem follows if we can show that the colimit $\colim F(\cE_M')$ does not change when we omit these additional embedded minors from $\cE_M'$. Our argument will make use of the following fact about colimits: if an object $F_{N}$ of the diagram $F(\cE_M')$ is the colimit $\colim F(\cE_N)$, where $\cE_N$ is the fundamental diagram of $N$, then we can omit $F_{N}$ from $\cE_M'$ without changing the value of $\colim F(\cE_M')$. We verify this case by case for all embedded minors $N=M\minor JI$ of the mentioned types.
 
 To start with, we consider $N=U^0_1\oplus U^2_4$. Let $1$ be the loop of the factor $U^0_1$. Then $\cE_N$ contains a unique embedded minor, which is $N\setminus1$, and thus $\colim F(\cE_N)=F_{N\setminus1}$. Since $1$ is a loop, the minor embedding $N\setminus1\hookrightarrow N$ induces an isomorphism $\colim F(\cE_N)=F_{N\setminus1}\to F_N$ by \cite[Prop.\ 4.9]{Baker-Lorscheid20}. As explained above, this shows that we can omit embedded minors of type $U^0_1\oplus U^2_4$ from $\cE_M'$ without changing its colimit.
 
 The case of embedded minors of type $N=U^1_1\oplus U^2_4$ is analogous. If $1$ is the coloop of the factor $U^1_1$, then $\cE_N$ consists of the unique embedded minor $N/1$ and thus $\colim F(\cE_N)=F_{N/1}$. Also in this case, \cite[Prop.\ 4.9]{Baker-Lorscheid20} applies and shows that we have an isomorphism $\colim F(\cE_N)=F_{N/1}\simeq F_N$, as desired.
 
 Next we turn to the parallel extensions of matroids of rank $3$ on $5$ elements. We begin with a classification of such parallel extensions. The isomorphism classes of rank $3$ matroids on $5$ elements with a $U^2_4$-minor are $U^1_1\oplus U^2_4$, $C_5$, and $U^3_5$. The matroid $U^1_1\oplus U^2_4$ has two parallel extensions: $U^1_2\oplus U^2_4$ and $U^1_1\oplus C_5^\ast$. The matroid $C_5$ has also two parallel extensions: one extends a series element, the other a non-series element. The uniform matroid $U^3_5$ has a unique parallel extension. 
 
 We continue with a case by case inspection of these parallel extensions, but for $U^1_2\oplus U^2_4$, which we cannot omit from the fundamental diagram for the reasons explained in \autoref{rem: fundamental diagram of U12+U24}. In each case, we identify the ground set with $E=\{1,\dotsc,6\}$ and assume that $1$ and $2$ are parallel. 
 
 As a first case, we consider $N=U^1_1\oplus C_5^\ast$. The two parallel elements $1$ and $2$ belong to $C_5^\ast$, and we denote by $6$ be the coloop of $U^1_1$. We illustrate in \autoref{fig: fundamental diagram of U11+C5*} the fundamental diagram $\cE_N$ of $N$, together with the isomorphism types of the embedded minors and the resulting diagram $F(\cE_N)$ of their foundations:

 \begin{figure}[htb]
 \[
  \beginpgfgraphicnamed{tikz/fig17}
  \begin{tikzpicture}[x=1.9cm,y=0.7cm]
   \draw[fill=blue!10!white,draw=blue!40!white,rounded corners=5pt]   (0.65,0.55) rectangle (1.35,2.45);
   \draw[fill=green!20!white,draw=green!80!black,rounded corners=5pt] (1.75,0.55) rectangle (2.25,2.45); 
   \draw[fill=blue!10!white,draw=blue!40!white,rounded corners=5pt]   (2.65,0.55) rectangle (3.35,2.45);
   \node[color=blue!80!white] at (1,2) {\footnotesize  $U^2_4$};
   \node[color=green!40!black] at (2,2) {\footnotesize  $C_5^\ast$};
   \node[color=blue!80!white] at (3,2) {\footnotesize  $U^2_4$};
   \node (N1/6) at (1,1) { $N\setminus1/6$};  
   \node (N/6)  at (2,1) { $N/6$};  
   \node (N2/6) at (3,1) { $N\setminus2/6$};  
   \draw[->] (N1/6) to (N/6);
   \draw[->] (N2/6) to (N/6);
   \node (F1/6) at (1,-0.5) { $\U$};
   \node (F/6)  at (2,-0.5) { $\U$};
   \node (F2/6) at (3,-0.5) { $\U$};
   \draw[->] (F1/6)  to node[sloped, above=-2pt] { $\sim$} (F/6);
   \draw[->] (F2/6)  to node[sloped, above=-2pt] { $\sim$} (F/6);
  \end{tikzpicture}
  \endpgfgraphicnamed
 \]
 \caption{The fundamental diagram of $U^1_1\oplus C_5^\ast$ and the associated diagram of foundations}
 \label{fig: fundamental diagram of U11+C5*}
 \end{figure}
 
 This shows that the colimit of $F(\cE_N)$ is $\U$, and identifies it canonically with the foundation of $N/6$. Since $6$ is a coloop, the foundation of $N$ is isomorphic to $F_{N/6}$ (cf.\ \cite[Prop.\ 4.9]{Baker-Lorscheid20}) and thus $\colim F(\cE_N)=F_{N/6}\to F_N$ is an isomorphism, as desired.
 
 Next we consider the parallel extension $N$ of $C_5$ by a series element. We denote the parallel elements by $1$ and $2$ and the complementary series element of $C_5$ by $3$. The fundamental diagram $\cE_N$ of $N$ and the resulting diagram $F(\cE_N)$ of their foundations are as illustrated in \autoref{fig: fundamental diagram of the parallel extension of C5 at a series element}.
 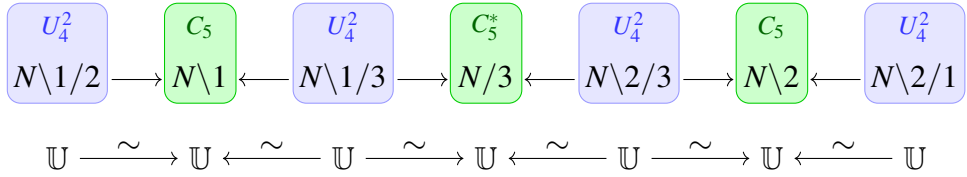
\begin{figure}[htb]
 \[
  \beginpgfgraphicnamed{tikz/fig18}
  \begin{tikzpicture}[x=1.9cm,y=0.7cm]
   \draw[fill=blue!10!white,draw=blue!40!white,rounded corners=5pt]   (0.65,0.55) rectangle (1.35,2.45);
   \draw[fill=green!20!white,draw=green!80!black,rounded corners=5pt] (1.75,0.55) rectangle (2.25,2.45); 
   \draw[fill=blue!10!white,draw=blue!40!white,rounded corners=5pt]   (2.65,0.55) rectangle (3.35,2.45);
   \draw[fill=green!20!white,draw=green!80!black,rounded corners=5pt] (3.75,0.55) rectangle (4.25,2.45); 
   \draw[fill=blue!10!white,draw=blue!40!white,rounded corners=5pt]   (4.65,0.55) rectangle (5.35,2.45);
   \draw[fill=green!20!white,draw=green!80!black,rounded corners=5pt] (5.75,0.55) rectangle (6.25,2.45); 
   \draw[fill=blue!10!white,draw=blue!40!white,rounded corners=5pt]   (6.65,0.55) rectangle (7.35,2.45);
   \node[color=blue!80!white]  at (1,2) {\footnotesize $U^2_4$};
   \node[color=green!40!black] at (2,2) {\footnotesize $C_5$};
   \node[color=blue!80!white]  at (3,2) {\footnotesize $U^2_4$};
   \node[color=green!40!black] at (4,2) {\footnotesize $C_5^\ast$};
   \node[color=blue!80!white]  at (5,2) {\footnotesize $U^2_4$};
   \node[color=green!40!black] at (6,2) {\footnotesize $C_5$};
   \node[color=blue!80!white]  at (7,2) {\footnotesize $U^2_4$};
   \node (N1/2) at (1,1) {$N\setminus1/2$};  
   \node (N1)   at (2,1) {$N\setminus1$};  
   \node (N1/3) at (3,1) {$N\setminus1/3$};  
   \node (N/3)  at (4,1) {$N/3$};  
   \node (N2/3) at (5,1) {$N\setminus2/3$};  
   \node (N2)   at (6,1) {$N\setminus2$};  
   \node (N2/1) at (7,1) {$N\setminus2/1$};  
   \draw[->] (N1/2) to (N1);
   \draw[->] (N1/3) to (N1);
   \draw[->] (N1/3) to (N/3);
   \draw[->] (N2/3) to (N/3);
   \draw[->] (N2/3) to (N2);
   \draw[->] (N2/1) to (N2);
   \node (F1/2) at (1,-0.5) {$\U$};
   \node (F1)   at (2,-0.5) {$\U$};
   \node (F1/3) at (3,-0.5) {$\U$};
   \node (F/3)  at (4,-0.5) {$\U$};
   \node (F2/3) at (5,-0.5) {$\U$};
   \node (F2)   at (6,-0.5) {$\U$};
   \node (F2/1) at (7,-0.5) {$\U$};
   \draw[->] (F1/2)  to node[sloped, above=-2pt] {$\sim$} (F1);
   \draw[->] (F1/3)  to node[sloped, above=-2pt] {$\sim$} (F1);
   \draw[->] (F1/3)  to node[sloped, above=-2pt] {$\sim$} (F/3);
   \draw[->] (F2/3)  to node[sloped, above=-2pt] {$\sim$} (F/3);
   \draw[->] (F2/3)  to node[sloped, above=-2pt] {$\sim$} (F2);
   \draw[->] (F2/1)  to node[sloped, above=-2pt] {$\sim$} (F2);
  \end{tikzpicture}
  \endpgfgraphicnamed
 \]
 \caption{The fundamental diagram of the parallel extension of $C_5$ by a series element and the associated diagram of foundations}
 \label{fig: fundamental diagram of the parallel extension of C5 at a series element}
 \end{figure}

 As in the previous case, the colimit of $F(\cE_N)$ is identified canonically with the foundation of $F_{N\setminus1}$, which is isomorphic to the foundation of the parallel extension $N$ of $N\setminus1$ by \cite[Prop.\ 4.9]{Baker-Lorscheid20}. Thus $\colim F(\cE_N)=F_{N\setminus1}\simeq F_N$, as desired.
 
 Next, we consider the parallel extension $N$ of $C_5$ by a non-series element. We denote the parallel elements by $1$ and $2$ and the series elements by $5$ and $6$. The fundamental diagram $\cE_N$ of $N$, and the resulting diagram $F(\cE_N)$ of their foundations, are as illustrated in \autoref{fig: fundamental diagram of the parallel extension of C5 at a non-series element}:

 \begin{figure}[htb]
 \[
  \beginpgfgraphicnamed{tikz/fig19}
  \begin{tikzpicture}[x=1.9cm,y=0.7cm]
   \draw[fill=green!20!white,draw=green!80!black,rounded corners=5pt] (1.75,1.55) rectangle (2.25,3.45); 
   \draw[fill=blue!10!white,draw=blue!40!white,rounded corners=5pt]   (2.65,0.55) rectangle (3.35,3.45);
   \draw[fill=green!20!white,draw=green!80!black,rounded corners=5pt] (3.75,0.55) rectangle (4.25,3.45); 
   \draw[fill=blue!10!white,draw=blue!40!white,rounded corners=5pt]   (4.65,0.55) rectangle (5.35,3.45);
   \draw[fill=green!20!white,draw=green!80!black,rounded corners=5pt] (5.75,1.55) rectangle (6.25,3.45); 
   \node[color=green!40!black] at (2,3) {\footnotesize $C_5$};
   \node[color=blue!80!white]  at (3,2) {\footnotesize $U^2_4$};
   \node[color=green!40!black] at (4,2) {\footnotesize $C_5^\ast$};
   \node[color=blue!80!white]  at (5,2) {\footnotesize $U^2_4$};
   \node[color=green!40!black] at (6,3) {\footnotesize $C_5$};
   \node (N1)   at (2,2) {$N\setminus1$};  
   \node (N1/6) at (3,1) {$N\setminus1/6$};  
   \node (N/6)  at (4,1) {$N/6$};  
   \node (N2/6) at (5,1) {$N\setminus2/6$};  
   \node (N1/5) at (3,3) {$N\setminus1/5$};  
   \node (N/5)  at (4,3) {$N/5$};  
   \node (N2/5) at (5,3) {$N\setminus2/5$};  
   \node (N2)   at (6,2) {$N\setminus2$};  
   \draw[->] (N1/5) to (N1);
   \draw[->] (N1/5) to (N/5);
   \draw[->] (N1/6) to (N1);
   \draw[->] (N1/6) to (N/6);
   \draw[->] (N2/5) to (N2);
   \draw[->] (N2/5) to (N/5);
   \draw[->] (N2/6) to (N2);
   \draw[->] (N2/6) to (N/6);
   \node (F1)   at (2,-1.5) {$\U$};  
   \node (F1/6) at (3,-2.5) {$\U$};  
   \node (F/6)  at (4,-2.5) {$\U$};  
   \node (F2/6) at (5,-2.5) {$\U$};  
   \node (F1/5) at (3,-0.5) {$\U$};  
   \node (F/5)  at (4,-0.5) {$\U$};  
   \node (F2/5) at (5,-0.5) {$\U$};  
   \node (F2)   at (6,-1.5) {$\U$};  
   \draw[->] (F1/5) to node[sloped, above=-2pt] {$\sim$} (F1);
   \draw[->] (F1/5) to node[sloped, above=-2pt] {$\sim$} (F/5);
   \draw[->] (F1/6) to node[sloped, above=-2pt] {$\sim$} (F1);
   \draw[->] (F1/6) to node[sloped, above=-2pt] {$\sim$} (F/6);
   \draw[->] (F2/5) to node[sloped, above=-2pt] {$\sim$} (F2);
   \draw[->] (F2/5) to node[sloped, above=-2pt] {$\sim$} (F/5);
   \draw[->] (F2/6) to node[sloped, above=-2pt] {$\sim$} (F2);
   \draw[->] (F2/6) to node[sloped, above=-2pt] {$\sim$} (F/6);
  \end{tikzpicture}
  \endpgfgraphicnamed
 \]
 \caption{The fundamental diagram of the parallel extension of $C_5$ at a non-series element and the associated diagram of foundations}
 \label{fig: fundamental diagram of the parallel extension of C5 at a non-series element}
 \end{figure}
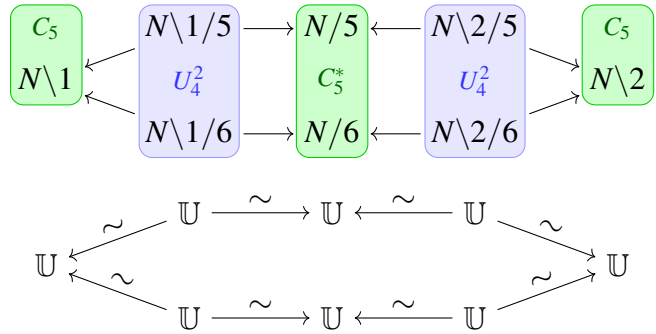

 In conclusion, the colimit of $F(\cE_N)$ is a quotient of $\U$ by an automorphism group that stems from the monodromy of the diagram. As a parallel extension of $N\setminus 1\simeq C_5$, the foundation of $N$ is isomorphic to $F_{N\setminus1}\simeq\U$. Since the diagram $F(\cE_N')$, which includes the foundation $F_N\simeq\U$ of $N$, is commutative, we conclude that the monodromy of $F(\cE_N)$ is trivial and thus $\colim F(\cE_N)\to F_N$ is an isomorphism, as claimed.

 The last case under investigation is the parallel extension $N$ of $U^3_5$. The fundamental diagram $\cE_N$ of $N$ and the resulting diagram $F(\cE_N)$ of their foundations are as in \autoref{fig: fundamental diagram of the parallel extension of U35}.
 \begin{figure}[htb]
  \centering
  \[
  \beginpgfgraphicnamed{tikz/fig20}
  \begin{tikzpicture}[x=1.9cm,y=0.7cm]
   \draw[fill=green!20!white,draw=green!80!black,rounded corners=5pt] (1.75,2.55) rectangle (2.25,6.45); 
   \draw[fill=blue!10!white,draw=blue!40!white,rounded corners=5pt]   (2.65,0.55) rectangle (3.35,6.45);
   \draw[fill=green!20!white,draw=green!80!black,rounded corners=5pt] (3.75,0.55) rectangle (4.25,6.45); 
   \draw[fill=blue!10!white,draw=blue!40!white,rounded corners=5pt]   (4.65,0.55) rectangle (5.35,6.45);
   \draw[fill=green!20!white,draw=green!80!black,rounded corners=5pt] (5.75,2.55) rectangle (6.25,6.45); 
   \node[color=green!40!black] at (2,6) {\footnotesize $U^3_5$};
   \node[color=blue!80!white]  at (3,6) {\footnotesize $U^2_4$};
   \node[color=green!40!black] at (4,6) {\footnotesize $C_5^\ast$};
   \node[color=blue!80!white]  at (5,6) {\footnotesize $U^2_4$};
   \node[color=green!40!black] at (6,6) {\footnotesize $U^3_5$};
   \node (N1)   at (2,3) {$N\setminus1$};  
   \node (N1/6) at (3,1) {$N\setminus1/6$};  
   \node (N/6)  at (4,1) {$N/6$};  
   \node (N2/6) at (5,1) {$N\setminus2/6$};  
   \node (N1/5) at (3,2) {$N\setminus1/5$};  
   \node (N/5)  at (4,2) {$N/5$};  
   \node (N2/5) at (5,2) {$N\setminus2/5$};  
   \node (N1/4) at (3,3) {$N\setminus1/4$};  
   \node (N/4)  at (4,3) {$N/4$};  
   \node (N2/4) at (5,3) {$N\setminus2/4$};  
   \node (N1/3) at (3,4) {$N\setminus1/3$};  
   \node (N/3)  at (4,4) {$N/3$};  
   \node (N2/3) at (5,4) {$N\setminus2/3$};  
   \node (N1/2) at (3,5) {$N\setminus1/2$};  
   \node (N2/1) at (5,5) {$N\setminus2/1$};  
   \node (N2)   at (6,3) {$N\setminus2$};  
   \draw[->] (N1/6) to (N1);
   \draw[->] (N1/6) to (N/6);
   \draw[->] (N1/5) to (N1);
   \draw[->] (N1/5) to (N/5);
   \draw[->] (N1/4) to (N1);
   \draw[->] (N1/4) to (N/4);
   \draw[->] (N1/3) to (N1);
   \draw[->] (N1/3) to (N/3);
   \draw[->] (N1/2) to (N1);
   \draw[->] (N2/6) to (N2);
   \draw[->] (N2/6) to (N/6);
   \draw[->] (N2/5) to (N2);
   \draw[->] (N2/5) to (N/5);
   \draw[->] (N2/4) to (N2);
   \draw[->] (N2/4) to (N/4);
   \draw[->] (N2/3) to (N2);
   \draw[->] (N2/3) to (N/3);
   \draw[->] (N2/1) to (N2);
   \node (F1)   at (2,-2.5) {$\V$};  
   \node (F1/6) at (3,-4.5) {$\U$};
   \node (F/6)  at (4,-4.5) {$\U$}; 
   \node (F2/6) at (5,-4.5) {$\U$};
   \node (F1/5) at (3,-3.5) {$\U$};
   \node (F/5)  at (4,-3.5) {$\U$};
   \node (F2/5) at (5,-3.5) {$\U$};
   \node (F1/4) at (3,-2.5) {$\U$};
   \node (F/4)  at (4,-2.5) {$\U$};
   \node (F2/4) at (5,-2.5) {$\U$};
   \node (F1/3) at (3,-1.5) {$\U$};
   \node (F/3)  at (4,-1.5) {$\U$};
   \node (F2/3) at (5,-1.5) {$\U$};
   \node (F1/2) at (3,-0.5) {$\U$};
   \node (F2/1) at (5,-0.5) {$\U$};
   \node (F2)   at (6,-2.5) {$\V$};  
   \draw[->] (F1/6) to (F1);
   \draw[->] (F1/6) to node[sloped, above=-2pt] {$\sim$} (F/6);
   \draw[->] (F1/5) to (F1);
   \draw[->] (F1/5) to node[sloped, above=-2pt] {$\sim$} (F/5);
   \draw[->] (F1/4) to (F1);
   \draw[->] (F1/4) to node[sloped, above=-2pt] {$\sim$} (F/4);
   \draw[->] (F1/3) to (F1);
   \draw[->] (F1/3) to node[sloped, above=-2pt] {$\sim$} (F/3);
   \draw[->] (F1/2) to (F1);
   \draw[->] (F2/6) to (F2);
   \draw[->] (F2/6) to node[sloped, above=-2pt] {$\sim$} (F/6);
   \draw[->] (F2/5) to (F2);
   \draw[->] (F2/5) to node[sloped, above=-2pt] {$\sim$} (F/5);
   \draw[->] (F2/4) to (F2);
   \draw[->] (F2/4) to node[sloped, above=-2pt] {$\sim$} (F/4);
   \draw[->] (F2/3) to (F2);
   \draw[->] (F2/3) to node[sloped, above=-2pt] {$\sim$} (F/3);
   \draw[->] (F2/1) to (F2);
  \end{tikzpicture}
  \endpgfgraphicnamed
  \]
  \caption{The fundamental diagram $\cE_N$ of the parallel extension of $U^3_5$ and the induced diagram of foundations}
  \label{fig: fundamental diagram of the parallel extension of U35}
 \end{figure}
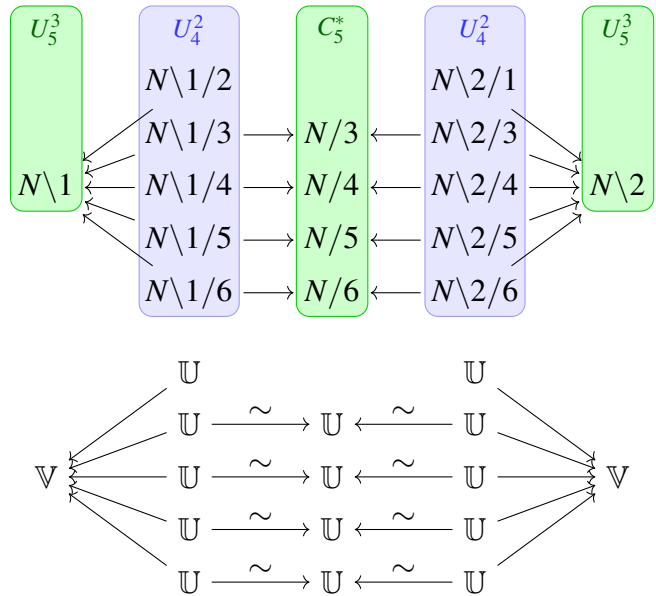
 By \cite[Prop.\ 4.9]{Baker-Lorscheid20}, the embedding of $N\setminus 1$ into its parallel extension $N$ induces an isomorphism $\V=F_{N\setminus1}\simeq F_N$. We aim to show that the canonical inclusion $F_{N\setminus1}\to\colim F(\cE_N)$ is an isomorphism. This follows if we can show that the morphisms
 \[
  \Psi_i: \ \U^{\otimes 4} \ \simeq \ \bigotimes_{j=3}^6 F_{N\minor ik} \ \longrightarrow \ F_{N\setminus i} \ \simeq \ \V
 \]
 are epimorphisms for $i=1,2$, since in this case, $F_{N\setminus1}$ and $F_{N\setminus2}$ will be identified in the colimit of $F(\cE_N)$. By symmetry, it suffices to show that $\Psi_1$ is an epimorphism. By \autoref{prop: foundation of U25}, we have
 \[
  F_{N\setminus 1} \ = \ \pastgenn{\Funpm}{x_2,\dotsc,x_6}{x_i+x_{i-1}x_{i+1}-1\mid i=2,\dotsc,6}
 \]
 for $x_i=\cross{i+1}{i+2}{i+3}{i+4}{}$ and $y_i=x_{i-1}x_{i+1}=\cross{i+1}{i+3}{i+2}{i+4}{}$, where we consider $i+k$ modulo $5$ as an element of $\{2,\dotsc,6\}$. For $i=3,\dotsc,6$, the cross ratio $x_i$ of $F_{N\setminus1}$ is the image of the corresponding cross ratio of $F_{N\minor ik}\simeq\U$. Since 
 \[
  x_2 \ = \ \cross3456{} \ = \ \cross3462{} \cdot \cross3425 \ = \ x_5^{-1}\cdot y_6,
 \]
 where $x_5^{-1}$ stems from $F_{N\minor15}$ and $y_6$ stems from $F_{N\minor16}$, $x_2$ also lies in the image of the map $\Psi_1$. This shows that $\Psi_1$ is surjective, and therefore an epimorphism of pastures. This concludes our argument that $\colim F(\cE_N)=F_{N\setminus1}\simeq F_N$, as desired.
\end{proof}

\begin{rem}\label{rem: fundamental diagram of U12+U24}
 To see that we cannot remove embedded minors of type $U^1_2\oplus U^2_4$ from the fundamental diagram, let us consider $M=U^1_2\oplus U^2_4$, whose fundamental diagram $\cE_M$, together with the induced diagram $F(\cE_M)$ of the associated foundations, is as illustrated in \autoref{fig: fundamental diagram of U12+U24}:

 \begin{figure}[htb]
 \[
  \beginpgfgraphicnamed{tikz/fig16}
  \begin{tikzpicture}[x=1.9cm,y=0.7cm]
   \draw[fill=blue!10!white,draw=blue!40!white,rounded corners=5pt] (0.65,0.55) rectangle (1.35,2.45);
   \draw[fill=green!20!white,draw=green!80!black,rounded corners=5pt] (1.65,0.55) rectangle (2.35,2.45); 
   \draw[fill=blue!10!white,draw=blue!40!white,rounded corners=5pt] (2.65,0.55) rectangle (3.35,2.45);
   \node[color=blue!80!white] at (1,2) {\footnotesize $U^2_4$};
   \node[color=green!40!black] at (2,2) {\footnotesize $U^1_2\oplus U^2_4$};
   \node[color=blue!80!white] at (3,2) {\footnotesize $U^2_4$};
   \node (M12)  at (1,1) {$M\minor12$};  
   \node (M)   at (2,1) {$M$};  
   \node (M21)  at (3,1) {$M\minor21$};  
   \draw[->] (M12) to (M);
   \draw[->] (M21) to (M);
   \node (F12) at (1,-0.5) {$\U$};
   \node (F)   at (2,-0.5) {$\U$};
   \node (F21) at (3,-0.5) {$\U$};
   \draw[->] (F12)  to node[sloped, above=-2pt] {$\sim$} (F);
   \draw[->] (F21)  to node[sloped, above=-2pt] {$\sim$} (F);
  \end{tikzpicture}
  \endpgfgraphicnamed
 \]
 \caption{The fundamental diagram of $U^1_2\oplus U^2_4$ and the associated diagram of foundations}
 \label{fig: fundamental diagram of U12+U24}
 \end{figure}
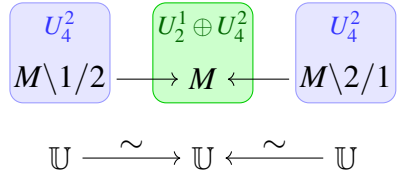
 Since $\U$ is a terminal object of the diagram $F(\cE_M)$, we conclude that $F_M\simeq\U$. If we were to omit the embedded minors of type $U^1_2\oplus U^2_4$ from $\cE_M$, then the resulting diagram of foundations would consist of two isolated copies of $\U$ whose colimit is $\U\otimes\U$. This shows that we cannot omit minors of this type in general.
 
 Also none of the other minors of types $U^2_4$, $U^2_5$, $U^3_5$, $C_5$, $C_5^\ast$, $F_7$ and $F_7^\ast$ can be omitted from the fundamental diagram without violating the result of \autoref{thm: fundamental presentation}. This can be seen as follows: all proper minors of $U^2_4$ are regular, so if we omit $M=U^2_4$ from the list, then $\colim\cE_M=\Funpm\neq\U=F_M$. Similarly, all proper minors of $M=F_7$ are regular, and omitting $F_7$ from the fundamental type yields $\colim \cE_M=\Funpm\neq\F_2=F_M$. Since the fundamental diagram and the fundamental presentation are both symmetric under duality, the same holds for $F_7^\ast$. The matroid $M=C_5$ has two non-regular proper minors, which are both of type $U^2_4$. Therefore the fundamental presentation is $\U\leftarrow\U\to\U$, equal to that of $\U^2_4\oplus \U^1_2$. Omitting $C_5$ from $\cE_M$ yields $\colim\cE_M=\U\otimes\U\neq\U=F_M$. By duality, the same holds for $C_5^\ast$. The matroid $U^2_5$ has $5$ non-regular proper minors, which are of type $U^2_4$. Thus omitting $M=U^2_5$ from the fundamental diagram yields $\colim\cE_M=\U^{\otimes5}\neq\V=F_M$. By duality, the same holds for $U^3_5$.
\end{rem}


\subsection{The fundamental lattice presentation}
\label{subsection: the fundamental lattice presentation}

In this section, we describe a presentation of the foundation in terms of the lattice of flats of a matroid. This approach has computational advantages, because several types of embedded minors correspond to the same sublattice, which leads to a more compact presentation of the foundation as a colimit.

By \cite[Prop.\ 4.9]{Baker-Lorscheid20}, the foundation $F_M$ of a matroid $M$ only depends on its simplification and therefore is determined by the lattice of flats $\Lambda$ of $M$, which justifies the notation $F_{\Lambda}=F_M$. As explained in \autoref{subsection: universal cross ratios}, the cross ratios $\cross abcdI=\cross{\gen{Ia}}{\gen{Ib}}{\gen{Ic}}{\gen{Id}}{}$ in $F_M=F_\Lambda$ only depend on the hyperplanes $\gen{Ia}$, $\gen{Ib}$, $\gen{Ic}$ and $\gen{Id}$, which are corank $1$ elements of $\Lambda$. This yields intrinsic generators of $F_\Lambda$ in terms of $\Lambda$.

\begin{df}
 An \emph{upper sublattice} is a matroid sublattice $\Lambda'$ (i.e.\ atomistic and semimodular) whose rank $r'$ equals the corank of its bottom element $F$ as an element in $\Lambda$ (or as a flat in $M$). 
\end{df}

\begin{df}
 If $\Lambda$ is a lattice and $S$ is a subset of atoms of $\Lambda$, we define the \emph{sublattice $\Lambda_S$ induced by $S$} to be the set of all $x \in \Lambda$ such that $x$ is a join of elements in $S$. If $\Lambda$ is a lattice and $x,y \in \Lambda$, we denote by $[x,y]$ the \emph{interval} $\{ z \in \Lambda \mid x \leq z \leq y \}$.
\end{df}

According to the ``Scum Theorem''\footnote{The name comes from the fact that, like scum which rises to the top of a pond, the lattice of flats of any minor of $M$ can be found ``hanging from the top'' of the lattice of flats of $M$.} of D.~A.~Higgs, $\Lambda_N$ is isomorphic to an upper sublattice of $\Lambda_M$ for every minor $N$ of $M$, and every upper sublattice of $\Lambda_M$ is isomorphic to $\Lambda_N$ for some minor $N$ of $M$. However, this is not a one-to-one correspondence. The following result, phrased in terms of embedded minors rather than minors, makes the correspondence more precise. 
To state the result, let $\Emb_M$ denote the set of embedded minors of a matroid $M$, and let $\USL_\Lambda$ denote the set of upper sublattices of a lattice $\Lambda$.
 
\begin{prop}\label{prop: embedded minor surject onto upper sublattices}
 Define $\Psi: \Emb_M \to \USL_{\Lambda_M}$ by sending $M\minor JI$ to $[\langle I \rangle, E]_{S}$ where $S=\{\gen{I,e}\mid e\notin I\cup J\}$ are the atoms of $[\langle I \rangle, E]$ that are generated by the elements of $M\minor JI$. Then $\Psi$ is surjective and $\Psi(N) \cong \Lambda_N$ for every embedded minor $N$ of $M$.
\end{prop}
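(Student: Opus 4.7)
The plan is to produce, for each embedded minor $N = M\minor JI$, an explicit lattice isomorphism between $\Lambda_N$ and $\Psi(N) = [\gen I, E]_S$. This simultaneously yields well-definedness of $\Psi$ as a map into $\USL_{\Lambda_M}$ and the assertion $\Psi(N)\cong\Lambda_N$. Surjectivity of $\Psi$ is then handled by a separate direct construction.

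First I would recall the standard descriptions of flats under contraction and deletion: the flats of $M/I$ correspond bijectively to the flats of $M$ containing $\gen I$, giving an order isomorphism $\Lambda_{M/I}\cong[\gen I, E]$; and for any $X\subseteq E\setminus(I\cup J)$, the closure in $N$ is $\text{cl}_N(X) = \gen{I\cup X}\setminus(I\cup J)$. It follows that every flat $F''$ of $N$ satisfies $F'' = \gen{I\cup F''}\setminus(I\cup J)$ and that the atoms of $\Lambda_N$ are exactly the distinct atoms $\gen{I,e}$ of $[\gen I,E]$ for $e\in E\setminus(I\cup J)$ with $e\notin\gen I$, i.e., the atoms in $S$. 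I would then define $\varphi : \Lambda_N \to [\gen I, E]_S$ by $\varphi(F'') := \gen{I\cup F''}$, with inverse $G\mapsto G\cap(E\setminus(I\cup J))$, these being mutually inverse by the preceding identity. Join-preservation reduces to showing $\gen{\gen{I\cup F''_1\cup F''_2}\setminus J} = \gen{I\cup F''_1\cup F''_2}$; this holds because $I\cup F''_1\cup F''_2$ is already contained in $E\setminus J$ and hence in $\gen{I\cup F''_1\cup F''_2}\setminus J$. Via $\varphi$, atomisticity and semimodularity transport from $\Lambda_N$ onto $[\gen I, E]_S$, whose rank equals $r(N) = r(M)-r_M(\gen I)$, the corank of $\gen I$ in $\Lambda_M$; hence $\Psi(N)$ is an upper sublattice.

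For surjectivity of $\Psi$, I start from an upper sublattice $\Lambda'\subseteq\Lambda_M$ with bottom flat $F$ and atom set $A$, choose $I\subseteq E$ with $\gen I = F$ (for example a basis of $F$, modified if necessary so that the embedded-minor coindependence condition is met), and set
\[
 J \ := \ (F\setminus I) \ \cup \ \bigcup_{\substack{a\text{ atom of }[F,E] \\ a\notin A}} (a\setminus F).
\]
Then $E\setminus(I\cup J)$ retains precisely representatives for each atom in $A$, so $S=\{F\}\cup A$, and the sublattice generated by $S$ inside $[\gen I, E]$ coincides with $\Lambda'$; thus $\Psi(M\minor JI)=\Lambda'$. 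The main technical obstacle is verifying join-preservation for $\varphi$, which rests on carefully tracking closures through the contraction $M/I$, the deletion $\setminus J$, and the ambient $\Lambda_M$ and reconciling them with joins inside $[\gen I,E]$. A subsidiary issue is arranging the (co)independence conditions on $I$ and $J$ in the surjectivity construction; in boundary cases, such as $F=E$ or when a basis of $F$ fails to be coindependent in $M$, the naive choice of $I$ must be suitably modified.
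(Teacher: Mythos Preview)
Your approach is essentially the same as the paper's: both characterize flats of $N=M\minor JI$ via the standard description of flats under deletion/contraction (Oxley, Prop.\ 3.3.7), and both reverse-engineer $I$ and $J$ from a given upper sublattice for surjectivity. The paper compresses the lattice isomorphism into the single observation that $F'\subseteq E\setminus(I\cup J)$ is a flat of $N$ iff $F'\cup I=F\setminus J$ for a (uniquely determined) flat $F$ of $M$, and leaves the choice of $I,J$ in the surjectivity step implicit; your version spells out the map $\varphi(F'')=\gen{I\cup F''}$, its inverse, and the explicit formula for $J$.

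One small slip: in your surjectivity construction you write $S=\{F\}\cup A$, but with your choice of $J\supseteq F\setminus I$ every $e\notin I\cup J$ lies outside $F$, so $\gen{I,e}\supsetneq F$ and hence $S=A$. This is harmless since the sublattice generated by $A$ already contains $F$ as the empty join. Your stated concern about arranging the independence/coindependence conditions on $I$ and $J$ is legitimate but routine (and the paper likewise does not dwell on it): taking $I$ to be a basis of $F$ makes $I$ independent, and since the atoms in $A$ join to $E$ (as $\Lambda'$ is an upper sublattice), $E\setminus J$ is spanning, so $J$ is coindependent.
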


\begin{proof}
 According to Proposition 3.3.7 in \cite{Oxley92}, the flats of $M\setminus J$ are the subsets of $E$ of the form $F-J$ such that $F$ is a flat of $M$, and the flats of $M/I$ are subsets $F'$ of $E$ such that $F' \cup I$ is a flat of $M$.  Thus a subset $F'$ of $E-(I\cup J)$ is a flat of $N = M\minor JI$ if and only if $F' \cup I = F -J$ for some flat $F$ of $M$. (The flat $F$ is determined by $F'$ since the closure of $F -J$ is equal to $F$ as $E-J$ is spanning.)

 It follows that for every embedded minor $N = M\minor JI$ and $S=\{\gen{I,e}\mid e\notin I\cup J\}$, the lattice $[\langle I \rangle, E]_S$ is an upper sublattice of $\Lambda_M$ that is isomorphic to $\Lambda_N$. The surjectivity of $\Psi$ follows since every upper sublattice of $\Lambda_M$ is of the form $[F,E]_S$ for some $F \in \Lambda$ and some subset $S$ of atoms of $[F,E]$, and since $[F,E]_S=\Psi(M\minor JI)$ for $I$ and $J$ such that $F=\gen I$ and $S=\{\gen{I,e}\mid e\notin I\cup J\}$.
\end{proof}
 
Given an upper sublattice $\Lambda'$ of $\Lambda_M$, we find an embedded minor $N=M\minor JI$ of $M$ with sublattice $\Lambda_N=\Lambda'$ by \autoref{prop: embedded minor surject onto upper sublattices}.
Consequently, the lattice inclusion $\Lambda' \hookrightarrow \Lambda$ induces a morphism of foundations $F_{\Lambda'}=F_N\to F_M=F_\Lambda$, which can be described intrinsically in terms of the tautological association 
\[
 \begin{array}{ccc}
  F_{\Lambda'}                 & \longrightarrow & F_\Lambda \\
  \cross{H_1}{H_2}{H_3}{H_4}{} & \longmapsto     & \cross{H_1}{H_2}{H_3}{H_4}{}
 \end{array}
\]
for modular quadruples $(H_1,H_2,H_3,H_4)$ of hyperplanes of $\Lambda'$. We say that an upper sublattice $\Lambda'$ of $\Lambda$ is an \emph{$N$-sublattice}, or \emph{of type $N$}, if $\Lambda'$ is isomorphic to the lattice of flats of the matroid $N$. 

\begin{df}
 Let $M$ be a matroid with lattice $\Lambda_M$. The \emph{fundamental lattice diagram $\cL_M$ of $M$} is the diagram of all upper sublattices $\Lambda'$ of $M$ of types $U^2_4$, $U^2_5$, $U^3_5$, $C_5$, $F_7$ and $F_7^*$ (as illustrated in \autoref{fig: relevant upper sublattices for the foundation}), together with all lattice inclusions $\Lambda'\hookrightarrow\Lambda''$. The $\cL_M$-presentation of $M$ is the induced diagram $F(\cL_M)$ of foundations of the lattices in $\cL_M$.
\end{df}

\begin{figure}[htb]
 \[
 \newsavebox\latticeUtwofour
 \sbox{\latticeUtwofour}{
  \beginpgfgraphicnamed{tikz/fig1}
  \begin{tikzpicture}[x=1.0cm,y=0.8cm]
   \node (0) at (1.5,0) {\footnotesize $\emptyset$};  
   \node (1) at (0,1) {\footnotesize $1$};  
   \node (2) at (1,1) {\footnotesize $2$};  
   \node (3) at (2,1) {\footnotesize $3$};  
   \node (4) at (3,1) {\footnotesize $4$};  
   \node (1234) at (1.5,2) {\footnotesize $1234$};  
   \draw (0) to (1);
   \draw (0) to (2);
   \draw (0) to (3);
   \draw (0) to (4);
   \draw (1) to (1234);
   \draw (2) to (1234);
   \draw (3) to (1234);
   \draw (4) to (1234);
   \node at (3.0,0.0) {\footnotesize ${U^2_4}$};  
  \end{tikzpicture}
  \endpgfgraphicnamed
 }
 \newsavebox\latticeUtwofive
 \sbox{\latticeUtwofive}{
  \beginpgfgraphicnamed{tikz/fig2}
  \begin{tikzpicture}[x=1.0cm,y=0.8cm]
   \node (0) at (2,0) {\footnotesize $\emptyset$};  
   \node (1) at (0,1) {\footnotesize $1$};  
   \node (2) at (1,1) {\footnotesize $2$};  
   \node (3) at (2,1) {\footnotesize $3$};  
   \node (4) at (3,1) {\footnotesize $4$};  
   \node (5) at (4,1) {\footnotesize $5$};  
   \node (12345) at (2,2) {\footnotesize $12345$};  
   \draw (0) to (1);
   \draw (0) to (2);
   \draw (0) to (3);
   \draw (0) to (4);
   \draw (0) to (5);
   \draw (1) to (12345);
   \draw (2) to (12345);
   \draw (3) to (12345);
   \draw (4) to (12345);
   \draw (5) to (12345);
   \node at (4.0,0.0) {\footnotesize ${U^2_5}$};  
  \end{tikzpicture}
  \endpgfgraphicnamed
 }
 \newsavebox\latticeUthreefive
 \sbox{\latticeUthreefive}{
  \beginpgfgraphicnamed{tikz/fig3}
  \begin{tikzpicture}[x=0.75cm,y=0.8cm]
   \node (0) at (4.5,0) {\footnotesize $\emptyset$};  
   \node (1) at (2.5,1) {\footnotesize $1$};  
   \node (2) at (3.5,1) {\footnotesize $2$};  
   \node (3) at (4.5,1) {\footnotesize $3$};  
   \node (4) at (5.5,1) {\footnotesize $4$};  
   \node (5) at (6.5,1) {\footnotesize $5$};  
   \node (12) at (0,2) {\footnotesize $12$};  
   \node (13) at (1,2) {\footnotesize $13$};  
   \node (14) at (2,2) {\footnotesize $14$};  
   \node (15) at (3,2) {\footnotesize $15$};  
   \node (23) at (4,2) {\footnotesize $23$};  
   \node (24) at (5,2) {\footnotesize $24$};  
   \node (25) at (6,2) {\footnotesize $25$};  
   \node (34) at (7,2) {\footnotesize $34$};  
   \node (35) at (8,2) {\footnotesize $35$};  
   \node (45) at (9,2) {\footnotesize $45$};  
   \node (12345) at (4.5,3) {\footnotesize $12345$};  
   \draw (0) to (1);
   \draw (0) to (2);
   \draw (0) to (3);
   \draw (0) to (4);
   \draw (0) to (5);
   \draw (2) to (12);
   \draw (3) to (13);
   \draw (4) to (14);
   \draw (5) to (15);
   \draw[white,line width=2pt] (3) to (23);
   \draw (3) to (23);
   \draw[white,line width=2pt] (4) to (24);
   \draw (4) to (24);
   \draw[white,line width=2pt] (5) to (25);
   \draw (5) to (25);
   \draw[white,line width=2pt] (4) to (34);
   \draw (4) to (34);
   \draw[white,line width=2pt] (5) to (35);
   \draw (5) to (35);
   \draw[white,line width=2pt] (1) to (12);
   \draw (1) to (12);
   \draw[white,line width=2pt] (1) to (13);
   \draw (1) to (13);
   \draw[white,line width=2pt] (1) to (14);
   \draw (1) to (14);
   \draw[white,line width=2pt] (1) to (15);
   \draw (1) to (15);
   \draw[white,line width=2pt] (2) to (23);
   \draw (2) to (23);
   \draw[white,line width=2pt] (2) to (24);
   \draw (2) to (24);
   \draw[white,line width=2pt] (2) to (25);
   \draw (2) to (25);
   \draw[white,line width=2pt] (3) to (34);
   \draw (3) to (34);
   \draw[white,line width=2pt] (3) to (35);
   \draw (3) to (35);
   \draw[white,line width=2pt] (4) to (45);
   \draw (4) to (45);
   \draw[white,line width=2pt] (5) to (45);
   \draw (5) to (45);
   \draw (12) to (12345);
   \draw (13) to (12345);
   \draw (14) to (12345);
   \draw (15) to (12345);
   \draw (23) to (12345);
   \draw (24) to (12345);
   \draw (25) to (12345);
   \draw (34) to (12345);
   \draw (35) to (12345);
   \draw (45) to (12345);
   \node at (9,0.0) {\footnotesize ${U^3_5}$};  
  \end{tikzpicture}
  \endpgfgraphicnamed
 }
 \newsavebox\latticeCfive
 \sbox{\latticeCfive}{
  \beginpgfgraphicnamed{tikz/fig4}
  \begin{tikzpicture}[x=0.75cm,y=0.8cm]
   \node (0) at (3.5,0) {\footnotesize $\emptyset$};  
   \node (1) at (1.5,1) {\footnotesize $1$};  
   \node (2) at (2.5,1) {\footnotesize $2$};  
   \node (3) at (3.5,1) {\footnotesize $3$};  
   \node (4) at (4.5,1) {\footnotesize $4$};  
   \node (5) at (5.5,1) {\footnotesize $5$};  
   \node (123) at (0,2) {\footnotesize $123$};  
   \node (14) at (1,2) {\footnotesize $14$};  
   \node (15) at (2,2) {\footnotesize $15$};  
   \node (24) at (3,2) {\footnotesize $24$};  
   \node (25) at (4,2) {\footnotesize $25$};  
   \node (34) at (5,2) {\footnotesize $34$};  
   \node (35) at (6,2) {\footnotesize $35$};  
   \node (45) at (7,2) {\footnotesize $45$};  
   \node (12345) at (3.5,3) {\footnotesize $12345$};  
   \draw (0) to (1);
   \draw (0) to (2);
   \draw (0) to (3);
   \draw (0) to (4);
   \draw (0) to (5);
   \draw (2) to (123);
   \draw (3) to (123);
   \draw (4) to (14);
   \draw (5) to (15);
   \draw[white,line width=2pt] (4) to (24);
   \draw (4) to (24);
   \draw[white,line width=2pt] (5) to (25);
   \draw (5) to (25);
   \draw[white,line width=2pt] (4) to (34);
   \draw (4) to (34);
   \draw[white,line width=2pt] (5) to (35);
   \draw (5) to (35);
   \draw (1) to (123);
   \draw[white,line width=2pt] (1) to (14);
   \draw (1) to (14);
   \draw[white,line width=2pt] (1) to (15);
   \draw (1) to (15);
   \draw[white,line width=2pt] (2) to (24);
   \draw (2) to (24);
   \draw[white,line width=2pt] (2) to (25);
   \draw (2) to (25);
   \draw[white,line width=2pt] (3) to (34);
   \draw (3) to (34);
   \draw[white,line width=2pt] (3) to (35);
   \draw (3) to (35);
   \draw[white,line width=2pt] (4) to (45);
   \draw (4) to (45);
   \draw[white,line width=2pt] (5) to (45);
   \draw (5) to (45);
   \draw (123) to (12345);
   \draw (14) to (12345);
   \draw (15) to (12345);
   \draw (24) to (12345);
   \draw (25) to (12345);
   \draw (34) to (12345);
   \draw (35) to (12345);
   \draw (45) to (12345);
   \node at (7,0.0) {\footnotesize ${C_5}$};  
  \end{tikzpicture}
  \endpgfgraphicnamed
 }
 \newsavebox\latticeFseven
 \sbox{\latticeFseven}{
  \beginpgfgraphicnamed{tikz/fig5}
  \begin{tikzpicture}[x=0.9cm,y=0.8cm]
   \node (0) at (3,0) {\footnotesize $\emptyset$};  
   \node (1) at (0,1) {\footnotesize $1$};  
   \node (2) at (1,1) {\footnotesize $2$};  
   \node (3) at (2,1) {\footnotesize $3$};  
   \node (4) at (3,1) {\footnotesize $4$};  
   \node (5) at (4,1) {\footnotesize $5$};  
   \node (6) at (5,1) {\footnotesize $6$};  
   \node (7) at (6,1) {\footnotesize $7$};  
   \node (124) at (0,2) {\footnotesize $124$};  
   \node (235) at (1,2) {\footnotesize $235$};  
   \node (346) at (2,2) {\footnotesize $346$};  
   \node (457) at (3,2) {\footnotesize $457$};  
   \node (561) at (4,2) {\footnotesize $561$};  
   \node (672) at (5,2) {\footnotesize $672$};  
   \node (713) at (6,2) {\footnotesize $713$};  
   \node (1234567) at (3,3) {\footnotesize $1234567$};  
   \draw (0) to (1);
   \draw (0) to (2);
   \draw (0) to (3);
   \draw (0) to (4);
   \draw (0) to (5);
   \draw (0) to (6);
   \draw (0) to (7);
   \draw (7) to (457);
   \draw (6) to (346);
   \draw (5) to (235);
   \draw (4) to (124);
   \draw (1) to (124);
   \draw (3)[white,line width=2pt] to (713);
   \draw (3) to (713);
   \draw (2)[white,line width=2pt] to (672);
   \draw (2) to (672);
   \draw (1)[white,line width=2pt] to (561);
   \draw (1) to (561);
   \draw (1)[white,line width=2pt] to (713);
   \draw (1) to (713);   
   \node[fill=white,rounded corners,inner sep=1pt] at (5,2) {\footnotesize $672$};  
   \node[fill=white,rounded corners,inner sep=1pt] at (1,1) {\footnotesize $2$};  
   \draw (7)[white,line width=2pt] to (672);
   \draw (7) to (672);
   \draw (7)[white,line width=2pt] to (713);
   \draw (7) to (713);
   \draw (6)[white,line width=2pt] to (561);
   \draw (6) to (561);
   \draw (6)[white,line width=2pt] to (672);
   \draw (6) to (672);
   \draw (5)[white,line width=2pt] to (457);
   \draw (5) to (457);
   \draw (5)[white,line width=2pt] to (561);
   \draw (5) to (561);
   \draw (4)[white,line width=2pt] to (346);
   \draw (4) to (346);
   \draw (4)[white,line width=2pt] to (457);
   \draw (4) to (457);
   \draw (3)[white,line width=2pt] to (235);
   \draw (3) to (235);
   \draw (3)[white,line width=2pt] to (346);
   \draw (3) to (346);
   \draw (2)[white,line width=2pt] to (124);
   \draw (2) to (124);
   \draw (2)[white,line width=2pt] to (235);
   \draw (2) to (235);
   \draw (124) to (1234567);
   \draw (235) to (1234567);
   \draw (346) to (1234567);
   \draw (457) to (1234567);
   \draw (561) to (1234567);
   \draw (672) to (1234567);
   \draw (713) to (1234567);
   \node at (6,0.0) {\footnotesize ${F_7}$};  
  \end{tikzpicture}
  \endpgfgraphicnamed
 }
 \newsavebox\latticeFsevendual
 \sbox{\latticeFsevendual}{
  \beginpgfgraphicnamed{tikz/fig6}
  \begin{tikzpicture}[x=0.9cm,y=0.8cm]
   \node (0) at (3,0) {\footnotesize $\emptyset$};  
   \node (1) at (0,1) {\footnotesize $1$};  
   \node (2) at (1,1) {\footnotesize $2$};  
   \node (3) at (2,1) {\footnotesize $3$};  
   \node (4) at (3,1) {\footnotesize $4$};  
   \node (5) at (4,1) {\footnotesize $5$};  
   \node (6) at (5,1) {\footnotesize $6$};  
   \node (7) at (6,1) {\footnotesize $7$};  
   \node (12) at (0,2) {\footnotesize $12$};  
   \node (dots1) at (1,2) {\footnotesize $\dotsb$};  
   \node (rk2) at (3,2) {\footnotesize $\text{(all $2$-subsets)}$};  
   \node (dots2) at (5,2) {\footnotesize $\dotsb$};  
   \node (67) at (6,2) {\footnotesize $67$};  
   \node (124) at (0,3) {\footnotesize $124$};  
   \node (dots3) at (0.6,3) {\footnotesize $\dotsb$};  
   \node (713) at (2.1,3) {\footnotesize $\text{(cohyperplanes)}$};  
   \node (2456) at (4.4,3) {\footnotesize $\text{(circuits)}$};  
   \node (dots4) at (5.35,3) {\footnotesize $\dotsb$};  
   \node (3567) at (6,3) {\footnotesize $3567$};  
   \node (1234567) at (3,4) {\footnotesize $1234567$};  
   \draw (0) to (1);
   \draw (0) to (2);
   \draw (0) to (3);
   \draw (0) to (4);
   \draw (0) to (5);
   \draw (0) to (6);
   \draw (0) to (7);
   \draw (1) to (12);
   \draw (2) to (12);
   \node at (1.5,1.5) {\footnotesize $\dotsb$};  
   \draw[dashed] (3) to (rk2);
   \draw[dashed] (4) to (rk2);
   \draw[dashed] (5) to (rk2);
   \node at (4.5,1.5) {\footnotesize $\dotsb$};  
   \draw (6) to (67);
   \draw (7) to (67);
   \draw (12) to (124);
   \node at (1,2.5) {\footnotesize $\dotsb$};  
   \draw[dashed] (rk2) to (713);
   \draw[dashed] (rk2) to (2456);
   \node at (5,2.5) {\footnotesize $\dotsb$};  
   \draw (67) to (3567);
   \draw (124) to (1234567);
   \node at (2.15,3.5) {\footnotesize $\dotsb$};  
   \draw[dashed] (713) to (1234567);
   \draw[dashed] (2456) to (1234567);
   \node at (3.95,3.5) {\footnotesize $\dotsb$};  
   \draw (3567) to (1234567);
   \node at (6,0.0) {\footnotesize ${F_7^\ast}$};  
  \end{tikzpicture}
  \endpgfgraphicnamed
 }
 \begin{tikzpicture}
  \node at (0,6.5) {\usebox{\latticeUtwofour}};
  \node at (7,6.5) {\usebox{\latticeUtwofive}};
  \node at (0,3.5) {\usebox{\latticeUthreefive}};
  \node at (7,3.5) {\usebox{\latticeCfive}};
  \node at (0,0) {\usebox{\latticeFseven}};
  \node at (7,0) {\usebox{\latticeFsevendual}};
 \end{tikzpicture}
 \]
 \caption{Lattices of types $U^2_4$, $U^2_5$, $U^3_5$, $C_5$, $F_7$ and $F_7^*$}
 \label{fig: relevant upper sublattices for the foundation}
\end{figure}
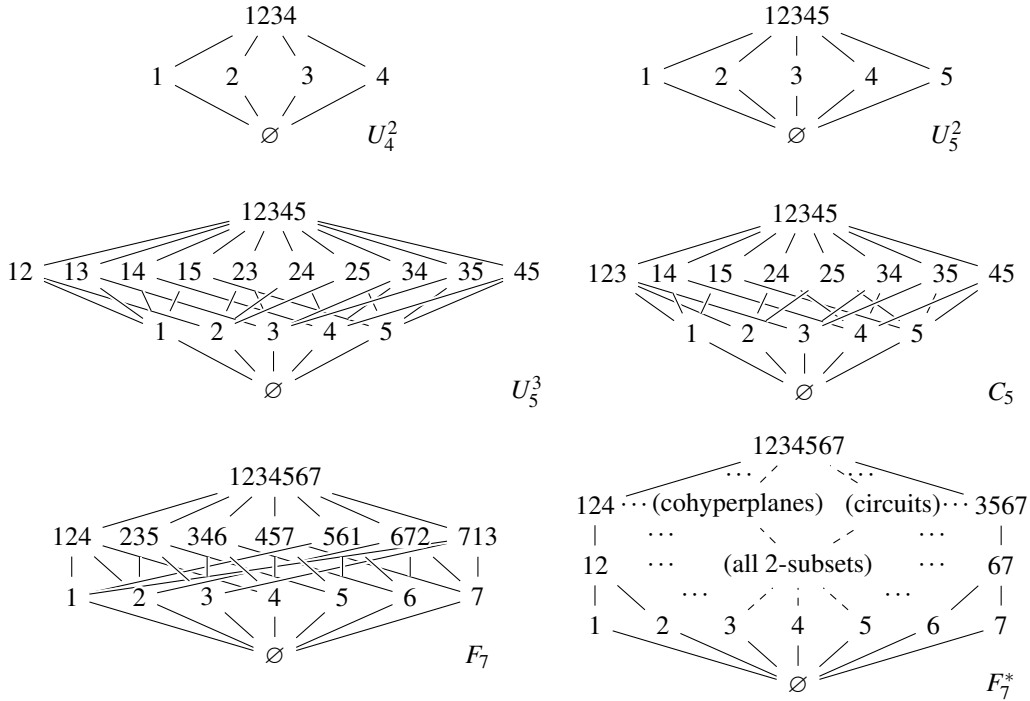

\begin{thm}\label{thm: fundamental lattice presentation}
 Let $M$ be a matroid with fundamental lattice diagram $\cL_M$. Then $F_M\simeq\colim F(\cL_M)$.
\end{thm}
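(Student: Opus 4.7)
The plan is to deduce this theorem from \autoref{thm: fundamental presentation} (which gives $F_M = \colim F(\cE_M)$) by establishing a natural bijection between cocones over $F(\cE_M)$ and cocones over $F(\cL_M)$, using the passage $\Psi$ from embedded minors to upper sublattices developed in \autoref{prop: embedded minor surject onto upper sublattices}.

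The key technical input, announced in the introduction and to follow from \autoref{prop: embedded minor surject onto upper sublattices} together with \autoref{lemma: going up}, is that the morphism $F_N \to F_M$ induced by a minor embedding depends only on the corresponding upper sublattice $\Psi(N) \subseteq \Lambda_M$. First I would assign to each $N$ in $\cE_M$ a distinguished upper sublattice $\Lambda'_N \in \cL_M$ together with a canonical isomorphism $F_N \simeq F_{\Lambda'_N}$. When $N$ has a simple type ($U^2_4$, $U^2_5$, $U^3_5$, $C_5$, $F_7$, or $F_7^\ast$), set $\Lambda'_N = \Psi(N)$, which is of the corresponding type and hence lies in $\cL_M$. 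When $N$ has type $C_5^\ast$, the simplification of $N$ is $U^2_4$, so $\Psi(N)$ is a $U^2_4$-sublattice already in $\cL_M$, and $F_N \simeq F_{U^2_4}$ is the canonical identification from \autoref{prop: foundation of dual matroids and simplifications}. When $N$ has type $U^2_4 \oplus U^1_2$, the sublattice $\Psi(N)$ itself is not of a type listed in $\cL_M$, but it contains a distinguished $U^2_4$-upper-sublattice $\Lambda'_N$ obtained by lifting past the coloop atom (that is, replacing the bottom flat $\gen I$ by $\gen{I,e}$ where $e$ is the coloop of the $U^2_4 \oplus U^1_1$-simplification); the identification $F_N \simeq F_{\Lambda'_N}$ comes from \autoref{thm: foundations of direct sums} together with $F_{U^1_2}=F_{U^1_1}=\Funpm$.

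Next I would construct inverse correspondences of cocones. Given a cocone $\{f_{\Lambda'} : F_{\Lambda'} \to P\}_{\Lambda' \in \cL_M}$ over $F(\cL_M)$ with vertex in a pasture $P$, define $g_N = f_{\Lambda'_N} \circ \iota_N : F_N \to P$, where $\iota_N : F_N \to F_{\Lambda'_N}$ is the canonical isomorphism above; functoriality of $N \mapsto \Lambda'_N$ ensures that $\{g_N\}$ is a cocone on $F(\cE_M)$. Conversely, given a cocone $\{g_N : F_N \to P\}_{N \in \cE_M}$, define $f_{\Lambda'} : F_{\Lambda'} \to P$ by choosing any $N \in \cE_M$ with $\Lambda'_N = \Lambda'$, which exists by the surjectivity of $\Psi$ in \autoref{prop: embedded minor surject onto upper sublattices}, and setting $f_{\Lambda'} = g_N \circ \iota_N^{-1}$. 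The independence of $f_{\Lambda'}$ from the choice of $N$ is precisely the morphism-depends-only-on-the-lattice assertion above; compatibility of $\{f_{\Lambda'}\}$ with lattice inclusions $\Lambda' \hookrightarrow \Lambda''$ follows from compatibility of $\{g_N\}$ with minor embeddings of suitable pre-image embedded minors, whose existence is provided by \autoref{lemma: going up}. Since these correspondences are manifestly mutually inverse, they yield a bijection of cocones, and combined with \autoref{thm: fundamental presentation} one obtains $F_M \simeq \colim F(\cL_M)$.

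The main obstacle is verifying rigorously that the assignment $N \mapsto \Lambda'_N$ is functorial and that morphisms in $F(\cE_M)$ depend only on the induced lattice inclusions, particularly in the non-simple cases $C_5^\ast$ and $U^2_4 \oplus U^1_2$; this will rest on the structural refinements of the Scum Theorem referenced above, and on carefully tracking how contraction and deletion of parallel and coloop summands translate into inclusions of upper sublattices of $\Lambda_M$.
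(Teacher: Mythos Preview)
Your plan is sound and close in spirit to the paper's proof: both start from $F_M=\colim F(\cE_M)$ and pass to the lattice side via the surjection $\Psi$ of \autoref{prop: embedded minor surject onto upper sublattices}. The paper, however, organizes the argument slightly differently. Rather than defining a direct assignment $N\mapsto\Lambda'_N\in\cL_M$ (with bespoke choices for $C_5^\ast$ and $U^2_4\oplus U^1_2$), it first factors $F(\cE_M)$ through the \emph{larger} diagram $\cL'_M$ of upper sublattices of all eight types in $\cS$, obtaining $F_M=\colim F(\cL'_M)$, and then prunes the two superfluous lattice types: the $C_5^\ast$-lattice is literally the $U^2_4$-lattice (since $C_5^\ast$ simplifies to $U^2_4$), and for a $U^1_2\oplus U^2_4$-sublattice $\Lambda_N$ one checks by direct inspection that $\cL_N$ contains a single $U^2_4$-sublattice with foundation $\U\simeq F_{\Lambda_N}$, so $\Lambda_N$ can be removed without changing the colimit. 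Your approach amounts to folding these two reductions into the definition of $\Lambda'_N$, which is perfectly valid but requires the extra functoriality verification you flag.

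Two small points to tighten. First, \autoref{lemma: going up} only covers \emph{equal-rank} lattice inclusions, so it does not by itself produce the preimage minor embeddings you need for inclusions like a $U^2_4$-sublattice inside a $C_5$- or $U^3_5$-sublattice; you will need a short additional argument (pick $N'$ with $\Psi(N')=\Lambda''$ and then locate the $U^2_4$-minor of $N'$ sitting over $\Lambda'$). Second, your well-definedness step in direction 2 (``independence of $f_{\Lambda'}$ from the choice of $N$'') does not follow \emph{directly} from the assertion that $F_N\to F_M$ depends only on $\Psi(N)$, since your cocone $\{g_N\}$ targets an arbitrary $P$, not $F_M$. The fix is easy: any cocone on $F(\cE_M)$ factors uniquely through $\colim F(\cE_M)=F_M$, after which the lattice-only-dependence of $F_N\to F_M$ does the job. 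With these two adjustments your argument goes through.
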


\begin{proof}
 By \autoref{thm: fundamental presentation}, the foundation $F_M$ is the colimit of $F(\cE_M)$, where $\cE_M$ is the fundamental diagram of $M$. By our previous considerations, the association $\cE_M\mapsto F(\cE_M)$ factors through the diagram $\cL_M'$ of upper sublattices $\Lambda_N$ of $M$ that correspond to the embedded minors $N=M\minor JI$ in $\cE_M$. Thus $\Lambda_M'$ consists of all upper sublattices of $M$ of types $U^2_4$, $U^2_5$, $U^3_5$, $C_5$, $C_5^\ast$, $U^1_2\oplus U^2_4$, $F_7$ and $F_7^*$ and $F_M=\colim F(\cE_M)=\colim F(\cL_M')$. Since the lattice of $N=C_5^\ast$ is equal to the lattice of type $U^2_4$, this case is subsumed by type $U^2_4$.
 
 For upper sublattices of type $N=U^1_2\oplus U^2_4$, we proceed as in the proof of \autoref{thm: fundamental presentation}: we show that the foundation $F_{\Lambda_N}$ is the colimit of $F(\cL_N)$, where $\cL_N$ is the fundamental lattice diagram of $N$. Let $E=\{1,\dotsc,6\}$ be the ground set of $N$, where we assume that $1$ and $2$ are the parallel elements corresponding to the direct summand $U^1_2$. The lattice of $N$ is as illustrated in \autoref{fig: lattice of U12+U24}:
 \begin{figure}[htb]
  \[
  \beginpgfgraphicnamed{tikz/fig21}
  \begin{tikzpicture}[x=1.2cm,y=0.8cm]
   \node (0) at (3,0) {\footnotesize $\emptyset$};  
   \node (12) at (1,1) {\footnotesize $12$};  
   \node (3) at (2,1) {\footnotesize $3$};  
   \node (4) at (3,1) {\footnotesize $4$};  
   \node (5) at (4,1) {\footnotesize $5$};  
   \node (6) at (5,1) {\footnotesize $6$};  
   \node (123)  at (1,2) {\footnotesize $123$};  
   \node (124)  at (2,2) {\footnotesize $124$};  
   \node (125)  at (3,2) {\footnotesize $125$};  
   \node (126)  at (4,2) {\footnotesize $126$};  
   \node (3456) at (5,2) {\footnotesize $3456$};  
   \node (123456) at (3,3) {\footnotesize $123456$};  
   \draw (0) to (12);
   \draw (0) to (3);
   \draw (0) to (4);
   \draw (0) to (5);
   \draw (0) to (6);
   \draw (3) to (3456);
   \draw (4) to (3456);
   \draw (5) to (3456);
   \draw (6) to (3456);
   \draw[white,line width=2pt] (3) to (123);
   \draw (3) to (123);
   \draw[white,line width=2pt] (4) to (124);
   \draw (4) to (124);
   \draw[white,line width=2pt] (5) to (125);
   \draw (5) to (125);
   \draw[white,line width=2pt] (6) to (126);
   \draw (6) to (126);
   \draw[white,line width=2pt] (12) to (123);
   \draw (12) to (123);
   \draw[white,line width=2pt] (12) to (124);
   \draw (12) to (124);
   \draw[white,line width=2pt] (12) to (125);
   \draw (12) to (125);
   \draw[white,line width=2pt] (12) to (126);
   \draw (12) to (126);
   \draw (123) to (123456);
   \draw (124) to (123456);
   \draw (125) to (123456);
   \draw (126) to (123456);
   \draw (3456) to (123456);
  \end{tikzpicture}
  \endpgfgraphicnamed
  \]
  \caption{The lattice of $U^1_2\oplus U^2_4$}
  \label{fig: lattice of U12+U24}
 \end{figure}
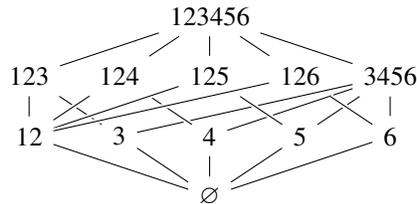
 
 The fundamental lattice diagram $\cL_N$ of $N$ consists of a unique upper sublattice $\Lambda'$ of type $U^2_4$, which is characterized by its hyperplanes $123$, $124$, $125$, and $126$. Since the foundation $F_N\simeq\U$ of $N$ is isomorphic to the foundation of $U^2_4$, we obtain the desired isomorphism $\colim F(\cL_N)=\U\simeq F_N$.
\end{proof}

The nice thing about the lattice presentation $F(\cL_M)$ of the foundation $F_M$, in contrast to the embedded minor presentation $F(\cE_M)$, is that it is more economical and therefore better for explicit computations; cf.\ \autoref{subsection: foundation of AG23-e}.

\begin{rem}\label{rem: relations of cross ratios from lattices}
 \textit{A posteriori,} we can associate with the different types of sublattices in $\cL_M$ relations between non-degenerate cross ratios $\cross{H_1}{H_2}{H_3}{H_4}{}$, as follows. (We denote by $H_F$ the hyperplane that appear as `$F$' in the illustration in \autoref{fig: relevant upper sublattices for the foundation}.)\footnote{To be precise, we will reorder the elements of a flat $F$ when this yields a more systematic description of a relation. The relations have to read in the sense that all indices $1$, $2$, $3$, $4$ (and $5$ in \eqref{H3} and \eqref{H4}) can be permuted. In \eqref{H4'}, the permutation of elements must preserve the circuit $123$, so we cannot exchange $4$ and $5$ by any of $1$, $2$ and $3$ in this relation.}

 \begin{itemize}
  \item Type $U^2_4$ corresponds to the relations
 \[\tag{H+}\label{H+}
  \cross {H_1}{H_2}{H_3}{H_4}{} + \cross {H_1}{H_3}{H_2}{H_4}{}  \ = \ 1;
 \]
 \[\tag{H$\sigma$}\label{Hs}
  \cross{H_1}{H_2}{H_3}{H_4}{} \ = \ \cross{H_2}{H_1}{H_4}{H_3}{} \ = \ \cross{H_3}{H_4}{H_1}{H_2}{} \ = \ \cross{H_4}{H_3}{H_2}{H_1}{};
 \]
 \[\tag{H1}\label{H1}
  \cross {H_1}{H_2}{H_4}{H_3}{} \ = \ \crossinv {H_1}{H_2}{H_3}{H_4}{};
 \]
 \[\tag{H2}\label{H2}
  \cross {H_1}{H_2}{H_3}{H_4}{} \cdot \cross {H_1}{H_3}{H_4}{H_2}{}  \cdot \cross {H_1}{H_4}{H_2}{H_3}{} \ = \ -1.
 \]
 \item Type $U^2_5$ corresponds to the relation
 \[\tag{H3}\label{H3}
  \cross {H_1}{H_2}{H_3}{H_4}{} \cdot \cross {H_1}{H_2}{H_4}{H_5}{} \cdot \cross {H_1}{H_2}{H_5}{H_3}{} \ = \ 1.
 \]
 \item Type $U^3_5$ corresponds to the relation
 \[\tag{H4}\label{H4}
  \cross {H_{15}}{H_{25}}{H_{35}}{H_{45}}{} \cdot \cross {H_{13}}{H_{23}}{H_{43}}{H_{53}}{} \cdot \cross {H_{14}}{H_{24}}{H_{54}}{H_{34}}{} \ = \ 1.
 \]
 \item Type $C_5$ (with circuit $123$) corresponds to the relation 
 \[\tag{H4'}\label{H4'}
  \cross {H_{14}}{H_{24}}{H_{34}}{H_{54}}{} \ = \ \cross {H_{15}}{H_{25}}{H_{35}}{H_{45}}{} .
 \]
 \item Types $F_7$ and $F_7^\ast$ correspond to the relation
 \[\tag{H--}\label{H-}
   -1 \ = \ 1.
 \]
 \end{itemize} 
\end{rem}

As a consequence of \autoref{thm: fundamental lattice presentation}, we deduce the following presentation for the foundation, which is helpful for inductive computations of foundations of matroids of large rank in terms of matroids of smaller rank (but with possibly more elements). For a concrete application, see \autoref{prop: foundation of T8}.

Let $M$ be a matroid with lattice $\Lambda_M$. A \emph{full upper sublattice of $\Lambda_M$} is an upper sublattice $\Lambda'$ of $\Lambda_M$ for which an inclusion $F'\subset F$ of flats $F'\in\Lambda'$ and $F\in \Lambda_M$ implies that $F\in\Lambda'$. In other words, if $F_0$ is the bottom of $\Lambda'$, then $\Lambda'$ consists of precisely all flats $F\in\Lambda_M$ that contain $F_0$. In particular, the full upper sublattices of $\Lambda_M$ are in bijection with the elements of $\Lambda_M$.

We write $\cL_M^{\leq r}$ for the diagram of all full upper sublattices of $\Lambda_M$ of rank less or equal to $r$ that contain a sublattice of type $U^2_4$, $F_7$ or $F_7^\ast$, together with all lattice inclusions. Note that every upper sublattice in $\cL_M$ contains an upper sublattice of type $U^2_4$, $F_7$ or $F_7^\ast$.

\begin{thm}\label{thm: fundamental lattice presentation by upper sublattices of small rank}
 Let $M$ be a matroid with lattice $\Lambda_M$. Then the canonical morphism $\colim F(\cL_M^{\leq4})\to F_M$ is an isomorphism. If $M$ either contains a minor of type $F_7$ or \emph{does not} contain a minor of type $F_7^\ast$, then the canonical morphism $\colim F(\cL_M^{\leq3})\to F_M$ is an isomorphism.
\end{thm}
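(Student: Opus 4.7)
The plan is to deduce both statements from the fundamental lattice presentation (\autoref{thm: fundamental lattice presentation}), treating full upper sublattices as intermediate objects. The key observation is that a full upper sublattice $\Lambda'$ of $\Lambda_M$ with bottom flat $F_0$ is canonically isomorphic, as a matroid lattice, to the lattice of flats of the contraction $M/F_0$, and the induced morphism $F_{\Lambda'}\cong F_{M/F_0}\to F_M$ coincides with the morphism coming from the minor embedding $M/F_0\hookrightarrow M$; cf.\ \autoref{prop: embedded minor surject onto upper sublattices} and \cite[Prop.\ 4.9]{Baker-Lorscheid20}.

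For the first assertion, I first note that every sublattice $\Lambda''$ in $\cL_M$ has rank at most $4$, since $F_7^\ast$ has rank $4$ and the remaining listed types have rank at most $3$. Such a $\Lambda''$ sits inside the full upper sublattice $\Lambda'$ of $\Lambda_M$ with the same bottom flat, which also has rank at most $4$; since $\Lambda'$ contains $\Lambda''$, and each of $U^2_5$, $U^3_5$, $C_5$ contains a $U^2_4$-sublattice, $\Lambda'$ contains a sublattice of type $U^2_4$, $F_7$, or $F_7^\ast$, so $\Lambda'\in\cL_M^{\leq 4}$. Conversely, for each $\Lambda'\in\cL_M^{\leq 4}$ with bottom $F_0$, applying \autoref{thm: fundamental lattice presentation} to $M/F_0$ expresses $F_{\Lambda'}\cong F_{M/F_0}$ as $\colim F(\cL_{M/F_0})$, and the sublattices appearing in $\cL_{M/F_0}$ correspond via \autoref{prop: embedded minor surject onto upper sublattices} to sublattices of $\Lambda_M$ contained in $\Lambda'$ that also belong to $\cL_M$. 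Combining both directions, the colimit of $F(\cL_M^{\leq 4})$ is naturally isomorphic to the colimit of $F(\cL_M)$, which equals $F_M$.

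For the second assertion, the same argument applies with $3$ in place of $4$ provided we can discard the $F_7^\ast$-sublattices and the rank-$4$ full upper sublattices of $\cL_M^{\leq 4}$ without affecting the colimit. By \autoref{rem: relations of cross ratios from lattices}, an $F_7^\ast$-sublattice has foundation $\F_2$ and contributes only the relation $-1=1$ to the colimit (it carries no cross ratios, since $F_7^\ast$ is binary and has no $U^2_4$-sublattice in its own lattice). Hence if $M$ has no $F_7^\ast$-minor, then by \autoref{prop: embedded minor surject onto upper sublattices} there is no $F_7^\ast$-sublattice in $\Lambda_M$, so each rank-$4$ full upper sublattice $\Lambda'\in\cL_M^{\leq 4}$ has foundation built, via \autoref{thm: fundamental lattice presentation} applied to $M/F_0$, entirely from rank-$\leq 3$ sublattices of $\Lambda_{M/F_0}$; these correspond to sublattices of $\Lambda_M$ already captured by $\cL_M^{\leq 3}$. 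If instead $M$ contains an $F_7$-minor, then some rank-$3$ full upper sublattice in $\cL_M^{\leq 3}$ contains an $F_7$-sublattice and thus already imposes $-1=1$, making any $F_7^\ast$-contribution redundant.

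The main technical obstacle is the bookkeeping in the first paragraph: verifying that the sublattices in $\cL_{M/F_0}$ pull back precisely to the sublattices of $\Lambda_M$ contained in $\Lambda'$ that appear in $\cL_M$, and that the induced morphisms of foundations are compatible with those of $F(\cL_M)$. Once this coherence between contraction and the upper sublattice correspondence is established, the claimed colimit identifications follow formally from the universal property of colimits, and the refinement to $\cL_M^{\leq 3}$ in the second assertion reduces to the two case-by-case observations above.
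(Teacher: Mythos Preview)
Your proposal is correct and follows essentially the same strategy as the paper. Both arguments rest on the two observations that (i) every sublattice in $\cL_M$ sits inside the full upper sublattice with the same bottom, which lies in $\cL_M^{\leq4}$, and (ii) for each $\Lambda'\in\cL_M^{\leq4}$, the foundation $F_{\Lambda'}$ is determined (via \autoref{thm: fundamental lattice presentation} applied to the contraction) by the $\cL_M$-sublattices it contains; the second assertion is then handled in both cases by noting that $F_7^\ast$-sublattices contribute only the relation $-1=1$. The only stylistic difference is that the paper packages the colimit identification as ``construct a section via $\Lambda'\mapsto\overline{\Lambda'}$ and show it is an epimorphism using the same-rank sublattices,'' whereas you phrase it as a colimit-of-colimits argument and defer the compatibility check to your final paragraph; the underlying content is the same.
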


\begin{proof}
 These claims follow from general considerations about colimits. First of all, we observe that every every upper sublattice $\Lambda'$ of $\cL_M$ embeds into a (unique) full upper sublattice $\overline{\Lambda'}$ with the same bottom element, which is in $\Lambda_M^{\leq4}$. Moreover, every lattice inclusion $\Lambda'\to\Lambda''$ in $\cL_M$ extends to a lattice inclusion $\overline{\Lambda'}\to\overline{\Lambda''}$ in $\Lambda_M^{\leq4}$. 
 
Consequently, these lattice inclusions induce a morphism $F_M\simeq\colim F(\cL_M)\to\colim F(\cL_M^{\leq4})$ that is a section to the canonical morphism $\colim F(\cL_M^{\leq4})\to F_M$. For a given upper sublattice $\Lambda'$, we denote by $\cL_{\Lambda'}^{\rk}$ the family of all sublattices $\Lambda''$ of $\Lambda'$ of the same rank that are in $\cL_M$. The inclusion $\Lambda''\to\Lambda'$ induces a morphism $F_{\Lambda''}\to F_{\Lambda'}$ and thus a morphism
 \[
  \bigotimes_{\Lambda''\in\cL_{\Lambda'}^{\rk}} F_{\Lambda''} \ \longrightarrow \ F_{\Lambda'}.
 \]
 This morphism is surjective, and thus an epimorphism, since $F_{\Lambda'}=\colim F(\cL_{M(\Lambda')})$ (by \autoref{thm: fundamental lattice presentation}), and since $\cL_{\Lambda'}^{\rk}$ contains all maximal elements of $\cL_{M(\Lambda')}$, where $M(\Lambda')$ is the simple matroid with lattice $\Lambda'$. We conclude that the section $F_M\to \colim F(\cL_M^{\leq4})$ is an epimorphism, and thus an isomorphism that is inverse to $\colim F(\cL_M^{\leq4})\to F_M$. The first claim follows.
 
 If $M$ has no minor of type $F_7^\ast$, then every sublattice in $\cL_M$ has rank at most $3$ and the above arguments hold with $\cL_M^{\leq4}$ replaced by $\cL_M^{\leq3}$. If $M$ has a minor of type $F_7$, then the corresponding upper sublattice is of rank $3$ and induces the relation $1=-1$ on $F_M$. Therefore we can remove all upper sublattices of type $F_7^\ast$ from $\cL_M$ without changing the colimit and, once again, the above arguments yield the desired result with $\cL_M^{\leq4}$ replaced by $\cL_M^{\leq3}$. The second claim follows.
\end{proof}

Similar arguments lead to the following variant of \autoref{thm: fundamental lattice presentation by upper sublattices of small rank} (we omit a formal proof):

\begin{thm}\label{thm: variant of foundation as colimit of all upper sublattices of small rank}
 Let $M$ be a matroid of rank $\geq 3$ and $\Lambda_M$ its lattice of flats. Let $\cL_M^{3+}$ be the diagram of all upper sublattices of $\Lambda_M$ of the types $U^2_4$ and $F_7^\ast$ and all full upper sublattices of rank $3$, together with all lattice inclusions. Then $F_M\simeq\colim F(\cL_M^{3+})$.
\end{thm}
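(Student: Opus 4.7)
The plan is to adapt the proof strategy of \autoref{thm: fundamental lattice presentation by upper sublattices of small rank}, starting from the identification $F_M\simeq\colim F(\cL_M)$ given by \autoref{thm: fundamental lattice presentation}. I will construct mutually inverse morphisms between $F_M$ and $\colim F(\cL_M^{3+})$, using an assignment of a containing sublattice in $\cL_M^{3+}$ to every sublattice in $\cL_M$.

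First I define, for every $\Lambda'\in\cL_M$, a sublattice $\overline{\Lambda'}\in\cL_M^{3+}$ containing $\Lambda'$. If $\Lambda'$ has type $U^2_4$ or $F_7^\ast$, set $\overline{\Lambda'}:=\Lambda'$. If $\Lambda'$ has type $U^3_5$, $C_5$, or $F_7$ (all of rank~$3$), let $\overline{\Lambda'}:=[F_0,E]$ be the full upper sublattice with the same bottom element $F_0$. If $\Lambda'$ has type $U^2_5$ (rank~$2$) with bottom $F_1$, pick any corank-$3$ flat $F_0\prec F_1$ and set $\overline{\Lambda'}:=[F_0,E]$; such an $F_0$ exists because $M$ has rank at least~$3$, so that $F_1$ covers at least one flat in $\Lambda_M$. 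The induced maps $F_{\Lambda'}\to F_{\overline{\Lambda'}}\to\colim F(\cL_M^{3+})$ are claimed to form a cocone on $F(\cL_M)$. The only subtle compatibility to check is when $\Lambda'$ has type $U^2_5$, where the choice of $\overline{\Lambda'}$ is not canonical; but since $F_{\Lambda'}$ is generated by the cross ratios coming from its five $U^2_4$-subsublattices, each of which itself lies in $\cL_M^{3+}$, the resulting map into $\colim F(\cL_M^{3+})$ is independent of that choice. This cocone yields a morphism $\sigma\colon F_M=\colim F(\cL_M)\to\colim F(\cL_M^{3+})$ which is a section of the canonical morphism $\pi\colon\colim F(\cL_M^{3+})\to F_M$.

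To conclude, I show that $\sigma$ is an epimorphism, which together with $\pi\circ\sigma=\id$ forces $\sigma$ to be an isomorphism. For $\Lambda''\in\cL_M^{3+}$ of type $U^2_4$ or $F_7^\ast$, the pasture $F_{\Lambda''}$ clearly lies in the image of $\sigma$, since $\Lambda''$ already appears in $\cL_M$. For a full upper sublattice $\Lambda''\in\cL_M^{3+}$ of rank~$3$, the associated simple matroid $M(\Lambda'')$ has rank~$3$ and hence admits no $F_7^\ast$-minor; applying \autoref{thm: fundamental lattice presentation} to $M(\Lambda'')$ yields $F_{\Lambda''}\simeq\colim F(\cL_{M(\Lambda'')})$, whose constituents are sublattices of types $U^2_4$, $U^2_5$, $U^3_5$, $C_5$, $F_7$, all of which correspond to sublattices in $\cL_M$ via the embedding $\Lambda''\hookrightarrow\Lambda_M$. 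Consequently, the canonical map $\bigotimes F_{\Lambda'}\to F_{\Lambda''}$ over the maximal elements $\Lambda'$ of $\cL_{M(\Lambda'')}$ is surjective, and each factor $F_{\Lambda'}$ lies in the image of $\sigma$. Hence $\sigma$ is surjective, finishing the proof. The main technical obstacle is precisely the cocone verification in the $U^2_5$ case, which is resolved by the bridging argument through the five $U^2_4$-sublattices of each $U^2_5$-sublattice.
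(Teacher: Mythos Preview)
Your argument is correct and follows essentially the approach that the paper intends: the paper explicitly omits a formal proof, writing only that ``similar arguments'' to those for \autoref{thm: fundamental lattice presentation by upper sublattices of small rank} yield the result, and your proof is precisely such an adaptation. The one genuine new wrinkle compared to the $\cL_M^{\leq4}$ case---that for a $U^2_5$-sublattice $\Lambda'$ the containing full rank~$3$ sublattice $\overline{\Lambda'}$ is not canonical---you handle correctly by noting that $F_{\Lambda'}\simeq\V$ is generated by the images of its five $U^2_4$-sublattices, each of which already lies in $\cL_M^{3+}$, so the induced map to the colimit is independent of the choice. One small expository remark: what you call ``the only subtle compatibility'' is really about the \emph{well-definedness} of the cocone leg at a $U^2_5$-sublattice, not about compatibility along an arrow in $\cL_M$; the actual cocone condition (for inclusions $U^2_4\hookrightarrow\Lambda''$) follows directly because $U^2_4$-sublattices are themselves in $\cL_M^{3+}$ and sit inside whatever $\overline{\Lambda''}$ you pick.
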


Finally we mention the following result for future reference.

\begin{lemma}\label{lemma: going up}
 Let $M$ be a matroid with lattice $\Lambda_M$, let $N=M\minor JI\hookrightarrow M$ be an embedded minor with lattice $\Lambda_N$, and let $\Lambda_N\to\Lambda'$ be an inclusion of upper sublattices of $\Lambda_M$. If $\Lambda'$ has the same rank as $\Lambda_N$, then there exists an embedded minor $N'=M\minor{J'}{I}$ with $N\hookrightarrow N'$, i.e.\ $J'\subset J$ and $\Lambda'=\Lambda_{N'}$ as upper sublattices of $\Lambda_M$.
\end{lemma}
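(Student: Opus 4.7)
The plan is to translate the inclusion of upper sublattices into an explicit set-theoretic relation on deletion sets via the dictionary of \autoref{prop: embedded minor surject onto upper sublattices}.

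First I would show that $\Lambda_N$ and $\Lambda'$ have the same bottom element. The rank of an upper sublattice equals the corank of its bottom in $\Lambda_M$; let $F_0$ denote the bottom of $\Lambda'$. Since $\langle I\rangle \in \Lambda_N \subseteq \Lambda'$, we have $F_0 \leq \langle I\rangle$, and equality of ranks forces $F_0 = \langle I\rangle$.

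Next I would set up the atom picture. Let $\mathcal{C}$ denote the set of covers of $\langle I\rangle$ in $\Lambda_M$; by semimodularity, $\langle I, e\rangle$ is the unique cover of $\langle I\rangle$ containing any given $e \in E - \langle I\rangle$, so the sets $\{C - \langle I\rangle\}_{C \in \mathcal{C}}$ partition $E - \langle I\rangle$ (they are the parallel classes of non-loop elements of $M/I$). By \autoref{prop: embedded minor surject onto upper sublattices}, the atoms of $\Lambda_N$ (viewed as an upper sublattice of $\Lambda_M$) are exactly the $C \in \mathcal{C}$ with $C - \langle I\rangle \not\subseteq J$, while the atoms of $\Lambda'$ form a larger subset of $\mathcal{C}$. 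Consequently, any $C \in \mathcal{C}$ that is \emph{not} an atom of $\Lambda'$ satisfies $C - \langle I\rangle \subseteq J$.

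The construction then writes itself: set
\[
 J' \ := \ \bigcup \, \bigl\{\, C - \langle I\rangle \,\bigm|\, C \in \mathcal{C},\ C \notin \Lambda' \,\bigr\}.
\]
The previous observation gives $J' \subseteq J$, and since independence is inherited by subsets and $J' \cap I = \emptyset$, the pair $(I, J')$ defines an embedded minor $N' := M\minor{J'}{I}$, with the inclusion $J' \subseteq J$ providing the minor embedding $N \hookrightarrow N'$. Applying \autoref{prop: embedded minor surject onto upper sublattices} to $N'$ and using disjointness of the blocks $C - \langle I\rangle$, the atoms of $\Lambda_{N'}$ are exactly those $C$ with $C - \langle I\rangle \not\subseteq J'$, which by construction coincides with the set of atoms of $\Lambda'$. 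Since an upper sublattice is determined by its bottom together with its atoms, this yields $\Lambda_{N'} = \Lambda'$, as required.

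The argument is essentially bookkeeping and I expect no serious obstacle; the only mildly delicate point is to verify that the atoms of $\Lambda_N$ embed into the \emph{atoms} of $\Lambda'$ rather than merely into $\Lambda'$ as elements. This is immediate once one notes that a rank-one cover relation in $\Lambda_M$ survives inside any upper sublattice containing both endpoints.
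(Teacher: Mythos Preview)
Your proof is correct and follows the same approach as the paper: show the bottoms agree, observe that every cover of $\langle I\rangle$ not among the atoms of $\Lambda'$ has all of its elements outside $\langle I\rangle$ contained in $J$, and shrink $J$ accordingly so that the atoms match. The only cosmetic difference is that the paper removes from $J$ a single chosen representative $e_F\in F-\langle I\rangle$ for each atom $F$ of $\Lambda'$ not in $\Lambda_N$, whereas you take the choice-free $J'=\bigcup\{C-\langle I\rangle : C\in\mathcal C,\ C\notin\Lambda'\}$; your $J'$ is contained in the paper's, but both yield an embedded minor $N'$ with $\Lambda_{N'}=\Lambda'$.
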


\begin{proof}
 Let $G=\gen{I}$ be the bottom element of $\Lambda_N$, which is the smallest flat of $M$ that is contained in $\Lambda_N$. Since $\Lambda'$ is an upper sublattice of the same rank as $\Lambda_N$ and $\Lambda_N\subset\Lambda'$, the flat $G$ is also the bottom element of $\Lambda'$. Consider an atom $F$ of $\Lambda'$, which is a flat of $M$ that covers $G$ and is contained in $\Lambda'$. If $F$ is not in $\Lambda_N$, then $F-G\subset J$. We choose, for every atom $F$ of $\Lambda'-\Lambda_N$, an element $e_F$ in $F-G$ and define 
 \[
  J' \ = \ J \ - \ \{ \, e_F\mid \text{atoms }F\text{ of }\Lambda'-\Lambda_N \, \}
 \]
 and $N'=M\minor{J'}I$. Then $F=\gen{Ie_F}$ is an atom of $\Lambda_{N'}$ and all other atoms of $\Lambda_{N'}$ are atoms of $\Lambda_N$. This shows that the atoms of $\Lambda_{N'}$ and $\Lambda'$ agree. Since upper sublattices are atomistic, this shows that $\Lambda'=\Lambda_{N'}$. 
\end{proof}

\begin{rem}
 The condition that $\Lambda'$ and $\Lambda_N$ have the same rank cannot be dropped in \autoref{lemma: going up} in general. For instance, for $M=U^2_3$ with ground set $E=\{1,2,3\}$, the upper sublattice $\Lambda_N$ of the minor $N=M/12$ consists of the single element $123$ and thus is a sublattice of $\Lambda'=\Lambda_{M/3}$. But the lattice inclusion $\Lambda_N\to\Lambda'$ is not induced by a minor embedding $N\hookrightarrow N'$ for any embedded minor $N'$ of $M$.
\end{rem}

We conclude with the observation that equalities between universal cross ratios are invariant under automorphisms of the matroid and simultaneous permutation of the $4$ entries in a cross ratio.

\begin{prop}\label{prop: invariance of cross ratio equalities under permutations}
 Let $M$ be a matroid with ground set $E$ and foundation $F_M$. Let $\varphi:E\to E$ be an automorphism of $M$ and $\sigma\in S_4$ a permutation. Let $H_1,\dotsc, H_4$ and $H'_1,\dotsc, H'_4$ be two modular quadruples of hyperplanes such that $\cross{H_1}{H_2}{H_3}{H_4}{}=\cross{H'_1}{H'_2}{H'_3}{H'_4}{}$ as elements of $F_M$. Then 
 \[
  \Bigg[\begin{matrix} \ \varphi(H_{\sigma(1)}) & \varphi(H_{\sigma(2)}) \ \\[5pt] \varphi(H_{\sigma(3)}) & \varphi(H_{\sigma(4)}) \end{matrix} \Bigg] \quad = \quad \Bigg[\begin{matrix} \ \varphi(H'_{\sigma(1)}) & \varphi(H'_{\sigma(2)}) \ \\[5pt] \varphi(H'_{\sigma(3)}) & \varphi(H'_{\sigma(4)}) \end{matrix} \Bigg]. 
 \]
\end{prop}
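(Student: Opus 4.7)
The plan is to separate the roles of $\varphi$ and $\sigma$, reduce the $\sigma$-part to a generating set of transpositions in $S_4$, and handle the multiplicative and additive cases separately.

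First I would reduce to the case $\varphi = \id$. The matroid automorphism $\varphi$ is in particular a (self-)minor embedding $M \hookrightarrow M$, so by the functoriality of the foundation with respect to minor embeddings (\cite[Prop.\ 4.9]{Baker-Lorscheid20}), it induces a pasture automorphism $\varphi_* : F_M \to F_M$ sending $\cross{H_1}{H_2}{H_3}{H_4}{}$ to $\cross{\varphi(H_1)}{\varphi(H_2)}{\varphi(H_3)}{\varphi(H_4)}{}$. Pasture automorphisms are in particular injective, so $\varphi_*$ preserves equalities; hence it suffices to treat $\varphi = \id$.

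Next, since $S_4$ is generated by the transpositions $(12)$, $(23)$, $(34)$ and the claim composes in $\sigma$ (apply $\sigma_2$ first to obtain an equality of permuted cross ratios, then $\sigma_1$), it suffices to verify the statement for each of these three transpositions. For $\sigma \in \{(12), (34)\}$, combining \eqref{Rs} with \eqref{R1} yields the multiplicative identity
\[
 \cross{H_{\sigma(1)}}{H_{\sigma(2)}}{H_{\sigma(3)}}{H_{\sigma(4)}}{} \ = \ \cross{H_1}{H_2}{H_3}{H_4}{}^{-1}
\]
in $F_M^\times$, and the corresponding identity for the primed quadruple. Since inversion is a bijection of the unit group $F_M^\times$, the conclusion for these two cases follows by inverting both sides of the hypothesis.

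The remaining case $\sigma = (23)$ is the key subtle point, because here the relationship with the original cross ratio is given by the additive relation \eqref{R+}: $\cross{H_1}{H_2}{H_3}{H_4}{} + \cross{H_1}{H_3}{H_2}{H_4}{} = 1$. The plan is to combine \eqref{R+} with the multiplicative relations \eqref{R1} and \eqref{R2} of a single $U^2_4$-minor. Concretely, \eqref{R2} gives $\cross{H_1}{H_2}{H_3}{H_4}{} \cdot \cross{H_1}{H_3}{H_4}{H_2}{} \cdot \cross{H_1}{H_4}{H_2}{H_3}{} = -1$, and \eqref{R1} identifies $\cross{H_1}{H_3}{H_4}{H_2}{}$ with $\cross{H_1}{H_3}{H_2}{H_4}{}^{-1}$; together these imply that the six cross ratios of each $U^2_4$-minor are organized into a single hexagon of fundamental elements in $F_M$. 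One then has to show that this hexagon is determined by any single one of its entries—equivalently, that if two $U^2_4$-minors of $M$ yield the same cross ratio in $F_M$, then the induced morphisms from $\U = F_{U^2_4}$ to $F_M$ coincide as pasture morphisms—which forces the $(23)$-permuted cross ratios to agree as well. This determinacy is exactly the content that goes beyond the purely multiplicative unit-group manipulations used in the easy cases, and is the main obstacle; the plan is to extract it from the fundamental presentation \autoref{thm: fundamental presentation of foundations in terms of bases}, using that the presentation is symmetric enough in the relations \eqref{Rs}, \eqref{R1}, \eqref{R2}, \eqref{R+} to ensure consistency of the hexagon across different $U^2_4$-minors giving the same cross ratio.
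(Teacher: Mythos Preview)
Your treatment of $\varphi$ via functoriality is correct and matches the paper's argument. Your reduction to generating transpositions and the handling of $(12)$ and $(34)$ via inversion in $F_M^\times$ is also correct, though the paper does not proceed this way—it treats all $\sigma$ uniformly.

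The real gap is the $(23)$ case, and you have not closed it. The claim you isolate—that the hexagon of a $U^2_4$-minor is determined in $F_M$ by any one of its entries, equivalently that $f(x)=f'(x)$ forces $f=f'$ for the induced morphisms $\U\to F_M$—is \emph{not} a consequence of the pasture axioms (for instance it fails for morphisms $\U\to\S$), so it cannot be ``extracted from the presentation'' by symmetry considerations alone. You have identified the obstacle correctly but not supplied the mechanism that overcomes it.

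The paper's route is structurally different. Rather than trying to show that one cross ratio determines its additive partner, the paper argues that \emph{every} equality between universal cross ratios in $F_M$ arises from a chain of elementary identifications of types \eqref{Hs} and \eqref{H4'}. Relation \eqref{Hs} is visibly stable under permuting the four entries, and for \eqref{H4'} the two $U^2_4$-sublattices of a $C_5$-sublattice give isomorphisms $\U\to\U\leftarrow\U$, so the identification matches \emph{all six} cross ratios of one $U^2_4$ with those of the other simultaneously. Hence each elementary step is $S_4$-equivariant, and the chain is too. What your argument is missing is precisely this reduction: you need to know that the only ``moves'' producing equalities of cross ratios are ones that carry entire hexagons to entire hexagons, not just single vertices. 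Without that structural input, the $(23)$ step does not go through.
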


\begin{proof}
 By \autoref{thm: fundamental lattice presentation} and \autoref{rem: relations of cross ratios from lattices}, all relations between cross ratios are a concatenation of the relations of types \eqref{H+}--\eqref{H-}. Since an automorphism of $M$ induces an automorphism of the lattice $\Lambda_M$ of flats of $M$, which permutes the upper sublattices of $M$ in an inclusion and type preserving way, it also preserves equalities between cross ratios.
 
 To establish our assertion for a permutation $\sigma$ of the $4$ entries of cross ratios, we note that any equality between cross ratios stems from a chain of relations of types \eqref{Hs} and \eqref{H4'}. Type \eqref{Hs} is evidently invariant under $\sigma$. To deduce the invariance for \eqref{H4'}, we note that the inclusions $\Lambda'/i\to\Lambda'\leftarrow\Lambda'/j$ of the two upper sublattices $\Lambda'/i$ and $\Lambda'/j$ of types $U^2_4$ into an upper sublattice $\Lambda'$ of type $C_5$ yields isomorphisms $\U\to\U\leftarrow\U$. We conclude that all cross ratios of $\Lambda'/i$ are identified with those of $\Lambda/j$, in a way that is stable under permutations of the four entries of cross ratios.
\end{proof}

\subsection{Foundations of matroids without large uniform minors}
\label{subsection: foundations of matroids without large uniform minors}

A matroid is \emph{without large uniform minors} if it has no minors of type $U^2_5$ or $U^3_5$. These are the only types of minors in the fundamental presentation $\cE_M$ of $M$ whose foundation is $\V$. 

Thus, for a matroid $M$ without large uniform minors, the fundamental diagram $F(\cE_M)$ consists of copies of $\F_2$ and $\U$. More precisely, copies of $\F_2$ appear as isolated vertices in $F(\cE_M)$, and any morphism between two copies of $\U$ is an isomorphism. Thus, the colimit of a connected component $\cC$ of $F(\cE_M)$ is either $\F_2$ or a \emph{symmetry quotient of $\U$}, by which we mean a quotient of $\U$ by a group of automorphisms.

The automorphism group of $\U$ is $\Aut(\U)=S_3$ (cf.\ \cite[Prop.\ 5.6]{Baker-Lorscheid20}), and the non-trivial symmetry quotients of $\U$ are 
\begin{align*}
 \D \ &= \ \pastgenn{\Funpm}{x}{x-1-1},\\
 \H \ &= \ \pastgenn{\Funpm}{\zeta_6}{\zeta_6^3+1,\ \zeta_6+\zeta_6^{-1}-1}, \\
 \F_3 \ &= \ \past{\Funpm}{}{\genn{1+1+1}}
\end{align*}
(cf.\ \cite[Prop.\ 5.8]{Baker-Lorscheid20}). Since the colimit of a diagram is the coproduct of the colimits of its connected components, this recovers the \emph{structure theorem for foundations of matroids without large uniform minors} (cf.\ \cite[Thm.\ 5.9]{Baker-Lorscheid20}):

\begin{thm}\label{thm: structure theorem for foundations of matroids without large uniform minors}
 Let $M$ be a matroid without large uniform minors, $F_M$ its foundation, and $r$ the number of connected components of the fundamental diagram of $M$. Then
 \begin{equation} \label{eq:WLUM}
  F_M \ \simeq \ F_1\otimes\dotsb\otimes F_r
 \end{equation}
 with $F_1,\dotsc,F_r\in\{\U,\ \D,\ \H,\ \F_3,\ \F_2\}$. 
 \qed
\end{thm}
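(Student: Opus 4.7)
The plan is to combine the fundamental presentation theorem (\autoref{thm: fundamental presentation}) with a case-by-case analysis of the objects and morphisms that can appear in $\cE_M$ under the hypothesis. First I would observe that excluding $U^2_5$ and $U^3_5$ minors leaves only objects of types $U^2_4$, $C_5$, $C_5^\ast$, $U^1_2\oplus U^2_4$, $F_7$, and $F_7^\ast$ in $\cE_M$. By the computations in \autoref{subsection: foundations of the uniform matroid U24} together with the remark following \autoref{df:fundamental diagram}, each of the first four matroids has foundation canonically isomorphic to $\U$, while \autoref{subsection: foundations of binary and regular matroids} gives $F_{F_7}=F_{F_7^\ast}=\F_2$. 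Hence every vertex of $F(\cE_M)$ is either $\U$ or $\F_2$.

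Next I would determine the morphisms. Since every proper minor of $F_7$ or $F_7^\ast$ is regular and so does not belong to $\cS$, the vertices labelled $F_7$ and $F_7^\ast$ have no incoming or outgoing edges in $\cE_M$; they therefore form isolated connected components contributing a tensor factor $\F_2$ each. For the remaining vertices, every edge in $F(\cE_M)$ is a morphism $\U\to\U$, which by \cite[Prop.\ 5.6]{Baker-Lorscheid20} must be an automorphism of $\U$, corresponding to an element of $\Aut(\U)\simeq S_3$.

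Then, for a non-trivial connected component $\cC$ of $F(\cE_M)$, I would compute $\colim\cC$ by fixing a base vertex and observing that the colimit identifies this copy of $\U$ with every other copy via the paths in $\cC$; whenever two paths between the same pair of vertices compose differently, the discrepancy is an element of $S_3$ which must act trivially in the colimit. Hence $\colim\cC$ is the quotient of $\U$ by the subgroup $H\leq S_3$ generated by the monodromies of $\cC$, and by the classification of subgroups of $S_3$ together with \cite[Prop.\ 5.8]{Baker-Lorscheid20}, this quotient is one of $\U$, $\D$, $\H$, or $\F_3$. Finally, since colimits in $\Pastures$ distribute over disjoint unions of diagrams as tensor products, we conclude
\[
 F_M \ = \ \colim F(\cE_M) \ \simeq \ \bigotimes_{i=1}^{r} \colim\cC_i \ = \ F_1\otimes\dotsb\otimes F_r,
\]
with each $F_i\in\{\U,\D,\H,\F_3,\F_2\}$.

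The main obstacle will be the monodromy computation in the third step: one has to be careful that the colimit of a connected diagram of copies of $\U$ with automorphism edges is exactly the quotient of $\U$ by the subgroup generated by the round-trip monodromies (and not something smaller); this requires a direct check using the explicit presentation $\U=\pastgenn{\Funpm}{x,y}{x+y-1}$, the description of $\Aut(\U)$, and the universal property of the colimit in $\Pastures$. The rest of the argument is essentially bookkeeping built on \autoref{thm: fundamental presentation}, the computation $F_{U^2_4}=\U$, and the classification of symmetry quotients of $\U$.
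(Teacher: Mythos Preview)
Your proposal is correct and follows essentially the same route as the paper. The paper's argument (given in the paragraph immediately preceding the theorem, which is then stated with a $\qed$) proceeds exactly as you outline: without $U^2_5$ and $U^3_5$ the diagram $F(\cE_M)$ consists of isolated copies of $\F_2$ together with copies of $\U$ connected by isomorphisms, so each connected component contributes either $\F_2$ or a symmetry quotient of $\U$, and the latter are classified as $\U$, $\D$, $\H$, $\F_3$ by \cite[Prop.\ 5.8]{Baker-Lorscheid20}.
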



Note that (the underlying graph of) the fundamental lattice diagram $\cL_M$ is a contraction of (the underlying graph of) the fundamental diagram $\cE_M$ and has, in particular, the same number of connected components. A result that we apply in the computations of several examples in \autoref{section: more examples of foundations} and \autoref{section: final examples} is the following.


\begin{cor}\label{cor: foundation of a matroid wlum with one component}
 Let $M$ be a matroid without minors of types $U^2_5$, \ $U^3_5$,\ $F_7$ or $F_7^\ast$, and assume that its fundamental (lattice) diagram is connected and non-empty. Then the foundation of $M$ is a symmetry quotient of $\U$. \qed
\end{cor}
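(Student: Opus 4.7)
The plan is essentially to invoke the structure theorem (Theorem~\ref{thm: structure theorem for foundations of matroids without large uniform minors}) and then unpack what the additional hypotheses impose on the resulting tensor decomposition. Since $M$ has no $U^2_5$ or $U^3_5$ minors, the structure theorem applies and yields
\[
 F_M \ \simeq \ F_1 \otimes \dotsb \otimes F_r
\]
with each $F_i \in \{\U,\ \D,\ \H,\ \F_3,\ \F_2\}$ and $r$ equal to the number of connected components of the fundamental diagram $\cE_M$. By hypothesis $\cE_M$ is connected and non-empty, so $r = 1$ and $F_M \simeq F_1$, a single factor from that list.

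It remains to rule out the possibility $F_1 = \F_2$. The plan here is to exploit the hypothesis that $M$ has no $F_7$ or $F_7^\ast$ minors, which eliminates exactly the two types of nodes in $\cE_M$ whose foundation is $\F_2$. Under this restriction, every node of $\cE_M$ has type in $\{U^2_4,\ C_5,\ C_5^\ast,\ U^1_2 \oplus U^2_4\}$, and by the observations following Definition~\ref{df:fundamental diagram} each such node has foundation canonically isomorphic to $\U$. Moreover, every morphism between two copies of $\U$ in $F(\cE_M)$ is an isomorphism, i.e.\ an element of $\Aut(\U) \simeq S_3$.

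With this in hand, the colimit $F_M = \colim F(\cE_M)$ of a connected, non-empty diagram whose objects are all $\U$ and whose arrows are automorphisms is, by general nonsense about colimits of groupoid-like diagrams, a quotient of $\U$ by the monodromy subgroup of $\Aut(\U)$, i.e.\ a symmetry quotient of $\U$ in the sense of the preceding discussion. Since $\F_2$ is not such a quotient, this excludes it and completes the argument. The only mildly subtle step is verifying that a connected diagram all of whose vertices carry $\U$ and all of whose edges are isomorphisms really does have a colimit of the form $\U/G$ for some $G \le \Aut(\U)$; this is standard but is the one place where a brief justification is needed, and it can be handled by choosing a basepoint, transporting all vertices to it via the connecting isomorphisms, and reading off the relations imposed by loops in the diagram.
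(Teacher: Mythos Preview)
Your argument is correct and follows the paper's approach: the corollary is an immediate consequence of the discussion preceding \autoref{thm: structure theorem for foundations of matroids without large uniform minors}, namely that without $F_7$ or $F_7^\ast$ minors every node of $F(\cE_M)$ is $\U$, every arrow is an isomorphism, and the colimit of a single connected such component is a symmetry quotient of $\U$. One small remark: the detour through the structure theorem is not needed, since your second paragraph already proves the full conclusion directly (not merely the exclusion of $\F_2$); the paper in fact deduces the structure theorem \emph{from} this component-wise analysis rather than the other way around.
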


\section{More examples}
\label{section: more examples of foundations}

We apply the methods from the previous sections to compute the foundation of $Q_6$ (following Oxley's notation in \cite[p.\ 641]{Oxley92}) and the one element restriction $AG(2,3)\setminus e$ of the ternary affine plane (cf.\ \cite[p.\ 653]{Oxley92}).


\subsection{The foundation of \texorpdfstring{$Q_6$}{Q6}}
\label{subsection: foundation of Q6}

The matroid $Q_6$ is the rank $3$ matroid on $6$ elements whose $3$-circuits are illustrated in \autoref{fig: circuits of Q6}.

\begin{figure}[htb]
 \[
 \beginpgfgraphicnamed{tikz/fig11}
 \begin{tikzpicture}[x=0.5cm,y=0.5cm]
  \filldraw ( 30:1) circle (2pt);  
  \node at ( 30:1.6) {\footnotesize $6$};  
  \filldraw (150:1) circle (2pt);  
  \node at (150:1.6) {\footnotesize $4$};  
  \filldraw (270:1) circle (2pt);  
  \node at (270:1.6) {\footnotesize $2$};  
  \filldraw ( 90:2) circle (2pt);  
  \node at ( 90:2.6) {\footnotesize $5$};  
  \filldraw (210:2) circle (2pt);  
  \node at (210:2.6) {\footnotesize $1$};  
  \filldraw (330:2) circle (2pt);  
  \node at (330:2.6) {\footnotesize $3$};  
  \draw [thick] ( 90:2) -- (210:2);
  \draw [thick] (210:2) -- (330:2);
 \end{tikzpicture}
 \endpgfgraphicnamed
 \]
 \caption{The circuits of $Q_6$}
 \label{fig: circuits of Q6} 
\end{figure}
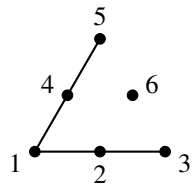

\begin{prop}\label{prop: foundation of Q6}
The foundation of $Q_6$ is $\V$.
\end{prop}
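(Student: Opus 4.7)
The plan is to apply the fundamental lattice presentation of \autoref{thm: fundamental lattice presentation}, giving $F_{Q_6}\simeq\colim F(\cL_{Q_6})$. Since $Q_6$ has rank $3$ and only six elements, the fundamental lattice diagram $\cL_{Q_6}$ contains no sublattices of type $F_7$ or $F_7^\ast$. Enumerating the remaining special sublattices: there is a unique $U^2_5$-sublattice at point $6$ (five lines of $Q_6$ pass through $6$, corresponding to $Q_6/6\cong U^2_5$); a unique $U^3_5$-sublattice on atoms $\{2,3,4,5,6\}$ (corresponding to $Q_6\setminus 1$); four $C_5$-sublattices corresponding to $Q_6\setminus i$ for $i\in\{2,3,4,5\}$; and nine $U^2_4$-sublattices (five inside the $U^2_5$-sublattice at point $6$, plus one at each of points $2,3,4,5$ where exactly four lines meet). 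Point $1$ has only three lines through it and so contributes no $U^2_4$-sublattice. The minor embedding $Q_6/6\hookrightarrow Q_6$ induces a canonical morphism $\iota\colon\V=F_{U^2_5}\to F_{Q_6}$; the plan is to show that $\iota$ is an isomorphism.

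For surjectivity of $\iota$, cross ratios at point $6$ lie in the image by construction, generating the image as $\V$. Cross ratios at each point $i\in\{2,3,4,5\}$ are brought into the image by choosing the $C_5$-sublattice whose non-$3$-circuit atoms are precisely $\{i,6\}$: take $Q_6\setminus 3$ for $i=2$, $Q_6\setminus 2$ for $i=3$ (both with remaining $3$-circuit $\{1,4,5\}$), $Q_6\setminus 5$ for $i=4$, and $Q_6\setminus 4$ for $i=5$ (both with remaining $3$-circuit $\{1,2,3\}$). The relation \eqref{H4'} for each such $C_5$-sublattice identifies the unique $U^2_4$-cross ratio at point $i$ with a $U^2_4$-cross ratio at point $6$ (corresponding to the four lines through $6$ other than $\{j,6\}$, where $Q_6\setminus j$ is the chosen $C_5$). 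Since no cross ratios arise at point $1$, this exhausts all generators of $F_{Q_6}$.

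For injectivity of $\iota$, I will construct a representation of $Q_6$ in $\Q(\alpha,\beta)^3$ by placing $v_1$ at the intersection of the two $3$-lines, $v_2,v_3$ generically on one line through $v_1$, $v_4,v_5$ generically on the other, and $v_6$ in generic position involving the parameters $\alpha,\beta$. This yields a morphism $F_{Q_6}\to\Q(\alpha,\beta)$, and the projection to $\Q(\alpha,\beta)^3/\gen{v_6}\cong\Q(\alpha,\beta)^2$ sends $v_1,\dots,v_5$ to five points on a projective line. With the parameters chosen so that the cross ratios of the projected points realize the canonical embedding of $\V\cong\U_2$ into $\Q(\alpha,\beta)$ from \autoref{prop: U_k is the foundation of U2k+3}, the composition $\V\xrightarrow{\iota}F_{Q_6}\to\Q(\alpha,\beta)$ is injective; hence so is $\iota$. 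The main obstacle is the construction of this generic representation: verifying that $\alpha,\beta$ can be chosen simultaneously to preserve all required non-collinearities of $Q_6$ and to recover the canonical generators of $\U_2$. Once this is in place, the consistency of the further relations coming from the $U^3_5$-sublattice and from the multiple $U^2_4$-sublattices at point $6$ with those already present in $\V$ follows automatically from the injectivity established above.
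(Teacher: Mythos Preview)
Your argument is correct and takes a genuinely different route from the paper's for the harder direction.

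Both you and the paper establish surjectivity of $\iota:\V=F_{Q_6/6}\to F_{Q_6}$ the same way: the four $C_5$-sublattices $Q_6\setminus i$ for $i\in\{2,3,4,5\}$ identify the cross ratio at point $i$ with one at point $6$ via \eqref{H4'}, and point $1$ carries no cross ratios.

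Where you diverge is in showing that $\iota$ is an isomorphism rather than merely a surjection. The paper does this by an explicit five-fold calculation: it writes down a second isomorphism $\beta:\V\to F_{Q_6\setminus 1}$ coming from the $U^3_5$-sublattice and verifies, generator by generator using chains of $C_5$ and $C_5^\ast$ relations, that $\alpha(x_i)=\beta(x_i)$ in $F_{Q_6}$ for $i=1,\dotsc,5$. Once the two copies of $\V$ coincide, every object in the fundamental diagram maps into a single copy and the colimit is $\V$.

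Your approach instead produces a $\Q(\alpha,\beta)$-representation of $Q_6$ whose contraction at $6$ is the canonical $U^2_5$-configuration, so that the composite $\V\xrightarrow{\iota}F_{Q_6}\xrightarrow{g}\Q(\alpha,\beta)$ is the embedding of \autoref{prop: U_k is the foundation of U2k+3}. This forces $\iota$ to be injective on unit groups; and because $\V=\U_2$ is a \emph{partial field} whose universal ring sits inside $\Q(\alpha,\beta)$, any relation $\iota(a)+\iota(b)+\iota(c)\in N_{F_{Q_6}}$ pushes forward to $g\iota(a)+g\iota(b)+g\iota(c)=0$ in $\Q(\alpha,\beta)$ and hence already lies in $N_\V$. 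Thus $\iota$ is an isomorphism of pastures, and the relations coming from the $U^3_5$-sublattice are automatically absorbed. The lift you describe is easy to realize: take $v_6=(0{:}0{:}1)$, the five canonical points $0,1,\infty,\alpha,\beta$ in $\{z=0\}\cong\P^1$, and lift $v_2,v_3$ (resp.\ $v_4,v_5$) along any two distinct lines through $v_1$; one checks directly that the only $3$-circuits are $\{1,2,3\}$ and $\{1,4,5\}$. Your method trades the paper's hands-on cross-ratio bookkeeping for an appeal to the partial-field structure of $\V$, which is cleaner but less self-contained.
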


\begin{proof}
 The matroid $M=Q_6$ has one embedded minor of type $U^2_5$, which is $M/6$, and one embedded minor of type $U^3_5$, which is $M\setminus1$. It has several embedded minors of types $U^2_4$, $C_5$, and $C_5^\ast$, and none of types $F_7$ and $F_7^\ast$. The fundamental diagram is illustrated in \autoref{fig: fundamental diagram of Q6}:
 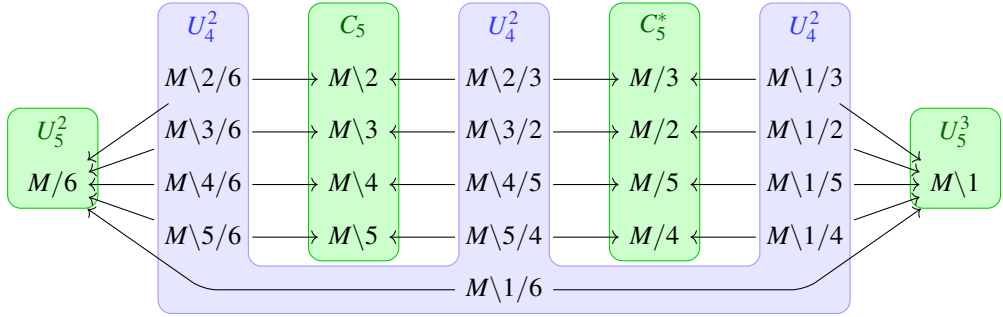
\begin{figure}[htb]
 \[
  \beginpgfgraphicnamed{tikz/fig15}
  \begin{tikzpicture}[x=2.0cm,y=0.7cm]
   \draw[fill=green!20!white,draw=green!80!black,rounded corners=5pt] (-0.3,1.55) rectangle (0.3,3.45); 
   \draw[fill=blue!10!white,draw=blue!40!white,rounded corners=5pt] (0.7,-0.45) -- (0.7,5.45) -- (1.3,5.45) -- (1.3,0.45) -- (2.7,0.45) -- (2.7,5.45) -- (3.3,5.45) -- (3.3,0.45) -- (4.7,0.45) -- (4.7,5.45) -- (5.3,5.45) -- (5.3,-0.45) -- cycle; 
   \draw[fill=green!20!white,draw=green!80!black,rounded corners=5pt] (1.7,0.55) rectangle (2.3,5.45); 
   \draw[fill=green!20!white,draw=green!80!black,rounded corners=5pt] (3.7,0.55) rectangle (4.3,5.45); 
   \draw[fill=green!20!white,draw=green!80!black,rounded corners=5pt] (5.7,1.55) rectangle (6.3,3.45); 
   \node (M6) at (0,2) {\footnotesize $M/6$};
   \node (M26) at (1,4) {\footnotesize $M\minor26$};
   \node (M36) at (1,3) {\footnotesize $M\minor36$};
   \node (M46) at (1,2) {\footnotesize $M\minor46$};
   \node (M56) at (1,1) {\footnotesize $M\minor56$};
   \node (M-2) at (2,4) {\footnotesize $M\setminus2$};
   \node (M-3) at (2,3) {\footnotesize $M\setminus3$};
   \node (M-4) at (2,2) {\footnotesize $M\setminus4$};
   \node (M-5) at (2,1) {\footnotesize $M\setminus5$};
   \node (M23) at (3,4) {\footnotesize $M\minor23$};
   \node (M32) at (3,3) {\footnotesize $M\minor32$};
   \node (M45) at (3,2) {\footnotesize $M\minor45$};
   \node (M54) at (3,1) {\footnotesize $M\minor54$};
   \node (M16) at (3,0) {\footnotesize $M\minor16$};
   \node (M3) at (4,4) {\footnotesize $M/3$};
   \node (M2) at (4,3) {\footnotesize $M/2$};
   \node (M5) at (4,2) {\footnotesize $M/5$};
   \node (M4) at (4,1) {\footnotesize $M/4$};
   \node (M13) at (5,4) {\footnotesize $M\minor13$};
   \node (M12) at (5,3) {\footnotesize $M\minor12$};
   \node (M15) at (5,2) {\footnotesize $M\minor15$};
   \node (M14) at (5,1) {\footnotesize $M\minor14$};
   \node (M1) at (6,2) {\footnotesize $M\setminus1$};
   \node[color=green!40!black] at (0,3) {\footnotesize $U^2_5$};
   \node[color=blue!80!white] at (1,5) {\footnotesize $U^2_4$};
   \node[color=green!40!black] at (2,5) {\footnotesize $C_5$};
   \node[color=blue!80!white] at (3,5) {\footnotesize $U^2_4$};
   \node[color=green!40!black] at (4,5) {\footnotesize $C_5^\ast$};
   \node[color=blue!80!white] at (5,5) {\footnotesize $U^2_4$};
   \node[color=green!40!black] at (6,3) {\footnotesize $U^3_5$};
   \draw[->] (M26) to (M6);
   \draw[->] (M36) to (M6);
   \draw[->] (M46) to (M6);
   \draw[->] (M56) to (M6);
   \draw[->] (M26) to (M-2);
   \draw[->] (M36) to (M-3);
   \draw[->] (M46) to (M-4);
   \draw[->] (M56) to (M-5);
   \draw[->] (M23) to (M-2);
   \draw[->] (M32) to (M-3);
   \draw[->] (M45) to (M-4);
   \draw[->] (M54) to (M-5);
   \draw[->] (M23) to (M3);
   \draw[->] (M32) to (M2);
   \draw[->] (M45) to (M5);
   \draw[->] (M54) to (M4);
   \draw[->] (M13) to (M3);
   \draw[->] (M12) to (M2);
   \draw[->] (M15) to (M5);
   \draw[->] (M14) to (M4);
   \draw[->] (M13) to (M1);
   \draw[->] (M12) to (M1);
   \draw[->] (M15) to (M1);
   \draw[->] (M14) to (M1);
   \draw[->,rounded corners=5pt] (M16) -- (1,0) -- (M6);
   \draw[->,rounded corners=5pt] (M16) -- (5,0) -- (M1);
  \end{tikzpicture}
  \endpgfgraphicnamed
  \] 
  \caption{The fundamental diagram $\cE_{Q_6}^\min$ of embedded minors of $Q_6$}
  \label{fig: fundamental diagram of Q6} 
 \end{figure}

 In order to compute the colimit of $F(\cE_M)$, we investigate how the foundations of the embedded minors become identified. Recall from \autoref{prop: foundation of U25} that the foundation of $M/6\simeq U^2_5$ is isomorphic to 
 \[
  \V \ = \ \pastgenn{\Funpm}{x_i\mid i=1,\dotsc,5}{x_i+x_{i-1}x_{i+1}-1\mid i=1,\dotsc,5}
 \]
 via the isomorphism $\alpha:\V\to F_{M/6}$ with 
 \[
  x_1 \mapsto \cross{2}{3}{5}{4}{6}, \quad \ \ 
  x_2 \mapsto \cross{3}{4}{1}{5}{6}, \quad \ \ 
  x_3 \mapsto \cross{4}{5}{2}{1}{6}, \quad \ \ 
  x_4 \mapsto \cross{5}{1}{3}{2}{6}, \quad \ \ 
  x_5 \mapsto \cross{1}{2}{4}{3}{6}.
 \]
 
 Similarly, there is an isomorphism $\beta:\V\to F_{M\setminus1}$ with
 \[
  x_1 \mapsto \cross{2}{3}{5}{4}{6}, \quad \ \ 
  x_2 \mapsto \cross{2}{5}{6}{4}{3}, \quad \ \ 
  x_3 \mapsto \cross{5}{4}{3}{6}{2}, \quad \ \ 
  x_4 \mapsto \cross{4}{6}{2}{3}{5}, \quad \ \ 
  x_5 \mapsto \cross{6}{3}{5}{2}{4}.
 \]
 
 The isomorphisms $\alpha$ and $\beta$ are compatible with the minor embeddings $M/6\hookrightarrow M$ and $M\setminus 1\hookrightarrow M$ in the sense that the respective images of the $x_i$ become identified in $F_M$, as follows from the computation below. Recall from \cite[Prop.\ 5.3]{Baker-Lorscheid20} that the relations \eqref{R0}, \eqref{R1}, and \eqref{R4} imply that the cross ratios of the two embedded $U^2_4$-minors of a $C_5$-minor are pairwise identified, e.g.,\
 \[
  \cross 3ijk6 \ = \ \cross 6ijk3
 \]
 in $M\setminus 2$, which also applies if the four entries of the cross ratios are simultaneously permuted. Similarly, we find identifications
 \[
  \cross ijk12 \ = \ \cross ijk32
 \]
 in $M/2$; cf.\ also \cite[Prop.\ 5.3]{Baker-Lorscheid20}. We label these equalities by the respective embedded minors that induce them in the following equations.
 
 Note that every cross ratio stems from a (unique) embedded minor of type $U^2_4$. We verify that
 \begin{align*}
  \alpha(x_1) \ &= \ \cross{2}{3}{5}{4}{6} \ = \ \beta(x_1), \\
  \alpha(x_2) \ &= \ \cross{3}{4}{1}{5}{6} \ \underset{M\setminus2}= \ \cross{6}{4}{1}{5}{3} \ \underset{M/3}= \ \cross{6}{4}{2}{5}{3} \ \underset{\eqref{Hs}}= \ \cross{2}{5}{6}{4}{3} \ = \ \beta(x_2), \\
  \alpha(x_3) \ &= \ \cross{4}{5}{2}{1}{6} \ \underset{M\setminus3}= \ \cross{4}{5}{6}{1}{2} \ \underset{M/2}= \ \cross{4}{5}{6}{3}{2} \ \underset{\eqref{Hs}}= \ \cross{5}{4}{3}{6}{2} \ = \ \beta(x_3), \\
  \alpha(x_4) \ &= \ \cross{5}{1}{3}{2}{6} \ \underset{M\setminus4}= \ \cross{6}{1}{3}{2}{5} \ \underset{M/5}= \ \cross{6}{4}{3}{2}{5} \ \underset{\eqref{Hs}}= \ \cross{4}{6}{2}{3}{5} \ = \ \beta(x_4), \textrm{\; and} \\
  \alpha(x_5) \ &= \ \cross{1}{2}{4}{3}{6} \ \underset{M\setminus5}= \ \cross{1}{2}{6}{3}{4} \ \underset{M/4}= \ \cross{5}{2}{6}{3}{4} \ \underset{\eqref{Hs}}= \ \cross{6}{3}{5}{2}{4} \ = \ \beta(x_5)
 \end{align*}
 as elements of $F_M$. 
 
 Since every upper sublattice of type $U^2_4$ is contained in either $\Lambda/6$ or $\Lambda\setminus1$, the foundation $F_M$ is generated by the images of $F_{\Lambda/6}$ and $F_{\Lambda\setminus1}$. Since we have taken all relations into consideration (note that the $y_i\in\V$ are the unique elements with $x_i+y_i=1$, so $\alpha(y_i)$ equals $\beta(y_i)$ in $F_M$ as well), it follows that the canonical morphisms $F_{\Lambda/6}\to F_M$ and $F_{\Lambda\setminus1}\to F_M$ are isomorphisms, and thus $F_M\simeq\V$.
\end{proof}


\subsection{The foundation of the M\"obius-Kantor configuration \texorpdfstring{$AG(2,3)\setminus e$}{AG(2,3)-e}}
\label{subsection: foundation of AG23-e}

The M\"obius-Kantor configuration is the single-element deletion of the ternary affine plane $AG(2,3)$. Since the automorphism group of $AG(2,3)$ acts transitively on its points, all of the single-element deletions of $AG(2,3)$ are isomorphic to each other. We write $AG(2,3)\setminus e$ for one of these minors. \autoref{fig: circuit diagram of AG23-e} contains a depiction of its $3$-circuits, and \autoref{fig: the lattice of flats of AG23-e} illustrates its lattice of flats.

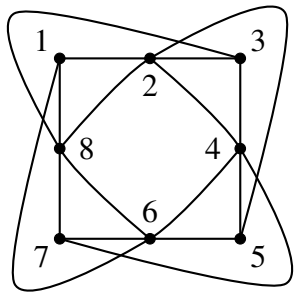
\begin{figure}[htb] 
\[
 \beginpgfgraphicnamed{tikz/fig8}
 \begin{tikzpicture}[x=1.2cm,y=1.2cm]
  \filldraw ( 1, 1) circle (2pt);
  \filldraw ( 0, 1) circle (2pt);
  \filldraw (-1, 1) circle (2pt);
  \filldraw ( 1,-1) circle (2pt);
  \filldraw ( 0,-1) circle (2pt);
  \filldraw (-1,-1) circle (2pt);
  \filldraw ( 1, 0) circle (2pt);
  \filldraw (-1, 0) circle (2pt);
  \draw[thick] (-1,-1) -- ( 1,-1);
  \draw[thick] ( 1,-1) -- ( 1, 1);
  \draw[thick] ( 1, 1) -- (-1, 1);
  \draw[thick] (-1, 1) -- (-1,-1);
  \draw[thick] plot [smooth] coordinates { (-1, 0) ( 0, 1) ( 1.5, 1.5) ( 1,-1) };
  \draw[thick] plot [smooth] coordinates { ( 0, 1) ( 1, 0) ( 1.5,-1.5) (-1,-1) };
  \draw[thick] plot [smooth] coordinates { ( 1, 0) ( 0,-1) (-1.5,-1.5) (-1, 1) };
  \draw[thick] plot [smooth] coordinates { ( 0,-1) (-1, 0) (-1.5, 1.5) ( 1, 1) };
  \node at (-1.2, 1.2) {$1$};
  \node at ( 1.2, 1.2) {$3$};
  \node at ( 1.2,-1.2) {$5$};
  \node at (-1.2,-1.2) {$7$};
  \node at (   0, 0.7) {$2$};
  \node at ( 0.7, 0  ) {$4$};
  \node at (   0,-0.7) {$6$};
  \node at (-0.7, 0  ) {$8$};
 \end{tikzpicture}
 \endpgfgraphicnamed
 \]
 \caption{The $3$-circuits of $AG(2,3)\setminus e$} 
 \label{fig: circuit diagram of AG23-e}
\end{figure}

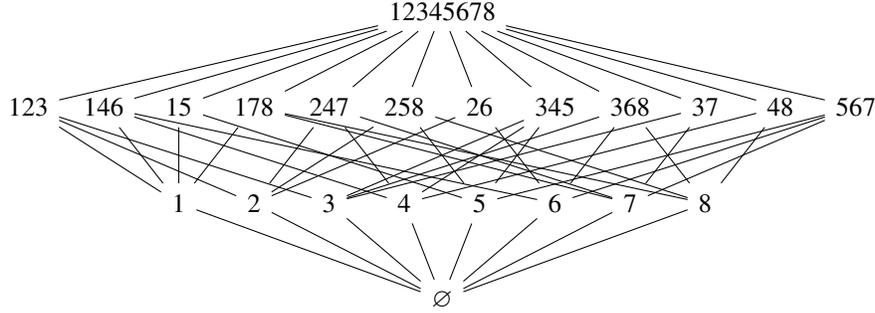
\begin{figure}[htb] 
\[
 \beginpgfgraphicnamed{tikz/fig35}
\begin{tikzpicture}[scale=0.8, vertices/.style={draw, fill=black, circle, inner sep=0pt},font=\footnotesize]
              \node  (0) at (-0+0,0) {$\emptyset$};
              \node  (1) at (-35/8+0,8/5){1};
              \node  (2) at (-35/8+5/4,8/5){2};
              \node  (3) at (-35/8+5/2,8/5){3};
              \node  (4) at (-35/8+15/4,8/5){4};
              \node  (5) at (-35/8+5,8/5){5};
              \node  (6) at (-35/8+25/4,8/5){6};
              \node  (7) at (-35/8+15/2,8/5){7};
              \node  (8) at (-35/8+35/4,8/5){8};
              \node  (9) at (-55/8+0,16/5){123};
              \node  (10) at (-55/8+5/4,16/5){146};
              \node  (11) at (-55/8+5/2,16/5){15};
              \node  (12) at (-55/8+15/4,16/5){178};
              \node  (13) at (-55/8+5,16/5){247};
              \node  (14) at (-55/8+25/4,16/5){258};
              \node  (15) at (-55/8+15/2,16/5){26};
              \node  (16) at (-55/8+35/4,16/5){345};
              \node  (17) at (-55/8+10,16/5){368};
              \node  (18) at (-55/8+45/4,16/5){37};
              \node  (19) at (-55/8+25/2,16/5){48};
              \node  (20) at (-55/8+55/4,16/5){567};
              \node  (21) at (-0+0,24/5){12345678};
      \foreach \to/\from in {0/1, 0/2, 0/3, 0/4, 0/5, 0/6, 0/7, 0/8, 1/9, 1/10, 1/11, 1/12, 2/9, 2/13, 2/14, 2/15, 3/16, 3/17, 3/9, 3/18, 4/10, 4/13, 4/16, 4/19, 5/11, 5/14, 5/20, 5/16, 6/10, 6/15, 6/17, 6/20, 7/12, 7/20, 7/13, 7/18, 8/17, 8/19, 8/12, 8/14, 9/21, 10/21, 11/21, 12/21, 13/21, 14/21, 15/21, 16/21, 17/21, 18/21, 19/21, 20/21}
      \draw [-] (\to)--(\from);
      \end{tikzpicture}
 \endpgfgraphicnamed
 \] 
  \caption{The lattice of flats of $AG(2,3)\setminus e$} 
 \label{fig: the lattice of flats of AG23-e}     
\end{figure}

\begin{prop}\label{prop: foundation of AG23-e}
 The foundation of $M = AG(2,3)\setminus e$ is isomorphic to $\H$. 
\end{prop}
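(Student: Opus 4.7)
The plan is to use the fundamental lattice presentation $F_M \simeq \colim F(\cL_M)$ from \autoref{thm: fundamental lattice presentation} in combination with the structure theorem \autoref{thm: structure theorem for foundations of matroids without large uniform minors}. Since $AG(2,3)$ is representable over $\F_3$, its one-element deletion $M = AG(2,3)\setminus e$ is also ternary, and hence has no minor of type $U^2_5$, $U^3_5$, $F_7$ or $F_7^\ast$. The structure theorem then implies that $F_M$ decomposes as a tensor product of pastures from $\{\U, \D, \H, \F_3, \F_2\}$, with one factor per connected component of $\cL_M$.

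The next step is to verify that $\cL_M$ is connected. Inspecting the lattice of flats in the figure, each of the $8$ points of $M$ lies on exactly $4$ lines (three of length $3$ and one of length $2$), giving precisely $8$ upper sublattices of type $U^2_4$, one per point. The $C_5$-sublattices arise by choosing a $3$-line $L$ together with a pair $\{d,e\}\subset E\setminus L$ that introduces no new $3$-line; a direct enumeration gives exactly $4$ such extensions for each of the $8$ three-lines. Since only the two elements off the $3$-circuit of $C_5$ lie on four lines, each such $C_5$-sublattice contains exactly two upper $U^2_4$-sublattices, namely those at $d$ and $e$; consequently these $C_5$-sublattices provide edges in the connectivity graph on the $8$ $U^2_4$-vertices. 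One checks directly that this graph is connected (already the $C_5$-extensions of the three $3$-lines through vertex $1$ connect $1$ to $4,5,6,7,8$, and those of $567$ connect $1$ to $2$ and $3$). By \autoref{cor: foundation of a matroid wlum with one component}, $F_M$ is a symmetry quotient of $\U$, i.e.\ $F_M \in \{\U, \D, \H, \F_3\}$, where $\F_2$ is excluded since $\U$ admits no morphism to $\F_2$.

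To single out $\H$, the plan is to combine a representability argument with a cross-ratio computation. On the representability side, $M$ admits a $3$-dimensional representation over $\F_7$, obtained by placing the $8$ points using the primitive $6$th root of unity $\zeta_6 = 3 \in \F_7^\times$; this provides a morphism $F_M \to \F_7$, which excludes $F_M = \F_3$ (since $\F_3 \not\to \F_7$). To rule out $\U$ and $\D$, I would trace a loop in the connectivity graph whose composition of \eqref{H4'}-equalities (one per $C_5$-sublattice traversed), reduced via the $S_3$-symmetry relations \eqref{Hs}, \eqref{H1} and \eqref{H2} of $U^2_4$, identifies a fundamental cross-ratio $x$ at a chosen vertex with its inverse $x^{-1}$. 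Together with the $\U$-relation $x+y=1$, this forces $x + x^{-1} = 1$, the defining relation of $\H$, so $F_M$ is a quotient of $\H$. Combined with the previous paragraph, this yields $F_M \simeq \H$.

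The principal obstacle is the explicit identification of a loop whose composed holonomy on $\U$ realizes the involution $x \mapsto x^{-1}$ (modulo $S_3$-symmetries) that singles out $\H$ among the symmetry quotients of $\U$. In practice, the transitivity of $\Aut(M)$ on points of $M$ reduces the search to loops based at a single vertex, and \autoref{prop: invariance of cross ratio equalities under permutations} streamlines the bookkeeping. Alternatively, one may circumvent the loop search by citing the known computation of the universal partial field of $M$ as $\H$ from the literature and upgrading it to the foundation via \autoref{prop: the universal partial field as quotient of the foundation}, using the $\{\U, \D, \H, \F_3\}$-constraint from the structure theorem to conclude that no further identifications are possible.
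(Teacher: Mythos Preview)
Your overall architecture matches the paper's: reduce to a symmetry quotient of $\U$ via connectedness of $\cL_M$ and \autoref{cor: foundation of a matroid wlum with one component}, then use representability and a holonomy computation to isolate $\H$. The paper uses $\F_4$-representability (ruling out $\D$ and $\F_3$ simultaneously) rather than your $\F_7$ (which only rules out $\F_3$), but your logic still closes since you intend the loop argument to exclude both $\U$ and $\D$.

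There is, however, a genuine error in your description of the holonomy. The relation you want is $x=y^{-1}$ (equivalently $xy=1$), which combined with $x+y=1$ gives $x+x^{-1}=1$, the defining relation of $\H=\past{\U}{\{x-y^{-1}\}}$. What you wrote, ``identifies $x$ with its inverse $x^{-1}$'', yields $x^2=1$; together with $x+y=1$ this forces $x=-1,\ y=2$, which is $\D$, not $\H$. In terms of $\Aut(\U)\simeq S_3$, the three transpositions all give $\D$ as quotient, while $\H$ arises from the $3$-cycle $(x,y)\mapsto(y^{-1},-x/y)$; so the holonomy you need is not an involution at all.

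Beyond this, the proof is incomplete at its crucial step: you acknowledge that exhibiting the loop is ``the principal obstacle'' but do not produce it. The paper does this concretely with a three-edge path through the $C_5$-sublattices $\Lambda\setminus267$, $\Lambda\setminus375$, $\Lambda\setminus483$, obtaining
\[
\cross{123}{146}{15}{178}{} \ = \ \cross{368}{48}{258}{178}{} \ = \ \cross{26}{247}{258}{123}{} \ = \ \cross{146}{178}{15}{123}{} \ = \ \cross{123}{15}{178}{146}{},
\]
which is exactly $\alpha(x)=\alpha(y^{-1})$. Your suggested fallback of citing the universal partial field from the literature is not available here, since this computation is precisely what the paper is establishing.
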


\begin{proof}
Let $\Lambda=\Lambda_M$ be the lattice of flats of $M=AG(2,3)\setminus e$ as illustrated in \autoref{fig: the lattice of flats of AG23-e}. It contains 8 upper sublattices of type $U^2_4$, whose respective bottom elements are precisely the $8$ atoms of $\Lambda$. We denote the $U^2_4$-sublattice with bottom element $i$ by $\Lambda/i$ for $i = 1,\dotsc,8$. The lattice $\Lambda$ contains $32$ upper sublattices of type $C_5$, which can be obtained from $AG(2,3)\setminus e$ by deleting $3$ elements $i$, $j$ and $k$ that either form a $3$-circuit (e.g.\ $\{i,j,k\}=\{1,2,3\}$) or such that $|i-j|=4$ (e.g.\ $\{i,j,k\}=\{1,5,6\}$). We denote these sublattices by $\Lambda\setminus{jkl}$. These are all upper sublattices that appear in the fundamental lattice diagram $\cL_M$ of $M$, which is illustrated in \autoref{fig: fundamental lattice subdiagram of AG23-e}, where we label the sublattices $\Lambda\setminus ijk$ of type $C_5$ by ``$ijk$'' due to space limitations.

Since the fundamental lattice diagram $\cL_M$ is connected and does not contain upper sublattices of types $U^2_5$,\ $U^3_5$,\ $F_7$ or $F_7^\ast$, \autoref{cor: foundation of a matroid wlum with one component} implies that the foundation $F_M$ of $M$ is a symmetry quotient of $\U$.

\newcounter{tikz-counter}

\begin{figure}[tb] 
 \[
  \beginpgfgraphicnamed{tikz/fig37}
   \begin{tikzpicture}[scale=1, font=\footnotesize, vertices/.style={draw, fill=black, circle, inner sep=0pt},decoration={markings,mark=at position 0.33 with {\arrow{>}}}]
    \draw[fill=blue!10!white,draw=blue!40!white,rounded corners=5pt] (22.5:5.3cm) -- (67.5:5.3cm) -- (112.5:5.3cm) -- (157.5:5.3cm) -- (202.5:5.3cm) -- (247.5:5.3cm) -- (292.5:5.3cm) -- (337.5:5.3cm) -- cycle;
    \draw[fill=white,draw=blue!40!white,rounded corners=5pt] (22.5:4.5cm) -- (45:3.7cm) -- (67.5:4.5cm) -- (90:3.7cm) -- (112.5:4.5cm) -- (135:3.7cm) -- (157.5:4.5cm) -- (180:3.7cm) -- (202.5:4.5cm) -- (225:3.7cm) -- (247.5:4.5cm) -- (270:3.7cm) -- (292.5:4.5cm) -- (315:3.7cm) -- (337.5:4.5cm) -- (0:3.7cm) -- cycle;
    \draw[fill=green!20!white,draw=green!80!black,rounded corners=5pt] (22.5:4.4cm) -- (45:3.6cm) -- (67.5:4.4cm) -- (90:3.6cm) -- (112.5:4.4cm) -- (135:3.6cm) -- (157.5:4.4cm) -- (180:3.6cm) -- (202.5:4.4cm) -- (225:3.6cm) -- (247.5:4.4cm) -- (270:3.6cm) -- (292.5:4.4cm) -- (315:3.6cm) -- (337.5:4.4cm) -- (0:3.6cm) -- cycle;
   \node[color=blue!80!white] at (22.5:4.8) {\footnotesize  $U^2_4$};
   \node[color=blue!80!white] at (112.5:4.8) {\footnotesize  $U^2_4$};
   \node[color=blue!80!white] at (202.5:4.8) {\footnotesize  $U^2_4$};
   \node[color=blue!80!white] at (292.5:4.8) {\footnotesize  $U^2_4$};
   \node[color=green!40!black] at (0,0) {\normalsize $C_5$};
    \foreach \a in {1,...,16}{\draw (135-\a*360/8: 4cm) node [draw,circle,inner sep=2pt,fill=black] (L\a) {};}
    \foreach \a in {1,...,8} {\draw ((135-\a*360/8: 4.5cm) node {$\Lambda/\a$};}
    \foreach \a in {1,...,8} {\setcounter{tikz-counter}{\a};
                              \addtocounter{tikz-counter}{1};
                              \draw (L\a) edge [-,postaction={decorate},bend left=0] node[midway,draw,circle,inner sep=1.5pt,fill=black]{} (L\arabic{tikz-counter});
                              \addtocounter{tikz-counter}{1};
                              \draw (L\a) edge [-,postaction={decorate},bend left=15] node[midway,draw,circle,inner sep=1.5pt,fill=black]{} (L\arabic{tikz-counter});
                              \addtocounter{tikz-counter}{1};
                              \draw (L\a) edge [-,postaction={decorate},bend left=22] node[midway,draw,circle,inner sep=1.5pt,fill=black]{} (L\arabic{tikz-counter});
                              \addtocounter{tikz-counter}{1};
                              \draw (L\a) edge [-,postaction={decorate},bend left=25] node[midway,draw,circle,inner sep=1.5pt,fill=black]{} (L\arabic{tikz-counter});
                              \draw (L\a) edge [-,postaction={decorate},bend right=25] node[midway,draw,circle,inner sep=1.5pt,fill=black]{} (L\arabic{tikz-counter});
                              \addtocounter{tikz-counter}{1};
                              \draw (L\a) edge [-,postaction={decorate},bend right=22] node[midway,draw,circle,inner sep=1.5pt,fill=black]{} (L\arabic{tikz-counter});
                              \addtocounter{tikz-counter}{1};
                              \draw (L\a) edge [-,postaction={decorate},bend right=15] node[midway,draw,circle,inner sep=1.5pt,fill=black]{} (L\arabic{tikz-counter});
                              \addtocounter{tikz-counter}{1};
                              \draw (L\a) edge [-,postaction={decorate},bend right=0] node[midway,draw,circle,inner sep=1.5pt,fill=black]{} (L\arabic{tikz-counter});
                              }
   \foreach \angle/\label in {1/483,2/481,3/265,4/263,5/487,6/485,7/261,8/267}
    {\draw ((112.5-\angle*360/8:4.05cm) node {\tiny$\label$};}
   \foreach \angle/\label in {1/375,2/482,3/157,4/264,5/371,6/486,7/153,8/268}
    {\draw ((135-\angle*360/8:2.9cm) node {\tiny$\label$};}
   \foreach \angle/\label in {1/156,2/376,3/378,4/158,5/152,6/372,7/374,8/154}
    {\draw ((112.5-\angle*360/8:2.67cm) node {\tiny$\label$};}
   \foreach \angle/\label in {1/146,2/123,3/247,4/178,5/258,6/567,7/368,8/345}
    {\draw ((135-\angle*360/8:0.7cm) node {\tiny$\label$};}
   \end{tikzpicture}
  \endpgfgraphicnamed
 \] 
  \caption{The fundamental lattice diagram of $AG(2,3)\setminus e$} 
 \label{fig: fundamental lattice subdiagram of AG23-e}     
\end{figure}


By \cite[p.\ 653]{Oxley92}, $AG(2,3)$ is quarternary, and so is $AG(2,3)\setminus e$. Therefore $F_M$ cannot be $\D$ nor $\F_3$, which leaves only the possibilities $\U$ and $\H$. We exhibit the defining relation $x\sim y^{-1}$ of $\H=\past{\U}{\{x-y^{-1}\}}$ in the following.

Consider the map $\alpha:\U\simeq F_{\Lambda/1}\to F_M$ with $\alpha(x)=\cross{123}{146}{15}{178}{}$, and thus $\alpha(y^{-1})=\cross{123}{15}{178}{146}{}$. We use the following part of $\cL_M$ to identify $\alpha(x)$ with $\alpha(y^{-1})$ in $F_M$:
\[
 \begin{tikzcd}[row sep=0,column sep=40]
  & & \Lambda/1 \ar[dl] \ar[dr] \\
  & \Lambda\setminus 267 && \Lambda\setminus 483 \\
  \Lambda / 8 \ar[rr] \ar[ur] && \Lambda\setminus 375 && \Lambda/2 \ar[ll] \ar[ul] 
 \end{tikzcd}
\]
In the following computation, we label an equality of cross ratios in $F_M$ that is induced by an upper sublattice of type $C_5$ by the corresponding sublattice. We have
\begin{multline*}
 \alpha(x) \ = \ \cross{123}{146}{15}{178}{} \ \underset{\Lambda\setminus 267}= \ \cross{368}{48}{258}{178}{} \ \underset{\Lambda\setminus 375}= \ \cross{26}{247}{258}{123}{} \ \underset{\Lambda\setminus 483}= \ \cross{146}{178}{15}{123}{} \\ 
 \underset{\eqref{Hs}}= \ \cross{123}{15}{178}{146}{} \ = \ \alpha(y^{-1}),
\end{multline*}
which proves our assertion that $F_M\simeq \past{\U}{\{x-y^{-1}\}}=\H$.
\end{proof}

\section{Fundamental types}
\label{subsection: fundamental types}

\subsection{The fundamental type of a class of matroids}
\label{subsection: fundamental presentation for subclasses}

In \autoref{subsection: presentation of the foundation by embedded minors}, we have seen that the foundation of a matroid $M$ can be presented as the colimit of the foundations of all special embedded minors of $M$, which consists of the isomorphism types of $U^2_4$, $U^2_5$, $U^3_5$, $C_5$, $C_5^\ast$, $U^2_4\oplus U^1_2$, $F_7$ and $F_7^\ast$. In fact, this presentation is minimal: the foundation of none of the matroids in this list can be written as the colimit of the foundations of its proper minors; cf.\ \autoref{rem: fundamental diagram of U12+U24}.

If we are given a class $\cC$ of matroids, it is natural to wonder if there is a minimal set $\cC_0$ of isomorphism classes of matroids in $\cC$ such that the foundation of every matroid $M$ in $\cC$ is the colimit of the foundations of all embedded minors of $M$ whose isomorphism type is in $\cC_0$. 

Given a matroid $M$ and a family $\cC_0$ of isomorphism types of matroids, we denote by $\cE_{M,\cC_0}$ the diagram of all embedded minors $N$ of $M$ whose isomorphism class belongs to $\cC_0$, together with all minor embeddings. We denote by $\Omega_{M,\cC_0}:\colim F(\cE_{M,\cC_0})\to F_M$ the canonical morphism from the colimit of the foundations of embedded minors of $M$ with isomorphism type $\cC_0$ into the foundation of $M$.

\begin{df}
 Let $\cC$ be a class of matroids. A \emph{fundamental type of $\cC$} is a minimal family $\cC_0$ of isomorphism classes of matroids such that $\Omega_{M,\cC_0}:\colim F(\cE_{M,\cC_0})\to F_M$ is an isomorphism for all $M\in\cC$.
\end{df}

It turns out that every class of matroids possesses a unique fundamental type. Before we prove this, let us consider some examples:
\begin{itemize}
 \item The fundamental type of all matroids consists of $U^2_4$, $U^2_5$, $U^3_5$, $C_5$, $C_5^\ast$, $U^2_4\oplus U^1_2$, $F_7$ and $F_7^\ast$ (by \autoref{thm: fundamental presentation}).
 \item The fundamental type of all regular matroids is empty since the foundation of a regular matroid is $\Funpm$ (by \cite[Thm.\ 7.35]{Baker-Lorscheid21b}).
 \item The fundamental type of all binary matroids consists of $F_7$ and $F_7^\ast$, since the foundation of a binary matroid is $\Funpm$ or $\F_2$ (by \cite[Thm.\ 7.32]{Baker-Lorscheid21b}).
 \item The fundamental type of matroids without $U^2_5$ and $U^3_5$-minors consists of $U^2_4$, $C_5$, $C_5^\ast$, $U^2_4\oplus U^1_2$, $F_7$ and $F_7^\ast$ (by \autoref{thm: fundamental presentation}; cf.\ also \cite[Thm.\ 5.9]{Baker-Lorscheid20}).
\end{itemize}

In \autoref{subsection: presentation of the foundation by embedded minors for 2-connected matroids} (resp. \autoref{subsection: presentation of the foundation by embedded minors for 3-connected matroids}), we describe the fundamental types of the classes of all $2$-connected (resp. $3$-connected) matroids.

The next result shows that a fundamental type always exists, and is uniquely determined through an explicit description. For $M$ in $\cC$, we denote by $\cE^{<}_{M,\cC}$ the diagram of all \emph{proper} embedded minors $N=M\minor JI$ (i.e.\ $N\neq M$) that are in $\cC$, together with all minor embeddings. We denote by $\Omega^<_{M,\cC}:\colim F(\cE^{<}_{M,\cC})\to F_M$ the canonical morphism.

\begin{thm}\label{thm: fundamental type}
 Every class $\cC$ of matroids has a unique fundamental type $\cC_\fundtype$, which consists of all isomorphism classes of matroids $M$ in $\cC$ for which $\Omega^<_{M,\cC}:\colim F(\cE^{<}_{M,\cC})\to F_M$ is \emph{not} an isomorphism.
\end{thm}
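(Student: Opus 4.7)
The plan is to define $\cC_\fundtype$ exactly as in the statement and then prove two complementary facts: first, that $\cC_\fundtype$ itself satisfies the defining property of a fundamental type (i.e.\ $\Omega_{M,\cC_\fundtype}$ is an isomorphism for every $M\in\cC$); and second, that every family $\cC_0$ with this property contains $\cC_\fundtype$. Together these yield both the existence and the uniqueness of the minimal fundamental type, which must then equal $\cC_\fundtype$.

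For the first fact I would argue by strong induction on $|E(M)|$. If $M\in\cC_\fundtype$, then $M=M\minor\emptyset\emptyset$ itself appears in $\cE_{M,\cC_\fundtype}$; since every other object $N$ admits a tautological minor embedding $N\hookrightarrow M$ in the diagram, any cocone is determined by its value at $M$ via functoriality, so the constant cocone at $F_M$ is initial and $\Omega_{M,\cC_\fundtype}$ is an isomorphism. If $M\notin\cC_\fundtype$, the definition of $\cC_\fundtype$ gives $\Omega^<_{M,\cC}\colon\colim F(\cE^<_{M,\cC})\stackrel\sim\to F_M$, so it suffices to identify $\colim F(\cE_{M,\cC_\fundtype})$ with $\colim F(\cE^<_{M,\cC})$ compatibly with the maps to $F_M$. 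Every object of $\cE_{M,\cC_\fundtype}$ is proper (since $M\notin\cC_\fundtype$) and has iso type in $\cC_\fundtype\subseteq\cC$, hence lies in $\cE^<_{M,\cC}$; and the induction hypothesis yields $F_N\cong\colim F(\cE_{N,\cC_\fundtype})$ for each $N\in\cE^<_{M,\cC}$. A cocone-translation argument---bijecting cocones on $\cE_{M,\cC_\fundtype}$ with cocones on $\cE^<_{M,\cC}$ by restriction to the subdiagrams $\cE_{N,\cC_\fundtype}\subseteq\cE_{M,\cC_\fundtype}$ and, conversely, by factoring through the isomorphisms $F_N\cong\colim F(\cE_{N,\cC_\fundtype})$---then yields the required identification.

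For the second fact I would argue analogously by induction on $|E(M)|$. Let $\cC_0$ satisfy $\Omega_{M',\cC_0}\stackrel\sim\to F_{M'}$ for all $M'\in\cC$, and suppose $M\in\cC_\fundtype\setminus\cC_0$. A preliminary reduction shows one may assume $\cC_0\subseteq\cC$: iso classes in $\cC_0\setminus\cC$ only contribute as embedded minors of matroids in $\cC$, but the inductive setup and minimality prune them without affecting the hypothesis. Under this reduction every object of $\cE_{M,\cC_0}$ is a proper embedded minor of $M$ lying in $\cC$, hence in $\cE^<_{M,\cC}$. Applying the induction hypothesis to each proper $N\in\cE^<_{M,\cC}$ gives $F_N\cong\colim F(\cE_{N,\cC_0})$, and the same cocone-translation argument identifies $\colim F(\cE_{M,\cC_0})$ with $\colim F(\cE^<_{M,\cC})$ compatibly with the maps to $F_M$. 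Since $\Omega_{M,\cC_0}$ is an isomorphism by hypothesis, so is $\Omega^<_{M,\cC}$, contradicting $M\in\cC_\fundtype$. Uniqueness of the fundamental type then follows formally: any minimal family with the property contains $\cC_\fundtype$, and $\cC_\fundtype$ itself is such a family, so any minimal family equals $\cC_\fundtype$.

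The main technical obstacle is the cocone-translation lemma common to both parts: assuming $F_N\cong\colim F(\cE_{N,\cC'})$ for all proper $N\in\cE^<_{M,\cC}$, the colimit $\colim F(\cE_{M,\cC'})$ should be canonically identified with $\colim F(\cE^<_{M,\cC})$, compatibly with the canonical maps to $F_M$. This is formally a consequence of the universal property of colimits together with the transitivity of embedded minors (a minor of an embedded minor is an embedded minor of the ambient matroid via composition of the deletion/contraction sets), and with the functoriality of foundations under minor embeddings established in \cite{Baker-Lorscheid20}; but verifying that the two restriction/extension procedures are mutually inverse and compatible with all minor embeddings between different $N$'s requires careful bookkeeping. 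A subsidiary delicacy is the reduction $\cC_0\subseteq\cC$ in the minimality argument, which is needed to ensure that the subdiagrams $\cE_{N,\cC_0}$ for $N\in\cE^<_{M,\cC}$ actually cover all of $\cE_{M,\cC_0}$.
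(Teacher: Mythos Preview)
Your two–fact structure matches the paper's, and your inductive argument for existence (Fact~1) is essentially the same as the paper's: where you induct on $|E(M)|$ and invoke a cocone-translation lemma, the paper inducts on the number of embedded minors removed from $\cE^<_{M,\cC}$, adding back one minimal-size object $N$ at a time and using that $F_N\cong\colim F(\cE^<_{N,\cC})$ to see the colimit is unchanged. Both are the same ``final subdiagram'' argument.

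The genuine difference is in the minimality step (Fact~2). The paper does \emph{not} use your cocone-translation lemma here. Instead, given a fundamental type $\cC_0$ and $M\notin\cC_0$, it directly constructs a section $\widetilde\Phi_M\colon F_M\to\colim F(\cE^<_{M,\cC})$ from the inclusion $\cE_{M,\cC_0}\subseteq\cE^<_{M,\cC}$ and the isomorphism $\Omega_{M,\cC_0}$, and then shows $\Omega^<_{M,\cC}\circ\widetilde\Phi_M$ is the identity by exhibiting an \emph{epimorphism} $\pi\colon\bigotimes_{N\in\cE^{\min}_{M,\cC_0}}F_N\twoheadrightarrow\colim F(\cE^<_{M,\cC})$ and checking the identity after precomposing with $\pi$. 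The epimorphism comes from the matroid-specific fact that every $F_N$ is generated by universal cross ratios, each arising from a $U^2_4$-minor contained in some object of $\cE_{N,\cC_0}$. Your approach is more uniformly categorical and avoids this generation fact; the paper's is more concrete. Both are valid.

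Two small remarks on your write-up. First, in Fact~2 the isomorphism $F_N\cong\colim F(\cE_{N,\cC_0})$ for proper $N\in\cE^<_{M,\cC}$ is not an ``induction hypothesis''---it is simply the standing assumption on $\cC_0$ applied to $N\in\cC$; no induction on $|E(M)|$ is needed there. Second, your concern about reducing to $\cC_0\subseteq\cC$ is legitimate: the paper's proof also tacitly uses that embedded minors with iso type in $\cC_0$ lie in $\cE^<_{M,\cC}$, so this is not a gap peculiar to your approach.
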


\begin{proof}
 Let $\cC_\fundtype$ be the collection of isomorphism classes of matroids $M$ in $\cC$ for which the canonical map $\Omega^<_{M,\cC}:\colim F(\cE^{<}_{M,\cC})\to F_M$ is not an isomorphism. As a first step we show that $\cC_\fundtype$ is contained in every fundamental type $\cC_0$ of $\cC$.
 
 Let $M$ be a matroid that does not belong to $\cC_0$. Then the diagram $\cE_{M,\cC_0}$ of embedded minors of $M$ with isomorphism class in $\cC_0$ is a subdiagram of $\cE^<_{M,\cC}$, which induces a morphism $\Phi_{M}:\colim F(\cE_{M,\cC_0})\to\colim F(\cE^{<}_{M,\cC})$ that commutes with the canonical morphisms to $F_M$:
 \[
  \begin{tikzcd}[row sep=40pt, column sep=60pt]
   \colim F(\cE_{M,\cC_0}) \ar[rr,"\Phi_{M}"] \ar[dr,"\Omega_{M,\cC_0}"',"\sim"] &     & \colim F(\cE^{<}_{M,\cC}) \ar[dl,"\Omega^<_{M,\cC}"] \\
                                                                         & F_M
  \end{tikzcd}
 \]
 If we define $\widetilde\Phi_M=\Phi_M\circ(\Omega_{M,\cC_0})^{-1}$, then this means that $\widetilde\Phi_M\circ\Omega^<_{M,\cC}=\id_{F_M}$.
 
 In order to show that $\Omega^<_{M,\cC}\circ\widetilde\Phi_M$ is the identity on $\colim F(\cE^{<}_{M,\cC})$, we note that the foundation $F_N$ of every embedded minor $N$ of $M$ is generated by the universal cross ratios of $N$, and that every universal cross ratio stems from a $U^2_4$-minor of $N$. Since $\Omega_{N,\cC_0}:\colim F(\cE_{N,\cC_0})\to F_N$ is an isomorphism, every embedded $U^2_4$-minor of $N$ is contained in a (minimal) embedded minor $N'$ in $\cE_{N,\cC_0}$. Thus the canonical morphism
 \[
  \pi: \bigotimes_{N\in\cE^\min_{M,\cC_0}} F_N \ \longrightarrow \ \colim F(\cE^{<}_{M,\cC})
 \]
 is an epimorphism, where $\cE^\min_{M,\cC_0}$ is the collection of minimal embedded minors of $\cE_{M,\cC_0}$ that contain a $U^2_4$-minor. Since $\cE^\min_{M,\cC_0}$ is contained in $\cE_{M,\cC_0}$, we have naturally that
 \[
  \Omega^<_{M,\cC}\circ\widetilde\Phi_M\circ\pi \ = \ \pi \ = \ \id_{\colim F(\cE^{<}_{M,\cC})}\circ\pi.
 \]
 Since $\pi$ is an epimorphism, this implies that $\Omega^<_{M,\cC}\circ\widetilde\Phi_M$ is the identity on $\colim F(\cE^{<}_{M,\cC})$.
 
 We conclude that $\Omega^<_{M,\cC}:\colim F(\cE^{<}_{M,\cC})\to F_M$ is an isomorphism and that the isomorphism class of $M$ does not belong to $\cC_\fundtype$. This shows that $\cC_\fundtype$ is contained in $\cC_0$ as claimed. Once we have shown that $\cC_\fundtype$ is a fundamental type, the uniqueness of $\cC_\fundtype$ follows by the minimality of fundamental types.
 
 Given a matroid $M$, we write $\cE^\fundtype_{M,\cC}=\cE_{M,\cC_\fundtype}$ and $\Omega^\fundtype_{M,\cC}=\Omega_{M,\cC_\fundtype}$. We aim to show that $\Omega^\fundtype_{M,\cC}:\colim F(\cE^\fundtype_{M,\cC})\to F_M$ is an isomorphism for all $M\in\cC$. If $\Omega^{<}_{M,\cC}$ is not an isomorphism, then $M$ belongs to $\cE^\fundtype_{M,\cC}$ and forms its terminal object. Therefore $\Omega^{\fundtype}_{M,\cC}$ is tautologically an isomorphism.
 
 Assume that $\Omega^{<}_{M,\cC}$ is an isomorphism. Let $\cE$ be a full subdiagram of $\cE^<_{M,\cC}$, i.e.,\ whenever it contains two embedded minors $N$ and $N'$ of $M$ such that $N$ is an embedded minor of $N'$, then the minor embedding $N\hookrightarrow N'$ is in $\cE$. The inclusion of diagrams $\cE\hookrightarrow\cE^<_{M,\cC}$ induces a pasture morphism
 \[
  \Phi_\cE: \ \colim F(\cE) \ \longrightarrow \ \colim F(\cE^<_{M,\cC}) \ \simeq \ F_M.
 \]
 Since the isomorphism type of $M$ does not belong to $\cC_\fundtype$, the diagram $\cE^\fundtype_{M,\cC}$ is a subdiagram of $\cE^<_{M,\cC}$. We show by induction on the number $k$ of embedded minors in $\cE^<_{M,\cC}$ that are not in $\cE$ that $\Phi_\cE$ is an isomorphism provided that $\cE$ contains $\cE^\fundtype_{M,\cC}$. If $k=0$, then $\cE=\cE^<_{M,\cC}$, so $\Phi_\cE$ is the identity, which establishes the base case of the induction.
 
 Let $k>0$. Let $N\in \cE^<_{M,\cC}$ be an embedded minor of minimal size in the complement of $\cE$. Then all proper embedded minors of $N$ are in $\cE$, i.e.,\ $\cE^<_{N,\cC}$ is a subdiagram of $\cE$. Since $N$ is not in $\cE^\fundtype_{M,\cC}\subset\cE$, the canonical morphism $\colim F(\cE^<_{N,\cC})\to F_N$ is an isomorphism. Let $\cE'$ be the full subdiagram of $\cE^<_{M,\cC}$ that contains $\cE$ and $N$. Thus $\Phi_\cE$ factors into
 \[
  \Phi_\cE: \ \colim F(\cE) \ \stackrel\sim\longrightarrow \ \colim F(\cE') \ \stackrel{\Phi_{\cE'}}\longrightarrow \ F_M.
 \]
 By the inductive hypothesis, $\Phi_{\cE'}$ is an isomorphism, and so is $\Phi_\cE$, which completes the inductive step.
 
 Since the number of embedded minors of $M$ is finite, this shows that the morphism $\Phi_{\cE^\fundtype_{M,\cC}}:\colim F(\cE^{\fundtype}_{M,\cC})\to F_M$ is an isomorphism, which concludes the proof.
\end{proof}

Given a matroid $N$ and a class of matroids $\cC$, we call a matroid $M$ in $\cC$ together with a minor embedding $N\simeq M\minor JI\hookrightarrow M$ a \emph{minimal $\cC$-extension of $N$} if all intermediate embedded minors $N\hookrightarrow N'\hookrightarrow M$ with $N'\neq M$ are not in $\cC$. The following result allows us to determine new fundamental types from known ones in terms of their minimal extensions.

\begin{prop}\label{prop: fundamental type of subclasses with U24}
 Let $\cD\subset\cC$ be classes of matroids with respective fundamental types $\cD_\fundtype$ and $\cC_\fundtype$. Then $\cD_\fundtype$ consists of the isomorphism types of all minimal $\cD$-extensions $M$ of matroids $N$ with isomorphism class in $\cC_\fundtype$ for which $\colim F(\cE^<_{M,\cD})\to F_M$ is not an isomorphism.
\end{prop}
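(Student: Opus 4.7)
By \autoref{thm: fundamental type} applied to $\cD$, we have $\cD_\fundtype = \{[M] \in \cD : \Omega^<_{M,\cD}$ is not an isomorphism$\}$, so the $\supseteq$-inclusion in the proposition is automatic. The content lies in the reverse inclusion, which amounts to showing that if $[M] \in \cD_\fundtype$, then $M$ is a minimal $\cD$-extension of some $N$ with $[N] \in \cC_\fundtype$. The plan is to prove the contrapositive: assuming $M \in \cD$ is not a minimal $\cD$-extension of any $N$ with $[N] \in \cC_\fundtype$, show that $\Omega^<_{M,\cD}$ is an isomorphism. The hypothesis forces $[M] \notin \cC_\fundtype$, since otherwise $N = M$ together with the identity embedding would provide such a minimal $\cD$-extension (vacuously, there being no proper intermediate embedded minors $N' \neq M$). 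Consequently \autoref{thm: fundamental type} applied to $\cC$ gives that $\Omega^<_{M,\cC}$ is an isomorphism, and $\Omega^<_{M,\cD}$ factors through it via the canonical morphism $\iota: \colim F(\cE^<_{M,\cD}) \to \colim F(\cE^<_{M,\cC}) \cong F_M$ induced by inclusion of diagrams; it therefore suffices to show that $\iota$ is an isomorphism.

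To establish this, the plan is to mimic the inductive vertex-adding argument in the proof of \autoref{thm: fundamental type}, starting from $\cE^<_{M,\cD}$ rather than from $\cE^\fundtype_{M,\cC}$. One processes the vertices of $\cE^<_{M,\cC} \setminus \cE^<_{M,\cD}$ in a suitable order and shows at each stage that enlarging the current subdiagram by one vertex does not change the colimit. For a vertex $N$ with $[N] \notin \cC_\fundtype$, the argument is identical to that of \autoref{thm: fundamental type}: one arranges the ordering so that every proper embedded minor of $N$ is already present when $N$ is added, so that the identification $F_N \cong \colim F(\cE^<_{N,\cC})$ makes $F_N$ redundant. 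For a vertex $N$ with $[N] \in \cC_\fundtype$, the hypothesis furnishes an intermediate embedded minor $N \hookrightarrow N_0 \hookrightarrow M$ with $N_0 \in \cE^<_{M,\cD}$, and the morphism $F_N \to F_{N_0}$ channels every element of $F_N$ into a vertex that was retained from the start.

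The main obstacle is the careful verification, in the $\cC_\fundtype$-case, that enlarging the diagram by $N$ introduces no genuinely new identifications. Concretely, an identification of the form $\phi_a(w_a) = \phi_b(w_b)$ in $F_N$ (for $w_a \in F_{N_a}$, $w_b \in F_{N_b}$ and embeddings $N_a, N_b \hookrightarrow N$) might a priori rely on the presence of $F_N$. The rerouting plan is to compose with $N \hookrightarrow N_0$ to obtain direct embeddings $N_a \hookrightarrow N_0$ and $N_b \hookrightarrow N_0$ that lie in the full diagram $\cE^<_{M,\cC}$, so that the equality in $F_N$ propagates under $F_N \to F_{N_0}$ to the corresponding equality of images in $F_{N_0}$. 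Combined with \autoref{prop: invariance of cross ratio equalities under permutations} and the description of cross-ratio relations from \autoref{thm: fundamental presentation of foundations in terms of bases}, this shows that the identifications mediated by $F_N$ are all already realised through the structure of $F_{N_0}$ together with already-processed vertices of smaller size. Organizing the induction so that the $\cC_\fundtype$-vertices (which rely crucially on the hypothesis) are handled in compatible order with the non-$\cC_\fundtype$-vertices (handled by the direct analogue of \autoref{thm: fundamental type}) then yields that $\iota$ is an isomorphism and completes the proof.
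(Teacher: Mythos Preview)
Your overall strategy matches the paper's: reduce to the contrapositive, observe that $[M]\notin\cC_\fundtype$, and then show $\Omega^<_{M,\cD}$ is an isomorphism. The divergence is in how you establish that last isomorphism. The paper does \emph{not} attempt a vertex-by-vertex induction from $\cE^<_{M,\cD}$ up to $\cE^<_{M,\cC}$; instead it argues directly that
\[
\colim F(\cE^<_{M,\cD}) \;\cong\; \colim_{N\in\cE^<_{M,\cD}} \colim F(\cE^\fundtype_{N,\cC}),
\]
and then uses the hypothesis to see that $\bigcup_{N\in\cE^<_{M,\cD}}\cE^\fundtype_{N,\cC}=\cE^\fundtype_{M,\cC}$, whence the double colimit equals $\colim F(\cE^\fundtype_{M,\cC})\cong F_M$. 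This ``colimit of colimits'' step cleanly avoids the delicate issue below.

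Your Case~2 has a genuine gap. When you add a vertex $N$ with $[N]\in\cC_\fundtype$, you justify that \emph{in}-edge identifications reroute through the chosen $N_0$, and that is correct. But you do not address \emph{out}-edge identifications: if $N$ has two distinct out-neighbours $N_0,N_0'\in\cE^<_{M,\cD}$ in the current diagram, then an element $x\in F_N$ is sent to images in $F_{N_0}$ and $F_{N_0'}$, and adding $N$ forces these images to coincide in the colimit. Nothing in your argument explains why that identification was already present before adding $N$; there need be no common proper $\cD$-minor of $M$ containing both $N_0$ and $N_0'$, and since $N\notin\cE$ you cannot use $N$ itself as the bridge. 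In categorical terms, the inclusion of the smaller diagram fails to be cofinal unless the comma category of out-neighbours of $N$ is connected, and you have no mechanism to force that. The appeals to \autoref{prop: invariance of cross ratio equalities under permutations} and \autoref{thm: fundamental presentation of foundations in terms of bases} do not resolve this: they control relations \emph{within} a single foundation, not identifications between images of $F_N$ in two unrelated vertices of the diagram. The paper's covering argument sidesteps the problem entirely because it never needs to add a $\cC_\fundtype$-vertex in isolation.
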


\begin{proof}
 By \autoref{thm: fundamental type}, the isomorphism class of $M$ belongs to the fundamental type of $\cD$ if and only if the canonical morphism $\Phi_M: \colim F(\cE^<_{M,\cD})\to F_M$ is not an isomorphism. Thus the claim follows at once for the minimal $\cD$-extensions $M$ of matroids with isomorphism type in $\cC_\fundtype$. 
 
 Due to the characterization of $\cD_\fundtype$ in \autoref{thm: fundamental type}, we are left with verifying that $\Omega^<_{M,\cD}: \colim F(\cE^<_{M,\cD})\to F_M$ is an isomorphism if $M$ is in $\cD$ and not a minimal $\cD$-extension of any of its embedded minors $N$ with isomorphism class in $\cC_\fundtype$.  Note that this means in particular that the isomorphism class of $M$ is not in $\cC_\fundtype$, and thus $\Omega^<_{M,\cC}: \colim F(\cE^<_{M,\cC})\to F_M$ is an isomorphism. 
 
 Let $\cE^\fundtype_{N,\cC}=\cE_{N,\cC_\fundtype}$ be the diagram of all embedded minors of a matroid $N$ with isomorphism class in $\cC_\fundtype$, and let $F^\fundtype_{N,\cC}=\colim F(\cE^\fundtype_{N,\cC})$. Then by the defining property of the fundamental type, the canonical morphism $\Omega^\fundtype_{N,\cC}: F^\fundtype_{N,\cC} \to F_N$ is an isomorphism. 
 
 A minor embedding $N\hookrightarrow N'$ induces an inclusion of diagrams $\cE^\fundtype_{N,\cC}\hookrightarrow \cE^\fundtype_{N',\cC}$ and a morphism $F^\fundtype_{N,\cC} \to F^\fundtype_{N',\cC}$ between the colimits of the respective diagrams of foundations, which corresponds to the canonical morphism $F_N\to F_{N'}$ under the isomorphisms $\Omega^\fundtype_{N,\cC}$ and $\Omega^\fundtype_{N',\cC}$. If $\cF^<_{M,\cD}$ is the diagram of all colimits $F^\fundtype_{N,\cC}$, together with the morphisms $F^\fundtype_{N,\cC} \to F^\fundtype_{N',\cC}$ induced by the inclusions $N\hookrightarrow N'$ of embedded minors $N$ and $N'$ of $M$ that are in $\cD$, then the previous observations imply that the colimit over the isomorphisms $\Omega^\fundtype_{N,\cC}$ yields an isomorphism $\Theta_{M,\cD}:\colim \cF^<_{M,\cD} \to \colim F(\cE^<_{M,\cD})$.
 
 Since $M$ is not a minimal $\cD$-extension of any matroid $N$ for which $[N]\in\cC_\fundtype$, we find for every embedded minor $N$ of $M$ with $[N]\in\cC_\fundtype$ a proper embedded minor $N'\in\cD$ of $M$ that contains $N$. Thus the union of the diagrams $\cE^\fundtype_{N,\cC}$, where $N$ ranges over all proper embedded minors of $M$ that are in $\cD$, is all of $\cE^\fundtype_{M,\cC}$. Therefore we have a canonical identification $\colim F(\cE^\fundtype_{M,\cC}) \cong \colim  \cF^<_{M,\cD}$. This yields a commutative diagram
 \[
  \begin{tikzcd}[row sep=25pt, column sep=60pt]
   \colim F(\cE^\fundtype_{M,\cC}) \ar[r,"\sim"] \ar[d,"\Omega^\fundtype_{M,\cC}"',"\sim"] & \colim  \cF^<_{M,\cD} \ar[r,"\sim","\Theta_{M,\cD}"'] & \colim F(\cE^<_{M,\cD}) \ar[d,"\Omega^<_{M,\cD}"] \\
   F_M \ar[rr,"\id"]                             &                              & F_M
  \end{tikzcd}
 \]
 which establishes our claim that $\Omega^<_{M,\cD}$ is an isomorphism and completes the proof.
\end{proof}

If $\cC$ is the class of all matroids, \autoref{prop: fundamental type of subclasses with U24} reads as follows:

\begin{cor}\label{cor: fundamental type with U24}
 Let $\cD$ be a class of matroids. Then its fundamental type consists of the isomorphism types of all minimal $\cD$-extensions $M$ of 
 \[
  U^2_4, \qquad U^2_5, \qquad U^3_5, \qquad C_5, \qquad C_5^\ast, \qquad U^2_4\oplus U^1_2, \qquad F_7 \qquad \text{and} \qquad F_7^\ast
 \]
 for which $\colim F(\cE^<_{M,\cD})\to F_M$ is not an isomorphism. 
\end{cor}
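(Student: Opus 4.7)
The plan is to simply specialize \autoref{prop: fundamental type of subclasses with U24} to the case where the ambient class $\cC$ is the class of all matroids. First I would observe that this is legitimate since $\cD \subset \cC$ trivially. The proposition then asserts that $\cD_\fundtype$ consists of the isomorphism types of all minimal $\cD$-extensions $M$ of matroids $N$ with $[N] \in \cC_\fundtype$ for which the canonical morphism $\colim F(\cE^<_{M,\cD}) \to F_M$ fails to be an isomorphism.

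The only remaining content is to identify $\cC_\fundtype$ when $\cC$ is the class of all matroids. By \autoref{thmB} (equivalently \autoref{thm: fundamental presentation}), we have $F_M = \colim F(\cE_M)$ for every matroid $M$, where $\cE_M = \cE_{M,\cC_0}$ and
\[
 \cC_0 \ = \ \bigl\{ \, U^2_4, \ U^2_5, \ U^3_5, \ C_5, \ C_5^\ast, \ U^2_4 \oplus U^1_2, \ F_7, \ F_7^\ast \, \bigr\}.
\]
Moreover, \autoref{rem: fundamental diagram of U12+U24} verifies that $\cC_0$ is minimal with this property, i.e.\ none of the listed matroids can be removed from $\cC_0$ without destroying the colimit presentation for some matroid in $\cC_0$ itself. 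By the uniqueness of the fundamental type established in \autoref{thm: fundamental type}, this forces $\cC_\fundtype = \cC_0$.

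Substituting this description of $\cC_\fundtype$ into the conclusion of \autoref{prop: fundamental type of subclasses with U24} yields exactly the statement of the corollary. No additional obstacle appears, as the work has been carried out in the previous results; the only potentially subtle point is the appeal to minimality of $\cC_0$ in order to invoke uniqueness of the fundamental type, but this is already furnished by \autoref{rem: fundamental diagram of U12+U24} together with \autoref{thm: fundamental type}.
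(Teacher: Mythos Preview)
Your proof is correct and follows essentially the same approach as the paper: apply \autoref{prop: fundamental type of subclasses with U24} with $\cC$ the class of all matroids, and identify $\cC_\fundtype$ via \autoref{thm: fundamental presentation}. Your version is slightly more explicit in justifying $\cC_\fundtype=\cC_0$ by invoking the minimality from \autoref{rem: fundamental diagram of U12+U24} together with the uniqueness in \autoref{thm: fundamental type}, which the paper leaves implicit.
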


\begin{proof}
 This follows at once from \autoref{prop: fundamental type of subclasses with U24} applied to the class $\cC$ of all matroids and \autoref{thm: fundamental presentation}, which characterizes the fundamental type in this case as the isomorphism classes of $U^2_4$, \ $U^2_5$, \ $U^3_5$, \ $C_5$, \ $C_5^\ast$, \ $U^2_4\oplus U^1_2$, \ $F_7$ and $F_7^\ast$ and satisfies the hypotheses of \autoref{prop: fundamental type of subclasses with U24}.
\end{proof}

\begin{rem}
 In general, two different classes $\cC$ and $\cC'$ can have the same fundamental type. Since their union has the same type, there is a maximal class for each given fundamental type. Examples of such maximal classes are regular matroids, binary matroids, and matroids without $U^2_5$ or $U^3_5$-minors. The classes $\cC^{(k)}$ of $k$-connected matroids (for $k\geq 2$) are not maximal, since we can add a loop to a matroid in $\cC^{(k)}$ without changing the foundation. This leads us to ask:
  
 \begin{problem}
  Can we characterize which families of matroids are the maximal classes for some fundamental type?
 \end{problem}

 Not all fundamental types are finite, as there are infinite families of matroids that are not minors of each other, such as the class of excluded minors for orientable matroids. This suggests: 
 
 \begin{problem}
  Is there a simple set of sufficient conditions, or, more ambitiously, both necessary and sufficient conditions, for a class of matroids to have a finite fundamental type? 
 \end{problem}
\end{rem}

\begin{rem}
 Fundamental types are in some sense complementary to excluded minors when applied to the class $\cC$ of matroids representable over some pasture $F$: while excluded minors are isomorphism types of minimal matroids that do not appear in $\cC$, the fundamental type of $\cC$ consists of isomorphism types of certain matroids that \emph{do} appear in $\cC$. 
 
 The difficulty of these complementary approaches is, however, of a very different nature. While it is in general hard to find a complete list of excluded minors, the fundamental type of $\cC$ is easily determined---the difficulty lies rather in the structure of the fundamental diagram. 
 
 More explicitly, the fundamental type of $\cC$ consists of the isomorphism types of those matroids among $U^2_4$, $U^2_5$, $U^3_5$, $C_5$, $C_5^\ast$, $U^2_4\oplus U^1_2$, $F_7$ and $F_7^\ast$ that are representable over $F$. This follows from \autoref{cor: fundamental type with U24} and the fact that if a matroid $N$ from this list is \emph{not} representable over $F$, then no extension of $N$ is $F$-representable. In particular, this means that the fundamental type of the class of $F$-representable matroids is finite for every pasture $F$.
\end{rem}

For connectivity considerations, however, the question of determining fundamental types is more interesting:

\begin{problem}
 Can one explicitly determine the fundamental type for the class of $4$-connected, or vertically $4$-connected, matroids?
\end{problem}


\subsection{The 2-connected fundamental presentation}
\label{subsection: presentation of the foundation by embedded minors for 2-connected matroids}

If $M$ is $2$-connected, then it turns out that we can omit the embedded minors of type $U^2_4\oplus U^1_2$ from the fundamental diagram without changing the fundamental presentation $\colim F(\cE_M)$ of the foundation $F_M$ of $M$. The key idea for the proof of this claim was communicated to us by Nathan Bowler.

\begin{df}
 The \emph{$2$-connected fundamental diagram of $M$} is the diagram $\cE_M^{(2)}$ of embedded minors of isomorphism types
 \[
  U^2_4, \qquad U^2_5, \qquad U^3_5, \qquad C_5,\qquad C_5^\ast, \qquad F_7, \qquad F_7^\ast,
 \]
 together with all minor embeddings. 
\end{df}

Note that $\cE_M^{(2)}$ is a subdiagram of the fundamental diagram $\cE_M$. More precisely, it consists of all embedded minors of $\cE_M$ that are $2$-connected. If $M$ is $2$-connected, then we call the associated diagram $F(\cE_M^{(2)})$ of foundations the \emph{$2$-connected fundamental presentation of $M$}, which is motivated by the following result. Let $\cC^{(2)}$ be the class of all $2$-connected matroids.

\begin{thm}\label{thm: fundamental presentation for 2-connected matroids}
 The fundamental type of $\cC^{(2)}$ consists of the isomorphism types of the matroids $U^2_4$, $U^2_5$, $U^3_5$, $C_5$, $C_5^\ast$, $F_7$ and $F_7^\ast$. This is, if $M$ is a $2$-connected matroid with foundation $F_M$ and $2$-connected fundamental diagram $\cE_M^{(2)}$, then $F_M\simeq \colim F(\cE_M^{(2)})$.
\end{thm}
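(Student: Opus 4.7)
The plan is to apply \autoref{prop: fundamental type of subclasses with U24} to the inclusion $\cD = \cC^{(2)} \subset \cC$, where $\cC$ denotes the class of all matroids. By \autoref{thm: fundamental presentation}, we have $\cC_\fundtype = \{U^2_4,\ U^2_5,\ U^3_5,\ C_5,\ C_5^\ast,\ U^2_4 \oplus U^1_2,\ F_7,\ F_7^\ast\}$. The proposition then identifies $\cC^{(2)}_\fundtype$ with the set of isomorphism types of minimal $2$-connected extensions $M$ of the matroids in $\cC_\fundtype$ for which the canonical morphism $\Omega^<_{M,\cC^{(2)}}\colon \colim F(\cE^<_{M,\cC^{(2)}}) \to F_M$ fails to be an isomorphism.

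The first step is to deal with the seven matroids in $\cC_\fundtype$ that are already $2$-connected, namely $U^2_4$, $U^2_5$, $U^3_5$, $C_5$, $C_5^\ast$, $F_7$, $F_7^\ast$. Each of these is trivially its own minimal $2$-connected extension, and the case-by-case argument given in \autoref{rem: fundamental diagram of U12+U24} for the full fundamental diagram carries over verbatim to the $2$-connected one (since those arguments only involve $2$-connected minors), showing that $\Omega^<_{M,\cC^{(2)}}$ is not an isomorphism for any of these $M$. Hence all seven remain in $\cC^{(2)}_\fundtype$, matching the list in the theorem.

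The remaining and main task is to show that the single non-$2$-connected matroid $N := U^2_4 \oplus U^1_2$ contributes nothing new: no minimal $2$-connected extension $M$ of $N$ lies in $\cC^{(2)}_\fundtype$. Equivalently, I must show that whenever $M$ is $2$-connected and contains $N$ as an embedded minor but admits no proper $2$-connected intermediate embedded minor containing $N$, the morphism $\Omega^<_{M,\cC^{(2)}}$ is in fact an isomorphism. The intuition is that $2$-connectivity of $M$ forces the two $U^2_4$-summands of any embedded $N$-minor to be linked by additional elements of $M$, and this linkage should supply proper $2$-connected embedded minors of $M$ (of types $C_5$, $C_5^\ast$, $U^2_5$, $U^3_5$, etc.) whose foundations already identify the two copies of $\U$ coming from the summands of $N$ in the $2$-connected diagram in the same way that the $N$-minor does in the full fundamental diagram.

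To make this rigorous I would invoke the Cunningham--Edmonds tree decomposition, which presents every $2$-connected matroid as an iterated $2$-sum of $3$-connected matroids, circuits, and cocircuits. Minimality of $M$ as a $\cC^{(2)}$-extension of $N$ sharply restricts this tree, and the argument splits into two cases. When $M$ is $3$-connected, minimality bounds $|E(M)|$ by a small constant and the claim is verified by direct enumeration, exhibiting explicit $C_5$-, $C_5^\ast$-, or $U^2_5$-type embedded minors that join the two $U^2_4$'s. When $M$ admits a non-trivial $2$-sum decomposition $M = M_1 \oplus_2 M_2$, one argues by induction on the number of $3$-connected components in the Cunningham--Edmonds tree: one of the factors contains an embedded $U^2_4$-minor linked via $C_5$- or $C_5^\ast$-structure to the basepoint of the $2$-sum, and the inductive hypothesis (applied to a smaller $2$-connected embedded minor obtained by pruning a leaf of the decomposition tree) furnishes the required identification. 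The main obstacle will be the careful enumeration of base cases together with the bookkeeping of cross-ratio identifications across $2$-sum boundaries, which together constitute the ``rather elaborate induction'' mentioned in the introduction.
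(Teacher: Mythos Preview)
Your high-level strategy is correct and matches the paper: apply \autoref{prop: fundamental type of subclasses with U24} to reduce to the minimal $\cC^{(2)}$-extensions of matroids in $\cC_\fundtype$, observe that the seven already $2$-connected matroids stay, and then show that no minimal $2$-connected extension of $N=U^2_4\oplus U^1_2$ lies in $\cC^{(2)}_\fundtype$. However, what you have written is an outline rather than a proof, and your sketch of the hard step diverges from the actual argument in a way that would not work as stated.

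The paper does not split into a ``$3$-connected case'' versus a ``$2$-sum case.'' Instead, it proves (\autoref{lemma: minimal 2-connected extensions of U24+U12}) that the minimal $2$-connected extensions of $N$ are \emph{completely classified}: up to duality, each is a series--parallel extension of $U^2_5$ whose Cunningham--Edmonds tree is a path $U^2_5 - U^2_3 - U^1_3 - U^2_3 - \dotsb$, one for each $n\geq 7$. In particular, \emph{no} minimal $2$-connected extension of $N$ is $3$-connected, so your first case is vacuous (and your claim that it is handled by a bounded enumeration is never needed). Once this classification is in hand, the paper inducts on $n=\#E_M$: by the inductive hypothesis, every embedded $U^2_4\oplus U^1_2$-minor of $M$ \emph{other than} the distinguished $N_0$ sits inside a proper $2$-connected embedded minor and can be discarded; the single remaining minor $N_0$ is then shown to impose no new relations by exhibiting an explicit chain of embedded $D_6$-minors (the series extension of $U^2_5$) and invoking \autoref{lemma: cross ratios in D6}. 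The linkage is thus via $D_6$, not via $C_5$ or $C_5^\ast$ as you suggest, and the induction variable is the number of elements, not the number of components in the tree. Without the explicit classification of the minimal extensions and the $D_6$ lemma, your proposed induction on the tree would have no base case and no mechanism for propagating cross-ratio identifications.
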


The rest of this section is dedicated to the proof of this theorem. By \autoref{cor: fundamental type with U24}, we only need to determine which of the minimal $2$-connected extensions of $U^2_4$, $U^2_5$, $U^3_5$, $C_5$, $C_5^\ast$, $U^2_4\oplus U^1_2$, $F_7$ and $F_7^\ast$ belong to the fundamental type $\cC^{(2)}_\fundtype$ of $\cC^{(2)}$. Since each of $U^2_4$, $U^2_5$, $U^3_5$, $C_5$, $C_5^\ast$, $F_7$ and $F_7^\ast$ is $2$-connected, they are each their own unique minimal $2$-connected extension. And since the foundation of none of these matroids is a colimit of foundations of proper embedded minors, each belongs to $\cC^{(2)}_\fundtype$.

We are left with an investigation of the minimal $2$-connected extensions of $U^2_4\oplus U^1_2$. We assume that the reader is familiar with Cunningham and Edmond's canonical tree decomposition of $2$-connected matroids; for details see \cite{Cunningham73} and \cite[Thm.\ 8.3.10]{Oxley92}.

\begin{lemma}\label{lemma: minimal 2-connected extensions of U24+U12}
 Let $M$ be a minimal $2$-connected extension of $N=U^2_4\oplus U^1_2$ with $N=M\minor{J}{I}$. Then $n=\# E_M\geq7$, and up to duality, the tree decomposition of $M$ is 
 \[
  \begin{tikzcd}
   M_1 \ar[-,r,"e_1"] & M_2 \ar[-,r,"e_2"] & \dotsb \ar[-,r,"e_{n-5}"] & M_{n-4},
  \end{tikzcd}
 \]
 where $M_i\simeq U^2_3$ for $i\geq 2$ even, $M_i\simeq U^1_3$ for $i\geq2$ odd, and $M_1\simeq U^2_5$. Moreover,
 \[
  J \ = \ E_M \ \cap \ \bigcup_{2\leq i\leq n-5\text{ odd}} E_{M_i} \qquad \text{and} \qquad I \ = \ E_M \ \cap \ \bigcup_{2\leq i\leq n-5\text{ even}} E_{M_i}.
 \]
\end{lemma}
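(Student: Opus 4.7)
The plan is to use the Cunningham--Edmonds tree decomposition of $M$ combined with a careful analysis of how $N = U^2_4 \oplus U^1_2$ can sit as an embedded minor.

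The bound $n \geq 7$ is immediate: $N$ itself has $6$ elements and is not $2$-connected (its two direct summands give the 1-separation $(E(U^2_4),E(U^1_2))$), so a $2$-connected extension with $N$ as embedded minor cannot have $|E_M|=6$.

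For the structural claim, I would analyze the Cunningham--Edmonds tree decomposition $T$ of $M$ and exploit the fact that $N$ carries the $1$-separation $(A,B)$ with $A=E(U^2_4)$ of rank $2$ and $B=E(U^1_2)$ of rank~$1$. Under the embedding $N\hookrightarrow M$, the restriction of $N$ to $A$ is $U^2_4$, which is $3$-connected, so its image in $M$ must be contained in a single $3$-connected node of $T$; call this node $M_1$. Minimality forces $M_1$ to be as small as possible, and the only $3$-connected matroids on at most $5$ elements with a $U^2_4$-minor are $U^2_4$, $U^2_5$, and $U^3_5$. The case $M_1=U^2_4$ can be ruled out: in that case no ``spare'' element would be available for a boundary with the adjacent tree node, so removing any single contribution of a subsequent tree node would still leave an embedded $N$ inside a $2$-connected minor, contradicting minimality. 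Hence $M_1\in\{U^2_5,U^3_5\}$, and after dualizing if necessary we may assume $M_1=U^2_5$; this produces the ``up to duality'' clause in the statement.

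Next, I would show that the remaining portion of $T$ is a path $M_1 - M_2 - \dotsb - M_{n-4}$ whose intermediate nodes are all triangles $U^2_3$ or triads $U^1_3$, alternating because Cunningham--Edmonds normal form forbids adjacent nodes of the same type. Any branching in $T$, or any intermediate node of size greater than $3$, would provide a redundant element whose removal yields a proper $2$-connected embedded minor still containing $N$, contradicting minimality. The parity of the position then determines whether each intermediate node is $U^2_3$ or $U^1_3$, since $M_1=U^2_5$ fixes the parity of its neighbor. The formulas for $I$ and $J$ then follow by tracking which operation each internal node corresponds to in reducing $M$ to $N$: removing the free element of an internal triangle ($U^2_3$) ``disconnects'' its two edges and manifests as a deletion, while removing the free element of an internal triad ($U^1_3$) identifies its two edges as a series pair and manifests as a contraction; the leaf $M_{n-4}$ contributes its free elements directly into the $U^1_2$-part of $N$, so only internal indices $2\leq i\leq n-5$ appear in the formulas.

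The hardest part will be the minimality analysis ruling out branching and oversized intermediate nodes. This requires a rather elaborate induction along the tree: at each step one must exhibit a specific element whose removal, via the appropriate inverse $2$-sum or minor operation, yields a proper embedded minor of $M$ that is still $2$-connected and still contains $N$. Handling both triangles and triads uniformly requires a careful case analysis separating the parallel-extension and series-extension effects that each node type induces, together with a reduction showing that ``short cuts'' across a branch always produce a shorter $2$-connected extension of $N$.
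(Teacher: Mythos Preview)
Your plan via the Cunningham--Edmonds tree is exactly the paper's approach, and your outline is broadly correct. Where you go astray is in expecting the minimality analysis to require ``a rather elaborate induction'': the paper's argument is direct, and rests on two observations you have not isolated.

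First, once each $M_i$ ($i\ge 2$) is known to be a circuit or cocircuit node (this follows since removing an element from a $3$-connected node keeps $M$ $2$-connected, contradicting minimality), you note that deleting an $E_M$-element from a circuit node, or contracting one from a cocircuit node, leaves $M$ $2$-connected; minimality therefore forces every element of $E_M\setminus E_N$ in a circuit node into $I$ and every such element in a cocircuit node into $J$. Now invoke the hypotheses that $I$ is independent and $J$ coindependent: two $E_M$-elements in a common circuit node would be parallel in $M$ (so $I$ dependent), and dually for cocircuit nodes. Hence each such node carries at most one element of $E_M$. This single step replaces your induction and your ``oversized intermediate node'' case analysis.

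Second, for the path shape the paper locates the two $U^1_2$-elements $a,b$: they cannot lie in $M_1$ (else $N$ would be $2$-connected), and a short argument forces them into the \emph{same} node $M_r$, which then contains no further element of $E_M$. Minimality pushes every other node onto the path from $M_1$ to $M_r$. Only after this does one read off $|E_{M_1}|=5$ (the four elements of $U^2_4$ plus the single basepoint $e_1$), giving $M_1\in\{U^2_5,U^3_5\}$; your attempt to bound $M_1$ before establishing the path shape, and your separate argument ruling out $M_1=U^2_4$, are out of order and unnecessary. The alternation of $U^1_3$ and $U^2_3$ is then automatic from the canonical-tree rule that adjacent nodes cannot both be circuits or both cocircuits, once the type of $M_2$ is pinned down.
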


\begin{figure}[htb]
 \[
  \beginpgfgraphicnamed{tikz/fig22}
  \begin{tikzpicture}[x=1cm,y=1cm]
   \draw[thick] (0,1) arc ( 90:270:0.5);
   \node[yshift=0.5cm] at (112.5:0.8) {\footnotesize $1$};
   \node[yshift=0.5cm] at (157.5:0.8) {\footnotesize $2$};
   \node[yshift=0.5cm] at (202.5:0.8) {\footnotesize $3$};
   \node[yshift=0.5cm] at (247.5:0.8) {\footnotesize $4$};
   \draw[line width=1.5pt,color=green] (0,0) to node[below=0pt,color=black] {\footnotesize $5$} (1,0);
   \draw[line width=1.5pt,color=green] (0,1) to node[above=0pt,color=black] {\footnotesize $7$} (2,1);
   \draw[line width=1.5pt,color=green] (1,0) to node[below=0pt,color=black] {\footnotesize $9$} (3,0);
   \draw[line width=1.0pt,color=red] (0,1) to node[left=2pt,color=black] {\footnotesize $6$} (1,0);
   \draw[line width=1.0pt,color=red] (1,0) to node[left=2pt,color=black] {\footnotesize $8$} (2,1);
   \draw[line width=1.0pt,color=red] (2,1) to node[left=2pt,color=black] {\footnotesize $10$} (3,0);
   \draw[thick] (2,1) to node[above=0pt,color=black] {\footnotesize $11$} (4,1);
   \draw[thick] (3,0) to node[left=2pt,color=black] {\footnotesize $12$} (4,1);
   \fill (-0.5,0.5) circle (3pt);
   \fill (-0.36,0.86) circle (3pt);
   \fill (-0.36,0.14) circle (3pt);
   \fill (0,0) circle (3pt);
   \fill (0,1) circle (3pt);
   \fill (1,0) circle (3pt);
   \fill (2,1) circle (3pt);
   \fill (3,0) circle (3pt);
   \fill (4,1) circle (3pt);
  \end{tikzpicture}
  \endpgfgraphicnamed
  \hspace{2cm}
  \beginpgfgraphicnamed{tikz/fig23}
  \begin{tikzpicture}[x=1cm,y=1cm]
   \draw[thick] (0,1) arc ( 90:270:0.5);
   \node[yshift=0.5cm] at (112.5:0.8) {\footnotesize $1$};
   \node[yshift=0.5cm] at (157.5:0.8) {\footnotesize $2$};
   \node[yshift=0.5cm] at (202.5:0.8) {\footnotesize $3$};
   \node[yshift=0.5cm] at (247.5:0.8) {\footnotesize $4$};
   \draw[line width=1.5pt,color=green] (0,0) to node[below=0pt,color=black] {\footnotesize $5$} (1,0);
   \draw[line width=1.5pt,color=green] (0,1) to node[above=0pt,color=black] {\footnotesize $7$} (2,1);
   \draw[line width=1.5pt,color=green] (1,0) to node[below=0pt,color=black] {\footnotesize $9$} (3,0);
   \draw[line width=1.5pt,color=green] (2,1) to node[above=0pt,color=black] {\footnotesize $11$} (4,1);
   \draw[line width=1.0pt,color=red] (0,1) to node[left=2pt,color=black] {\footnotesize $6$} (1,0);
   \draw[line width=1.0pt,color=red] (1,0) to node[left=2pt,color=black] {\footnotesize $8$} (2,1);
   \draw[line width=1.0pt,color=red] (2,1) to node[left=2pt,color=black] {\footnotesize $10$} (3,0);
   \draw[thick,bend left=15] (3,0) to node[left=0pt,color=black] {\footnotesize $12$} (4,1);
   \draw[thick,bend right=15] (3,0) to node[right=0pt,color=black] {\footnotesize $13$} (4,1);
   \fill (-0.5,0.5) circle (3pt);
   \fill (-0.36,0.86) circle (3pt);
   \fill (-0.36,0.14) circle (3pt);
   \fill (0,0) circle (3pt);
   \fill (0,1) circle (3pt);
   \fill (1,0) circle (3pt);
   \fill (2,1) circle (3pt);
   \fill (3,0) circle (3pt);
   \fill (4,1) circle (3pt);
  \end{tikzpicture}
  \endpgfgraphicnamed
 \]
 \caption{Minimal $2$-connected extensions of $U^2_4\oplus U^1_2$}
 \label{fig: minimal extension of U24+U12}
\end{figure}
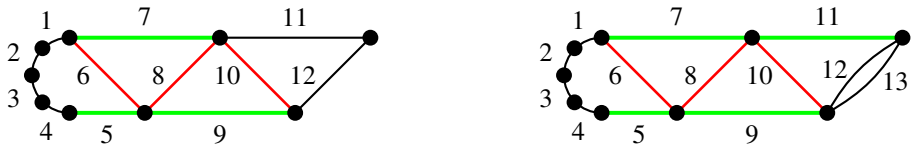

\begin{rem}\label{rem: minimal 2-connected extensions of U24+U12}
 The minimal $2$-connected extensions of $N=U^2_4\oplus U^1_2$ with $M_1=U^2_5$ are illustrated in \autoref{fig: minimal extension of U24+U12} for $n=\# E_M$ equal to $12$ and to $13$, which exemplify the cases $M_{n-4}=U^2_3$ and $M_{n-4}=U^1_3$, respectively.
 
 The black half-circle with edges $1$, $2$, $3$ and $4$ stands symbolically for the $2$-summand $M_1=U^2_5$, and might be seen as a stylized version of the common illustration of the point line configuration of $U^2_5$. We use the convention $E_M\cap E_{M_1}=\{1,\dotsc,4\}$, as well as $E_M\cap E_{M_i}=\{i+3\}$ for $i=2,\dotsc,n-5$ and $E_M\cap E_{M_{n-4}}=\{n-1,n\}$. The elements of $J$ are illustrated as green edges and the elements of $I$ are illustrated as red edges.
 
 Since $M$ is a parallel-series extension of $U^2_5$, we can illustrate the deletion and contraction of elements $i\geq 5$ of $M$ in terms of deleting or contracting the corresponding edge. In particular, a subset $J$ of $E_M-\{1,\dotsc,4\}$ is coindependent if it is not an edge cut and a subset $I$ of $E_M-\{1,\dotsc,4\}$ is independent if $I\cup\{1,\dotsc,4\}$ is a forest.
 
 Excluding the green edges and contracting the red edges results in both cases of \autoref{fig: minimal extension of U24+U12} in
 \[
  \beginpgfgraphicnamed{tikz/fig24}
  \begin{tikzpicture}[x=1cm,y=1cm]
   \draw[thick] (0,1) arc ( 90:270:0.5);
   \node[yshift=0.5cm] at (112.5:0.8) {\footnotesize $1$};
   \node[yshift=0.5cm] at (157.5:0.8) {\footnotesize $2$};
   \node[yshift=0.5cm] at (202.5:0.8) {\footnotesize $3$};
   \node[yshift=0.5cm] at (247.5:0.8) {\footnotesize $4$};
   \draw[thick,bend left=15] (0,1) to node[above=0pt,color=black] {\footnotesize } (2,1);
   \draw[thick,bend right=15] (0,1) to node[below=0pt,color=black] {\footnotesize } (2,1);
   \fill (-0.5,0.5) circle (3pt);
   \fill (-0.36,0.86) circle (3pt);
   \fill (-0.36,0.14) circle (3pt);
   \fill (0,0) circle (3pt);
   \fill (0,1) circle (3pt);
   \fill (2,1) circle (3pt);
  \end{tikzpicture}
  \endpgfgraphicnamed
 \]
 which illustrates the matroid $N_0=U^2_4\oplus U^1_2$.
\end{rem}

\begin{proof}[Proof of {\autoref{lemma: minimal 2-connected extensions of U24+U12}}]
 Let us identify $E_M$ with $\{1,\dotsc,n\}$ such that $E_{N_0}=\{1,\dotsc,4,a,b\}$ with $1,\dotsc,4$ being the elements of the summand $U^2_4$ of $N_0=U^2_4\oplus U^1_2$, and $a=n-1$ and $b=n$ being elements of $U^1_2$. Since $N_0$ is not $2$-connected and has $6$ elements, it is clear that $n\geq7$.

 Let $M_1,\dotsc,M_r$ be the $2$-summands of the canonical tree decomposition of $M$. This means that each $M_i$ is either $3$-connected, a circuit (i.e.\ isomorphic to $U^1_k$) or a cocircuit (i.e.\ isomorphic to $U^{k-1}_k$). Since $U^2_4$ is $3$-connected and appears as a minor of $N_0$, and thus of $M$, we conclude that $1,\dotsc,4$ belong to the same $2$-summand of $M$, say $M_1$. Since $U^2_4$ is not contained in a circuit or cocircuit, we conclude that $M_1$ is $3$-connected. 

 The assumption that $M$ is a minimal $2$-connected extension of $N_0$ has a series of implications. Since the removal of a single element of a $3$-connected matroid results in a $2$-connected minor, none of $M_2,\dotsc,M_r$ are $3$-connected, and thus each element $e\in E_M-E_{N_0}$ is contained in a circuit $M_i\simeq U^1_{n_i}$ or in a cocircuit $M_i\simeq U^{n_i-1}_{n_i}$ . Note that deleting $e$ from a circuit $M_i\simeq U^1_{n_i}$, as well as contracting $e$ from a cocircuit $M_i\simeq U^{n_i-1}_{n_i}$, leaves $M$ $2$-connected. By the minimality of $M$, we conclude that if $e\in M_i\simeq U^1_{n_i}$, then $e\in I$, and if $e\in M_i\simeq U^{n_i-1}_{n_i}$, then $e\in J$. Moreover, since $J$ is coindependent and $I$ is independent by the definition of $N_0$ as an embedded minor, we conclude that $M_i$ contains at most one element that is also in $M$.
 
 Since $U^2_4\oplus U^1_2$ is not $2$-connected, $a$ and $b$ are not in $M_1$. Since $N_0\minor{12}{34}\simeq U^1_2$ is $2$-connected, but $M\setminus e$ for $e\in J$ and $M/e$ for $e\in I$ are not, we conclude that $a$ and $b$ are contained in the same component $M_i$, say $M_r$, and that $M_r$ does not contain any other element of $M$. Once again, by the minimality of $M$, we conclude that all other components $M_2,\dotsc,M_{r-1}$ must lie on a path between $M_1$ and $M_r$, i.e.,\ that the tree decomposition of $M$ is of the form
 \[
  \begin{tikzcd}
   M_1 \ar[-,r,"e_1"] & M_2 \ar[-,r,"e_2"] & \dotsb \ar[-,r,"e_{r-1}"] & M_{r}
  \end{tikzcd}
 \]
 after reordering the indices appropriately. Thus $M_1$ has elements $1,2,3,4,e_1$ and is isomorphic to $U^2_5$ or $U^3_5$ as a $3$-connected matroid on $5$ elements. After taking duals (as allowed in the claim of the lemma), we can assume that $M_1\simeq U^2_5$. Since $U^1_2$ does not appear in a canonical tree decomposition, we conclude that that every component $M_2,\dotsc,M_{r-1}$ contains precisely $1$ element of $M$ and $M_r$ has elements $a,b,e_{r-1}$. Thus $M_i$ is isomorphic to $U^1_3$ or to $U^2_3$ for every $i=2,\dotsc,r$. Counting elements, we conclude that $r=n-4$.
 
 Let $5$ be the unique element in $E_{M_2}\cap E_M$. Since $M\minor{(J-5)}{(I-5)}$ has a $U^2_5$-minor by \cite[Prop.\ 7.1.21]{Oxley92}, but $M\minor JI$ is not $2$-connected, we conclude that $5\in I$. In conclusion, $M_2\simeq U^2_3$. Since adjacent $2$-summands cannot both be circuits, nor can they both be cocircuits, in the canonical tree decomposition of a matroid, this determines the isomorphism types of $M_3,\dotsc,M_{n-4}$, which proves all claims of the lemma.
\end{proof}

In the following arguments, we use a chain of series extensions of $U^2_5$ that appear as embedded minors of $M$. We denote the series extension of $U^2_5$ by $D_6$. We fix the ground set $E_{D_6}=\{1,\dotsc,6\}$, where $5$ and $6$ are the series elements. 

\begin{lemma}\label{lemma: cross ratios in D6}
 For every permutation $\sigma\in S_4$, we have
 \[
  \cross{\sigma(1)}{\sigma(2)}{\sigma(3)}{\sigma(4)}{5} \ = \ \cross{\sigma(1)}{\sigma(2)}{\sigma(3)}{\sigma(4)}{6}
 \]
 in $D_6$. 
\end{lemma}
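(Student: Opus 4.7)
The plan is to reduce the claim to the single equality $x := \cross{1}{2}{3}{4}{5} = \cross{1}{2}{3}{4}{6} =: y$ in $F_{D_6}$ and then derive this equality by chaining relations from several small embedded minors of $D_6$. For the reduction: for any $\sigma\in S_4$, the cross ratio $\cross{\sigma(1)}{\sigma(2)}{\sigma(3)}{\sigma(4)}{5}$ belongs to the $U^2_4$-upper sublattice of $\Lambda_{D_6}$ with bottom $\{5\}$ and atoms $\{i,5\}$ for $i\in\{1,2,3,4\}$. By the standard $U^2_4$-relations \eqref{Hs}, \eqref{H1} and \eqref{H2}, it equals a universal word $w_\sigma(x,-1)$ in $x$ and $-1$; the analogous $U^2_4$-sublattice at $\{6\}$ gives $\cross{\sigma(1)}{\sigma(2)}{\sigma(3)}{\sigma(4)}{6}=w_\sigma(y,-1)$, so $x=y$ implies the general statement.

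To prove $x=y$, I would combine Pl\"ucker relations in the two $U^2_5$-minors with $C_5$-minor relations. Since $\{5,6\}$ is a cocircuit of $D_6$, both contractions $D_6/5$ (on $\{1,2,3,4,6\}$) and $D_6/6$ (on $\{1,2,3,4,5\}$) are isomorphic to $U^2_5$. Applying relation \eqref{R3} inside each minor and pulling back to $F_{D_6}$ along the minor embeddings yields the two Pl\"ucker identities
\[
 \cross{1}{2}{3}{4}{5}\cdot\cross{1}{2}{4}{6}{5}\cdot\cross{1}{2}{6}{3}{5} \ = \ 1 \ = \ \cross{1}{2}{3}{4}{6}\cdot\cross{1}{2}{4}{5}{6}\cdot\cross{1}{2}{5}{3}{6}.
\]
For $a\in\{3,4\}$, the minor $D_6\setminus a$ is a $C_5$-minor with triangle $\{1,2,3,4\}-\{a\}$ and series pair $\{5,6\}$, and the $C_5$-relation \eqref{H4'} between the cross ratios of its two $U^2_4$-upper sublattices gives
\[
 \cross{1}{2}{4}{5}{6} \ = \ \cross{1}{2}{4}{6}{5} \qquad \text{and} \qquad \cross{1}{2}{3}{5}{6} \ = \ \cross{1}{2}{3}{6}{5}
\]
in $F_{D_6}$. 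Combined with relation \eqref{R1} (which inverts the cross ratio upon swapping positions $3$ and $4$), the second of these yields $\cross{1}{2}{5}{3}{6}=\cross{1}{2}{3}{5}{6}^{-1}=\cross{1}{2}{3}{6}{5}^{-1}=\cross{1}{2}{6}{3}{5}$. Substituting these three identifications into the two Pl\"ucker identities collapses them to $x=y$.

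The main obstacle is that no direct single-step relation between $\cross{1}{2}{3}{4}{5}$ and $\cross{1}{2}{3}{4}{6}$ is available in $F_{D_6}$: relation \eqref{R5} would provide this at once if $\{5\}$ and $\{6\}$ spanned the same rank-$1$ flat, but they are distinct atoms of $\Lambda_{D_6}$, and $D_6$ has no minor in which $5$ and $6$ become parallel while retaining a common cross ratio. Instead one must chain relations coming from the two $U^2_5$-minors and two of the four $C_5$-minors together, and verify at each step that the cross ratios involved lie in $\Omega_{D_6}^\octa$; the latter is automatic here since every $3$-subset of $\{1,\ldots,6\}$ containing $5$ or $6$ is a basis of $D_6$.
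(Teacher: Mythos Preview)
Your main computation is correct and is a genuinely different argument from the paper's. The paper observes that $D_6^\ast$ is the parallel extension of $U^3_5$ with parallel class $\{5,6\}$, so relation \eqref{R5} gives the desired equality directly in $F_{D_6^\ast}$; it then transfers this back via the canonical isomorphism $F_{D_6}\simeq F_{D_6^\ast}$. Your approach instead stays entirely inside $D_6$ and derives the identity by chaining two instances of \eqref{R3} (from the $U^2_5$-minors $D_6/5$ and $D_6/6$) with two instances of \eqref{H4'} (from the $C_5$-minors $D_6\setminus 3$ and $D_6\setminus 4$). This is more hands-on but has the virtue of not invoking the duality isomorphism for foundations as a black box.

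There is, however, a genuine gap in your reduction step. The relations \eqref{Hs}, \eqref{H1} and \eqref{H2} do \emph{not} express every cross ratio $\cross{\sigma(1)}{\sigma(2)}{\sigma(3)}{\sigma(4)}{5}$ as a word in $x=\cross{1}{2}{3}{4}{5}$ and $-1$: the six cross ratios of a $U^2_4$-sublattice require two multiplicative generators, say $x$ and $y=\cross{1}{3}{2}{4}{5}$, and in a general pasture the additive relation $x+y-1\in N$ does not determine $y$ from $x$. So proving $\cross{1}{2}{3}{4}{5}=\cross{1}{2}{3}{4}{6}$ alone does not formally imply the statement for all $\sigma$. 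The fix is immediate: your chain argument is completely symmetric in the four elements $1,2,3,4$ (you only used that $D_6\setminus a\cong C_5$ for $a\in\{1,2,3,4\}$ and that $D_6/5$, $D_6/6$ are $U^2_5$), so the identical computation with $(1,2,3,4)$ replaced by $(\sigma(1),\sigma(2),\sigma(3),\sigma(4))$ yields the general case directly; alternatively, run it once more with $2$ and $3$ swapped to get the second generator, after which \eqref{Hs}, \eqref{H1}, \eqref{H2} cover the rest.
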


\begin{proof}
 The dual $D_6^\ast$ of $D_6$ is a parallel extension of $U^3_5$ with parallel elements $5$ and $6$. Thus $\cross{\sigma(1)}{\sigma(2)}{\sigma(3)}{\sigma(4)}{5} \ = \ \cross{\sigma(1)}{\sigma(2)}{\sigma(3)}{\sigma(4)}{6}$ in $D_6^\ast$ by relation \eqref{R5} of \autoref{thm: fundamental presentation of foundations in terms of bases}. Applying the canonical isomorphism $\varphi:F_{D_6}\to F_{D_6^\ast}$ to the cross ratios in question yields 
 \[
  \varphi\bigg(\cross{\sigma(1)}{\sigma(2)}{\sigma(3)}{\sigma(4)}{5}\bigg) \ = \ \cross{\sigma(1)}{\sigma(2)}{\sigma(3)}{\sigma(4)}{6} \ = \ \cross{\sigma(1)}{\sigma(2)}{\sigma(3)}{\sigma(4)}{5} \ = \ \varphi\bigg(\cross{\sigma(1)}{\sigma(2)}{\sigma(3)}{\sigma(4)}{6}\bigg),
 \]
 and thus the relation claimed in the lemma.
\end{proof}

We use the previous insights to study the foundation of a fixed minimal $2$-connected extension $M$ of $N_0$. Since foundations are insensitive to dualization, we can assume that $M$ is a parallel-series extension of $M_1=U^2_5$. We label the elements of $M$ according to the conventions of \autoref{fig: minimal extension of U24+U12}: if 
 \[
  \begin{tikzcd}
   M_1 \ar[-,r,"e_1"] & M_2 \ar[-,r,"e_2"] & \dotsb \ar[-,r,"e_{n-5}"] & M_{n-4}
  \end{tikzcd}
 \]
 is the tree decomposition of $M$, then 
 \[
  E_{M_1}\cap M \ = \ \{1,2,3,4\}, \qquad E_{M_i}\cap E_M \ = \ \{i+3\}, \qquad E_{M_{n-4}}\cap E_M =\{a,b\}
 \]
 for $i=2,\dotsc,n-5$, and where $a=n-1$ and $b=n$.
 
 Since $D_6\simeq U^2_5\oplus_2 U^2_3$, we can illustrate $D_6$ as 
 \[
  \beginpgfgraphicnamed{tikz/fig25}
  \begin{tikzpicture}[x=1cm,y=1cm]
   \draw[thick] (0,1) arc ( 90:270:0.5);
   \node[yshift=0.5cm] at (112.5:0.8) {\footnotesize $1$};
   \node[yshift=0.5cm] at (157.5:0.8) {\footnotesize $2$};
   \node[yshift=0.5cm] at (202.5:0.8) {\footnotesize $3$};
   \node[yshift=0.5cm] at (247.5:0.8) {\footnotesize $4$};
   \fill (-0.5,0.5) circle (3pt);
   \fill (-0.36,0.86) circle (3pt);
   \fill (-0.36,0.14) circle (3pt);
   \fill (0,0) circle (3pt);
   \fill (0,1) circle (3pt);
   \draw[line width=0.8pt,color=black] (0,0) to node[below=0pt,color=black] {\footnotesize $5$} (1,0.5);
   \draw[line width=0.8pt,color=black] (0,1) to node[above=0pt,color=black] {\footnotesize $6$} (1,0.5);
   \fill (1,0.5) circle (3pt);
  \end{tikzpicture}
  \endpgfgraphicnamed
 \]
 following the conventions of \autoref{rem: minimal 2-connected extensions of U24+U12} and \autoref{fig: minimal extension of U24+U12}. Thus an embedded minor $N=M\minor JI$ that contains $\{1,\dotsc,4\}$ is of type $D_6$ if excluding the edges in $J$ and contracting the edges labeled by $I$ results in the above picture. 
 
 For odd $k$ between $5$ and $n-2$, we define
 \begin{align*}
  J_k \ &= \ \big\{ i\in \{5,\dotsc,n-2\} \, \big| \, \text{$i$ odd if $i<k$ and $i$ even if $i\geq k$} \big\}, \\
  I_k \ &= \ \big\{ i\in \{5,\dotsc,n-2\} \, \big| \, \text{$i$ even if $i<k$ and $i$ odd if $i\geq k$} \big\}, 
 \end{align*}
 as well as $J_k'=J_k-k$ and $I_k'=I_k-(k+1)$. Then all of the following embedded minors are of type $D_6$, as visible from the corresponding illustrations where the green edges are deleted and the red edges are contracted.
 \begin{enumerate}
  \item $M\minor{J_k'a}{I_k'b}$ and $M\minor{J_k'b}{I_k'a}$ for $k=5,7,\dotsc,n-3$ (note that $a$ and $b$ are symmetric, so we only illustrate the first case for $k=9$, $n=16$ and $k=9$, $n=17$):
 \[
  \beginpgfgraphicnamed{tikz/fig26}
  \begin{tikzpicture}[x=1cm,y=1cm]
   \draw[thick] (0,1) arc ( 90:270:0.5);
   \node[yshift=0.5cm] at (112.5:0.8) {\footnotesize $1$};
   \node[yshift=0.5cm] at (157.5:0.8) {\footnotesize $2$};
   \node[yshift=0.5cm] at (202.5:0.8) {\footnotesize $3$};
   \node[yshift=0.5cm] at (247.5:0.8) {\footnotesize $4$};
   \draw[line width=1.5pt,color=red] (0,0) to node[below=0pt,color=black] {\footnotesize $5$} (1,0);
   \draw[line width=1.5pt,color=red] (0,1) to node[above=0pt,color=black] {\footnotesize $7$} (2,1);
   \draw[line width=0.8pt,color=black] (1,0) to node[below=0pt,color=black] {\footnotesize $9$} (3,0);
   \draw[line width=1.5pt,color=green] (2,1) to node[above=0pt,color=black] {\footnotesize $11$} (4,1);
   \draw[line width=1.5pt,color=green] (3,0) to node[below=0pt,color=black] {\footnotesize $13$} (5,0);
   \draw[line width=1.5pt,color=green] (0,1) to node[left=2pt,color=black] {\footnotesize $6$} (1,0);
   \draw[line width=1.5pt,color=green] (1,0) to node[left=2pt,color=black] {\footnotesize $8$} (2,1);
   \draw[line width=0.8pt,color=black] (2,1) to node[left=2pt,color=black] {\footnotesize $10$} (3,0);
   \draw[line width=1.5pt,color=red] (3,0) to node[left=2pt,color=black] {\footnotesize $12$} (4,1);
   \draw[line width=1.5pt,color=red] (4,1) to node[left=2pt,color=black] {\footnotesize $14$} (5,0);
   \draw[line width=1.5pt,color=green] (4,1) to node[above=0pt,color=black] {\footnotesize $15$} (6,1);
   \draw[line width=1.5pt,color=red] (5,0) to node[left=2pt,color=black] {\footnotesize $16$} (6,1);
   \fill (-0.5,0.5) circle (3pt);
   \fill (-0.36,0.86) circle (3pt);
   \fill (-0.36,0.14) circle (3pt);
   \fill (0,0) circle (3pt);
   \fill (0,1) circle (3pt);
   \fill (1,0) circle (3pt);
   \fill (2,1) circle (3pt);
   \fill (3,0) circle (3pt);
   \fill (4,1) circle (3pt);
   \fill (5,0) circle (3pt);
   \fill (6,1) circle (3pt);
  \end{tikzpicture}
  \endpgfgraphicnamed
  \hspace{0.2cm}
  \beginpgfgraphicnamed{tikz/fig27}
  \begin{tikzpicture}[x=1cm,y=1cm]
   \draw[line width=0.8pt] (0,1) arc ( 90:270:0.5);
   \node[yshift=0.5cm] at (112.5:0.8) {\footnotesize $1$};
   \node[yshift=0.5cm] at (157.5:0.8) {\footnotesize $2$};
   \node[yshift=0.5cm] at (202.5:0.8) {\footnotesize $3$};
   \node[yshift=0.5cm] at (247.5:0.8) {\footnotesize $4$};
   \draw[line width=1.5pt,color=red] (0,0) to node[below=0pt,color=black] {\footnotesize $5$} (1,0);
   \draw[line width=1.5pt,color=red] (0,1) to node[above=0pt,color=black] {\footnotesize $7$} (2,1);
   \draw[line width=0.8pt,color=black] (1,0) to node[below=0pt,color=black] {\footnotesize $9$} (3,0);
   \draw[line width=1.5pt,color=green] (2,1) to node[above=0pt,color=black] {\footnotesize $11$} (4,1);
   \draw[line width=1.5pt,color=green] (3,0) to node[below=0pt,color=black] {\footnotesize $13$} (5,0);
   \draw[line width=1.5pt,color=green] (4,1) to node[above=0pt,color=black] {\footnotesize $15$} (6,1);
   \draw[line width=1.5pt,color=green] (0,1) to node[left=2pt,color=black] {\footnotesize $6$} (1,0);
   \draw[line width=1.5pt,color=green] (1,0) to node[left=2pt,color=black] {\footnotesize $8$} (2,1);
   \draw[line width=0.8pt,color=black] (2,1) to node[left=2pt,color=black] {\footnotesize $10$} (3,0);
   \draw[line width=1.5pt,color=red] (3,0) to node[left=2pt,color=black] {\footnotesize $12$} (4,1);
   \draw[line width=1.5pt,color=red] (4,1) to node[left=2pt,color=black] {\footnotesize $14$} (5,0);
   \draw[bend left=15,line width=1.5pt,color=green] (5,0) to node[left=0pt,color=black] {\footnotesize $16$} (6,1);
   \draw[bend right=15,line width=1.5pt,color=red] (5,0) to node[right=0pt,color=black] {\footnotesize $17$} (6,1);
   \fill (-0.5,0.5) circle (3pt);
   \fill (-0.36,0.86) circle (3pt);
   \fill (-0.36,0.14) circle (3pt);
   \fill (0,0) circle (3pt);
   \fill (0,1) circle (3pt);
   \fill (1,0) circle (3pt);
   \fill (2,1) circle (3pt);
   \fill (3,0) circle (3pt);
   \fill (4,1) circle (3pt);
   \fill (5,0) circle (3pt);
   \fill (6,1) circle (3pt);
  \end{tikzpicture}
  \endpgfgraphicnamed
 \]
 \item $M\minor{J'_{n-1}}{I'_{n-1}}$ for even $n$:
 \[
  \beginpgfgraphicnamed{tikz/fig28}
  \begin{tikzpicture}[x=1cm,y=1cm]
   \draw[thick] (0,1) arc ( 90:270:0.5);
   \node[yshift=0.5cm] at (112.5:0.8) {\footnotesize $1$};
   \node[yshift=0.5cm] at (157.5:0.8) {\footnotesize $2$};
   \node[yshift=0.5cm] at (202.5:0.8) {\footnotesize $3$};
   \node[yshift=0.5cm] at (247.5:0.8) {\footnotesize $4$};
   \draw[line width=1.5pt,color=red] (0,0) to node[below=0pt,color=black] {\footnotesize $5$} (1,0);
   \draw[line width=1.5pt,color=red] (0,1) to node[above=0pt,color=black] {\footnotesize $7$} (2,1);
   \draw[line width=1.5pt,color=red] (1,0) to node[below=0pt,color=black] {\footnotesize $9$} (3,0);
   \draw[line width=1.5pt,color=red] (2,1) to node[above=0pt,color=black] {\footnotesize $11$} (4,1);
   \draw[line width=1.5pt,color=red] (3,0) to node[below=0pt,color=black] {\footnotesize $13$} (5,0);
   \draw[line width=1.5pt,color=green] (0,1) to node[left=2pt,color=black] {\footnotesize $6$} (1,0);
   \draw[line width=1.5pt,color=green] (1,0) to node[left=2pt,color=black] {\footnotesize $8$} (2,1);
   \draw[line width=1.5pt,color=green] (2,1) to node[left=2pt,color=black] {\footnotesize $10$} (3,0);
   \draw[line width=1.5pt,color=green] (3,0) to node[left=2pt,color=black] {\footnotesize $12$} (4,1);
   \draw[line width=1.5pt,color=green] (4,1) to node[left=2pt,color=black] {\footnotesize $14$} (5,0);
   \draw[line width=0.8pt,color=black] (4,1) to node[above=0pt,color=black] {\footnotesize $15$} (6,1);
   \draw[line width=0.8pt,color=black] (5,0) to node[left=2pt,color=black] {\footnotesize $16$} (6,1);
   \fill (-0.5,0.5) circle (3pt);
   \fill (-0.36,0.86) circle (3pt);
   \fill (-0.36,0.14) circle (3pt);
   \fill (0,0) circle (3pt);
   \fill (0,1) circle (3pt);
   \fill (1,0) circle (3pt);
   \fill (2,1) circle (3pt);
   \fill (3,0) circle (3pt);
   \fill (4,1) circle (3pt);
   \fill (5,0) circle (3pt);
   \fill (6,1) circle (3pt);
  \end{tikzpicture}
  \endpgfgraphicnamed
 \]
 \item $M\minor{J'_{n-2}a}{I'_{n-2}}$ and $M\minor{J'_{n-2}b}{I'_{n-2}}$ for odd $n$:
 \[
  \beginpgfgraphicnamed{tikz/fig30}
  \begin{tikzpicture}[x=1cm,y=1cm]
   \draw[line width=0.8pt] (0,1) arc ( 90:270:0.5);
   \node[yshift=0.5cm] at (112.5:0.8) {\footnotesize $1$};
   \node[yshift=0.5cm] at (157.5:0.8) {\footnotesize $2$};
   \node[yshift=0.5cm] at (202.5:0.8) {\footnotesize $3$};
   \node[yshift=0.5cm] at (247.5:0.8) {\footnotesize $4$};
   \draw[line width=1.5pt,color=red] (0,0) to node[below=0pt,color=black] {\footnotesize $5$} (1,0);
   \draw[line width=1.5pt,color=red] (0,1) to node[above=0pt,color=black] {\footnotesize $7$} (2,1);
   \draw[line width=1.5pt,color=red] (1,0) to node[below=0pt,color=black] {\footnotesize $9$} (3,0);
   \draw[line width=1.5pt,color=red] (2,1) to node[above=0pt,color=black] {\footnotesize $11$} (4,1);
   \draw[line width=1.5pt,color=red] (3,0) to node[below=0pt,color=black] {\footnotesize $13$} (5,0);
   \draw[line width=0.8pt,color=black] (4,1) to node[above=0pt,color=black] {\footnotesize $15$} (6,1);
   \draw[line width=1.5pt,color=green] (0,1) to node[left=2pt,color=black] {\footnotesize $6$} (1,0);
   \draw[line width=1.5pt,color=green] (1,0) to node[left=2pt,color=black] {\footnotesize $8$} (2,1);
   \draw[line width=1.5pt,color=green] (2,1) to node[left=2pt,color=black] {\footnotesize $10$} (3,0);
   \draw[line width=1.5pt,color=green] (3,0) to node[left=2pt,color=black] {\footnotesize $12$} (4,1);
   \draw[line width=1.5pt,color=green] (4,1) to node[left=2pt,color=black] {\footnotesize $14$} (5,0);
   \draw[bend left=15,line width=1.5pt,color=green] (5,0) to node[left=0pt,color=black] {\footnotesize $16$} (6,1);
   \draw[bend right=15,line width=0.8pt,color=black] (5,0) to node[right=0pt,color=black] {\footnotesize $17$} (6,1);
   \fill (-0.5,0.5) circle (3pt);
   \fill (-0.36,0.86) circle (3pt);
   \fill (-0.36,0.14) circle (3pt);
   \fill (0,0) circle (3pt);
   \fill (0,1) circle (3pt);
   \fill (1,0) circle (3pt);
   \fill (2,1) circle (3pt);
   \fill (3,0) circle (3pt);
   \fill (4,1) circle (3pt);
   \fill (5,0) circle (3pt);
   \fill (6,1) circle (3pt);
  \end{tikzpicture}
  \endpgfgraphicnamed
  \hspace{0.2pt}
  \beginpgfgraphicnamed{tikz/fig29}
  \begin{tikzpicture}[x=1cm,y=1cm]
   \draw[line width=0.8pt] (0,1) arc ( 90:270:0.5);
   \node[yshift=0.5cm] at (112.5:0.8) {\footnotesize $1$};
   \node[yshift=0.5cm] at (157.5:0.8) {\footnotesize $2$};
   \node[yshift=0.5cm] at (202.5:0.8) {\footnotesize $3$};
   \node[yshift=0.5cm] at (247.5:0.8) {\footnotesize $4$};
   \draw[line width=1.5pt,color=red] (0,0) to node[below=0pt,color=black] {\footnotesize $5$} (1,0);
   \draw[line width=1.5pt,color=red] (0,1) to node[above=0pt,color=black] {\footnotesize $7$} (2,1);
   \draw[line width=1.5pt,color=red] (1,0) to node[below=0pt,color=black] {\footnotesize $9$} (3,0);
   \draw[line width=1.5pt,color=red] (2,1) to node[above=0pt,color=black] {\footnotesize $11$} (4,1);
   \draw[line width=1.5pt,color=red] (3,0) to node[below=0pt,color=black] {\footnotesize $13$} (5,0);
   \draw[line width=0.8pt,color=black] (4,1) to node[above=0pt,color=black] {\footnotesize $15$} (6,1);
   \draw[line width=1.5pt,color=green] (0,1) to node[left=2pt,color=black] {\footnotesize $6$} (1,0);
   \draw[line width=1.5pt,color=green] (1,0) to node[left=2pt,color=black] {\footnotesize $8$} (2,1);
   \draw[line width=1.5pt,color=green] (2,1) to node[left=2pt,color=black] {\footnotesize $10$} (3,0);
   \draw[line width=1.5pt,color=green] (3,0) to node[left=2pt,color=black] {\footnotesize $12$} (4,1);
   \draw[line width=1.5pt,color=green] (4,1) to node[left=2pt,color=black] {\footnotesize $14$} (5,0);
   \draw[bend left=15,line width=0.8pt,color=black] (5,0) to node[left=0pt,color=black] {\footnotesize $16$} (6,1);
   \draw[bend right=15,line width=1.5pt,color=green] (5,0) to node[right=0pt,color=black] {\footnotesize $17$} (6,1);
   \fill (-0.5,0.5) circle (3pt);
   \fill (-0.36,0.86) circle (3pt);
   \fill (-0.36,0.14) circle (3pt);
   \fill (0,0) circle (3pt);
   \fill (0,1) circle (3pt);
   \fill (1,0) circle (3pt);
   \fill (2,1) circle (3pt);
   \fill (3,0) circle (3pt);
   \fill (4,1) circle (3pt);
   \fill (5,0) circle (3pt);
   \fill (6,1) circle (3pt);
  \end{tikzpicture}
  \endpgfgraphicnamed  
 \]
\end{enumerate}

With this, we are prepared to prove \autoref{thm: fundamental presentation for 2-connected matroids}. We are left with showing that for all minimal $2$-connected extensions $M$ of $N_0=U^2_4\oplus U^1_2$, the natural map $\colim F(\cE^{<}_{M,\cC^{(2)}})\to F_M$ is an isomorphism. We show this by induction on $n=\# E_M$. Since there is no $2$-connected extension of $N_0=U^2_4\oplus U^1_2$ with $6$ elements, the base case for $n=6$ is trivially true.

Let $n\geq 7$. By \autoref{thm: fundamental presentation}, the natural map $\colim F(\cE^{<,+}_{M,\cC^{(2)}})\to F_M$ is an isomorphism, where $\cE^{<,+}_{M,\cC^{(2)}}$ is the diagram $\cE^{<}_{M,\cC^{(2)}}$ enriched with all embedded minors of type $U^2_4\oplus U^1_2$.

By \autoref{lemma: minimal 2-connected extensions of U24+U12}, $N_0=M\minor{J_0}{I_0}$ is the unique embedded minor of $M$ of type $U^2_4\oplus U^1_2$ such that $M$ is a minimal $2$-connected extension. Thus every other embedded $U^2_4\oplus U^1_2$-minor $N\hookrightarrow M$ is contained in a proper $2$-connected embedded minor $M'$ of $M$. By our inductive hypothesis, the natural map $\colim F(\cE^<_{M',\cC^{(2)}})\to F_{M'}$ is an isomorphism. Since the map $F_{N}\to F_M$ factors through $F_{M'}$, we conclude that we can exclude $N$ from $\cE^{<,+}_{M,\cC^{(2)}}$ without changing the colimit of $F(\cE^{<,+}_{M',\cC^{(2)}})$.

We are left with $N_0$, whose effect on the cross ratios of $M$ consists of the relations $\cross{\sigma(1)}{\sigma(2)}{\sigma(3)}{\sigma(4)}{I_0a}=\cross{\sigma(1)}{\sigma(2)}{\sigma(3)}{\sigma(4)}{I_0b}$ for all $\sigma\in S_4$. These relations are also implied by a sequence of embedded $D_6$-minors, as considered above in (1)--(3), which identify cross ratios according to \autoref{lemma: cross ratios in D6}. Namely, if $n$ is even, then
\begin{multline*}
 \cross{\sigma(1)}{\sigma(2)}{\sigma(3)}{\sigma(4)}{I_0a} \ = \ \cross{\sigma(1)}{\sigma(2)}{\sigma(3)}{\sigma(4)}{I_5a} \ = \ \dotsb \ = \ \cross{\sigma(1)}{\sigma(2)}{\sigma(3)}{\sigma(4)}{I_{n-4}a} \ = \ \cross{\sigma(1)}{\sigma(2)}{\sigma(3)}{\sigma(4)}{I_{n-2}}   \\
 \ = \ \cross{\sigma(1)}{\sigma(2)}{\sigma(3)}{\sigma(4)}{I_{n-4}b} \ = \ \dotsb \ = \ \cross{\sigma(1)}{\sigma(2)}{\sigma(3)}{\sigma(4)}{I_5b} \ = \ \cross{\sigma(1)}{\sigma(2)}{\sigma(3)}{\sigma(4)}{I_0b}.
\end{multline*}
If $n$ is odd, then
\begin{multline*}
 \cross{\sigma(1)}{\sigma(2)}{\sigma(3)}{\sigma(4)}{I_0a} \ = \ \cross{\sigma(1)}{\sigma(2)}{\sigma(3)}{\sigma(4)}{I_5a} \ = \ \dotsb \ = \ \cross{\sigma(1)}{\sigma(2)}{\sigma(3)}{\sigma(4)}{I_{n-1}a} \\
 \ = \ \cross{\sigma(1)}{\sigma(2)}{\sigma(3)}{\sigma(4)}{I_{n-1}b} \ = \ \dotsb \ = \ \cross{\sigma(1)}{\sigma(2)}{\sigma(3)}{\sigma(4)}{I_5b} \ = \ \cross{\sigma(1)}{\sigma(2)}{\sigma(3)}{\sigma(4)}{I_0b}.
\end{multline*}
We see that in either case, the relations between the cross ratios of $N_0$ are already induced by other embedded minors of $F_M$ of type $D_6$. This allows us to exclude $N_0$ from $\cE^{<,+}_{M,\cC^{(2)}}$ without changing the colimit of $F(\cE^{<,+}_{M',\cC^{(2)}})$. We deduce that the natural map $\colim F(\cE^{<}_{M,\cC^{(2)}})\to F_M$ is an isomorphism, which concludes the proof of \autoref{thm: fundamental presentation for 2-connected matroids}. \hfill\qed


\subsection{The 3-connected fundamental presentation}
\label{subsection: presentation of the foundation by embedded minors for 3-connected matroids}

If $M$ is $3$-connected, then we can present its foundation $F_M$ as the colimit of $3$-connected embedded minors of certain isomorphism types. In comparison with the $2$-connected fundamental diagram $\cE_M^{(2)}$, we replace the embedded minors of types $C_5$ and $C_5^\ast$, which are not $3$-connected, by their minimal $3$-connected extensions, which are the rank $3$ whirl $W^3$ and the matroids $Q_6$ and $P_6$ (following Oxley's notation in \cite[p.\ 641]{Oxley92}). This presentation of $F_M$ results from a version of the splitter theorem ``for a marked element'' by Bixby and Coullard (\cite{Bixby-Coullard87}), together with an exhaustive computer search.

\begin{figure}[htb]
 \[
 \beginpgfgraphicnamed{tikz/fig7}
 \begin{tikzpicture}[x=0.65cm,y=0.65cm]
  \filldraw[xshift=-4.5cm] ( 30:1) circle (2pt);  
  \filldraw[xshift=-4.5cm] (150:1) circle (2pt);  
  \filldraw[xshift=-4.5cm] (270:1) circle (2pt);  
  \filldraw[xshift=-4.5cm] ( 90:2) circle (2pt);  
  \filldraw[xshift=-4.5cm] (210:2) circle (2pt);  
  \filldraw[xshift=-4.5cm] (330:2) circle (2pt);  
  \draw [xshift=-4.5cm,thick] ( 90:2) -- (210:2);
  \draw [xshift=-4.5cm,thick] (210:2) -- (330:2);
  \draw [xshift=-4.5cm,thick] (330:2) -- ( 90:2);
  \draw[xshift=-4.5cm] (0,-2) node {$W^3$};
  \filldraw[xshift=0cm] ( 30:1) circle (2pt);  
  \filldraw[xshift=0cm] (150:1) circle (2pt);  
  \filldraw[xshift=0cm] (270:1) circle (2pt);  
  \filldraw[xshift=0cm] ( 90:2) circle (2pt);  
  \filldraw[xshift=0cm] (210:2) circle (2pt);  
  \filldraw[xshift=0cm] (330:2) circle (2pt);  
  \draw [xshift=0cm,thick] ( 90:2) -- (210:2);
  \draw [xshift=0cm,thick] (210:2) -- (330:2);
  \draw[xshift=0cm] (0,-2) node {$Q_6$};
  \filldraw[xshift=4.5cm] ( 30:1) circle (2pt);  
  \filldraw[xshift=4.5cm] (150:1) circle (2pt);  
  \filldraw[xshift=4.5cm] (270:1) circle (2pt);  
  \filldraw[xshift=4.5cm] ( 90:2) circle (2pt);  
  \filldraw[xshift=4.5cm] (210:2) circle (2pt);  
  \filldraw[xshift=4.5cm] (330:2) circle (2pt);  
  \draw [xshift=4.5cm,thick] (210:2) -- (330:2);
  \draw[xshift=4.5cm] (0,-2) node {$P_6$};
 \end{tikzpicture}
 \endpgfgraphicnamed
 \]
 \caption{The $3$-circuits of $W^3$, $Q_6$ and $P_6$}
 \label{fig: 3-circuits of W3, Q6 and P6}
\end{figure}
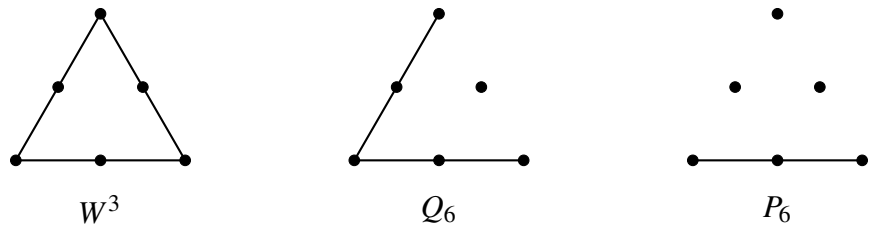
The matroids $W^3$, $Q_6$ and $P_6$ are all rank $3$ matroids on $6$ elements. Their respective $3$-circuits are illustrated in \autoref{fig: 3-circuits of W3, Q6 and P6}. 
The foundations of these three matroids are isomorphic to the foundations of the first rank $2$ uniform matroids:
\[
 F_{W^3} \ \simeq \ F_{U^2_4} \ = \ \U, \qquad\qquad  F_{Q_6} \ \simeq \ F_{U^2_5} \ = \ \V, \qquad\qquad  F_{P_6} \ \simeq \ F_{U^2_6}.
\]

See \ \autoref{subsection: foundation of Q6} for the computation of the foundation of $Q_6$ and \autoref{subsection: foundation of whirls} for $W^3$.
We have verified that the foundations of $P_6$ and $U^2_6$ are isomorphic with the assistance of the Macaulay2 package \textsc{Pastures}; however, it also follows from the fact (suggested to us by Nathan Bowler) that
the foundation of a matroid $M$ is equal to the foundation of any Delta-Wye exchange $M'$ of $M$; this is proved in \cite{Baker-Lorscheid-Walsh-Zhang}.

\begin{df}
 The \emph{$3$-connected fundamental diagram of $M$} is the diagram $\cE_M^{(3)}$ of embedded minors of isomorphism types
 \[
   U^2_4, \qquad U^2_5, \qquad U^3_5, \qquad W^3, \qquad Q_6, \qquad P_6, \qquad F_7, \textrm{ \; and \; } F_7^\ast,
 \]
together with all minor embeddings. 
\end{df}

If $M$ is $3$-connected, we call the associated diagram $F(\cE_M^{(3)})$ of foundations the \emph{$3$-connected fundamental presentation of $M$}, which is motivated by the following result. Let $\cC^{(3)}$ be the class of all $3$-connected matroids.

\begin{thm}\label{thm: fundamental presentation for 3-connected matroids}
  The fundamental type of $\cC^{(3)}$ consists of the isomorphism types of the matroids $U^2_4$, $U^2_5$, $U^3_5$, $W^3$, $Q_6$, $P_6$, $F_7$ and $F_7^\ast$. In other words, if $M$ is a $3$-connected matroid with foundation $F_M$ and $3$-connected fundamental diagram $\cE_M^{(3)}$, then $F_M\simeq \colim F(\cE_M^{(3)})$.
\end{thm}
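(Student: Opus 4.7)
The plan is to derive this result as a special case of \autoref{prop: fundamental type of subclasses with U24} applied with $\cD = \cC^{(3)} \subset \cC^{(2)} = \cC$. Combining this with \autoref{thm: fundamental presentation for 2-connected matroids}, which identifies
\[
 \cC^{(2)}_\fundtype \ = \ \{\, U^2_4, \ U^2_5, \ U^3_5, \ C_5, \ C_5^\ast, \ F_7, \ F_7^\ast \,\},
\]
one obtains that $\cC^{(3)}_\fundtype$ consists of the isomorphism types of those minimal $\cC^{(3)}$-extensions $M$ of matroids $N \in \cC^{(2)}_\fundtype$ for which the canonical morphism $\colim F(\cE^<_{M, \cC^{(3)}}) \to F_M$ fails to be an isomorphism. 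Once this set is determined, the claim that $F_M \cong \colim F(\cE^{(3)}_M)$ for every 3-connected $M$ follows from the defining property of fundamental types.

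I would first dispatch the matroids $U^2_4$, $U^2_5$, $U^3_5$, $F_7$, and $F_7^\ast$: each is already 3-connected and thus its own unique minimal $\cC^{(3)}$-extension. The non-isomorphism condition is easy to verify directly in each case. For instance, $F_{U^2_4} = \U$ while all proper 3-connected embedded minors of $U^2_4$ are regular, giving colimit $\Funpm \neq \U$; for $U^2_5$ (resp.\ $U^3_5$), the foundation $\V$ is not generated by the $\U$-foundations of embedded $U^2_4$-minors alone; for $F_7$ and $F_7^\ast$, the relation $-1 = 1$ in $\F_2 = F_{F_7} = F_{F_7^\ast}$ can only arise from the $F_7$ or $F_7^\ast$-minor itself. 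This establishes five of the eight entries in $\cC^{(3)}_\fundtype$.

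The heart of the proof is then the enumeration of the minimal $\cC^{(3)}$-extensions of $C_5$ and $C_5^\ast$, together with the verification of the non-isomorphism condition. For this I would invoke the Bixby--Coullard strengthening of Seymour's splitter theorem, which provides a controlled descent within the class of 3-connected matroids while respecting a distinguished (not necessarily 3-connected) minor. Applied to $C_5$ (and dually to $C_5^\ast$), this bounds the size of minimal $\cC^{(3)}$-extensions and so reduces the problem to a finite search over matroids on at most 8 elements. An exhaustive computer-assisted check, implemented via the Macaulay2 package \textsc{Pastures} of Chen and the third author, then identifies the candidates for which $\colim F(\cE^<_{M,\cC^{(3)}}) \to F_M$ fails to be an isomorphism: exactly the rank-3 whirl $W^3$ and the 6-element rank-3 matroids $Q_6$ and $P_6$ (cf.\ \autoref{fig: 3-circuits of W3, Q6 and P6}), which being self-dual appear symmetrically as extensions of both $C_5$ and $C_5^\ast$.

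The main obstacle is this final step. The Bixby--Coullard reduction cleanly bounds the problem but does not by itself single out the specific extensions; both the enumeration of candidate minimal 3-connected extensions of $C_5$ and $C_5^\ast$ and the verification of the non-isomorphism condition for each candidate rely on the computer-assisted search, which is why this is the unique place in the paper (outside the Appendix) where a computer-assisted proof is needed. Assembling both groups of contributions yields
\[
 \cC^{(3)}_\fundtype \ = \ \{\, U^2_4, \ U^2_5, \ U^3_5, \ W^3, \ Q_6, \ P_6, \ F_7, \ F_7^\ast \,\},
\]
completing the determination of the fundamental type and thereby establishing the theorem.
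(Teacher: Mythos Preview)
Your proposal is correct and follows essentially the same route as the paper: reduce via \autoref{prop: fundamental type of subclasses with U24} (applied with $\cC=\cC^{(2)}$) and \autoref{thm: fundamental presentation for 2-connected matroids} to determining the minimal 3-connected extensions of $C_5$ and $C_5^\ast$, bound their size, and finish with an exhaustive computer search using the \textsc{Pastures} package. The only cosmetic difference is that the paper obtains the size bound by rephrasing a minimal 3-connected extension of $C_5$ as a minimal 3-connected extension of $U^2_4=C_5/e$ that uses the series element $e$ and then citing a result of Oxley--Whittle (yielding at most $8$ elements), rather than naming Bixby--Coullard directly as you do.
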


\begin{proof}
 By \autoref{prop: fundamental type of subclasses with U24} applied to $\cC=\cC^{(2)}$ and \autoref{thm: fundamental presentation for 2-connected matroids}, it suffices to show that the minimal $3$-connected extensions of $C_5$ and $C_5^*$ are $W^3$, $P_6$, and $Q_6$. Since each of the latter three matroids is self-dual, it suffices to establish the result for $C_5$.

 Let $e$ be a series element of $C_5$, so that $C_5/e=U^2_4$. Let $M$ be a minimal $3$-connected extension of $C_5$, which is the same as a minimal $3$-connected extension of $U^2_4=C_5/e$ that uses $e$. By \cite[Theorem 12.3.6]{Oxley-Whittle98}, $M$ has at most $4$ elements that are not in $U^2_4$. An exhaustive search among all matroids with up to $8$ elements using the Macaulay2 package \textsc{Pastures} then shows that $W^3$, $Q_6$, and $P_6$ are the only minimal $3$-connected extensions of $C_5$.
\end{proof}

\begin{df}
 Let $M$ be a matroid with lattice of flats $\Lambda$. The \emph{$3$-connected fundamental lattice diagram} is the diagram $\cL_M^{(3)}$ of all upper sublattices of $\Lambda$ of types $U^2_4$, $U^2_5$, $U^3_5$, $W^3$, $Q_6$, $P_6$, $F_7$, and $F_7^\ast$, together with all inclusions as sublattices.
\end{df}

\begin{thm}\label{thm: fundamental lattice presentation for 3-connected matroids}
 Let $M$ be a matroid with foundation $F_M$ and $3$-connected fundamental lattice diagram $\cL_M^{(3)}$. Then $F_M\simeq\colim F(\cL_M^{(3)})$.
\end{thm}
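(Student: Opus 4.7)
The plan is to mirror the proof of \autoref{thm: fundamental lattice presentation}, replacing \autoref{thm: fundamental presentation} with its 3-connected counterpart \autoref{thm: fundamental presentation for 3-connected matroids} as the starting point. That result gives $F_M \simeq \colim F(\cE_M^{(3)})$, so the task reduces to showing that this colimit factors through the diagram $\cL_M^{(3)}$ of upper sublattices of the relevant types.

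As recalled in the discussion preceding \autoref{thm: fundamental lattice presentation}, for any embedded minor $N = M\minor{J}{I}$ the foundation $F_N$ depends only on the simplification of $N$ (\autoref{prop: foundation of dual matroids and simplifications}), hence only on the upper sublattice $\Lambda_N \subseteq \Lambda_M$, and the induced morphism $F_N \to F_M$ admits the intrinsic tautological description
\[
 \cross{H_1}{H_2}{H_3}{H_4}{} \ \longmapsto \ \cross{H_1}{H_2}{H_3}{H_4}{}
\]
on modular quadruples of hyperplanes of $\Lambda_N$. In particular, this morphism depends only on the lattice inclusion $\Lambda_N \hookrightarrow \Lambda_M$, so two embedded minors with equal associated sublattices induce the same morphism to $F_M$. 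This allows the diagram $F(\cE_M^{(3)})$ to be reorganized as a diagram $F(\cL')$ of foundations indexed by upper sublattices of $\Lambda_M$, with $\colim F(\cE_M^{(3)}) \simeq \colim F(\cL')$.

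I would then identify $\cL'$ with $\cL_M^{(3)}$. Each of the matroids $U^2_4$, $U^2_5$, $U^3_5$, $W^3$, $Q_6$, $P_6$, $F_7$, $F_7^\ast$ is simple, and no two of them have isomorphic lattices of flats (a short case check: compare ranks, the number of atoms, and the distribution of $3$-element rank-$2$ flats). Consequently, each embedded minor in $\cE_M^{(3)}$ contributes an upper sublattice of precisely one of the eight types listed in the definition of $\cL_M^{(3)}$, and conversely \autoref{prop: embedded minor surject onto upper sublattices} guarantees that every upper sublattice of $\Lambda_M$ of one of these types arises as $\Lambda_N$ for some embedded minor of the matching type in $\cE_M^{(3)}$.

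The main technical subtlety I anticipate is verifying that between two fixed sublattices in $\cL_M^{(3)}$ the induced morphism of foundations is unambiguously defined, i.e., that distinct minor embeddings $N \hookrightarrow N'$ realizing the same lattice inclusion $\Lambda_N \subseteq \Lambda_{N'}$ yield the same morphism $F_{\Lambda_N} \to F_{\Lambda_{N'}}$. This will be handled by combining \autoref{lemma: going up}, which lifts inclusions of upper sublattices of equal rank to compatible chains of embedded minors with common contraction set, with the intrinsic cross-ratio description of the morphism and the transitivity afforded by \autoref{prop: invariance of cross ratio equalities under permutations}. Once this compatibility is in place, the identification $\colim F(\cL') = \colim F(\cL_M^{(3)})$ is formal and the isomorphism $F_M \simeq \colim F(\cL_M^{(3)})$ follows.
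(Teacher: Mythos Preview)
Your proposal is correct and takes the same approach as the paper, starting from \autoref{thm: fundamental presentation for 3-connected matroids} and passing to the lattice diagram via the observation that foundations and the induced morphisms to $F_M$ depend only on the associated upper sublattice. The paper's one-line proof sidesteps your anticipated technical subtlety by working directly in $F_M$: since two embedded minors with the same upper sublattice are already identified in $\colim F(\cE_M^{(3)})\simeq F_M$, the isomorphism factors through $\colim F(\cL_M^{(3)})$ without any need to analyze the compatibility of inter-sublattice morphisms.
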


\begin{proof}
 This follows at once from \autoref{thm: fundamental presentation for 3-connected matroids}, using the observation that the foundations of two embedded minors with the same upper sublattice are identified in $\colim F(\cE^{(3)})\simeq F_M$, and thus this isomorphism factors through $\colim F(\cL_M^{(3)})$.
\end{proof}

If we exclude certain minors from the fundamental type of $\cC^{(3)}$, then these minors get replaced by their minimal proper $3$-connected extensions. In this way, we can derive new results for subclasses of $\cC^{(3)}$. For example, the minimal proper $3$-connected extensions of $U^2_5$ are $U^2_6$, $U^3_6$, $W^3$, $Q_6$, and $P_6$. The same holds for $U^3_5$ if we replace $U^2_6$ by $U^4_6$ in this list. By excluding some of these matroids, we find fundamental types for various subclasses of $\cC^{(3)}$. One such result that will prove useful in a follow-up paper is the following.

\begin{cor}\label{cor: fundamental type of U24 and 3-connected on at least 6 elements without some minors}
 Let $\cC$ be the class of all $3$-connected matroids of type $U^2_4$, or 3-connected matroids with at least $6$ elements that are without minors of type $U^2_6$, $U^3_6$, $U^4_6$, or $P_6$. Then the fundamental class of $\cC$ consists of the isomorphism types of $U^2_4$, $W^3$, $Q_6$, $F_7$, and $F_7^\ast$.
\end{cor}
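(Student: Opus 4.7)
The plan is to apply \autoref{prop: fundamental type of subclasses with U24} to the inclusion $\cC \subset \cC^{(3)}$, using \autoref{thm: fundamental presentation for 3-connected matroids}, which yields $\cC^{(3)}_{\fundtype} = \{U^2_4, U^2_5, U^3_5, W^3, Q_6, P_6, F_7, F_7^\ast\}$. First, I would check directly from the definition that the matroids $U^2_4, W^3, Q_6, F_7, F_7^\ast$ all lie in $\cC$ (each is $3$-connected, equals $U^2_4$ or has at least six elements, and avoids the minors $U^2_6, U^3_6, U^4_6, P_6$), whereas $U^2_5, U^3_5, P_6$ do not.

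For each $N \in \cC^{(3)}_{\fundtype} \cap \cC$, the unique minimal $\cC$-extension of $N$ is $N$ itself. For $N = P_6$, every proper extension already contains $P_6$ as a minor, so there is no $\cC$-extension. The delicate case is $N \in \{U^2_5, U^3_5\}$, where I would invoke Seymour's splitter theorem: every proper $3$-connected extension $M$ of $U^2_5$ contains a $3$-connected single-element extension or coextension of $U^2_5$ as a minor. A direct enumeration, in the spirit of the case analysis used in the proof of \autoref{thm: fundamental presentation for 3-connected matroids}, identifies these as $U^2_6$ (the unique $3$-connected single-element extension) together with the coextensions $U^3_6, P_6$, and $Q_6$. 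Since $\cC$ excludes $U^2_6, U^3_6$, and $P_6$ as minors, any $3$-connected matroid in $\cC$ containing $U^2_5$ must contain $Q_6$ as a minor; and since $Q_6 \in \cC$, minimality forces $M = Q_6$. The same argument, with $U^2_6$ replaced by $U^4_6$, shows $Q_6$ is also the unique minimal $\cC$-extension of $U^3_5$.

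Assembling these, the set of minimal $\cC$-extensions of matroids in $\cC^{(3)}_{\fundtype}$ is precisely $\{U^2_4, W^3, Q_6, F_7, F_7^\ast\}$. To conclude via \autoref{thm: fundamental type}, it remains to verify that the canonical morphism $\Omega^<_{M,\cC}: \colim F(\cE^<_{M,\cC}) \to F_M$ fails to be an isomorphism for each $M$ in this list. For $M \in \{U^2_4, W^3, Q_6\}$, none of $C_5, C_5^\ast, U^2_5, U^3_5, U^2_4\oplus U^1_2$ lies in $\cC$ and could play the role of a linking minor, forcing $\cE^<_{M,\cC}$ to reduce to isolated $U^2_4$-embedded minors. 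Consequently $\colim F(\cE^<_{M,\cC})$ equals $\Funpm$ (for $M = U^2_4$) or a nontrivial tensor power of $\U$ (for $M \in \{W^3, Q_6\}$), matching neither $F_{U^2_4} = F_{W^3} = \U$ nor $F_{Q_6} = \V$. For $M \in \{F_7, F_7^\ast\}$, the relation $-1=1$ that gives $F_M = \F_2$ arises only from an embedded $F_7$- or $F_7^\ast$-minor (cf.\ relation \eqref{R-}), and no proper $\cC$-minor of $M$ can impose this relation, so the colimit cannot equal $\F_2$.

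The hard part will be the case analysis underlying the Seymour splitter argument in the second paragraph: enumerating all $3$-connected single-element extensions and coextensions of $U^2_5$ (and dually of $U^3_5$) and checking that $Q_6$ is the unique one lying in $\cC$. All remaining verifications then follow routinely from the machinery established earlier in the paper.
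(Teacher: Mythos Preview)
Your proof is correct and follows essentially the same route as the paper: both use Seymour's Splitter Theorem to show that any embedded $U^2_5$- or $U^3_5$-minor of a matroid in $\cC$ sits inside a $Q_6$-minor, since the other $3$-connected single-element extensions and coextensions of $U^2_5$ (resp.\ $U^3_5$) are excluded by hypothesis.

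The main difference is one of packaging. The paper argues more directly from \autoref{thm: fundamental presentation for 3-connected matroids}: once every embedded $U^2_5$-minor $N$ lies in a $Q_6$-minor $N'$, it invokes \autoref{prop: foundation of Q6} to note that $F_N\to F_{N'}$ is an isomorphism, so $U^2_5$ and $U^3_5$ can be absorbed into $Q_6$ in the colimit. Your argument instead routes through \autoref{prop: fundamental type of subclasses with U24}, which requires you to identify the minimal $\cC$-extensions and then check separately that $\Omega^<_{M,\cC}$ fails to be an isomorphism for each of $U^2_4$, $W^3$, $Q_6$, $F_7$, $F_7^\ast$. This extra verification is something the paper's proof glosses over, so your version is in fact more complete on the minimality side; conversely, you do not need the isomorphism $F_{U^2_5}\simeq F_{Q_6}$ at all. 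One small remark: your justification for $F_7$ and $F_7^\ast$ can be sharpened by simply noting that every proper minor of $F_7$ (and of $F_7^\ast$) is regular, so $\colim F(\cE^<_{M,\cC})=\Funpm\neq\F_2$; your phrasing that ``$-1=1$ arises only from an embedded $F_7$- or $F_7^\ast$-minor'' is not literally true in general (cf.\ the remark after \autoref{thm: fundamental presentation of foundations in terms of bases}), though it is harmless here.
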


\begin{proof}
 Let $M$ be in $\cC$ and let $N=M\minor JI$ an embedded minor of type $U^2_5$ or $U^3_5$. By the Splitter Theorem (\cite[Thm.\ 12.1.2]{Oxley92}), $N$ is embedded in a $3$-connected $N'$-minor of $M$ on $6$ elements, which must be of type $Q_6$ since we excluded all other possible types in the hypothesis of the corollary. By \autoref{prop: foundation of Q6}, the induced morphism $F_N\to F_{N'}$ is an isomorphism, and the map $F_N\to F_M$ factors through $F_N\to F_{N'}$. Since $P_6$ is excluded, we deduce from \autoref{thm: fundamental presentation for 3-connected matroids} that the fundamental type of $\cC$ consists of $U^2_4$, $W^3$, $Q_6$, $F_7$, and $F_7^\ast$, as claimed.
\end{proof}

\section{Final examples}
\label{section: final examples}

In this section, we compute the foundation of whirls (\cite[p.\ 659]{Oxley92}), the non-Fano matroid $F_7^-$ (\cite[p.\ 643]{Oxley92}), and the matroids $P_7$ and $T_8$ (\cite[p.\ 644, 649]{Oxley92}).


\subsection{The foundation of whirls}
\label{subsection: foundation of whirls}

Let us recall the definition of whirls. Let $W_r$ be the $r$-spoked wheel, considered as a matroid. Let $a_1,\dotsc,a_r$ correspond to the rim edges and $b_1,\dotsc,b_r$ to the spoke edges, as illustrated in \autoref{fig: the r-spoked wheel}. The rim $\{a_1,\ldots,a_r\}$ is the unique circuit-hyperplane of $W_r$, and the rank $r$ whirl $W^r$ is the corresponding relaxation of $W_r$. Note that wheels are regular, so the foundation of $W_r$ is $\Funpm$, cf.\ \autoref{subsection: foundations of binary and regular matroids}.

 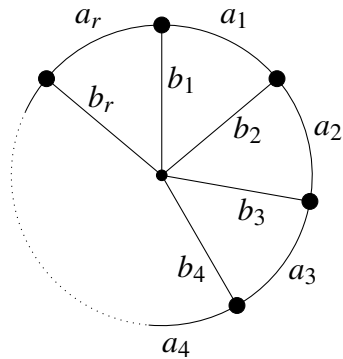
\begin{figure}[htb]
 \[
 \beginpgfgraphicnamed{tikz/fig9}
  \begin{tikzpicture}
   \filldraw (0:0) circle (2pt);  
   \filldraw (140:2) circle (3pt);  
   \draw (0:0) -- (140:2);
   \node at (128:1.3) {$b_{r}$};
   \node at (115:2.3) {$a_{r}$};
   \foreach \a in {1,2,3,4} {
                           \filldraw (140-50*\a:2) circle (3pt);  
                           \draw (0:0) -- (140-50*\a:2);
                           \node at (128-50*\a:1.3) {$b_{\a}$};
                           \node at (115-50*\a:2.3) {$a_{\a}$};
                           } 
   \centerarc[](0,0)(155:-95:2);
   \centerarc[dotted](0,0)(155:265:2);
  \end{tikzpicture}
 \endpgfgraphicnamed
 \]
 \caption{The graphic representation of the $r$-spoked wheel}
 \label{fig: the r-spoked wheel} 
\end{figure}

\begin{prop}\label{prop: foundations of whirls}
 The foundation of the whirl $W^r$ is isomorphic to $\U$ for all $r\geq 2$. 
\end{prop}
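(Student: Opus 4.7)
For $r=2$ we have $W^2=U^2_4$, and the foundation is $\U$ by \autoref{subsection: foundations of the uniform matroid U24}. For $r\geq 3$, the strategy is to apply the $3$-connected fundamental presentation $F_{W^r}\simeq\colim F(\cE_{W^r}^{(3)})$ of \autoref{thm: fundamental presentation for 3-connected matroids} and identify the colimit as $\U$ in three steps.

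The first step is to restrict which isomorphism types in the fundamental type $\{U^2_4,U^2_5,U^3_5,W^3,Q_6,P_6,F_7,F_7^\ast\}$ actually arise as minors of $W^r$. By a classical theorem of Whittle, every whirl is near-regular, and in particular ternary. Since ternariness is preserved under taking minors, and none of $U^2_5,U^3_5,Q_6,P_6,F_7,F_7^\ast$ is ternary---$U^2_5$, $U^3_5$, $Q_6$, and $P_6$ require a representing field with at least four (respectively five) elements, while $F_7$ and $F_7^\ast$ require characteristic $2$---only embedded minors of types $U^2_4$ and $W^3$ can appear in $\cE_{W^r}^{(3)}$. Recalling from the preceding discussion that $F_{U^2_4}\simeq F_{W^3}\simeq\U$, the diagram $F(\cE_{W^r}^{(3)})$ consists entirely of copies of $\U$, with all morphisms being automorphisms of $\U$.

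The second step is to verify that $\cE_{W^r}^{(3)}$ is connected, which I expect to be the main combinatorial obstacle. The plan is to show that any two embedded $U^2_4$-minors of $W^r$ can be linked by a chain of embedded $W^3$-minors. Every embedded $W^3$-minor of $W^r$ contains four embedded $U^2_4$-minors (its single-element contractions), all of which become identified in the colimit via automorphisms of $\U$. Exploiting the cyclic symmetry of the whirl, it suffices to connect $U^2_4$-minors arising from adjacent spoke-rim configurations, and the embedded $W^3$-minors built on three consecutive spokes provide the required links.

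Granted connectedness, the colimit of $F(\cE_{W^r}^{(3)})$ is a quotient of $\U$ by a subgroup of $\Aut(\U)=S_3$, and hence lies in $\{\U,\D,\H,\F_3\}$. The near-regularity of $W^r$ provides, via \autoref{cor: M is representable over P iff there is a morphism from F_M to P}, a pasture morphism $F_{W^r}\to\U$. Among the four candidates, only $\U$ itself admits a morphism to $\U$: a morphism $\D\to\U$ would produce an element $2\in\U^\times$ with $2-1-1\in N_\U$, a morphism $\H\to\U$ would require a primitive sixth root of unity in $\U^\times$, and $\F_3\to\U$ would force $1+1+1\in N_\U$. None of these hold, since $\U^\times/\{\pm 1\}$ is freely generated by $x$ and $y$ and the only generating null relation of $\U$ is $x+y=1$. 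Hence $F_{W^r}\simeq\U$.
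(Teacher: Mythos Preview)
Your argument is circular at $r=3$. You invoke $F_{W^3}\simeq\U$ from ``the preceding discussion,'' but that passage (just before \autoref{thm: fundamental presentation for 3-connected matroids}) explicitly defers to the present section for the proof. The $3$-connected presentation cannot break the circle: since $W^3$ itself belongs to the $3$-connected fundamental type, $\cE_{W^3}^{(3)}$ has $W^3$ as a terminal object, and the identity $F_{W^3}\simeq\colim F(\cE_{W^3}^{(3)})$ is a tautology that computes nothing. For $r\geq4$ your colimit then still requires $F_{W^3}$ as an input, so the whole argument rests on the unproved case $r=3$.

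The paper sidesteps this by using the $2$-connected presentation (\autoref{thm: fundamental presentation for 2-connected matroids}), whose fundamental type contains $C_5$ and $C_5^\ast$ in place of $W^3$. Since $W^r$ is without large uniform minors and without $F_7$ or $F_7^\ast$ minors, \autoref{cor: foundation of a matroid wlum with one component} reduces the claim to connectedness of $\cE^{(2)}(W^r)$, which is then proved by induction on $r$. The inductive step hinges on a combinatorial fact your sketch does not address: $W^r/b_i$ agrees with the regular matroid $W_r/b_i$, so every embedded $U^2_4$-minor lies in some $W^r/a_i$; after deleting one of the two parallel spokes this is a copy of $W^{r-1}$, and adjacent such copies are linked through explicit $C_5$- and $C_5^\ast$-minors.

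A small correction to your sketch: $W^3$ has six embedded $U^2_4$-minors, not four, and none is a single-element contraction---each arises by contracting a rim element and deleting an adjacent spoke; see \autoref{ex: 3-connected fundamental diagrams of W3}.
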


\begin{proof}
 By \cite[p.\ 660]{Oxley92}, whirls are $2$-connected and near-regular. In particular, they do not contain any minors of types $U^2_5$ and $U^3_5$. Thus by the structure theorem for foundations of matroids without large uniform minors (cf.\ \autoref{thm: structure theorem for foundations of matroids without large uniform minors}), the foundation of $W^r$ is a tensor product of copies of $\U$, $\D$, $\H$, $\F_3$, and $\F_2$. Since $W^r$ is near-regular, but none of $\D$, $\H$, $\F_3$, and $\F_2$ allow for a morphism to $\U$, we conclude that the foundation of $W^r$ is a tensor power of $\U$. Moreover, if $\cE^{(2)}(W^r)$ is connected then there is only one tensor factor. 
 
 We prove that $\cE^{(2)}(W^r)$ is connected by induction on $r$. The $2$-whirl $W^2$ is isomorphic to $U^2_4$ and thus $\cE^{(2)}(W^2)$ consists of a single vertex (of type $U^2_4$), which establishes the base case.
 
 Assume that $r>2$. We need to show that every pair of embedded $U^2_4$-minors of $W^r$ lie in the same connected component of $\cE^{(2)}(W^r)$. Let $E=\{a_1,\dotsc,a_r,b_1,\dotsc,b_r\}$ be the common ground set of $W_r$ and $W^r$ and $i\in\{1,\dotsc,r\}$. As a first step, we observe that the bases $W^r/b_i$ are those subsets $B$ of $E-b_i$ for which $B\cup \{b_i\}$ is a basis of $W^r$. These subsets agree with the bases of the regular matroid $W_r/b_i$, which shows that $W^r/b_i$ is regular and does not contain any $U^2_4$-minor.
 
 Since the rank of $W^r$ is $r>2$, we conclude that every embedded $U^2_4$-minor of $W^r$ is contained in $W^r/a_i$ for some $i$. Since $b_i$ and $b_{i+1}$ are parallel in $W^r/a_i$ (where we use $b_{r+1}=b_1$), every $U^2_4$-minor of $W^r/a_i$ is contained in either $W^r\minor{b_i}{a_i}$ or $W^r\minor{b_{i+1}}{a_{i}}$, which are both isomorphic to $W^{r-1}$. By the inductive hypothesis, the $2$-connected fundamental diagram of $W^{r-1}$ is connected, so we are left with showing that the $2$-connected fundamental diagrams of $W^r\minor{b_i}{a_i}$ or $W^r\minor{b_{i+1}}{a_{i}}$ (for varying $i$) are connected as subdiagrams of $\cE^{(2)}(W^r)$.
 
 The subdiagrams $W^r\minor{b_i}{a_i}$ and $W^r\minor{b_{i+1}}{a_{i}}$ are connected by the embedded $C_5^\ast$-minor
 \[
  W^r\;\minor{\;b_1\dotsc b_{i-2}\;b_{i+2}\dotsc b_r\;}{\;a_1\dotsc a_{i-2}\;a_i\;a_{i+2}\dotsc a_r}
 \]
 (where we read indices modulo $r$, as appropriate) and the subdiagrams $W^r\minor{b_i}{a_i}$ and $W^r\minor{b_i}{a_{i-1}}$ are connected by the embedded $C_5$-minor 
 \[
  W^r\;\minor{\;b_1\dotsc b_{i-2}\;b_i\;b_{i+2}\dotsc b_r\;}{\;a_1\dotsc a_{i-2}\;a_{i+2}\dotsc a_r},
 \]
 which shows that $\cE^{(2)}(W^r)$ is connected as claimed.
\end{proof}

\begin{ex}\label{ex: 3-connected fundamental diagrams of W3}
 We determine $\cE_M^{(3)}$ and $\cL_M^{(3)}$ for the rank $3$ whirl $M=W^3$ as a first example. Its $3$-circuits and its lattice of flats $\Lambda$ are illustrated in \autoref{fig: circuits and lattice of flats of W3}.

 \begin{figure}[htb]
 \[
 \beginpgfgraphicnamed{tikz/fig36}
  \begin{tikzpicture}[cross line/.style={normal line,preaction={draw=white, -, line width=2pt}}]
   \node (C) at (-2,1) {};
   \foreach \a in {2,4,6} {
                           \filldraw (C)++(150+\a*60:0.75) coordinate (P\a) circle (2pt);  
                           \draw[color=white] (C)++(150+\a*60:1.1) coordinate (L\a) circle (2pt);  
                           \node at (L\a) {\a};
                           } 
   \foreach \a in {1,3,5} {
                           \filldraw (C)++(150+\a*60:1.5) coordinate (P\a) circle (2pt);  
                           \draw[color=white] (C)++(150+\a*60:1.9) coordinate (L\a) circle (2pt);  
                           \node at (L\a) {\a};
                           } 
   \draw (P1) -- (P3) -- (P5) -- (P1);                          
   \node (0) at (5,0) {$\emptyset$};
   \node (1) at (2.5,1) {$1$};
   \node (2) at (3.5,1) {$2$};
   \node (3) at (4.5,1) {$3$};
   \node (4) at (5.5,1) {$4$};
   \node (5) at (6.5,1) {$5$};
   \node (6) at (7.5,1) {$6$};
   \node (14) at (1,2){$14$};
   \node (123) at (2,2){$123$};
   \node (24) at (3,2){$24$};
   \node (36) at (4,2){$36$};
   \node (345) at (5,2){$345$};
   \node (46) at (6,2){$46$};
   \node (52) at (7,2){$52$};
   \node (561) at (8,2){$561$};
   \node (62) at (9,2){$62$};
   \node (E) at (5,3){$123456$};
   \foreach \from/\to in {0/1, 0/2, 0/3, 0/4, 0/5, 0/6, 1/14, 4/14, 1/123, 2/123, 3/123, 2/24, 4/24, 3/36, 6/36, 3/345, 4/345, 5/345, 4/46, 6/46, 5/52, 2/52, 5/561, 6/561, 1/561, 6/62, 2/62, 14/E, 123/E, 24/E, 36/E, 345/E, 46/E, 52/E, 561/E, 62/E} \draw [-] (\from)--(\to);
  \end{tikzpicture}
 \endpgfgraphicnamed
 \]
 \caption{The $3$-circuits and the lattice of flats of the $3$-whirl $W^3$}
 \label{fig: circuits and lattice of flats of W3} 
\end{figure}
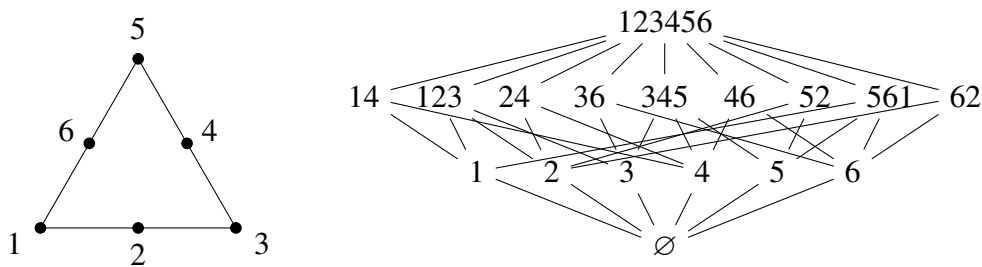

The embedded minors that appear in $\cE^{(3)}_M$ are $M$ itself and the $U^2_4$-minors $M\minor ji$ with $j$ even, $i$ odd, both contained in a common $3$-circuit. The upper sublattice $\Lambda/i$ defined by $M\minor ji$ is insensitive to $j$, and therefore $\Lambda/i$ corresponds to two distinct embedded $U^2_4$-minors. The diagrams $\cE^{(3)}_M$ and $\cL_M^{(3)}$ are illustrated in \autoref{fig: 3-connected fundamental diagram of W3}.

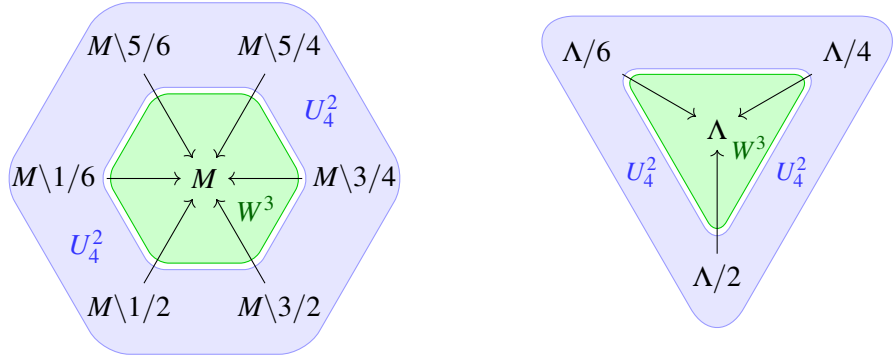
\begin{figure}[htb]
 \[
  \beginpgfgraphicnamed{tikz/fig38}
   \begin{tikzpicture}[font=\small]
    \draw[fill=blue!10!white,draw=blue!40!white,rounded corners=10pt] (0:2.7) -- (60:2.7) -- (120:2.7) -- (180:2.7) -- (240:2.7) -- (300:2.7) -- cycle;
    \draw[fill=white,draw=blue!40!white,rounded corners=5pt] (0:1.4) -- (60:1.4) -- (120:1.4) -- (180:1.4) -- (240:1.4) -- (300:1.4) -- cycle;
    \draw[fill=green!20!white,draw=green!80!black,rounded corners=5pt] (0:1.3) -- (60:1.3) -- (120:1.3) -- (180:1.3) -- (240:1.3) -- (300:1.3) -- cycle;
    \node[color=blue!80!white] at (30:1.8) {$U^2_4$};
    \node[color=blue!80!white] at (210:1.8) {$U^2_4$};
    \node[color=green!40!black] at (330:0.8) {$W^3$};
    \node (M) at (0:0) {$M$};
    \node (M34) at (  0:2) {$M\minor34$};
    \node (M54) at ( 60:2) {$M\minor54$};
    \node (M56) at (120:2) {$M\minor56$};
    \node (M16) at (180:2) {$M\minor16$};
    \node (M12) at (240:2) {$M\minor12$};
    \node (M32) at (300:2) {$M\minor32$};
    \foreach \from in {34, 54, 56, 16, 12, 32} \draw[->] (M\from) -- (M);
   \end{tikzpicture}
   \hspace{1.5cm}
   \begin{tikzpicture}[font=\small]
    \draw[fill=blue!10!white,draw=blue!40!white,rounded corners=20pt] (30:3.0) -- (150:3.0) -- (270:3.0) -- cycle;
    \draw[fill=white,draw=blue!40!white,rounded corners=10pt] (30:1.6) -- (150:1.6) -- (270:1.6) -- cycle;
    \draw[fill=green!20!white,draw=green!80!black,rounded corners=7pt] (30:1.45) -- (150:1.45) -- (270:1.45) -- cycle;
    \node[color=blue!80!white] at (330:1.15) {\footnotesize $U^2_4$};
    \node[color=blue!80!white] at (210:1.15) {\footnotesize $U^2_4$};
    \node[color=green!40!black] at (330:0.5) {\footnotesize $W^3$};
    \node (L) at (0:0) {$\Lambda$};
    \node (L4) at ( 30:2) {$\Lambda/4$};
    \node (L6) at (150:2) {$\Lambda/6$};
    \node (L2) at (270:2) {$\Lambda/2$};
    \foreach \from in {2,4,6} \draw[->] (L\from) -- (L);
   \end{tikzpicture}
  \endpgfgraphicnamed
 \]
 \caption{The $3$-connected fundamental diagram and lattice diagram of $W^3$}
 \label{fig: 3-connected fundamental diagram of W3} 
\end{figure}
\end{ex}

Since the foundation of the $3$-whirl $W^3$ is $\U$ by \autoref{prop: foundations of whirls}, the embedding of each minor $W^3\minor ji$ of type $U^2_4$ into $W^3$ induces an isomorphism $F_{W^3\minor ji}\to F_{W^3}$. This means that the cross ratios stemming from the six $U^2_4$-minors of $W^3$ are identified in $F_{W^3}$. Explicitly, these identifications are as follows.

\begin{lemma}\label{lemma: cross ratios in W3}
 The cross ratios in the foundation of $W^3$ satisfy the following relations (assuming the enumeration of elements as in \autoref{fig: circuits and lattice of flats of W3}):
 \[
  \cross{123}{24}{25}{26}{} \ = \ \cross{345}{46}{14}{24}{}  \ = \ \cross{156}{26}{36}{46}{}.
 \]
\end{lemma}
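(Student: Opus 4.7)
The plan is to derive the two asserted equalities from relation \eqref{H4'} (see \autoref{rem: relations of cross ratios from lattices}) applied to two specific $C_5$-type upper sublattices of $\Lambda_{W^3}$, namely those induced by the embedded minors $W^3\setminus 3$ and $W^3\setminus 1$. Although $C_5$-sublattices do not appear in the $3$-connected lattice diagram $\cL^{(3)}_{W^3}$ shown in \autoref{fig: 3-connected fundamental diagram of W3}, every upper sublattice of $\Lambda_{W^3}$ gives rise to a pasture morphism $F_{\Lambda'}\to F_{W^3}$, so the $C_5$-relation \eqref{H4'} is a valid identity in the foundation $F_{W^3}$.

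For the first equality, the minor $W^3\setminus 3$ determines a $C_5$-sublattice of $\Lambda_{W^3}$ whose three-element line is $561$ (from the $3$-circuit $\{1,5,6\}$ of $W^3$, depicted in \autoref{fig: circuits and lattice of flats of W3}), with free points $\{2\}$ and $\{4\}$. The two $U^2_4$-sublattices $\Lambda/2$ and $\Lambda/4$ arise by further contracting $2$ or $4$. Applying \eqref{H4'} with circuit ordered $(5,6,1)$ and free points $(2,4)$ gives
\[
\cross{52}{62}{123}{24}{} \ = \ \cross{345}{46}{14}{24}{},
\]
where we used the closures $\langle 5,2\rangle_{W^3}=52$, $\langle 1,2\rangle_{W^3}=123$, $\langle 5,4\rangle_{W^3}=345$, etc. A single application of \eqref{Hs} (the swap $\cross{H_1}{H_2}{H_3}{H_4}{}=\cross{H_3}{H_4}{H_1}{H_2}{}$) turns the left-hand side into $\cross{123}{24}{25}{26}{}$, proving the first equality.

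For the second equality, I would repeat the construction with $W^3\setminus 1$, which is a $C_5$-sublattice with three-element line $345$ (from the $3$-circuit $\{3,4,5\}$) and free points $\{2\}$, $\{6\}$; the sub-sublattices are again $\Lambda/2$ and $\Lambda/6$. Relation \eqref{H4'} with circuit $(3,4,5)$ and free points $(2,6)$ yields
\[
\cross{123}{24}{52}{62}{} \ = \ \cross{36}{46}{561}{62}{}.
\]
The left-hand side equals $\cross{123}{24}{25}{26}{}$, and one application of \eqref{Hs} to the right-hand side rewrites it as $\cross{561}{62}{36}{46}{}=\cross{156}{26}{36}{46}{}$, yielding the second equality.

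The only bookkeeping needed is to verify that the $3$-circuit structure of $W^3$ produces the closures $\langle 1,2\rangle_{W^3}=123$, $\langle 4,5\rangle_{W^3}=345$, and $\langle 5,6\rangle_{W^3}=561$, which identify the rank-2 flats appearing on both sides of the \eqref{H4'} relations with the labels in the lemma statement. This is immediate from \autoref{fig: circuits and lattice of flats of W3}, so no serious obstacle is expected.
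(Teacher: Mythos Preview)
Your proof is correct and follows the same approach as the paper: the paper's proof is the one-line observation that each pair of cross ratios lies in a common $C_5$-type upper sublattice of $\Lambda_{W^3}$, so the equality follows from \eqref{H4'}. You have simply made this explicit by identifying the relevant $C_5$-sublattices as $\Lambda\setminus 3$ and $\Lambda\setminus 1$ and writing out the resulting instances of \eqref{H4'} together with the single \eqref{Hs} swap needed in each case.
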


\begin{proof}
 For each pair of cross ratios in this equation, all of their hyperplanes are contained in an upper sublattice of $W^3$ of type $C_5$, thus the claim follows directly from \eqref{H4'}.
\end{proof}


\subsection{The foundation of \texorpdfstring{$F_7^-$}{F7-}}
\label{subsection: foundation of F7-}

In this section, we compute the foundation of the non-Fano matroid $F_7^-$, whose $3$-circuits are as illustrated in \autoref{fig: circuits of nonfano}. 

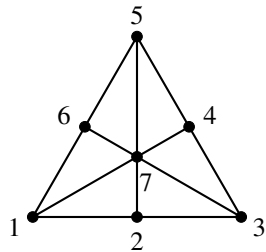
\begin{figure}[htb]
 \[
 \beginpgfgraphicnamed{tikz/fig31}
 \begin{tikzpicture}[x=0.4cm,y=0.4cm]
  \filldraw ( 30:2) circle (2pt);  
  \node at ( 30:2.8) {\footnotesize $4$};  
  \filldraw (150:2) circle (2pt);  
  \node at (150:2.8) {\footnotesize $6$};  
  \filldraw (270:2) circle (2pt);  
  \node at (270:2.8) {\footnotesize $2$};  
  \filldraw ( 90:4) circle (2pt);  
  \node at ( 90:4.7) {\footnotesize $5$};  
  \filldraw (210:4) circle (2pt);  
  \node at (210:4.7) {\footnotesize $1$};  
  \filldraw (330:4) circle (2pt);  
  \node at (330:4.7) {\footnotesize $3$};
  \node at (290:0.9) {\footnotesize $7$};
  \filldraw (330:0) circle (2pt);  
  \draw [thick] ( 90:4) -- (210:4);
  \draw [thick] (210:4) -- (330:4);
  \draw [thick] (90:4) -- (330:4);
  \draw [thick] (150:2) -- (330:4);
  \draw [thick] (270:2) -- (90:4);
  \draw [thick] (210:4) -- (30:2);
 \end{tikzpicture}
 \endpgfgraphicnamed
 \]
 \caption{The $3$-circuits of the non-Fano matroid $F_7^-$}
 \label{fig: circuits of nonfano} 
\end{figure}

\begin{prop}\label{prop: foundation of the non-Fano matroid}
 The foundation of $F_7^-$ is isomorphic to $\D$.
\end{prop}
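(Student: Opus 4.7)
The plan is to combine the structure theorem for matroids without large uniform minors (\autoref{thm: structure theorem for foundations of matroids without large uniform minors}, via \autoref{cor: foundation of a matroid wlum with one component}) with known representability data for $F_7^-$. First, since $F_7^-$ is ternary (\cite[p.\ 643]{Oxley92}), it has no $U^2_5$- or $U^3_5$-minor; as a relaxation of $F_7$ with the same ground set and rank it has neither $F_7$ nor $F_7^\ast$ as a minor (by a size/rank comparison). Moreover, I would argue that the fundamental lattice diagram is connected: the three $U^2_4$-upper-sublattices of $\Lambda_{F_7^-}$ are $\Lambda/j$ for $j\in\{2,4,6\}$ (the three ``midpoint'' elements lying on exactly two lines of $F_7^-$), and each of the four embedded $W^3$-minors $F_7^-\setminus i$ for $i\in\{1,3,5,7\}$ contains all three $\Lambda/j$ as upper sublattices. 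Hence $F_{F_7^-}$ is a symmetry quotient of $\U$, i.e., one of $\U,\D,\H,\F_3,\F_2,\Funpm$.

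Next, I would narrow this down using representability. Since $F_7^-$ is dyadic (\cite[p.\ 643]{Oxley92}), there is a morphism $F_{F_7^-}\to\D$. This rules out $\H$ (whose generator $\zeta_6$ satisfies $\zeta^2-\zeta+1=0$, which has no root in $\D^\times\subset\Q^\times$), $\F_3$ (which would require $2=-1$ in $\D$), and $\F_2$ (which would require $1+1=0$ in $\D$). Since $F_7^-$ is not regular, $F_{F_7^-}\neq\Funpm$. This leaves $F_{F_7^-}\in\{\U,\D\}$.

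Finally, to show $F_{F_7^-}=\D$ rather than $\U$, I would use that $F_7^-$ is not representable over $\F_4$ (\cite[p.\ 643]{Oxley92}), so there is no morphism $F_{F_7^-}\to\F_4$. But $\U$ does admit a morphism to $\F_4$, sending a fundamental pair $(x,y)$ to $(t,t+1)$ where $\F_4=\F_2(t)$ with $t^2+t+1=0$. Hence $F_{F_7^-}\neq\U$, and therefore $F_{F_7^-}\simeq\D$.

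The main obstacle is verifying that the fundamental lattice diagram is connected, which rests on a concrete inspection of the $U^2_4$- and $W^3$-upper-sublattices of $\Lambda_{F_7^-}$; alternatively one can exhibit $C_5$-minors such as $F_7^-\setminus\{3,5\}$ (connecting $\Lambda/2$ and $\Lambda/6$) and $F_7^-\setminus\{1,3\}$ (connecting $\Lambda/4$ and $\Lambda/6$). The remaining steps are routine bookkeeping with known facts about representability and pasture morphisms.
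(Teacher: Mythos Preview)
Your proof is correct and follows the same overall strategy as the paper: establish connectedness of the fundamental diagram (no $U^2_5$, $U^3_5$, $F_7$, $F_7^\ast$ minors), invoke \autoref{cor: foundation of a matroid wlum with one component} to conclude $F_{F_7^-}$ is a symmetry quotient of $\U$, use dyadicity to narrow down to $\{\U,\D\}$, and then rule out $\U$. The only substantive difference is in the final step: the paper exhibits the relation $x=y$ defining $\D=\past\U{\{x-y\}}$ by an explicit chain of cross-ratio identities through three $W^3$-minors (via \autoref{lemma: cross ratios in W3}), whereas you rule out $\U$ by non-representability over $\F_4$. The paper's Remark immediately following the proof explicitly endorses your shortcut (phrased as non-representability in characteristic $2$). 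Your route is quicker; the paper's explicit computation has the payoff of yielding \autoref{lemma: cross ratios in F7-}, which is reused in the computation of $F_{T_8}$.

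One small wrinkle: your first connectedness argument invokes the $W^3$-minors $F_7^-\setminus i$, but $W^3$ is not among the types appearing in $\cL_M$ (it appears only in $\cL_M^{(3)}$). Your fallback via the $C_5$-minors $F_7^-\setminus\{3,5\}$ and $F_7^-\setminus\{1,3\}$ is the correct argument at the level of $\cL_M$ and suffices to apply \autoref{cor: foundation of a matroid wlum with one component} as stated. The paper itself argues via $\cE^{(3)}(F_7^-)$, which is likewise connected; either route works.
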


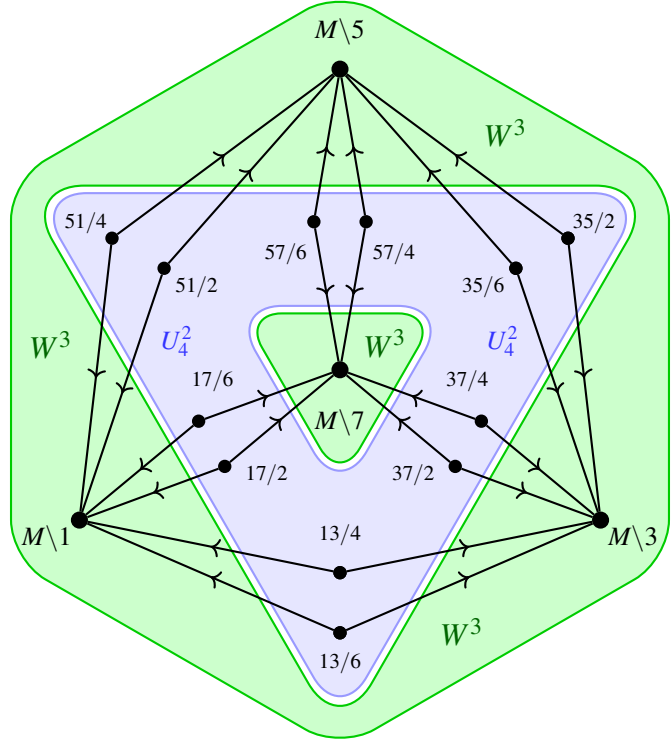
\begin{figure}[htb]
 \[
  \beginpgfgraphicnamed{tikz/fig32}
   \begin{tikzpicture}[x=1cm,y=1cm,scale=1, font=\footnotesize, vertices/.style={draw, fill=black, circle, inner sep=0pt},thick,decoration={markings,mark=at position 0.5 with {\arrow{>}}}]
    \draw[fill=green!20!white,draw=green!80!black,rounded corners=15pt] (30:5.05) -- (90:5.05) -- (150:5.05) -- (210:5.05) -- (270:5.05) -- (330:5.05) -- cycle;
    \draw[fill=white,draw=green!80!black,rounded corners=24pt] (270:4.9) -- (30:4.9) -- (150:4.9) -- cycle;
    \draw[fill=blue!10!white,draw=blue!40!white,rounded corners=20pt] (270:4.7) -- (30:4.7) -- (150:4.7) -- cycle;
    \draw[fill=white,draw=blue!40!white,rounded corners=20pt] (270:1.7) -- (30:1.7) -- (150:1.7) -- cycle;
    \draw[fill=green!20!white,draw=green!80!black,rounded corners=15pt] (270:1.5) -- (30:1.5) -- (150:1.5) -- cycle;
    \node[color=blue!80!white] at (170:2.2) {\footnotesize  $U^2_4$};
    \node[color=blue!80!white] at (10:2.2) {\footnotesize  $U^2_4$};
    \node[color=green!40!black] at (30:0.7) {\normalsize $W^3$};
    \node[color=green!40!black] at (55:3.85) {\normalsize $W^3$};
    \node[color=green!40!black] at (175:3.85) {\normalsize $W^3$};
    \node[color=green!40!black] at (295:3.85) {\normalsize $W^3$};
    \foreach \a in {2,4,...,10}{
                           \draw ((150+\a*360/6: 3.5) node [draw,circle,inner sep=1.5pt,fill=black] (U\a) {};
                           \draw ((150+\a*360/6: 2.7) node [draw,circle,inner sep=1.5pt,fill=black] (V\a) {};
                           }
    \draw (0:0) node [draw,circle,inner sep=2pt,fill=black] (W7) {};
    \foreach \a in {1,3,5}{
                           \draw (150+\a*360/6: 4) node [draw,circle,inner sep=2pt,fill=black] (W\a) {};
                           \draw (140+\a*360/6: 2) node [draw,circle,inner sep=1.5pt,fill=black] (X\a) {};
                           \draw (160+\a*360/6: 2) node [draw,circle,inner sep=1.5pt,fill=black] (Y\a) {};
                           }
    \foreach \a in {1,3,5} {\setcounter{tikz-counter}{\a};
                              \draw [-,postaction={decorate}] (X\arabic{tikz-counter}) -- (W7);
                              \draw [-,postaction={decorate}] (Y\arabic{tikz-counter}) -- (W7);
                              \draw [-,postaction={decorate}] (X\arabic{tikz-counter}) -- (W\a);
                              \draw [-,postaction={decorate}] (Y\arabic{tikz-counter}) -- (W\a);
                              \addtocounter{tikz-counter}{1};
                              \draw [-,postaction={decorate}] (U\arabic{tikz-counter}) -- (W\a);
                              \draw [-,postaction={decorate}] (V\arabic{tikz-counter}) -- (W\a);
                              \addtocounter{tikz-counter}{4};
                              \draw [-,postaction={decorate}] (U\arabic{tikz-counter}) -- (W\a);
                              \draw [-,postaction={decorate}] (V\arabic{tikz-counter}) -- (W\a);
                              }
    \draw (270:0.7) node {$M\setminus7$};                              
    \foreach \a in {1,3,5}{\draw (150+\a*360/6: 4.5) node {$M\setminus\a$};} 
    \foreach \angle/\exclude/\contract in  {1/17/6,3/37/2,5/57/4} {\draw ((125+\angle*360/6:1.7) node {\tiny$\exclude/\contract$};}
    \foreach \angle/\exclude/\contract in {1/17/2,3/37/4,5/57/6} {\draw ((175+\angle*360/6:1.7) node {\tiny$\exclude/\contract$};}
    \foreach \angle/\exclude/\contract in {2/13/4,4/35/6,6/51/2} {\draw ((150+\angle*360/6:2.2) node {\tiny$\exclude/\contract$};}
    \foreach \angle/\exclude/\contract in {2/13/6,4/35/2,6/51/4} {\draw ((150+\angle*360/6:3.9) node {\tiny$\exclude/\contract$};}
   \end{tikzpicture}
  \endpgfgraphicnamed
 \]
 \caption{The 3-connected fundamental diagram of $F_7^-$}
 \label{fig: 3-conn fund diagram of the nonfano matroid} 
\end{figure}

\begin{proof}
 Since $F_7^-$ is $3$-connected, we can use the $3$-connected fundamental diagram $\cE^{(3)}(F_7^-)$ of $F_7^-$, as illustrated in \autoref{fig: 3-conn fund diagram of the nonfano matroid}, to compute $F_M$ by \autoref{thm: fundamental presentation for 3-connected matroids}
 
 Since $\cE^{(3)}(F_7^-)$ is connected and $F_7^-$ does not have any minors of types $U^2_5$,\ $U^3_5$,\ $F_7$ and $F_7^\ast$, \autoref{cor: foundation of a matroid wlum with one component} implies that the foundation $F_{F^-_7}$ of $F_7^-$ is a symmetry quotient of $\U$.
 
 By \cite[p.\ 644]{Oxley92}, the non-Fano matroid $F_7^-$ is dyadic, i.e.,\ $\D$-representable. Since neither $\H$ nor $\F_3$ map to $\D$, the foundation $F_M$ is either $\U$ or $\D$. In order to exhibit the defining relation of $\D=\past\U{\{x-y\}}$, we consider the subdiagram 
 \[
  \begin{tikzcd}[row sep=0,column sep=40]
   & & M\setminus 7  \\
   & M\minor{17}2 \ar[dl] \ar[ur] && M\minor{37}4 \ar[dr] \ar[ul] \\
   M\setminus 1 && M\minor{13}6 \ar[rr] \ar[ll] && M\setminus 3 
  \end{tikzcd}
 \]
 of $\cE^{(3)}(F_7^-)$ and the morphism $\alpha:\U\simeq F_{W^3\minor{13}6}\to F_{W^3}$ with $\alpha(x)=\cross{367}{46}{26}{56}{}$ and $\alpha(y)=\cross{367}{26}{46}{56}{}$. By \autoref{lemma: cross ratios in W3}, we find the following chain of equalities (where we label the equalities by the corresponding $W^3$-minor to which we apply the lemma):
 \[
  \alpha(x) \ \ = \ \ \cross{367}{46}{26}{56}{} \ \ \underset{M\setminus1}= \ \ \cross{123}{24}{26}{25}{} \ \ \underset{M\setminus7}= \ \ \cross{147}{24}{46}{45}{} \ \ \underset{M\setminus3}= \ \ \cross{367}{26}{46}{56}{} \ \ = \ \ \alpha(y),
 \]
 which shows that $\alpha(x)=\alpha(y)$ in $F_{F^-_7}$ and thus $F_{F^-_7}\simeq\past\U{\{x-y\}}=\D$, as claimed. 
\end{proof}

\begin{rem}
 Note that it is known that $F_7^-$ is not near-regular: by \cite[p.\ 644]{Oxley92}), $F_7^-$ is not representable over any field of characteristic $2$. This could have been used as a shortcut in last step of the proof of \autoref{prop: foundation of the non-Fano matroid}.
\end{rem}

\begin{lemma}\label{lemma: cross ratios in F7-}
 There is a unique isomorphism $\alpha:\D=\pastgenn\Funpm{x}{x+x-1}\to F_{F_7^-}$, and the following equalities between cross ratios hold in $F_7^-$ (assuming the enumeration of elements as in \autoref{fig: circuits of nonfano}):
 \begin{align*}
  \alpha(x) \ &= \ \cross{367}{46}{26}{156}{} \ = \ \cross{367}{26}{46}{156}{}; \\
  \alpha(x^{-1}) \ &= \ \ \cross{367}{26}{156}{46}{} \ \; = \ \ \cross{367}{46}{156}{26}{}; \\
  \alpha(-1) \ &= \ \cross{367}{156}{46}{26}{} \ = \ \cross{367}{156}{26}{46}{}.
 \end{align*}
\end{lemma}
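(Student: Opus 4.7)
The plan is to derive the lemma directly from \autoref{prop: foundation of the non-Fano matroid} together with the basic cross ratio relations \eqref{H+}, \eqref{H1}, and \eqref{H2} applied to the modular quadruple $(367, 46, 26, 156)$ of hyperplanes through the point $6$.

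For uniqueness I would first verify that $\Aut(\D) = \{\id\}$: an automorphism must fix $-1$ and send the generator $x$ to some fundamental element $a$ of $\D$ satisfying $a+a=1$. The fundamental elements of $\D$ are $x$, $x^{-1}$, and $-1$, and among these only $x$ satisfies $a+a=1$, since $(-1)+(-1)=-2 \neq 1$, and $x^{-1}+x^{-1}=1$ would force $2x^{-1}=1$, i.e.\ $x = x^{-1}$. Combined with the isomorphism $F_{F_7^-} \simeq \D$ from \autoref{prop: foundation of the non-Fano matroid}, this forces any isomorphism $\alpha: \D \to F_{F_7^-}$ to be unique, and moreover $\alpha(x)$ is the unique element $a$ of $F_{F_7^-}$ with $a+a=1$.

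To identify $\alpha(x)$ as the cross ratio $\cross{367}{46}{26}{156}{}$, I would combine relation \eqref{H+}, which says $\cross{367}{46}{26}{156}{} + \cross{367}{26}{46}{156}{} = 1$, with the identification $\cross{367}{46}{26}{156}{} = \cross{367}{26}{46}{156}{}$ established in the proof of \autoref{prop: foundation of the non-Fano matroid} via the chain of $W^3$-sublattices of $F_7^- \setminus 1$, $F_7^- \setminus 7$, and $F_7^- \setminus 3$. This shows that $a := \cross{367}{46}{26}{156}{}$ satisfies $a + a = 1$, hence $a = \alpha(x)$, which establishes both equalities listed for $\alpha(x)$.

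The remaining four equalities are then routine consequences of \eqref{H1} and \eqref{H2}. Relation \eqref{H1} (inversion by swapping $H_3$ and $H_4$) gives $\cross{367}{46}{156}{26}{} = \alpha(x)^{-1} = \alpha(x^{-1})$ and $\cross{367}{26}{156}{46}{} = \cross{367}{26}{46}{156}{}^{-1} = \alpha(x^{-1})$, where the identification from the previous paragraph is used to simplify the right-hand side of the latter. Relation \eqref{H2} applied to $(367, 46, 26, 156)$ reads
\[
 \cross{367}{46}{26}{156}{} \cdot \cross{367}{26}{156}{46}{} \cdot \cross{367}{156}{46}{26}{} \ = \ -1,
\]
and substituting $\alpha(x)\cdot \alpha(x^{-1}) = 1$ forces $\cross{367}{156}{46}{26}{} = -1 = \alpha(-1)$. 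One further application of \eqref{H1} then yields $\cross{367}{156}{26}{46}{} = \cross{367}{156}{46}{26}{}^{-1} = -1$. The only substantive step is the identification in the third paragraph, which is already contained in the proof of \autoref{prop: foundation of the non-Fano matroid}; everything else is mechanical manipulation of the defining cross ratio relations.
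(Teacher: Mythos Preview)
Your proof is correct and follows essentially the same approach as the paper. Both establish uniqueness via $\Aut(\D)=\{\id\}$ (the paper phrases this as ``$x$ is the unique element of $\D$ that satisfies $x+x-1\in N_\D$'') and both take the first row of equalities directly from the computation in the proof of \autoref{prop: foundation of the non-Fano matroid}. The only difference is that for the second and third rows the paper cites \autoref{prop: invariance of cross ratio equalities under permutations} (applying a permutation $\sigma\in S_4$ simultaneously to both sides of the first row), whereas you work out each value explicitly from \eqref{H1} and \eqref{H2}; since that proposition is itself proved using these relations, the two arguments are equivalent.
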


\begin{proof}
 The existence of $\alpha:\D\to F_{F_7^-}$ is the content of \autoref{prop: foundation of the non-Fano matroid}, and its uniqueness follows from the fact that $\D$ does not have any nontrivial automorphisms, since $x$ is the unique element of $\D$ that satisfies $x+x-1\in N_\D$. The first row of equalities of cross ratios reflects the computation in the proof of \autoref{prop: foundation of the non-Fano matroid}, and the second and third row are deduced from this by \autoref{prop: invariance of cross ratio equalities under permutations}.
\end{proof}


\subsection{The foundation of \texorpdfstring{$P_7$}{P7}} 
\label{subsection: foundation of P7}

The matroid $P_7$, following Oxley's notation in \cite[p.\ 644]{Oxley92}, is the rank $3$ matroid on $7$ elements whose $3$-circuits and lattice of flats are as illustrated in \autoref{fig: circuits and lattice of flats of P7}.

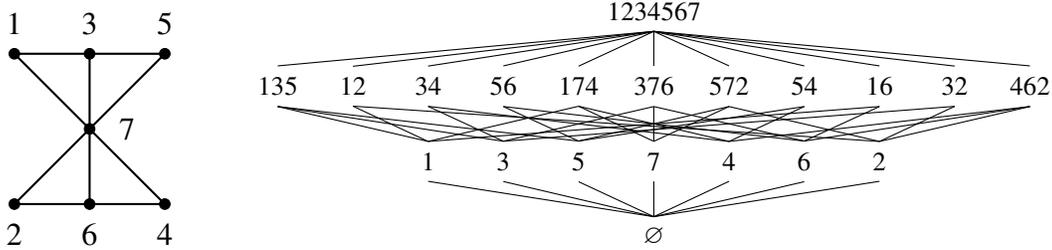
\begin{figure}[tb]
\[
  \beginpgfgraphicnamed{tikz/fig33}
   \begin{tikzpicture}[x=1cm, y=1cm]
    \filldraw (0,0) circle (2pt);
    \filldraw (1,0) circle (2pt);
    \filldraw (2,0) circle (2pt);
    \filldraw (1,1) circle (2pt);
    \filldraw (0,2) circle (2pt);
    \filldraw (1,2) circle (2pt);
    \filldraw (2,2) circle (2pt);
    \draw[thick] (0,0) -- (2,0) -- (0,2) -- (2,2) -- (0,0);
    \draw[thick] (1,0) -- (1,2);
    \node at (0,2.4) {$1$};
    \node at (1,2.4) {$3$};
    \node at (2,2.4) {$5$};
    \node at (1.5,1) {$7$};
    \node at (0,-0.4) {$2$};
    \node at (1,-0.4) {$6$};
    \node at (2,-0.4) {$4$};
   \end{tikzpicture}
  \endpgfgraphicnamed
  \hspace{0.85cm}
  \beginpgfgraphicnamed{tikz/fig39}
   \begin{tikzpicture}[x=1cm, y=1cm, font=\footnotesize]
              \node  (0) at (5,0){$\emptyset$};
              \node  (1) at (2,1) {$1$};
              \node  (3) at (3,1) {$3$};
              \node  (5) at (4,1) {$5$};
              \node  (7) at (5,1) {$7$};
              \node  (4) at (6,1) {$4$};
              \node  (6) at (7,1) {$6$};
              \node  (2) at (8,1) {$2$};
              \node  (135) at (0,2) {$135$};
              \node  (174) at (4,2) {$174$};
              \node  (376) at (5,2) {$376$};
              \node  (572) at (6,2) {$572$};
              \node  (462) at (10,2) {$462$};
              \node  (12) at (1,2) {$12$};
              \node  (34) at (2,2) {$34$};
              \node  (56) at (3,2) {$56$};
              \node  (54) at (7,2) {$54$};
              \node  (16) at (8,2) {$16$};
              \node  (32) at (9,2) {$32$};
              \node  (E) at (5,3) {$1234567$};
      \foreach \to/\from in {0/1, 0/2, 0/3, 0/4, 0/5, 0/6, 0/7, 1/135, 3/135, 5/135, 1/174, 7/174, 4/174, 3/376, 7/376, 6/376, 5/572, 7/572, 2/572, 2/462, 6/462, 4/462, 1/12, 2/12, 3/34, 4/34, 5/56, 6/56, 5/54, 4/54, 1/16, 6/16, 3/32, 2/32, 135/E, 174/E, 376/E, 572/E, 462/E, 12/E, 34/E, 56/E, 54/E, 16/E, 32/E} \draw [-] (\to.north)--(\from.south);
  \end{tikzpicture}
 \endpgfgraphicnamed
\]
 \caption{The $3$-circuits and the lattice of flats of $P_7$}
 \label{fig: circuits and lattice of flats of P7} 
\end{figure}

\begin{prop}\label{prop: foundation of P7}
 The foundation of $P_7$ is isomorphic to $\U$. 
\end{prop}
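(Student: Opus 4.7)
The plan mirrors the proof of \autoref{prop: foundations of whirls} for whirls: show that $P_7$ has no ``large uniform'' minors, apply the structure theorem, invoke near-regularity to force every tensor factor to be $\U$, and verify the fundamental diagram is connected so that there is only one factor. First, $P_7$ is $3$-connected (\cite[p.~644]{Oxley92}), and we claim it has no minor of type $U^2_5$, $U^3_5$, $F_7$, or $F_7^\ast$. Inspecting \autoref{fig: circuits and lattice of flats of P7}: every contraction $P_7/e$ has at most four parallel classes (four for $e\in\{1,\dots,6\}$, three for $e=7$), so no simplification of a single-element contraction is $U^2_5$, and further contractions reduce the rank below $2$. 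Dually, the five $3$-circuits of $P_7$ cannot all be destroyed by deleting only two elements, since $7$ lies on three of them while each of the other six points lies on exactly two; hence $P_7\setminus J$ with $|J|\leq 2$ always contains a $3$-circuit, precluding $U^3_5$. Finally $P_7\neq F_7$ (it has five rather than seven $3$-circuits), and $F_7^\ast$ has rank $4>\rk(P_7)$.

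By \autoref{thm: structure theorem for foundations of matroids without large uniform minors}, the foundation decomposes as $F_{P_7}\simeq F_1\otimes\dotsb\otimes F_r$ with each $F_i\in\{\U,\D,\H,\F_3,\F_2\}$. Since $P_7$ is near-regular (it is representable over every field of size at least $3$, cf.\ \cite[Cor.~11.2.7]{Oxley92}), there is a morphism $F_{P_7}\to\U$, and hence a morphism $F_i\to\U$ for each $i$. Among the five candidate pastures, only $\U$ admits such a morphism: any morphism out of $\D$, $\H$, $\F_3$, or $\F_2$ would require a fundamental element of $\U$ satisfying the corresponding defining relation (namely $a=1+1$, $a^3=-1$ with $a+a^{-1}=1$, $1+1+1=0$, or $1+1=0$), and each of these fails in the torsion-free characteristic-zero partial field $\U$. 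Hence every $F_i$ is $\U$.

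It remains to establish connectedness of the fundamental diagram, so that $r=1$. Using \autoref{thm: fundamental lattice presentation for 3-connected matroids}, we enumerate $\cL^{(3)}_{P_7}$. Each point $e\in\{1,\dots,6\}$ contributes a $U^2_4$-sublattice $\Lambda/e$; the point $7$ contributes none, since $P_7/7$ has only three parallel classes. Each $f\in\{1,\dots,6\}$ contributes a $W^3$-sublattice $\Lambda\setminus f$, because $P_7\setminus f$ has exactly three $3$-circuits that pairwise intersect in distinct single elements (e.g.\ for $f=2$ the triangles $\{1,3,5\},\{1,4,7\},\{3,6,7\}$ meet pairwise in $1,3,7$), matching the cyclic triangle structure of $W^3$. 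The remaining deletion $P_7\setminus 7$ consists of the two disjoint triangles $\{1,3,5\}$ and $\{2,4,6\}$, which is not $3$-connected, so contributes no sublattice of any type in $\cC^{(3)}_\fundtype$. Each $W^3$-sublattice $\Lambda\setminus f$ contains the three $U^2_4$-sublattices $\Lambda/e$ whose bottom $\{e\}$ lies in exactly one $3$-circuit of $P_7\setminus f$, and a direct check shows these inclusions link all six $U^2_4$-sublattices into a single connected component.

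The main obstacle is the near-regularity input, which is most cleanly supplied by a literature citation; alternatively, one can exhibit an explicit $\U$-representation of $P_7$ by choosing coordinates in $\P^2$ over the coordinate ring $\Z[x,y,x^{-1},y^{-1},(x-1)^{-1},(y-1)^{-1}]$ so that the five prescribed $3$-circuits become collinear. With near-regularity in hand, the tensor decomposition collapses to a single copy of $\U$ and $F_{P_7}\simeq\U$ follows.
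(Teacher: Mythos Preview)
Your proof is correct and follows essentially the same route as the paper's: invoke near-regularity to force the foundation to be a tensor power of $\U$, then verify that the $3$-connected fundamental lattice diagram $\cL^{(3)}_{P_7}$ is connected to reduce to a single factor. The paper's version is terser, citing \cite[Thm.~5.9]{Baker-Lorscheid20} directly for ``near-regular $\Rightarrow$ tensor power of $\U$'' and pointing to the figure for connectedness, whereas you unpack both steps (structure theorem plus elimination of $\D,\H,\F_3,\F_2$; explicit enumeration of the six $U^2_4$- and six $W^3$-sublattices and their incidences). Your description of $\cL^{(3)}_{P_7}$ matches the paper's \autoref{fig: 3-connected fundamental lattice diagram of P7} exactly.
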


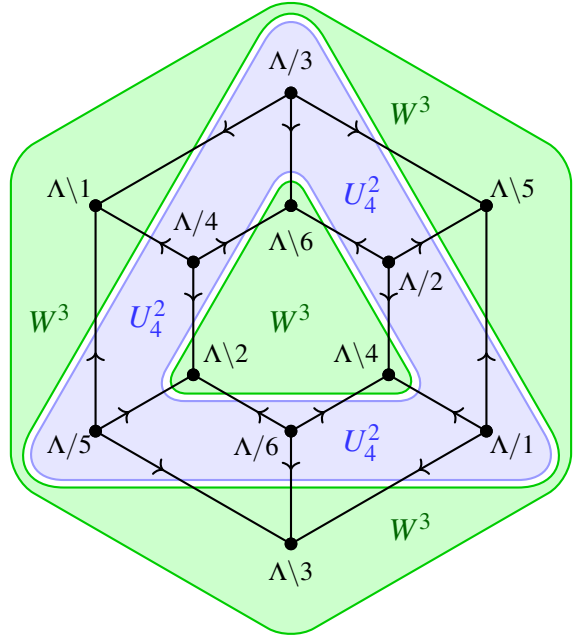
\begin{figure}[tb] 
 \[
  \beginpgfgraphicnamed{tikz/fig34}
   \begin{tikzpicture}[x=1cm,y=1cm, font=\footnotesize, vertices/.style={draw, fill=black, circle, inner sep=0pt},thick,decoration={markings,mark=at position 0.35 with {\arrow{>}}}]
    \draw[fill=green!20!white,draw=green!80!black,rounded corners=10pt] (270:4.3) -- (330:4.3) -- ( 30:4.3) -- ( 90:4.3) -- (150:4.3) -- (210:4.3) -- cycle;
    \draw[fill=white,draw=green!80!black,rounded corners=25pt] (330:4.5) -- ( 90:4.5) -- (210:4.5) -- cycle; 
    \draw[fill=blue!10!white,draw=blue!40!white,rounded corners=20pt] (330:4.3) -- ( 90:4.3) -- (210:4.3) -- cycle; 
    \draw[fill=white,draw=blue!40!white,rounded corners=14pt] (330:2.2) -- ( 90:2.2) -- (210:2.2) -- cycle;
    \draw[fill=green!20!white,draw=green!80!black,rounded corners=10pt] (330:2.0) -- ( 90:2.0) -- (210:2.0) -- cycle;
   \node[color=blue!80!white] at ( 60:1.9) {\normalsize $U^2_4$};
   \node[color=blue!80!white] at (180:1.9) {\normalsize $U^2_4$};
   \node[color=blue!80!white] at (300:1.9) {\normalsize $U^2_4$};
   \node[color=green!40!black] at (0:0) {\normalsize $W^3$};
   \node[color=green!40!black] at ( 60:3.2) {\normalsize $W^3$};
   \node[color=green!40!black] at (180:3.2) {\normalsize $W^3$};
   \node[color=green!40!black] at (300:3.2) {\normalsize $W^3$};
    \foreach \a in {1,3,...,11} {\filldraw ( 90+\a*60:3.0) circle (2pt) coordinate (L-\a); 
                                 \filldraw (270+\a*60:3.0) circle (2pt) coordinate (L/\a);}
    \foreach \a in {2,4,...,12} {\filldraw ( 90+\a*60:1.5) circle (2pt) coordinate (L-\a); 
                                 \filldraw (270+\a*60:1.5) circle (2pt) coordinate (L/\a);}
    \foreach \a in {1,3,5} {\node at ( 90+\a*60:3.4) {$\Lambda\setminus\a$}; 
                            \node at (270+\a*60:3.4) {$\Lambda/\a$};}
    \foreach \a in {2,4,6} {\node at ( 90+\a*60:1.0) {$\Lambda\setminus\a$}; 
                            \node at (255+\a*60:1.79) {$\Lambda/\a$};}
    \foreach \a in {1,...,6} {\setcounter{tikz-counter}{\a};
                              \addtocounter{tikz-counter}{2};
                              \draw [-,postaction={decorate}] (L/\a) -- (L-\arabic{tikz-counter});
                              \addtocounter{tikz-counter}{1};
                              \draw [-,postaction={decorate}] (L/\a) -- (L-\arabic{tikz-counter});
                              \addtocounter{tikz-counter}{1};
                              \draw [-,postaction={decorate}] (L/\a) -- (L-\arabic{tikz-counter});
                              }
   \end{tikzpicture}
  \endpgfgraphicnamed
 \] 
  \caption{The $3$-connected fundamental lattice diagram $\cL^{(3)}_{P_7}$ of $P_7$} 
 \label{fig: 3-connected fundamental lattice diagram of P7} 
\end{figure}

\begin{proof}
 Since $M=P_7$ is near-regular (cf.\ \cite[p.\ 644]{Oxley92}), its foundation $F_M$ is isomorphic to $\U\otimes\dotsc\otimes\U$ by \cite[Thm.\ 5.9]{Baker-Lorscheid20}. Our result $F_M\simeq\U$ follows from the fact that $\cL^{(3)}_M$ is connected, as visible in \autoref{fig: 3-connected fundamental lattice diagram of P7}.
\end{proof}

\begin{lemma}\label{lemma: cross ratios in P7}
 The following equalities between cross ratios hold in the foundation of $P_7$ (assuming the enumeration of elements as in \autoref{fig: circuits and lattice of flats of P7}):
 \[
  \cross{135}{147}{12}{16}{} \ = \ \cross{135}{45}{257}{56}{} \ = \ \cross{135}{24}{23}{367}{} \ = \ \cross{246}{147}{45}{34}{} \ = \ \cross{246}{12}{257}{23}{} \ = \ \cross{246}{16}{56}{367}{}.
 \]
\end{lemma}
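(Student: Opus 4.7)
The plan is to prove the lemma by applying \autoref{lemma: cross ratios in W3} to three of the six $W^3$-sublattices of $\Lambda_{P_7}$, namely those coming from the embedded minors $P_7 \setminus 2$, $P_7 \setminus 3$, and $P_7 \setminus 5$, and then concatenating the three resulting chains of equalities. This relies on \autoref{prop: foundation of P7}, which guarantees that $F_{P_7} \simeq \U$ and therefore that the canonical morphism $F_{\Lambda/i} \to F_{P_7}$ is an isomorphism for each $i \in \{1, \dots, 6\}$.

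The first step is to verify that each single-element deletion $P_7 \setminus i$ is a $W^3$-minor and to identify its spoke elements (those on exactly one of its three $3$-circuits). A direct inspection of the $3$-circuits of $P_7$ shows that the spokes of $P_7 \setminus 3$ are $\{1, 5, 6\}$, the spokes of $P_7 \setminus 5$ are $\{1, 2, 3\}$, and the spokes of $P_7 \setminus 2$ are $\{4, 5, 6\}$. Since the $U^2_4$-subsublattices of $\Lambda_{P_7 \setminus i}$ are precisely the sublattices $\Lambda/j$ for $j$ a spoke of $P_7 \setminus i$, these three $W^3$-sublattices jointly cover all six sublattices $\Lambda/1, \dots, \Lambda/6$ and do so in a connected fashion.

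Next I will apply \autoref{lemma: cross ratios in W3} to each of the three $W^3$-sublattices and reorder the hyperplanes in the resulting cross ratios via the $S_4$-symmetries \eqref{Hs} and \eqref{H1} to match the orderings listed in the present lemma. The outputs are three triples of equalities: from $\Lambda_{P_7 \setminus 3}$ one extracts the equality of the 1st, 2nd, and 6th cross ratios of the lemma; from $\Lambda_{P_7 \setminus 5}$ the equality of the 1st, 3rd, and 5th; and from $\Lambda_{P_7 \setminus 2}$ the equality of the 2nd, 4th, and 6th. Concatenating these three chains establishes all six equalities at once.

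The principal obstacle is purely bookkeeping: \autoref{lemma: cross ratios in W3} is stated with respect to a particular cyclic orientation of the three $3$-circuits of $W^3$, and using the opposite orientation produces the inverse of the desired cross ratio. One must therefore systematically normalize each cross ratio, making essential use of the identity $\cross{H_1}{H_2}{H_3}{H_4}{} = \cross{H_2}{H_1}{H_3}{H_4}{}^{-1}$, which follows from combining \eqref{Hs} and \eqref{H1}, so that an equality of inverted cross ratios can be immediately converted into the desired equality of the listed cross ratios.
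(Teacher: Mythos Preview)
Your proposal is correct and follows essentially the same approach as the paper's proof, which simply notes that every $U^2_4$-sublattice of $\Lambda_{P_7}$ is contained in a $W^3$-sublattice and appeals to repeated applications of \autoref{lemma: cross ratios in W3}. You are more explicit than the paper in singling out the three deletions $P_7\setminus 2$, $P_7\setminus 3$, $P_7\setminus 5$ and in flagging the orientation/normalization bookkeeping, but the underlying argument is the same. One minor remark: your invocation of \autoref{prop: foundation of P7} is not actually needed---the equalities from \autoref{lemma: cross ratios in W3} transfer to $F_{P_7}$ simply because the maps $F_{\Lambda\setminus i}\to F_{P_7}$ are pasture morphisms, regardless of whether $F_{P_7}\simeq\U$.
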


\begin{proof}
 As visible in \autoref{fig: 3-connected fundamental lattice diagram of P7}, every $U^2_4$-minor of $P_7$ is contained in a $W^3$-minor. This allows us to deduct the lemma by a repeated application of \autoref{lemma: cross ratios in W3}.
\end{proof}

\subsection{The foundation of \texorpdfstring{$T_8$}{T8}} 
\label{subsection: foundation of T8}

In this section, we determine the foundation of the ternary spike $T_8$ (using Oxley's notation in \cite[p.\ 649]{Oxley92}) as $\F_3$ using the fundamental presentation by upper sublattices of rank $\leq3$ as in \autoref{thm: fundamental lattice presentation by upper sublattices of small rank}.

We realize $T_8$ as the matroid on $E=\{1,\dotsc,8\}$ whose $4$-circuits are
\[
 1238,\ 1247,\ 1346,\ 2345,\ 1256,\ 1357,\ 1458,\ 2367,\ 2468,\ 3478,\ 5678
\]
and whose other circuits have all $5$ elements.

\begin{prop}\label{prop: foundation of T8}
 The foundation of $T_8$ is isomorphic to $\F_3$.
\end{prop}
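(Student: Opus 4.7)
The plan is to apply \autoref{thm: fundamental lattice presentation by upper sublattices of small rank}. Since $T_8$ is ternary (standard, cf.\ Oxley), it has no $F_7^\ast$-minor, so the second clause of that theorem yields $F_{T_8}\simeq\colim F(\cL^{\leq3}_{T_8})$. As $\rk T_8=4$, the rank-$3$ entries of this diagram are precisely the full upper sublattices $\Lambda_{T_8/e}$ for $e\in E$, joined by the lower-rank full upper sublattices coming from contractions by flats of rank $2$ and $3$.

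First I would identify the rank-$3$ contractions $T_8/e$. Counting $4$-circuits through each element partitions $E$ into two orbits under $\Aut(T_8)$: elements of $\{1,2,3,4\}$ each lie in six $4$-circuits, while elements of $\{5,6,7,8\}$ each lie in five. Comparing the $3$-circuits of $T_8/e$ with the lists displayed in \autoref{fig: circuits of nonfano} and \autoref{fig: circuits and lattice of flats of P7} yields $T_8/e\cong F_7^-$ for $e\in\{1,2,3,4\}$ and $T_8/e\cong P_7$ for $e\in\{5,6,7,8\}$. By \autoref{prop: foundation of the non-Fano matroid} and \autoref{prop: foundation of P7}, the corresponding foundations are $\D$ and $\U$, respectively, and each appears in the colimit diagram together with natural maps to the surviving lower-rank contractions.

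Two general structural inputs then locate $F_{T_8}$ up to a small ambiguity. Since $T_8$ has no $U^2_5$ or $U^3_5$ minor, \autoref{thm: structure theorem for foundations of matroids without large uniform minors} decomposes $F_{T_8}$ as a tensor product of factors from $\{\U,\D,\H,\F_3,\F_2\}$; the absence of $F_7$ and $F_7^\ast$ minors, together with $T_8$ not being binary, rules out any $\F_2$ factor (see \autoref{subsection: foundations of binary and regular matroids}). Since $T_8$ is representable only in characteristic $3$, $F_{T_8}$ admits no morphism to any characteristic-$0$ field, which forces at least one tensor factor to be $\F_3$ (as each of $\U$, $\D$, $\H$ does admit characteristic-$0$ morphisms). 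Together with the existing morphism $F_{T_8}\to\F_3$ coming from ternarity, the task is reduced to proving that no extra $\U$, $\D$, or $\H$ factors appear.

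The main obstacle, and the step where the lattice presentation really pays off, is carrying out this last collapse. I would select a $U^2_4$-sublattice $\Lambda'$ of $\Lambda_{T_8}$ sitting simultaneously inside an $F_7^-$-minor (say $\Lambda_{T_8/1}$) and a $P_7$-minor (say $\Lambda_{T_8/5}$). The inclusion $\Lambda'\hookrightarrow\Lambda_{T_8/1}$ combined with \autoref{lemma: cross ratios in F7-} pins the cross ratio of $\Lambda'$ to the dyadic generator $x\in\D$, while the inclusion $\Lambda'\hookrightarrow\Lambda_{T_8/5}$ combined with \autoref{lemma: cross ratios in P7} equates it with a cross ratio of another $U^2_4$-sublattice $\Lambda''$ inside $\Lambda_{T_8/5}$. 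Tracking $\Lambda''$ into a second $F_7^-$-minor (for instance $\Lambda_{T_8/2}$) and invoking \autoref{prop: invariance of cross ratio equalities under permutations} produces an extra relation of the form $x=x^{-1}$ between the two dyadic generators. Combined with $x+x-1\in N_\D$ this forces $x=-1$ and hence $1+1+1\in N_{F_{T_8}}$; in the setting of \autoref{thm: structure theorem for foundations of matroids without large uniform minors} every ambient $\U$, $\D$, and $\H$ factor thereby collapses onto $\F_3$, giving $F_{T_8}\simeq\F_3$.
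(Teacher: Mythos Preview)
Your overall strategy matches the paper's: invoke \autoref{thm: fundamental lattice presentation by upper sublattices of small rank}, identify $T_8/e\cong F_7^-$ for $e\le 4$ and $T_8/e\cong P_7$ for $e\ge 5$, and then chase a cross ratio through a chain $F_7^-\leadsto P_7\leadsto F_7^-$ using \autoref{lemma: cross ratios in F7-} and \autoref{lemma: cross ratios in P7}. That is exactly the mechanism the paper uses.

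There is, however, a genuine gap in your final step. The claim that ``$1+1+1\in N_{F_{T_8}}$ makes every ambient $\U$, $\D$, $\H$ factor collapse onto $\F_3$'' is false: for example $\F_3\otimes\U$ has $1+1+1$ in its null set but is not $\F_3$. Your cross-ratio chase only shows that the image of the dyadic generator coming from \emph{one} $F_7^-$-minor equals $-1$ in $F_{T_8}$; this collapses the tensor factor containing that particular $U^2_4$-sublattice, but says nothing about other connected components of the fundamental diagram. The paper closes this gap differently: it first observes that $\cL^{\le 3}_{T_8}$ is \emph{connected} (this is the content of \autoref{fig: fundamental diagram of T8}), so \autoref{cor: foundation of a matroid wlum with one component} already forces $F_{T_8}$ to be a \emph{single} symmetry quotient of $\U$. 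The $F_7^-$-minors then narrow it to $\D$ or $\F_3$, and one relation $\alpha_1(x)=-1$ decides it. Your structure-theorem-plus-``at least one $\F_3$ factor'' route does not supply connectedness, and you need it.

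A smaller issue: your proposed bridge via $\Lambda_{T_8/5}$ does not work as stated. The only rank-$2$ full upper sublattice inside both $\Lambda_{T_8/1}$ and $\Lambda_{T_8/5}$ is $\Lambda_{T_8}/15$, and that one is regular (the paper records that $\Lambda/ij$ is of type $U^2_4$ only when $j\ge 5$ and $j-i\ne 4$; here $5-1=4$). The paper instead bridges $\Lambda/1$ and $\Lambda/2$ through $\Lambda/8$, where both $\Lambda/18$ and $\Lambda/28$ are of type $U^2_4$, and the relation obtained is $\alpha_1(x)=\alpha_2(-1)$ directly rather than $x=x^{-1}$.
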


 
 

\begin{proof}
 As a ternary matroid, $M=T_8$ does not have minors of type $F_7^\ast$, and thus \autoref{thm: fundamental lattice presentation by upper sublattices of small rank} implies that $F_M=\colim F(\cL^{\leq3}_M)$ where $\cL_M^{\leq 3}$ consists of all full upper sublattices of $\Lambda=\Lambda_M$ of rank less or equal to $3$ that contain an upper sublattice of type $U^2_4$.
 
 The lattice $\Lambda$ of flats of $T_8$ contains the following elements: its atoms are $1,\dotsc,8$, its $2$-flats are all $2$-subsets, and its hyperplanes are all $4$-circuits and all $3$-subsets $ijk$ with $1\leq i\leq 4<j<k\leq 8$ with $j-i\neq4\neq k-i$. The full upper sublattice $\Lambda/ij$ is of type $U^2_4$ if $j\geq5$ and $j-i\neq 4$; otherwise it is regular. For $i\leq 4$, the lattice $\Lambda/i$ is of type $F_7^-$, while for $i\geq5$, it is of type $P_7$. The fundamental diagram $\cL^{(\leq3)}$ is illustrated in \autoref{fig: fundamental diagram of T8}. Because of space limitations, we label the upper sublattices $\Lambda/i$ and $\Lambda/ij$ by their respective bottom elements $i$ and $ij$.

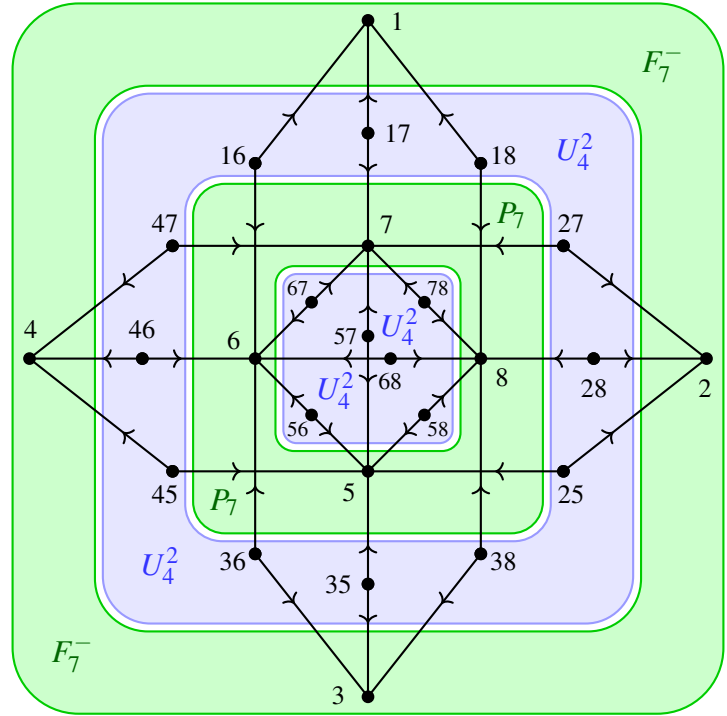
\begin{figure}[tb]
 \[
  \beginpgfgraphicnamed{tikz/fig42}
   \begin{tikzpicture}[x=1.5cm,y=1.5cm, font=\footnotesize, vertices/.style={draw, fill=black, circle, inner sep=0pt},thick,decoration={markings,mark=at position 0.35 with {\arrow{>}}}]
    \draw[fill=green!20!white,draw=green!80!black,rounded corners=25pt] (-3.15,-3.15) rectangle (3.15,3.15);
    \draw[fill=white,draw=green!80!black,rounded corners=20pt] (-2.42,-2.42) rectangle (2.42,2.42);
    \draw[fill=blue!10!white,draw=blue!40!white,rounded corners=18pt] (-2.35,-2.35) rectangle (2.35,2.35);
    \draw[fill=white,draw=blue!40!white,rounded corners=14pt] (-1.62,-1.62) rectangle (1.62,1.62);
    \draw[fill=green!20!white,draw=green!80!black,rounded corners=12pt] (-1.55,-1.55) rectangle (1.55,1.55);
    \draw[fill=white,draw=green!80!black,rounded corners=7pt] (-0.82,-0.82) rectangle (0.82,0.82);
    \draw[fill=blue!10!white,draw=blue!40!white,rounded corners=5pt] (-0.75,-0.75) rectangle (0.75,0.75);
    \node[color=green!40!black] at (45:3.7) {\normalsize $F_7^-$};
    \node[color=blue!80!white] at (45:2.6) {\normalsize $U^2_4$};
    \node[color=green!40!black] at (45:1.8) {\normalsize $P_7$};
    \node[color=blue!80!white] at (45:0.4) {\normalsize $U^2_4$};
    \node[color=green!40!black] at (225:3.7) {\normalsize $F_7^-$};
    \node[color=blue!80!white] at (225:2.6) {\normalsize $U^2_4$};
    \node[color=green!40!black] at (225:1.8) {\normalsize $P_7$};
    \node[color=blue!80!white] at (225:0.4) {\normalsize $U^2_4$};
    \foreach \a in {1,...,4} {
                              \filldraw (180-\a*90:3.0) circle (2pt) coordinate (\a);
                              \node at (175-\a*90:3.0) {$\a$};
                             } 
    \foreach \a in {5,...,8} {
                              \filldraw (-\a*90:1.0) circle (2pt) coordinate (\a);
                              \node at (-8-\a*90:1.2) {$\a$};
                             } 
    \foreach \a in {0,...,23} {\filldraw (120-\a*30:2.0) circle (2pt) coordinate (U\a);} 
    \filldraw (-0.5,-0.5) circle (2pt) coordinate (U56);
    \filldraw ( 0.5,-0.5) circle (2pt) coordinate (U58);
    \filldraw (-0.5, 0.5) circle (2pt) coordinate (U67);
    \filldraw ( 0.5, 0.5) circle (2pt) coordinate (U78);
    \filldraw ( 0  , 0.2) circle (2pt) coordinate (U57);
    \filldraw ( 0.2, 0  ) circle (2pt) coordinate (U68);
    \foreach \a in {1,...,4} {\setcounter{tikz-counter}{3*\a-3};
                              \draw [-,postaction={decorate}] (U\arabic{tikz-counter}) -- (\a);
                              \addtocounter{tikz-counter}{1};
                              \draw [-,postaction={decorate}] (U\arabic{tikz-counter}) -- (\a);
                              \addtocounter{tikz-counter}{1};
                              \draw [-,postaction={decorate}] (U\arabic{tikz-counter}) -- (\a);
                              }
    \foreach \a in {5,...,8} {\setcounter{tikz-counter}{3*\a-10};
                              \draw [-,postaction={decorate}] (U\arabic{tikz-counter}) -- (\a);
                              \addtocounter{tikz-counter}{2};
                              \draw [-,postaction={decorate}] (U\arabic{tikz-counter}) -- (\a);
                              \addtocounter{tikz-counter}{2};
                              \draw [-,postaction={decorate}] (U\arabic{tikz-counter}) -- (\a);
                              }
    \draw [-,postaction={decorate}] (U56) -- (5);
    \draw [-,postaction={decorate}] (U56) -- (6);
    \draw [-,postaction={decorate}] (U57) -- (5);
    \draw [-,postaction={decorate}] (U57) -- (7);
    \draw [-,postaction={decorate}] (U58) -- (5);
    \draw [-,postaction={decorate}] (U58) -- (8);
    \draw [-,postaction={decorate}] (U67) -- (6);
    \draw [-,postaction={decorate}] (U67) -- (7);
    \draw [-,postaction={decorate}] (U68) -- (6);
    \draw [-,postaction={decorate}] (U68) -- (8);
    \draw [-,postaction={decorate}] (U78) -- (7);
    \draw [-,postaction={decorate}] (U78) -- (8);
    \node at ( 45:0.88) {\tiny $78$};
    \node at (135:0.88) {\tiny $67$};
    \node at (225:0.88) {\tiny $56$};
    \node at (315:0.88) {\tiny $58$};
    \node at (135:0.28) {\scriptsize $57$};
    \node at (315:0.28) {\scriptsize $68$};
    \node at ( 1.8, 1.2) {$27$};
    \node at ( 1.8,-1.2) {$25$};
    \node at (-1.8, 1.2) {$47$};
    \node at (-1.8,-1.2) {$45$};
    \node at ( 1.2, 1.8) {$18$};
    \node at ( 1.2,-1.8) {$38$};
    \node at (-1.2, 1.8) {$16$};
    \node at (-1.2,-1.8) {$36$};
    \node at ( 82.5:2.02) {$17$};
    \node at (172.5:2.02) {$46$};
    \node at (262.5:2.02) {$35$};
    \node at (352.5:2.02) {$28$};
   \end{tikzpicture}
  \endpgfgraphicnamed
 \] 
 \caption{The fundamental lattice diagram $\cL^{(\leq3)}$ of $T_8$}
 \label{fig: fundamental diagram of T8} 
\end{figure}

 Since $\cL^{\leq3}_M$ is connected and since, as a ternary matroid, $M$ is without minors of types $U^2_5$,\ $U^3_5$,\ $F_7$ and $F_7^\ast$, \autoref{cor: foundation of a matroid wlum with one component} implies that $F_{M}$ is a symmetry quotient of $\U$.
 
 By \autoref{prop: foundation of the non-Fano matroid}, the foundation of the upper sublattices $\Lambda/i$ of type $F_7^-$ (for $i=1,\dotsc,4$) have foundation $\D$. In conclusion, $F_M$ is isomorphic to a symmetry quotient of $\D$, which is either $\D$ or $\F_3$.
 
 In order to exhibit the defining relation of $\F_3\simeq\past\D{\{x+1\}}$, we consider the chain of lattice inclusions
 \[
  \begin{tikzcd}[column sep=40, row sep=0]
   \Lambda/1 & \Lambda/18 \ar[r] \ar[l] & \Lambda/8 & \Lambda/28 \ar[r] \ar[l] & \Lambda/2,
  \end{tikzcd}
 \]
 which induces a chain of morphisms
 \[
  \begin{tikzcd}[column sep=40, row sep=0]
   \D & \U \ar[r,"\sim"] \ar[l] & \U & \U \ar[l,"\sim"'] \ar[r] & \D
  \end{tikzcd}
 \]
 between the respective foundations. The $3$-circuits of $\Lambda/1$, $\Lambda/8$ and $\Lambda/2$ are depicted in \autoref{fig: circuits of 3 minors of T8}.
 
 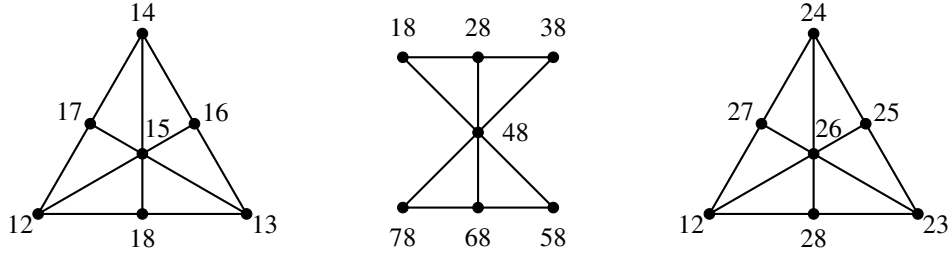
\begin{figure}[htb]
 \[
  \beginpgfgraphicnamed{tikz/fig10}
  \begin{tikzpicture}[x=0.4cm,y=0.4cm]
   \filldraw ( 30:2) circle (2pt);  
   \node at ( 30:2.8) {\footnotesize $16$};  
   \filldraw (150:2) circle (2pt);  
   \node at (150:2.8) {\footnotesize $17$};  
   \filldraw (270:2) circle (2pt);  
   \node at (270:2.8) {\footnotesize $18$};  
   \filldraw ( 90:4) circle (2pt);  
   \node at ( 90:4.7) {\footnotesize $14$};  
   \filldraw (210:4) circle (2pt);  
   \node at (210:4.7) {\footnotesize $12$};  
   \filldraw (330:4) circle (2pt);  
   \node at (330:4.7) {\footnotesize $13$};
   \node at (60:1.0) {\footnotesize $15$};
   \filldraw (330:0) circle (2pt);  
   \draw [thick] ( 90:4) -- (210:4);
   \draw [thick] (210:4) -- (330:4);
   \draw [thick] (90:4) -- (330:4);
   \draw [thick] (150:2) -- (330:4);
   \draw [thick] (270:2) -- (90:4);
   \draw [thick] (210:4) -- (30:2);
  \end{tikzpicture}
  \endpgfgraphicnamed
  \hspace{1.2cm}
  \beginpgfgraphicnamed{tikz/fig14}
   \begin{tikzpicture}[x=1cm, y=1cm]
    \filldraw (0,0) circle (2pt);
    \filldraw (1,0) circle (2pt);
    \filldraw (2,0) circle (2pt);
    \filldraw (1,1) circle (2pt);
    \filldraw (0,2) circle (2pt);
    \filldraw (1,2) circle (2pt);
    \filldraw (2,2) circle (2pt);
    \draw[thick] (0,0) -- (2,0) -- (0,2) -- (2,2) -- (0,0);
    \draw[thick] (1,0) -- (1,2);
    \node at (0,2.4) {\footnotesize $18$};
    \node at (1,2.4) {\footnotesize $28$};
    \node at (2,2.4) {\footnotesize $38$};
    \node at (1.5,1) {\footnotesize $48$};
    \node at (0,-0.4) {\footnotesize $78$};
    \node at (1,-0.4) {\footnotesize $68$};
    \node at (2,-0.4) {\footnotesize $58$};
   \end{tikzpicture}
  \endpgfgraphicnamed
  \hspace{1.2cm}
  \beginpgfgraphicnamed{tikz/fig41}
  \begin{tikzpicture}[x=0.4cm,y=0.4cm]
   \filldraw ( 30:2) circle (2pt);  
   \node at ( 30:2.8) {\footnotesize $25$};  
   \filldraw (150:2) circle (2pt);  
   \node at (150:2.8) {\footnotesize $27$};  
   \filldraw (270:2) circle (2pt);  
   \node at (270:2.8) {\footnotesize $28$};  
   \filldraw ( 90:4) circle (2pt);  
   \node at ( 90:4.7) {\footnotesize $24$};  
   \filldraw (210:4) circle (2pt);  
   \node at (210:4.7) {\footnotesize $12$};  
   \filldraw (330:4) circle (2pt);  
   \node at (330:4.7) {\footnotesize $23$};
   \node at (60:1.0) {\footnotesize $26$};
   \filldraw (330:0) circle (2pt);  
   \draw [thick] ( 90:4) -- (210:4);
   \draw [thick] (210:4) -- (330:4);
   \draw [thick] (90:4) -- (330:4);
   \draw [thick] (150:2) -- (330:4);
   \draw [thick] (270:2) -- (90:4);
   \draw [thick] (210:4) -- (30:2);
  \end{tikzpicture}
  \endpgfgraphicnamed
 \]
 \caption{The $3$-circuits of $T_8/1$, \ \ $T_8/8$ and $T_8/2$}
 \label{fig: circuits of 3 minors of T8} 
 \end{figure}

 Let $\alpha_i:\D\to F_{\Lambda/i}\to F_M$ (for $i=1,2$) be the composition of the unique isomorphisms $\D\to F_{\Lambda/i}$ (cf.\ \autoref{lemma: cross ratios in F7-}) with the morphism induced by the lattice inclusion $\Lambda/i\hookrightarrow\Lambda$. Let $x\in\D$ be the unique element with $x+x-1\in N_\D$. Then 
 \[
  \alpha_1(x) \ \ \underset{\Lambda/1}= \ \ \cross{1458}{178}{168}{1238}{} \ \ \underset{\Lambda/8}= \ \ \cross{258}{278}{2468}{1238}{} \ \ \underset{\Lambda/2}= \ \ \alpha_2(-1),
 \]
 where we label the equalities with the upper sublattice $\Lambda/i$ to which we apply one of \autoref{lemma: cross ratios in F7-} and \autoref{lemma: cross ratios in P7}, depending on the type of $\Lambda/i$. This equation shows that $x$ and $-1$ are identified in $F_M$, and thus $F_M\simeq\past\D{\genn{x+1}}\simeq\F_3$, as claimed.
\end{proof}
 
\begin{rem}
 Note that we can replace the last step in the proof of \autoref{prop: foundation of T8}, which exhibits the relation $\alpha_1(x)=\alpha_2(-1)$, by the fact that $T_8$ is not representable over any field of characteristic different from $3$ by \cite[p.\ 649]{Oxley92}, which rules out the possibility that its foundation is $\D$.
\end{rem}


\section{The structure theorem for matroids without large uniform minors revisited}
\label{subsection: the structure theorem for matroids without large uniform minors revisited}

A matroid is \emph{without large uniform minors} if it does not have any minors of type $U^2_5$ or $U^3_5$. A central result of the first two author's paper \cite{Baker-Lorscheid20} is that the foundation of a matroid without large uniform minors decomposes into a tensor product of pastures that are isomorphic to $\F_2$, $\F_3$, $\U$, $\D$ and $\H$. The previous computations show that every such tensor product occurs as the foundation of a matroid without large uniform minors.


\begin{thm}\label{thm: structure theorem for matroids without large uniform minors - revisited}
 The isomorphism classes of the foundations of matroids without large uniform minors are represented by all pastures of the form $F_1\otimes\dotsc\otimes F_r$ for some $r\geq 0$ and $F_i\in\{\F_2,\ \F_3,\ \U,\ \D,\ \H\}$.
\end{thm}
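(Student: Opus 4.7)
The plan is to establish the two directions separately. The forward direction is immediate from the structure theorem already in hand: by \autoref{thm: structure theorem for foundations of matroids without large uniform minors}, every foundation of a matroid without large uniform minors decomposes as a tensor product $F_1 \otimes \cdots \otimes F_r$ with each $F_i \in \{\F_2, \F_3, \U, \D, \H\}$. So only the converse requires work, namely that every such tensor product is realized.

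For the converse, I would exhibit, for each of the five building block pastures, an explicit matroid without large uniform minors whose foundation is that pasture. The paper has already established all these computations:
\begin{itemize}
\item $U^2_4$ realizes $\U$ (\autoref{subsection: foundations of the uniform matroid U24});
\item $F_7$ realizes $\F_2$ (\autoref{subsection: foundations of binary and regular matroids});
\item $F_7^-$ realizes $\D$ (\autoref{prop: foundation of the non-Fano matroid});
\item $AG(2,3)\setminus e$ realizes $\H$ (\autoref{prop: foundation of AG23-e});
\item $T_8$ realizes $\F_3$ (\autoref{prop: foundation of T8}).
\end{itemize}
The empty tensor product $\Funpm$ is realized by any regular matroid. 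Each of these exemplars is without large uniform minors: $U^2_4$ has too few elements; $F_7$ is binary, hence has no $U^2_5$ or $U^3_5$ minor (neither is binary); and $F_7^-$, $AG(2,3)\setminus e$, and $T_8$ are all ternary, so again they admit no $U^2_5$ or $U^3_5$ minor since these are excluded minors for ternary representability.

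Given an arbitrary tensor product $F_1 \otimes \cdots \otimes F_r$ with $F_i \in \{\F_2, \F_3, \U, \D, \H\}$, I would take the direct sum $M := M_1 \oplus \cdots \oplus M_r$ where each $M_i$ is one of the exemplars above with $F_{M_i} \simeq F_i$. By \autoref{thm: foundations of direct sums} (applied inductively),
\[
F_M \ \simeq \ F_{M_1} \otimes \cdots \otimes F_{M_r} \ \simeq \ F_1 \otimes \cdots \otimes F_r.
\]
The only thing that remains to verify is that $M$ is itself without large uniform minors. This follows from the standard fact that every minor of a direct sum $M_1 \oplus \cdots \oplus M_r$ has the form $N_1 \oplus \cdots \oplus N_r$ with $N_i$ a minor of $M_i$; since $U^2_5$ and $U^3_5$ are connected, any such minor isomorphic to $U^2_5$ or $U^3_5$ would have to equal a single $N_i$, contradicting the fact that each $M_i$ is without large uniform minors.

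There is essentially no hard step here: both halves reduce to results already proven in the paper, and the only mild check is the minor-closure property of the class of matroids without large uniform minors under direct sum, which is routine. The substance of the theorem lies not in the proof itself but in the convergence of the many earlier computations (the five realizations) and the compatibility of the direct sum with tensor products of foundations.
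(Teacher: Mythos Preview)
Your proof is correct and follows essentially the same approach as the paper: invoke the structure theorem for one direction, and for the other realize each building block via the same five exemplars ($U^2_4$, $F_7$, $F_7^-$, $AG(2,3)\setminus e$, $T_8$) and assemble arbitrary tensor products via direct sums using \autoref{thm: foundations of direct sums}. You are slightly more explicit than the paper in checking that each exemplar and their direct sums are without large uniform minors, but this is a minor elaboration rather than a different argument.
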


\begin{proof}
 By \cite[Thm.\ 5.9]{Baker-Lorscheid20}, every foundation of a matroid without large uniform minor is of the described form. Thus we are left with showing that every pasture of the form $F_1\otimes\dotsc\otimes F_r$ with $F_i\in\{\F_2,\ \F_3,\ \U,\ \D,\ \H\}$ is isomorphic to the foundation of a matroid without large uniform minors. By \autoref{thm: foundations of direct sums}, it suffices to show that each of $F_i\in\{\F_2,\ \F_3,\ \U,\ \D,\ \H\}$ appears as such a foundation. This is indeed the case: the foundation of $F_7$ is $\F_2$ (\autoref{subsection: foundations of binary and regular matroids}), the foundation of $T_8$ is $\F_3$ (\autoref{prop: foundation of T8}), the foundation of $U^2_4$ is $\U$ (\autoref{subsection: foundations of the uniform matroid U24}), the foundation of $F_7^-$ is $\D$ (\autoref{prop: foundation of the non-Fano matroid}), and the foundation of $AG(2,3)\setminus e$ is $\H$ (\autoref{prop: foundation of AG23-e}).
\end{proof}

\appendix

\section{Some interesting foundations}
\label{appendix: some interesting foundations}

Not every pasture appears as the foundation of a matroid. A concrete example is the pasture $P=\pastgenn{\Funpm}{x}{x^4+x-1}$. On the one hand, there is no morphism from $\V$ to $P$, and hence a matroid with a large uniform minor (i.e., a minor isomorphic to $U^2_5$ or $U^3_5$) cannot have foundation $P$. But, on the other hand, it is easy to check that $P$ is not a tensor product of copies of $\U$, $\D$, $\H$, $\F_3$, and $\F_2$, so by \autoref{thm: structure theorem for foundations of matroids without large uniform minors} $P$ cannot be the foundation of a matroid without large uniform minors. In general, it is a wide open problem to characterize which pastures are foundations. 

In this appendix, we list some foundations, many of which we found with the help of the Macaulay2 package \textsc{Pastures} developed by Chen and the third author; cf.\ \cite{Chen-Zhang}. Since the foundation of a direct sum and of a $2$-sum of two matroids is the tensor product of the foundations of the summands (cf.\ \autoref{thm: foundations of direct sums} and \cite{Baker-Lorscheid-Walsh-Zhang}), we concentrate on the description of foundations of $3$-connected matroids, which can be thought of as the building blocks for all foundations.

The following list contains descriptions of all small foundations of $3$-connected matroids on up to $8$ elements, where ``small'' means that the foundation has at most $7$ hexagons, along with various other examples of interest. We discuss notable properties of these foundations, in particular whether they admit morphisms into a field, into the sign hyperfield $\S$, or into the tropical hyperfield $\T$. We call a pasture \emph{rigid} if every morphism to $\T$ factors through $\K$, i.e.\ its image is contained in $\{0,1\}$. By \cite[Prop.~B.1]{Baker-Lorscheid23}, a matroid is rigid if and only if its foundation is rigid. We gather the information about representability in \autoref{table: morphisms of foundations into other pastures}. 

We freely use Oxley's notation from \cite{Oxley92} throughout. Where we lack a better description of a matroid $M$, we list its \emph{short circuits}, which are all circuits of size less than or equal to the rank $r$ of $M$. Note that the whole circuit set $\cC_M$ of $M$ can be recovered from the subset $\cC_M^{\rank}$ of short circuits by adding all $(r+1)$-subsets that do not contain a short circuit. 


Some of the examples of foundations that we mention below are too large for a complete description to be meaningful. In these cases, we restrict ourselves to a \emph{numerical description} of the foundation $F_M$ which mentions: 
\begin{itemize}
 \item the \emph{rank of $F_M$}, which is the free rank of the unit group $F_M^\times$;
 \item the \emph{torsion of $F_M$}, which is the torsion subgroup of $F_M^\times$;
 \item whether $-1=1$ or not; note that in many cases the torsion of $F_M$ is generated by $-1$, in which case it equals $\{1\}$ or $\{1,\ -1\}$;
 \item the number of \emph{hexagons} of $F_M$.
\end{itemize} 

\subsection*{Some open problems}

Before discussing the list of foundations, we would like to mention a few interesting problems:

\begin{problem}
 For a pasture $F$, let $\cC_F^\min$ be the class of minimal isomorphism types of matroids with foundation $F$. For which $F$ is $\cC_F^\min$ finite?
\end{problem}

Note that this problem is intimately related to Rota's conjecture, which can be separated into two parts: 
\begin{enumerate}
 \item Is there a finite list $\cL(q)$ of \emph{excluded foundations} for every finite field $\F_q$? (More precisely, we're asking here for a list with the following property: if $M$ is a matroid with foundation $F_M$ such that no foundation $F\in\cL(q)$ maps to $F_M$, then there is a morphism $F_M\to\F_q$.)
 \item Is $\cC_F^\min$ finite for every finite field $\F_q$ and $F\in\cL(q)$?
\end{enumerate}
We do not know the answer to either part.

Another highly interesting task is to develop a better understanding of which pastures appear as the foundation of a matroid. Results in this direction can be used as tools to study matroid representations. Thus we formulate the task:

\begin{problem}
 Find (easily verifiable) criteria that imply that a given pasture is a foundation or that it is \emph{not} a foundation.
\end{problem}

\subsection{Uniform foundations}
We call the foundation of a uniform matroid $U^r_n$ a \emph{uniform foundation} and denote it by $\U^r_n$. 

The first example of a uniform foundation is the regular partial field $\Funpm$, which is the foundation of every uniform matroid $U^r_n$ with $r\in\{0,1,n-1,n\}$. The unique minimal matroid with foundation $\Funpm$ is the trivial (empty) matroid.

The second (class of) examples of uniform matroids are the $k$-regular partial fields $\U_k=\U^2_{k+3}=\U^{k+1}_{k+3}$ for $k\geq1$, which appear as the foundation of $U^2_{k+3}$ and $U^{k+1}_{k+3}$, and of no other uniform matroid. These two matroids are minimal matroids with this foundation. In particular, we have $\U_1=\U$ and $\U_2=\V$. The unique minimal matroid with foundation $\U$ is $U^2_4$, and the unique minimal matroids with foundation $\V$ are $U^2_5$ and $U^3_5$. For $k\geq 3$, the regular partial field $\U_k$ is the foundation of minimal matroids that are not uniform; 
e.g.\ $P_6$ is a minimal matroid with foundation $\U_3$.

The smallest example of a uniform foundation that is not a $k$-regular partial field is $\U^3_6$, which has rank $14$, whose torsion group is generated by $-1\neq1$, and which has $30$ hexagons.

In general, the unit group of the uniform matroid $\U^r_n$ is isomorphic to $\Z/2\Z\times\Z^{\binom rn-n}$ by \cite[Thm.~8.1]{Dress-Wenzel89} and \cite[Cor.~7.13]{Baker-Lorscheid21b}. This shows that the rank of uniform foundations grows very quickly with the size of the ground set. Note that the rank of every non-uniform foundation of a rank $r$ matroid on $n$ elements is strictly smaller than $\binom rn-n$.

The foundations of all $3$-connected matroids on up to $6$ elements are uniform. The first examples of non-uniform foundations appear for $3$-connected matroids on $7$ elements, cf.\ \autoref{subsection: foundations of matroids on 7 elements}.

\subsection{Finite fields}\label{Ffinitefields}
By \autoref{prop: foundation of projective spaces}, every finite field $\F_q$ is a foundation, namely of the projective geometry $PG(d,q)$ of any dimension $d\geq2$.

In the case of $\F_2$, the Fano plane $F_7=PG(2,2)$ and its dual $F_7^\ast$ are the unique minimal matroids with foundation $\F_2$. This follows from the fact that a matroid is binary if and only if it does not contain a $U^2_4$-minor, which is equivalent with its foundation being equal to $\Funpm$ or $\F_2$; coupling this with \autoref{thm: fundamental presentation} yields our claim.

Since $PG(2,q)$ is a proper minor of $PG(d,q)$ for $d\geq3$, none of the higher dimensional projective spaces is minimal with foundation $\F_q$. Since $\F_q$ is the universal partial field of the extended Dowling geometry for $\F_q^\times$, which is a proper minor of $PG(2,q)$ for $q>2$, we conjecture that $F_7=PG(2,2)$ is the only (Desarguesian) projective plane that is minimal for its foundation.

Minimal matroids with foundation $\F_3$ are $T_8$ and $R_9$. Minimal size matroids for the foundations $\F_4$, $\F_5$, $\F_7$ and $\F_8$ all have $9$ elements. Two minimal matroids with foundation $\F_4$ are those represented by the matrices
{\footnotesize
\[
 \begin{pmatrix}
  1 & 0 & 0 & 1 & 1 & 1 & 0 & 0 & 1 & b \\
  0 & 1 & 0 & 1 & 1 & a & 1 & 1 & 0 & 0 \\
  0 & 0 & 1 & 1 & 0 & 0 & 1 & a & 1 & 1 
 \end{pmatrix}
 \qquad \text{\normalsize and} \qquad
 \begin{pmatrix}
  1 & 0 & 0 & 0 & a & b & 1 & 0 & 0 \\
  0 & 1 & 0 & 0 & 1 & a & 0 & a & 1 \\
  0 & 0 & 1 & 0 & 1 & 0 & b & 1 & 0 \\
  0 & 0 & 0 & 1 & 0 & 1 & 1 & 1 & 1 
 \end{pmatrix}
\]  
}%
over $\F_4=\{0,1,a,b\}$. Two minimal matroids with foundation $\F_5$ are those represented by the matrices
{\footnotesize
\[
 \left(
 \begin{array}{cccccccccccccccc}
  1 & 0 & 0 & 1 & 1 & 1 & 1 & 0 & 0 & 1 & 2 \\
  0 & 1 & 0 & 1 & 1 & 2 & 3 & 1 & 1 & 0 & 0 \\
  0 & 0 & 1 & 1 & 0 & 0 & 0 & 1 & 2 & 1 & 1 
 \end{array}
 \right)
 \quad \text{\normalsize and} \quad
 \begin{pmatrix}
  1 & 0 & 0 & 0 & 4 & 3 & 1 & 0 & 0 \\
  0 & 1 & 0 & 0 & 3 & 1 & 0 & 1 & 1 \\
  0 & 0 & 1 & 0 & 1 & 0 & 4 & 1 & 1 \\
  0 & 0 & 0 & 1 & 0 & 1 & 1 & 1 & 0
 \end{pmatrix}
\]
}%
over $\F_5$. A minimal size matroid with foundation $\F_7$ is represented by the matrix
{\footnotesize
\[
 \begin{pmatrix}
  1 & 0 & 0 & 0 & 0 & 0 & 1 & 1 & 1 \\
  0 & 1 & 0 & 0 & 1 & 1 & 0 & 1 & 1 \\
  0 & 0 & 1 & 0 & 1 & 2 & 2 & 4 & 2 \\
  0 & 0 & 0 & 1 & 0 & 1 & 2 & 2 & 3
 \end{pmatrix}
\]
}%
over $\F_7$. A minimal size matroid with foundation $\F_8$ is represented by the matrix
{\footnotesize
\[
 \begin{pmatrix}
  1 & 0 & 0 & 0 & 0 & 0 & 1       & 1     & 1       \\
  0 & 1 & 0 & 0 & 1 & 1 & 1       & a+1   & 1       \\
  0 & 0 & 1 & 0 & 1 & a & a^2+1   & a^2+1 & a^2+a+1 \\
  0 & 0 & 0 & 1 & 0 & 1 & a^2+a+1 & a     & 1
 \end{pmatrix}
\]
}%
over $\F_8=\F_2(a)$ where $a\in\F_8$ is a primitive element over $\F_2$ with minimal polynomial $T^3+T+1$.

Let $p$ be a prime number. Then \cite[Thm.\ 4.1.1]{Silins24} determines a rank $3$ matroid whose universal partial field is $\F_p$ in terms of an explicit matrix representation. Namely, let $l=\lfloor{\log_2(p+1)}\rfloor$ and $b_i=\lfloor(p+1)/2^{l-i+1}\rfloor$ for $i=1,\dotsc,l$. Then the matroid represented by the matrix
{\footnotesize
\[
 \begin{pmatrix}
  1 & 0 & 0 & 1 & 1 & 1 & 0   & 1 \ \quad \dotsc \quad \ 0 & 1   \\
  0 & 1 & 0 & 1 & 1 & 0 & 1   & 2 \ \quad \dotsc \quad \ 1 & 2   \\
  0 & 0 & 1 & 1 & 0 & 1 & b_1 & b_1 \quad \dotsc \quad b_l & b_l 
 \end{pmatrix}
\]
}%
over $\F_p$ has universal partial field $\F_p$. We do not know at the time of writing if the foundation of this matroid is equal to $\F_p$ for all primes $p$.

\subsection{Some small foundations}
\label{subsection: some small foundations}

In the following, we describe a series of ``small'' foundations of $3$-connected matroids. We searched exhaustively among all matroids on up to $8$ elements and list all foundations with up to $7$ hexagons (for $3$-connected matroids on up to $8$ elements). There are several examples of foundations of larger matroids. We order these examples by $(h,r)$ (as indicated in the section headers) where $h$ is the number of hexagons and $r$ is the rank of the foundation.

\subsubsection{(1,0)} \label{FKrasner}
The \emph{Krasner hyperfield} $\K=\F_2\otimes\F_3$ appears as the foundation of several matroids on $9$ elements (we found at least $44$ such matroids). There are no matroids with foundation $\K$ that have less than $9$ elements. A concrete example of a matroid on $10$ elements with foundation $\K$ is the modular sum of $F_7$ with $R_9$ along a common $3$-circuit as indicated in \autoref{fig: modular sum of F7 and R9} (where the circled vertices of the generalized parallel connection are deleted). 

\begin{figure}[htb] 
\[
 \beginpgfgraphicnamed{tikz/fig49}
 \begin{tikzpicture}[x=0.08cm,y=0.07cm]
  \draw ( 0,-6) -- ( 0,54) -- (54,41.5) -- (54,-18.5) --cycle;
  \draw ( 0,-6) -- ( 0,54) -- (-54,41.5) -- (-54,-18.5) --cycle;
  \draw[thick] ( 0, 0) -- ( 0,48);
  \draw[thick] ( 0, 0) -- (16,36);
  \draw[thick] ( 0, 0) -- (24,30);
  \draw[thick] ( 0, 0) -- (48,12);
  \draw[thick] ( 0,24) -- (24,30);
  \draw[thick] ( 0,24) -- (48,12);
  \draw[thick] ( 0,48) -- (24, 6);
  \draw[thick] ( 0,48) -- (48,12);
  \draw[thick] ( 0, 0) -- (-24,30);
  \draw[thick] ( 0, 0) -- (-48,12);
  \draw[thick] ( 0,24) -- (-48,12);
  \draw[thick] ( 0,48) -- (-24, 6);
  \draw[thick] ( 0,48) -- (-48,12);
  \draw[thick] plot [smooth] coordinates { (16,36) ( 0,24) (24, 6) };
  \draw[thick] plot [smooth] coordinates { (-24,30) ( 0,24) (-24, 6) };
  \draw[fill=white] ( 0, 0) circle (4pt);
  \draw[fill=white] ( 0,24) circle (4pt);
  \draw[fill=white] ( 0,48) circle (4pt);
  \filldraw (12,27) circle (3pt);
  \filldraw (16,20) circle (3pt);
  \filldraw (16,36) circle (3pt);
  \filldraw (24, 6) circle (3pt);
  \filldraw (24,30) circle (3pt);
  \filldraw (48,12) circle (3pt);
  \filldraw (-16,20) circle (3pt);
  \filldraw (-24, 6) circle (3pt);
  \filldraw (-24,30) circle (3pt);
  \filldraw (-48,12) circle (3pt);
 \end{tikzpicture}
 \endpgfgraphicnamed
 \]
 \caption{A modular sum of $F_7$ with $R_9$} 
 \label{fig: modular sum of F7 and R9}
\end{figure}
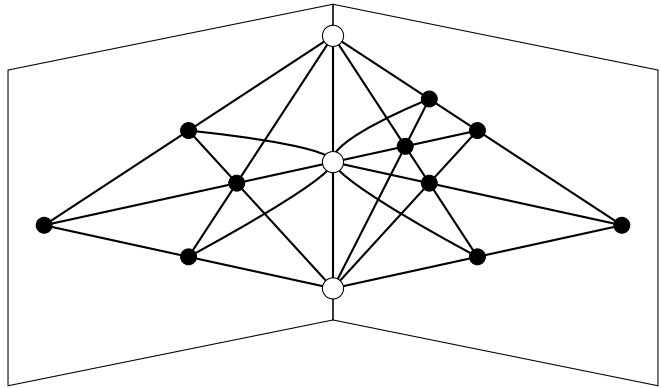

\subsubsection{(1,0)} \label{Fsign}
The sign hyperfield $\S=\past\Funpm{\genn{1+1-1}}$ appears as the foundation of the rank $4$ matroid on $9$ elements with short circuits
\begin{align*}
 & 
1234, \qquad 
1256, \qquad 
1357, \qquad 
1369, \qquad 
1468, \qquad 
1589, \qquad 
2358, \qquad 
 \\ & 
2379, \qquad 
2457, \qquad 
2469, \qquad 
2678, \qquad 
3456, \qquad 
4789, \qquad 
5679.
\end{align*}
This matroid is $3$-connected and of minimal size for foundation $\S$. The sign hyperfield is orientable and rigid, but not representable over any field.

Note that this implies that also the weak sign hyperfield 
\[
 \W \ = \ \past{\Funpm}{\genn{1+1+1,\ 1+1-1}} \ = \ \F_3\otimes\S
\]
(cf.~\cite{Baker-Bowler19,Bland-Jensen87,Wagowski89}) is a foundation, e.g.\ for the direct sum of the above matroid with $PG(2,3)$. We do not know if the weak sign hyperfield appears as the foundation of a $3$-connected matroid.

\subsubsection{(1,0)} \label{Fhexagonal}
The \emph{hexagonal partial field} 
\[
 \H \ = \ \pastgenn{\Funpm}{\zeta_6}{\zeta_6^3+1,\ \zeta_6+\zeta_6^{-1}-1}
\]
is of importance for its role in the structure theorem for foundations of matroids without large uniform minors (\autoref{thm: structure theorem for matroids without large uniform minors - revisited}). It also appears under the name of the \emph{sixth-root-of-unity partial field} (or \emph{$\sqrt[6]{1}$ partial field}), see, e.g., \cite{Pendavingh-vanZwam10a}. The smallest matroids with foundation $\H$ have $8$ elements, and there are $3$ of them: $AG(2,3)\setminus e$, its dual, and the self-dual rank $4$ matroid with short circuits
\begin{align*}
 & 
 1234, \qquad 
 1235, \qquad 
 1236, \qquad 
 1237, \qquad 
 1238, \qquad 
 1245, \qquad 
 1345,
 \\ & 
 1467, \qquad 
 1578, \qquad 
 2345, \qquad 
 2468, \qquad 
 2567, \qquad 
 3478, \qquad 
 3568.
\end{align*}
The hexagonal partial field is representable (in all fields with a solution to $x^2-x+1=0$) and rigid, but not orientable.



\subsubsection{(1,1)} \label{Fdyadic}
The \emph{dyadic partial field} 
\[
 \D \ = \ \pastgenn\Funpm{x}{x-1-1}
\]
is another building block for foundations of matroids without large uniform minors (\autoref{thm: structure theorem for matroids without large uniform minors - revisited}). The smallest matroids with foundation $\D$ are the non-Fano matroid $F_7^-$ and its dual. There is one minimal matroid for $\D$ with $8$ elements, which is the self-dual matroid $P_8$. All other minimal matroids with foundation $\D$ have more than $8$ elements.

Interestingly, certain tensor products of $\D$ with other pastures appear as the foundation of $3$-connected matroids. For instance, $\D\otimes\F_2=\pastgenn{\F_2}{x}{x+x+1}$ is the foundation of $AG(3,2)'$, and $\D\otimes\D$ is the foundation the rank $4$ spike on $8$ elements with short circuits
\begin{align*}
 &
 1256, \qquad 
 1278, \qquad 
 1357, \qquad 
 1368, \qquad 
 1458, \qquad 
 1467, 
 \\ & 
 2367, \qquad 
 2468, \qquad 
 2358, \qquad 
 2457, \qquad 
 3456, \qquad 
 3478. 
\end{align*}

The dyadic partial field is the only pasture that we know which is not a quotient of $\Funpm$ but appears as a non-trivial tensor factor of the foundation of a $3$-connected matroid. 

In so far, we wonder (cf.\ the discussion about indecomposable foundations in \autoref{subsection: indecomposable foundations}):

\begin{problem}
 Let $M$ be a $3$-connected matroid whose foundation $F_M\simeq P_1\otimes P_2$ is the tensor product of two pastures $P_1$ and $P_2$. Is then one of $P_1$ and $P_2$ necessarily a tensor product of copies of $\F_2$, $\F_3$, $\S$, and/or $\D$? 
\end{problem}

\subsubsection{(1,1)} \label{Fgoldenratio}
The \emph{golden ratio partial field} 
\[
 \G \ = \ \pastgenn{\Funpm}{x}{x^2+x-1}
\] 
plays a prominent role in the representation theory of matroids; cf.\ \cite{Pendavingh-vanZwam10a} and \cite{Pendavingh-vanZwam10b}. It is the quotient of $\V=\pastgenn{\Funpm}{x_1,\dotsc,x_5}{x_i+x_{i-1}x_{i+1}-1}$ obtained by identifying all generators $x_1\sim x_2\sim x_3\sim x_4\sim x_5$. The smallest matroid with foundation $\G$ is the self-dual rank $4$ matroid on $8$ elements with short circuits
\[
 1234, \quad
 1256, \quad
 1357, \quad
 1368, \quad
 1478, \quad
 2358, \quad
 2457, \quad
 2678, \quad 
 3456,
\]
and this is the only matroid with foundation $\G$ that has less than $9$ elements. A minimal matroid for foundation $\G$ with $9$ elements is the minor $B_{11}\setminus23$ of the Betsy Ross matroid $B_{11}$ (cf.~\cite[Figure 3.3]{vanZwam09}), where $123$ is a $3$-circuit and $1$ is the ``center'' of $M$. Also the Betsy Ross matroid itself has foundation $\G$.

The golden ratio partial field is representable (over all fields with a solution to $x^2+x=1$), orientable, and rigid.

\subsubsection{(2,0)} \label{F20a}

The pasture
\[
 \FF{F20a} \ = \ \pastgenn\S{x}{x^2+1,\ x+1-1}
\]
appears as the foundation of the $3$-connected rank $4$ matroid on $9$ elements with short circuits
\begin{align*}
 & 
 1234, \qquad 
 1256, \qquad 
 1289, \qquad 
 1357, \qquad 
 1368, \qquad 
 1469, \qquad 
 1478, \qquad 
 \\ & 
 2358, \qquad 
 2379, \qquad 
 2457, \qquad 
 2678, \qquad 
 3456, \qquad 
 3489, \qquad 
 5689,
\end{align*}
which is a matroid of minimal size for $\FF{F20a}$. The foundation $\FF{F20a}$ is rigid, but not orientable nor representable (over any field).

\subsubsection{(2,1)} \label{F21a}
The pasture
\[
 \FF{F21a} \ = \ \pastgenn\Funpm{x}{x^3-x-1,\ x^5-x^4-1}
\]
appears as the foundation of the $3$-connected rank $4$ matroid on $9$ elements with short circuits
\begin{align*}
 & 
 1234, \qquad 
 1235, \qquad 
 1245, \qquad 
 1267, \qquad 
 1345, \qquad 
 1368, \qquad 
 1478, \qquad 
 1579,
 \\ & 
 2345, \qquad 
 2369, \qquad 
 2489, \qquad 
 2568, \qquad 
 3467, \qquad 
 3789, \qquad 
 4569.
\end{align*}
which is a matroid of minimal size for $\FF{F21a}$. Since 
\[
 (x^2-x+1)\cdot(x^3-x-1) \ = \ x^5-x^4-1, 
\]
the foundation $\FF{F21a}$ injects into its the universal ring $R=\Z[x]/\gen{x^3-x-1}$ (note that $x\cdot(x^2-1)=1$, so $x$ is a unit of $R$). Solving the $S$-unit equation $a+b=1$ in $R$ (with SageMath) exhibits $(x^3,x)$ and $(x^5,-x^4)$ as its only solutions, up to $S_3$-conjugates. Therefore $\FF{F21a}$ is a partial field, and is thus equal to its universal partial field. The foundation $\FF{F21a}$ is representable (over all fields with a root of $T^3-T-1$), orientable, and rigid. 


The pastures \ref{F21a} and $\G$ are the only two examples we currently know of infinite rigid pastures which occur as foundations of $3$-connected matroids.

\begin{problem}
Are there infinitely many different infinite rigid pastures?
\end{problem}


\subsubsection{(2,1)} \label{FGaussian} 
The \emph{Gaussian partial field} 
\[
 \H_2 \ = \ \pastgenn{\Funpm}{i,x}{i^2+1, \quad x-i-1, \quad x^2-i-i}
\]
(cf.\ \cite[p.~543]{Pendavingh-vanZwam10a}) is the foundation of the $3$-connected rank $4$ matroid on $9$ elements with short circuits
\begin{align*}
 &
 1234, \quad 
 1267, \quad 
 1235, \quad 
 1245, \quad 
 1345, \quad 
 1368, \quad 
 1379, \quad 
 1489, \quad 
 \\ &
 2345, \quad 
 2469, \quad 
 2568, \quad 
 2789, \quad 
 3467, \quad 
 3589, \quad 
 4578, \quad 
 5679, \quad 
\end{align*}
which is minimal for this foundation. It embeds into its universal ring $\Z[i,1/2]$ via $i\mapsto i$ and $x\mapsto i+1$. The partial field $\H_2$ is representable (over all fields of characteristic $\neq2$ with a square root of $-1$), but it is neither orientable nor rigid.


\subsubsection{(2,1)} \label{FDowling}
The Dowling lift of $\F_4$
\[
 \FF{FDowling} \ = \ \pastgenn\H{x}{x-\zeta_6-1} 
\]
(cf.\ \cite[p.~543]{Pendavingh-vanZwam10a}) appears as the foundation of the $3$-connected rank $4$ matroid on $9$ elements with short circuits
\begin{align*}
 & 
 1234, \quad 1235, \quad 1245, \quad 1267, \quad 1268, \quad 1278, \quad 1345, \quad 1389, \quad 1469, \quad 1579, 
 \\ & 
 1678, \quad 2345, \quad 2379, \quad 2489, \quad 2569, \quad 2678, \quad 3467, \quad 3568, \quad 4578,
\end{align*}
which is a matroid of minimal size for $\FF{FDowling}$. It embeds into its universal ring $\Z[\zeta_6,\ (1+\zeta_6)^{-1}]$ via $\zeta_6\mapsto\zeta_6$ and $x\mapsto1+\zeta_6$ and is equal to its universal partial field. The Dowling lift of $\F_4$ is representable (over all fields with a primitive 3rd root of unity). It is neither orientable nor rigid.


\subsubsection{(2,1)} \label{F21d}
The pasture
\[
 \FF{F21d} \ = \ \pastgenn{\F_4}{x}{x+a+1}, 
\]
where $\F_4=\{0,1,a,b\}$, appears as the foundation of the $3$-connected rank $4$ matroid on $9$ elements with short circuits
\begin{align*}
 & 
 \ \ 123, \quad \ \ 
 1245, \quad \ \ 
 1267, \quad \ \ 
 1345, \quad \ \ 
 1367, \quad \ \ 
 1468, \quad \ \ 
 1789, \quad \ \ 
 2345,
 \\ & 
 2367, \quad \ \ 
 2469, \quad \ \ 
 2568, \quad \ \ 
 2579, \quad \ \ 
 3478, \quad \ \ 
 3689, \quad \ \ 
 4567, \quad \ \ 
 4589,
\end{align*}
which is a matroid of minimal size for $\FF{F21d}$. It surjects onto its universal ring $\F_4=\F_4[x^{\pm 1}]/\gen{x+a+1}$ (with $x\mapsto a+1$), which is equal to the universal partial field of $\FF{F21d}$. The foundation $\FF{F21d}$ is representable (over field extensions of $\F_4$), but it is neither orientable nor rigid.


\subsubsection{(2,1)} \label{F21e} 
The pasture 
\[
 \FF{F21e} \ = \ \pastgenn{\F_3}{x}{x+1-1} \ = \ \F_3\otimes\Big(\pastgenn\Funpm{x}{x+1-1}\Big)
\]
is the foundation of the $3$-connected rank $4$ matroid on $8$ elements with short circuits
\begin{align*}
 &
 1234, \quad 
 1256, \quad 
 1357, \quad 
 1368, \quad 
 1458, \quad 
 2358, \quad 
 2367, \quad 
 2457, \quad 
 3456, \quad 
 4678,
 \end{align*}
which is minimal for this foundation. It is neither representable, nor orientable, nor rigid.

\subsubsection{(2,1)} \label{F21f}
The pasture
\[
 \FF{F21f} \ = \ \pastgenn\S{x}{x+1-1} \ = \ \S\otimes\Big(\pastgenn\Funpm{x}{x+1-1}\Big)
\]
appears as the foundation of the $3$-connected rank $4$ matroid on $9$ elements with short circuits
\begin{align*}
 & 
 1234, \qquad 
 1256, \qquad 
 1279, \qquad 
 1357, \qquad 
 1389, \qquad 
 1459, \qquad 
 1468, \qquad 
 \\ & 
 2358, \qquad 
 2367, \qquad 
 2457, \qquad 
 2489, \qquad 
 3456, \qquad 
 3479, \qquad 
 6789.
\end{align*}
which is a matroid of minimal size for $\FF{F21f}$. The foundation $\FF{F21f}$ is orientable, but is neither representable nor rigid.

\subsubsection{(2,1)} \label{F21g}
The pasture
\[
 \FF{F21g} \ = \ \pastgenn\K{x}{x+1+1} \ = \ \K\otimes\Big(\pastgenn\Funpm{x}{x+1-1}\Big)
\]
appears as the foundation of the $3$-connected rank $4$ matroid on $9$ elements with short circuits
\begin{align*}
 & 
 1234, \quad \ \ 
 1256, \quad \ \ 
 1289, \quad \ \ 
 1357, \quad \ \ 
 1468, \quad \ \ 
 1479, \quad \ \ 
 2358, \quad \ \ 
 2379,
 \\ & 
 2457, \quad \ \ 
 2469, \quad \ \ 
 2678, \quad \ \ 
 3456, \quad \ \ 
 3689, \quad \ \ 
 4589, \quad \ \ 
 5679.
\end{align*}
which is a matroid of minimal size for $\FF{F21g}$. The foundation $\FF{F21g}$ is neither representable, nor orientable, nor rigid.

\subsubsection{(2,1)} \label{F21h}
The pasture
\[
 \FF{F21h} \ = \ \pastgenn{\F_2}{x}{x+x+1,\ x^2+x^2+1}
\]
appears as the foundation of the $3$-connected rank $4$ matroid on $9$ elements with short circuits
\begin{align*}
 & 
 1234,\ \quad 
 1235,\ \quad 
 1245,\ \quad 
 1267,\ \quad 
 1345,\ \quad 
 1368,\ \quad 
 1479,\ \quad 
 1589,\ \quad 
 2345,
 \\ & 
 2378,\ \quad 
 2469,\ \quad 
 2568,\ \quad 
 2579,\ \quad 
 3467,\ \quad 
 3489,\ \quad 
 3569,\ \quad 
 4578,\ \quad 
 6789.
\end{align*}
which is a matroid of minimal size for $\FF{F21h}$. The foundation $\FF{F21h}$ is neither representable, nor orientable, nor rigid.

\subsubsection{(2,2)} \label{F22a} 
The pasture 
\[
 \FF{F22a} \ = \ \pastgenn{\F_2}{x,y}{x+y+1, \quad x^2+y^2+1}
\]
is the foundation of the matroid $F_7^+$ (cf.\ \cite[p.~91]{vanZwam09}), which can be depicted as
\[
 \beginpgfgraphicnamed{tikz/fig47}
 \begin{tikzpicture}[x=0.4cm,y=0.4cm]
  \filldraw (  0:0) circle (2pt);  
  \filldraw ( 30:2) circle (2pt);  
  \filldraw ( 30:4) circle (2pt);  
  \filldraw (150:2) circle (2pt);  
  \filldraw (270:2) circle (2pt);  
  \filldraw ( 90:4) circle (2pt);  
  \filldraw (210:4) circle (2pt);  
  \filldraw (330:4) circle (2pt);  
  \draw [thick] (0:0) circle (2);  
  \draw [thick] ( 90:4) -- (210:4);
  \draw [thick] (210:4) -- (330:4);
  \draw [thick] (90:4) -- (330:4);
  \draw [thick] (90:4) -- (270:2);
  \draw [thick] (210:4) -- (30:4);
  \draw [thick] (330:4) -- (150:2);
 \end{tikzpicture}
 \endpgfgraphicnamed
\]
Another minimal $3$-connected matroid for this foundation is represented over $\F_4=\{0,1,a,b\}$ by the matrix
{\footnotesize
\[
 \begin{pmatrix}
  1 & 0 & 0 & 0 & b & a & 1 & 1 \\
  0 & 1 & 0 & 0 & a & 1 & 0 & 1 \\
  0 & 0 & 1 & 0 & 1 & 0 & a & 1 \\
  0 & 0 & 0 & 1 & 0 & 1 & 1 & 1 \\
 \end{pmatrix}
\]
}
Note that $-1=1$ in $\FF{F22a}$ even though this matroid has no minors of type $F_7$ and $F_7^\ast$. 

The foundation $\FF{F22a}$ embeds into its universal ring $\F_2[x^{\pm1},y^{\pm1}]/(x+y-1)$ via the tautological map $x\mapsto x$ and $y\mapsto y$ (note that $x^2+y^2-1\in(x+y-1)$), and its universal partial field is $\pastgenn{\F_2}{x,y}{x^{2^k}+y^{2^k}-1\mid k\geq0}$ (which recovers \cite[Thm.~3.3.27.$($iv$)$]{vanZwam09}). The foundation $\FF{F22a}$ is representable (in all fields of characteristic $2$ with at least $4$ elements), but is neither orientable nor rigid.

\subsubsection{(3,2)} \label{F32a}  
The pasture 
\[
 \FF{F32a} \ = \ \pastgenn{\F_2}{x,y}{x+1+1, \quad y+1+1, \quad xy+1+1}
\]
is the foundation of (the $3$-connected rank $4$ matroid) $F_8$, which is of minimal size for this foundation. It is neither representable, nor orientable, nor rigid.

\subsubsection{(3,2)} \label{F32b} 
Let $\zeta_6$ be a fundamental element of $\H$ (cf.\ \autoref{Fhexagonal}). The pasture 
\[
 \FF{F32b} \ = \ \pastgenn{\H}{x,y}{x-\zeta_6-1, \quad y-\zeta_6^{-1}-1}
\]
is the foundation of the $3$-connected rank $4$ matroid on $8$ elements with short circuits
\begin{align*}
 &
 1234, \quad 
 1256, \quad 
 1278, \quad 
 1357, \quad 
 1368, \quad 
 2358, \quad 
 2457, \quad 
 3456, \quad 
 4678,
\end{align*}
which is minimal for this foundation. It maps to its universal ring $\Z[\zeta_6,1/3]$ via $x\mapsto\zeta_6+1$ and $y\mapsto\zeta_6^{-1}+1$ (since $(\zeta_6+1)(\zeta_6^{-1}+1)=3$, the integer $3$ is invertible). Note that $\zeta_6\cdot(\zeta_6^{-1}+1)=\zeta_6+1$, which shows that this map is not injective and that its universal partial field is $\FF{FDowling}=\pastgenn{\H}{x}{x-\zeta_6-1}$, the \emph{Dowling lift of $\F_4$} (cf.\ \autoref{FDowling}). The foundation $\FF{F32b}$ is representable (over all fields that contain a primitive third root of unity), but is neither orientable nor rigid.

\subsubsection{(3,2)} \label{F32c} \label{FnonGersonides}
The \emph{non-Gersonides foundation}\footnote{The Gersonides partial field $\G\E$ is named after Gersonides, who determined all solutions to $x+y=1$ in powers of $-1$, $2$ and $3$ as $2-1=1$,\ \ $3-2=1$,\ \ $4-3=1$ and $9-8=1$. Since the last relation is missing in the pasture $\FF{F32c}$, we call it the non-Gersonides foundation.} 
\[
 \FF{F32c} \ = \ \pastgenn{\Funpm}{x,y}{x-1-1,  \quad y-x-1,  \quad x^2-y-1}
\]
is the foundation of the $3$-connected rank $4$ matroid on $8$ elements with short circuits
\begin{align*}
 &
 1234, \quad 
 1256, \quad 
 1278, \quad 
 1357, \quad 
 1368, \quad 
 1458, \quad 
 2367, \quad 
 2457, \quad 
 3456, \quad 
 4678,
\end{align*}
which is minimal for this foundation. It maps injectively into its universal ring $\Z[1/6]$ via $x\mapsto2$ and $y\mapsto3$, and its universal partial field is the \emph{Gersonides partial field} $\G\E=\pastgenn{\Funpm}{x,y}{x-1-1,\ y-x-1,\ x^2-y-1,\ y^2-x^3-1}$ (cf.\ \cite[p.~543]{Pendavingh-vanZwam10a}), which acquires the additional relation $y^2-x^3-1=0$. This means that the quotient map $\FF{F32c}\to\G\E$ is a bijection, but not an isomorphism. The non-Gersonides foundation is representable (over all fields of characteristic $\neq2,3$) and orientable, but not rigid.

\subsubsection{(3,3)} \label{F2cyclotomic}
The \emph{$2$-cyclotomic partial field} 
\[
 \K_2 \ = \ \pastgenn{\Funpm}{x,y,z}{y-x-1, \quad z-y-1, \quad y^2+xz-1}
\]
(cf.\ \cite[p.~542]{Pendavingh-vanZwam10a}), which embeds into $\Z[x^{\pm1},\frac1{x+1},\frac1{x+2}]$ via $x\mapsto x$, \ \ $y\mapsto x+1$ \ \ and \ \ $z\mapsto x+2$, appears as the foundation the rank $3$ matroid on $7$ elements that is depicted as
\[
 \beginpgfgraphicnamed{tikz/fig48}
 \begin{tikzpicture}[x=0.4cm,y=0.4cm]
  \filldraw (  0:0) circle (2pt);  
  \filldraw ( 30:2) circle (2pt);  
  \filldraw (150:2) circle (2pt);  
  \filldraw (270:2) circle (2pt);  
  \filldraw ( 90:4) circle (2pt);  
  \filldraw (210:4) circle (2pt);  
  \filldraw (330:4) circle (2pt);  
  \draw [thick] ( 90:4) -- (210:4);
  \draw [thick] (210:4) -- (330:4);
  \draw [thick] (90:4) -- (330:4);
  \draw [thick] (210:4) -- (30:2);
  \draw [thick] (330:4) -- (150:2);
 \end{tikzpicture}
 \endpgfgraphicnamed
\]
This matroid is of minimal size for $\K_2$ and $3$-connected, and its dual is the only other matroid on $7$ elements with foundation $\K_2$. The $2$-cyclotomic partial field is representable (in all fields with at least $4$ elements) and orientable, but not rigid. It plays a role in \cite[Thm.~4.17]{Pendavingh-vanZwam10a}: a matroid is quarternary and representable over the Gaussian partial field $\H_2$ (cf.\ \autoref{FGaussian}) if and only if it is $\K_2$-representable.




\subsubsection{(4,2)} \label{F42a} 
The pasture 
\[
 \FF{F42a} \ = \ \pastgenn{\Funpm}{x,y}{x+1+1, \quad y+1-1, \quad xy+1-1, \quad x^2y+1-1}
\]
is the foundation of the $3$-connected rank $4$ matroid on $8$ elements with short circuits
\begin{align*}
 &
 1234, \quad 
 1256, \quad 
 1278, \quad 
 1357, \quad 
 1468, \quad 
 2358, \quad 
 2467, \quad 
 3456, \quad 
 4578,  
\end{align*}
which is minimal for this foundation. This foundation is neither representable nor rigid, but is orientable.


\subsubsection{(4,3)} \label{FHydra3} 
The \emph{Hydra $3$ partial field}
\[
 \H_3 \ = \ \pastgenn{\Funpm}{x,y,z}{x+y-1, \quad xy+z-1, \quad x+y^2-z, \quad x^2+y-z}
\]
(cf.~\cite[p.~543]{Pendavingh-vanZwam10a}) is the foundation of the $3$-connected rank $3$ matroid on $8$ elements, depicted as 
\[
 \beginpgfgraphicnamed{tikz/fig45}
 \begin{tikzpicture}[x=0.4cm,y=0.4cm]
  \filldraw (210:1) circle (2pt);  
  \filldraw (330:1) circle (2pt);  
  \filldraw ( 30:2) circle (2pt);  
  \filldraw (150:2) circle (2pt);  
  \filldraw (270:2) circle (2pt);  
  \filldraw ( 90:4) circle (2pt);  
  \filldraw (210:4) circle (2pt);  
  \filldraw (330:4) circle (2pt);  
  \draw [thick] ( 90:4) -- (210:4);
  \draw [thick] (210:4) -- (330:4);
  \draw [thick] ( 90:4) -- (330:4);
  \draw [thick] (210:4) -- ( 30:2);
  \draw [thick] (150:2) -- (330:4);
  \draw [thick] (270:2) -- ( 30:2);
  \draw [thick] (150:2) -- (270:2);
 \end{tikzpicture}
 \endpgfgraphicnamed
\]
which is minimal for this foundation. The foundation $\H_3$ is representable (in fields with at least $5$ elements) and orientable, but not rigid.

\subsubsection{(5,3)} \label{F53a} 
Let $\zeta_6$ be a fundamental element of $\H$ (cf.~\autoref{Fhexagonal}). The pasture
\[\textstyle
 \FF{F53a} \ = \ \pastgenn{\H}{x,y,z}{z-x+1, \quad x-y+\zeta_6^2, \quad y-z+\zeta_6^4, \quad x+\zeta_6^2y+\zeta_6^4z}
\]
is the foundation of the $3$-connected rank $4$ matroid on $8$ elements with short circuits
\begin{align*}
 &
 1234, \quad 
 1256, \quad 
 1357, \quad 
 1678, \quad 
 2378, \quad 
 2467, \quad 
 3456, \quad 
 4578,
\end{align*}
which is minimal for this foundation. The foundation $\FF{F53a}$ embeds into its universal ring $R_M=\Z[\zeta_6,\ x^{\pm1},\ (x-1)^{-1},\ (x+\zeta_6^2)^{-1}]$ via \ \ $\zeta_6\mapsto\zeta_6$, \ \ $x\mapsto x$, \ \ $y\mapsto x+\zeta_6^2$ \ \ and \ \ $z\mapsto x-1$. \ \ Since all solutions to the $S$-unit equation $a+b-1=0$ for $R_M$ and $S=\{x,\ x-1,\ x+\zeta_6^2\}$ are in the null set of \FF{F53a}, the foundation $\FF{F53a}$ is a partial field and is equal to the universal partial field of $M$. The foundation $\FF{F53a}$ is representable (in every field with a root of $T^2-T+1$ and at least $4$ elements), but is neither orientable nor rigid.



    
\subsubsection{(5,3)} \label{F53b} 
The pasture 
\[
 \FF{F53b} \ = \ \pastgenn{\Funpm}{x,y,z}{x-1-1, \ \ y-1-1, \ \ z+1-1, \ \ xz+1-1, \ \ yz+1-1}
\]
is the foundation of the $3$-connected rank $4$ matroid on $8$ elements with short circuits
\begin{align*}
 &
 1234, \qquad 
 1256, \qquad 
 1278, \qquad 
 1357, \qquad 
 1368, \qquad 
 1458, 
 \\ &
 2367, \qquad 
 2457, \qquad 
 2468, \qquad 
 3456, \qquad 
 3478,  
\end{align*}
which is minimal for this foundation. This foundation is neither representable nor rigid, but is orientable.


\subsubsection{(5,3)} \label{F53c} 
The pasture 
\[
 \FF{F53c} \ = \ \pastgenn{\F_2}{x,y,z}{x+y+1,\ \ x^2+y^2+1,\ \ z+1+1,\ \ xz+1+1,\ \ yz+1+1}
\]
is the foundation of the $3$-connected rank $4$ matroid on $8$ elements with short circuits
\begin{align*}
 &
 1234, \quad 
 1256, \quad 
 1357, \quad 
 1468, \quad 
 2358, \quad 
 2467, \quad 
 3456, \quad 
 3478, \quad 
 5678, \quad 
\end{align*}
which is minimal for this foundation. Another minimal ($3$-connected) matroid for this foundation is the rank $4$ matroid on $8$ elements with short circuits
\[
  1234, \quad 
  1256, \quad 
  1357, \quad 
  1468, \quad 
  2368, \quad 
  2457, \quad 
  3456, \quad 
  5678. 
\]
Interestingly, this matroid has no minors of type $F_7$ and $F_7^\ast$, even though $-1=1$ holds in $\FF{F53c}$. This foundation is neither representable, nor orientable, nor rigid.


\subsubsection{(5,4)} \label{F54a} 

The partial field 
\[
 \P_4 \ = \ \pastgenn{\Funpm}{x,y,z,w}{x+y-1, \ \ z-x-1, \ \ w-y-1, \ \ x^2+yz-1, \ \ y^2+xw-1}
\]
(cf.\ \cite[p.~543]{Pendavingh-vanZwam10a}) is the foundation of the $3$-connected matroid $M_{8591}^{Y\Delta}$ (cf.\ \cite[p.\ 91]{vanZwam09}). It is representable (over every field with at least $4$ elements) and orientable, but not rigid.



\subsubsection{(6,3)} \label{F63a} 
The pasture $\FF{F63a}=\pastgenn{\Funpm}{x,y,z}{S}$, where $S$ consists of
\[
 x-1-1,  \quad y+1-1,  \quad xy+1-1,  \quad yz+1-1,  \quad z-x-1,  \quad x^2-z-1, 
\]
is the foundation of the $3$-connected rank $4$ matroid on $8$ elements with short circuits
\begin{align*}
 &
 1234, \quad 
 1256, \quad 
 1357, \quad 
 1368, \quad 
 1478, \quad 
 2358, \quad 
 2457, \quad 
 3456, \quad 
 5678, \quad 
\end{align*}
which is minimal for this foundation. It is neither representable, nor rigid, nor orientable.

   
\subsubsection{(6,3)} \label{F63b} 
The pasture $\FF{F63b}=\pastgenn{\Funpm}{x,y,z}{S}$, where $S$ consists of
\[
 x-1-1,  \quad y-x-1,  \quad z-x-1,  \quad x^2-y-1,  \quad x^2-z-1,  \quad yz-x^3-1,
\]
is the foundation of the $3$-connected rank $4$ matroid on $8$ elements with short circuits
\begin{align*}
 &
 1234, \quad 
 1256, \quad 
 1357, \quad 
 1458, \quad 
 1678, \quad 
 2358, \quad 
 2367, \quad 
 2457, \quad 
 3456, \quad 
\end{align*}
which is minimal for this foundation. The foundation $\FF{F63b}$ maps (non-injectively) to its universal ring $\Z[\frac16]$ via \ \ $x\mapsto2$, \ \ $y\mapsto3$ \ \ and \ \ $z\mapsto3$. \ \ Its universal partial field is the Gersonides partial field $\G\E=\pastgenn{\Funpm}{x,y}{x-1-1,\ y-x-1,\ x^2-y-1,\ y^2-x^3-1}$ (cf.\ \autoref{FnonGersonides}). The foundation $\FF{F63b}$ is representable (in characteristic $\neq2,3$) and orientable, but not rigid.

    
\subsubsection{(6,3)} \label{F63c} 
The pasture $\FF{F63c}=\pastgenn{\F_2}{x,y,z}{S}$, where $S$ consists of 
\[
 x+1+1,  \quad y+1+1,  \quad z+1+1,  \quad xy+1+1,  \quad xz+1+1,  \quad yz+1+1,
\]
is the foundation of the $3$-connected rank $4$ matroid on $8$ elements with short circuits
\begin{align*}
 &
 1234, \qquad 
 1256, \qquad 
 1368, \qquad 
 1357, \qquad 
 1458, \qquad 
 2358, \qquad 
 \\ &
 2367, \qquad 
 2457, \qquad 
 3456, \qquad 
 3478, \qquad 
 5678, \qquad 
\end{align*}
which is minimal for this foundation. It is neither representable, nor orientable, nor rigid.


\subsubsection{(6,4)} \label{F64a} 
The pasture $\FF{F64a}=\pastgenn{\Funpm}{x,y,z,w}{S}$, where $S$ consists of
\[
 w-1-1, \quad \   
 y-x-1, \quad \   
 z-y-1, \quad \ 
 y^2-xz-1, \quad \ 
 \tfrac{wy}x-\tfrac zx-1, \quad \ 
 \tfrac xz+\tfrac wz-1, 
\]
 is the foundation of the $3$-connected rank $3$ matroid on $8$ elements depicted as
\[
 \beginpgfgraphicnamed{tikz/fig46}
 \begin{tikzpicture}[x=0.4cm,y=0.4cm]
  \filldraw (  0:0) circle (2pt);  
  \filldraw (5.2,1) circle (2pt);  
  \filldraw ( 30:2) circle (2pt);  
  \filldraw (150:2) circle (2pt);  
  \filldraw (270:2) circle (2pt);  
  \filldraw ( 90:4) circle (2pt);  
  \filldraw (210:4) circle (2pt);  
  \filldraw (330:4) circle (2pt);  
  \draw [thick] ( 90:4) -- (210:4);
  \draw [thick] (210:4) -- (330:4);
  \draw [thick] ( 90:4) -- (330:4);
  \draw [thick] (210:4) -- ( 30:2);
  \draw [thick] (150:2) -- (330:4);
  \draw [thick] (150:2) -- (5.2,1);
  \draw [thick] (270:2) -- ( 90:4);
 \end{tikzpicture}
 \endpgfgraphicnamed
\]
which is minimal for this foundation. It embeds into its universal ring $\Z[\frac12,\ x^{\pm1},\ \frac1{x+1},\ \frac1{x+2}]$ via \ \ $w\mapsto2$, \ \ $x\mapsto x$, \ \  $y\mapsto x+1$, \ \ and \ \ $z\mapsto x+2$, \ \ but is not equal to its universal partial field, which is isomorphic to the universal partial field of \FF{F74a} (cf.\ \autoref{F74a}). The foundation $\FF{F64a}$ is representable (in all fields of characteristic $\neq2$ with at least $5$ elements) and orientable, but not rigid.

\subsubsection{(7,4)} \label{F74a} 
The pasture $\FF{F74a}=\pastgenn{\Funpm}{x,y,z,w}{S}$, where $S$ consists of
\[
 \begin{array}{lllll}
  & w-1-1, \quad \ \ 
  & z-y-1, \quad \ \ 
  & \frac{wy}x-\frac zx-1, \quad \ \ 
  & \frac{x^2}{z^2}+\frac{w^2y}{z^2}-1, \quad \ \ 
  \\ 
  & y-x-1, \quad \ \ 
  & y^2-xz-1, \quad \ \ 
  & \frac xz+\frac wz-1, \quad \ \ 
 \end{array}
\]
is the foundation of the $3$-connected rank $4$ matroid on $8$ elements with short circuits
\begin{align*}
 &
 1234, \quad \ \ 
 1256, \quad \ \ 
 1357, \quad \ \ 
 1478, \quad \ \ 
 2368, \quad \ \ 
 2457, \quad \ \ 
 3456, \quad \ \ 
 5678, 
\end{align*}
which is minimal for this foundation. It embeds into its universal ring $\Z[\frac12,\ x^{\pm1},\ \frac1{x+1},\ \frac1{x+2}]$ via \ \ $w\mapsto2$, \ \ $x\mapsto x$, \ \  $y\mapsto x+1$, \ \ and \ \ $z\mapsto x+2$, \ \ and is equal to its universal partial field (verified with the method of \cite{Verschoore24}). The foundation $\FF{F74a}$ is representable (over every field with at least $5$ elements) and orientable, but not rigid.

\subsubsection{(7,4)} \label{F74b} 
The pasture $\FF{F74b}=\pastgenn{\Funpm}{x,y,z,w}{S}$, where $S$ consists of
\[
 \begin{array}{lllll}
  & x-y-1, \quad \ \ 
  & \frac{x^2}w+\frac yw-1, \quad \ \ 
  & \frac zw+\frac{xy}w-1, \quad \ \ 
  & \frac{x^3}{yw}-\frac z{yw}-1,
  \\[5pt] 
  & \frac zx-\frac yx-1, \quad \ \ 
  & \frac z{x^2}+\frac{y^2}{x^2}-1, \quad \ \ 
  & \frac{xz}w-\frac{y^2}w-1, 
 \end{array}
\]
is the foundation of the $3$-connected rank $4$ matroid on $8$ elements with short circuits
\begin{align*}
 &
 1234, \quad \ \ 
 1256, \quad \ \ 
 1357, \quad \ \ 
 1468, \quad \ \ 
 2358, \quad \ \ 
 2457, \quad \ \ 
 2678, \quad \ \ 
 3456, 
\end{align*}
which is minimal for this foundation. The foundation $\FF{F74b}$ embeds into its universal ring $\Z[x^{\pm1},\ (x-1)^{-1},\ (2x-1)^{-1},\ (x^2+x-1)^{-1}]$ via \ \ $x\mapsto x$, \ \ $y\mapsto x-1$, \ \ $z\mapsto 2x-1$ \ \ and \ \ $w\mapsto x^2+x-1$. \ \  It is not equal to its universal partial field
\[
 \Pi(\FF{F74b}) \ = \ \past{\FF{F74b}}{\genn{\tfrac{xw}{y^3}-\tfrac{z^2}{y^3}-1}},
\]
whose null set features an additional relation (verified with the method from \cite{Verschoore24}). The foundation $\FF{F74b}$ is representable (over every field with at least $5$ elements) and orientable, but not rigid.

%

\subsubsection{(9,7)} \label{FHydra4} 
The \emph{Hydra $4$ partial field} $\H_4=\pastgenn{\Funpm}{x,y,z,s,t,w}{S}$, where $S$ consists of
\[
 \begin{array}{lllllll}
  & x+s-1,
  & \frac sz+\frac{xt}z-1,
  & \frac{xt}w+\frac{ys}w-1,
  \\[4pt]
  & y+t-1,
  & \frac tz+\frac{ys}z-1,
  & \frac w{yz}-\frac {xt^2}{yz}-1,
  \\[4pt]
  & xy+z-1,
  \qquad
  & \frac wz+\frac{st}z-1,
  \qquad
  & \frac w{xz}-\frac{ys^2}{xz}-1,
 \end{array}
\]
(cf.\ \cite[p.~543]{Pendavingh-vanZwam10a}) is the foundation of the $3$-connected rank $3$ matroid on $7$ elements depicted as
\[
 \beginpgfgraphicnamed{tikz/fig43}
 \begin{tikzpicture}[x=0.4cm,y=0.4cm]
  \filldraw (  0:0) circle (2pt);  
  \filldraw ( 30:2) circle (2pt);  
  \filldraw (150:2) circle (2pt);  
  \filldraw (270:2) circle (2pt);  
  \filldraw ( 90:4) circle (2pt);  
  \filldraw (210:4) circle (2pt);  
  \filldraw (330:4) circle (2pt);  
  \draw [thick] ( 90:4) -- (210:4);
  \draw [thick] (210:4) -- (330:4);
  \draw [thick] (90:4) -- (330:4);
  \draw [thick] (210:4) -- (30:2);
 \end{tikzpicture}
 \endpgfgraphicnamed
\]
which is minimal for this foundation. The Hydra 4 partial field is representable (over every field with at least $5$ elements) and orientable, but not rigid.

\subsubsection{(11,7)}\label{FPappus}
The \emph{Pappus foundation} $\FF{FPappus}=\pastgenn{\Funpm}{x,\ y,\ z,\ s,\ t,\ v,\ w}{S}$, where $S$ consists of 
\begin{align*}
    & s+x-1,
   && \tfrac wx-\tfrac{st}x-1,
   && \tfrac xz-\tfrac yz-1,
 \\[2pt]
    & t+y-1, 
   && \tfrac vy-\tfrac{st}y-1,
   && \tfrac tz-\tfrac sz-1,
 \\[2pt] 
    & v+xt-1,
   && \tfrac v{xy}-\tfrac s{xy}-1,
   && \tfrac{xt}z-\tfrac{ys}z-1,
 \\[2pt]
    & w+ys-1,
   && \tfrac w{xy}-\tfrac t{xy}-1,
\end{align*}
is the foundation of the Pappus matroid, depicted as
\[
 \beginpgfgraphicnamed{tikz/fig54}
   \begin{tikzpicture}[x=1cm, y=1.2cm]
    \filldraw (0,0) circle (2pt);
    \filldraw (2,0) circle (2pt);
    \filldraw (4,0) circle (2pt);
    \filldraw (1,1) circle (2pt);
    \filldraw (2,1) circle (2pt);
    \filldraw (3,1) circle (2pt);
    \filldraw (0,2) circle (2pt);
    \filldraw (2,2) circle (2pt);
    \filldraw (4,2) circle (2pt);
    \draw[thick] (0,0) -- (4,0) -- (0,2) -- (4,2) -- (0,0);
    \draw[thick] (0,0) -- (2,2) -- (4,0);
    \draw[thick] (0,2) -- (2,0) -- (4,2);
    \draw[thick] (1,1) -- (3,1);
   \end{tikzpicture}
 \endpgfgraphicnamed
\]
which is a minimal matroid for this foundation. The Pappus foundation embeds into its universal ring 
\[
 \Z\big[x^{\pm1},\ y^{\pm1},\ (1-x)^{-1},\ (1-y)^{-1},\ (x-y)^{-1},\ (1-x+xy)^{-1},\ (1-y+xy)^{-1}\big]
\]
via 
\begin{align*}
   x &\mapsto x, \qquad 
 & s &\mapsto 1-x, \qquad 
 & v &\mapsto 1-x+xy, \qquad 
 & z &\mapsto x-y,
 \\
   y &\mapsto y, \qquad 
 & t &\mapsto 1-y, \qquad 
 & w &\mapsto 1-y+xy.
\end{align*}
The Pappus foundation differs from its universal partial field 
\[
 \Pi(\FF{FPappus}) \ = \ \past{\FF{FPappus}}{\genn{\tfrac wz-\tfrac vz-1, \quad \tfrac{xtw}z-\tfrac{ysv}z-1, \quad \tfrac{tv}{xyz}-\tfrac{sw}{xyz}-1, \quad \tfrac{xv}{stz}-\tfrac{yw}{stz}-1}}
\]
(verified with the method from \cite{Verschoore24}). It is representable (over $\F_4$ and all fields with at least $7$ elements) and orientable, but not rigid.

\subsection{Morphisms of foundations into other pastures}

A matroid $M$ is representable over a pasture $P$ if and only if there is a morphism from the foundation $F_M$ of $M$ into $P$. This allows us to study matroid representations in terms of algebraic properties, a technique that has been applied successfully in the previous papers \cite{Baker-Lorscheid20} and \cite{Baker-Lorscheid21}. 

In \autoref{table: morphisms of foundations into other pastures}, we display for a range of foundations $F$ (in the leftmost column) and pastures $P$ (in the top row) whether there exists a morphism from $F$ to $P$. The second column (``repr.'') indicates if the foundation is representable over any field, the third column (``PF'') indicates if the foundation is a partial field, and the last column (``rigid'') indicates if the foundation is rigid.

\begin{table}[ptb]
 \caption{Morphisms from foundations into other pastures}
 \label{table: morphisms of foundations into other pastures}
 \begin{tabular}{|c|c|c|c|c|c|c|c|c|c|c|c|c|c|c|c|}
  \hline
  found.     & repr.  & PF     & $\F_2$ & $\F_3$ & $\F_4$ & $\F_5$ & $\F_7$ & $\F_8$ & $\F_9$ & $\G$   & $\R$   & $\C$   & $\S$   & $\P$   & rigid  
  \\ \hline \hline
  $\Funpm$   & \check & \check & \check & \check & \check & \check & \check & \check & \check & \check & \check & \check & \check & \check & \check 
  \\ \hline
  $\U$       & \check & \check & $-$    & \check & \check & \check & \check & \check & \check & \check & \check & \check & \check & \check & $-$    
  \\ \hline
  $\V$       & \check & \check & $-$    & $-$    & \check & \check & \check & \check & \check & \check & \check & \check & \check & \check & $-$    
  \\ \hline
  $\F_2$     & \check & \check & \check & $-$    & \check & $-$    & $-$    & \check & $-$    & $-$    & $-$    & $-$    & $-$    & $-$    & \check 
  \\ \hline
  $\F_3$     & \check & \check & $-$    & \check & $-$    & $-$    & $-$    & $-$    & \check & $-$    & $-$    & $-$    & $-$    & $-$    & \check 
  \\ \hline
  $\K$       & \norep & $-$    & $-$    & $-$    & $-$    & $-$    & $-$    & $-$    & $-$    & $-$    & $-$    & $-$    & $-$    & $-$    & \check 
  \\ \hline
  $\S$       & \norep & $-$    & $-$    & $-$    & $-$    & $-$    & $-$    & $-$    & $-$    & $-$    & $-$    & $-$    & \check & \check & \check 
  \\ \hline
  $\H$       & \check & \check & $-$    & \check & \check & $-$    & \check & $-$    & \check & $-$    & $-$    & \check & $-$    & \check & \check 
  \\ \hline
  $\D$       & \check & \check & $-$    & \check & $-$    & \check & \check & $-$    & \check & $-$    & \check & \check & \check & \check & $-$    
  \\ \hline
  $\G$       & \check & \check & $-$    & $-$    & \check & \check & $-$    & $-$    & \check & \check & \check & \check & \check & \check & \check 
  \\ \hline
  \FF{F20a}  & \norep & $-$    & $-$    & $-$    & $-$    & $-$    & $-$    & $-$    & $-$    & $-$    & $-$    & $-$    & $-$    & $-$    & \check 
  \\ \hline
  \FF{F21a}  & \check & \check & $-$    & $-$    & $-$    & \check & \check & \check & $-$    & $-$    & \check & \check & \check & \check & \check 
  \\ \hline
  $\H_2$     & \check & \check & $-$    & $-$    & $-$    & \check & $-$    & $-$    & \check & $-$    & $-$    & \check & $-$    & \check & $-$
  \\ \hline
\FF{FDowling}& \check & \check & $-$    & $-$    & \check & $-$    & \check & $-$    & $-$    & $-$    & $-$    & \check & $-$    & \check & $-$    
  \\ \hline
  \FF{F21d}  & \check & $-$    & $-$    & $-$    & \check & $-$    & $-$    & $-$    & $-$    & $-$    & $-$    & $-$    & $-$    & $-$    & $-$    
  \\ \hline
  \FF{F21e}  & \norep & $-$    & $-$    & $-$    & $-$    & $-$    & $-$    & $-$    & $-$    & $-$    & $-$    & $-$    & $-$    & $-$    & $-$    
  \\ \hline
  \FF{F21f}  & \norep & $-$    & $-$    & $-$    & $-$    & $-$    & $-$    & $-$    & $-$    & $-$    & $-$    & $-$    & \check & $-$    & $-$    
  \\ \hline
  \FF{F21g}  & \norep & $-$    & $-$    & $-$    & $-$    & $-$    & $-$    & $-$    & $-$    & $-$    & $-$    & $-$    & $-$    & $-$    & $-$    
  \\ \hline
  \FF{F21h}  & \norep & $-$    & $-$    & $-$    & $-$    & $-$    & $-$    & $-$    & $-$    & $-$    & $-$    & $-$    & $-$    & $-$    & $-$    
  \\ \hline
  \FF{F22a}  & \check & $-$    & $-$    & \check & $-$    & $-$    & $-$    & $-$    & $-$    & $-$    & $-$    & $-$    & \check & \check & $-$    
  \\ \hline
  \FF{F32a}  & \norep & $-$    & $-$    & $-$    & $-$    & $-$    & $-$    & $-$    & $-$    & $-$    & $-$    & $-$    & $-$    & $-$    & $-$    
  \\ \hline
  \FF{F32b}  & \check & $-$    & $-$    & $-$    & $-$    & \check & $-$    & $-$    & $-$    & $-$    & $-$    & \check & $-$    & \check & $-$    
  \\ \hline
  \FF{F32c}  & \check & $-$    & $-$    & $-$    & \check & \check & $-$    & $-$    & $-$    & $-$    & \check & \check & \check & \check & $-$    
  \\ \hline
  $\K_2$     & \check & \check & $-$    & \check & \check & \check & \check & \check & $-$    & $-$    & \check & \check & \check & \check & $-$    
  \\ \hline
  \FF{F42a}  & \norep & $-$    & $-$    & $-$    & $-$    & $-$    & $-$    & $-$    & $-$    & $-$    & $-$    & $-$    & \check & \check & $-$    
  \\ \hline
  $\H_3$     & \check & \check & $-$    & $-$    & $-$    & \check & \check & \check & \check & \check & \check & \check & \check & \check & $-$    
  \\ \hline
  \FF{F53a}  & \check & \check & $-$    & $-$    & \check & \check & \check & \check & \check & \check & \check & \check & $-$    & \check & $-$    
  \\ \hline
  \FF{F53b}  & \norep & $-$    & $-$    & $-$    & $-$    & $-$    & $-$    & $-$    & $-$    & $-$    & $-$    & $-$    & \check & \check & $-$    
  \\ \hline
  \FF{F53c}  & \norep & $-$    & $-$    & $-$    & $-$    & $-$    & $-$    & $-$    & $-$    & $-$    & $-$    & $-$    & $-$    & $-$    & $-$    
  \\ \hline
  $\P_4$     & \check & \check & $-$    & $-$    & \check & \check & \check & \check & \check & \check & \check & \check & \check & \check & $-$    
  \\ \hline
  \FF{F63a}  & \norep & $-$    & $-$    & $-$    & $-$    & $-$    & $-$    & $-$    & $-$    & $-$    & $-$    & $-$    & \check & \check & $-$    
  \\ \hline
  \FF{F63b}  & \check & $-$    & $-$    & $-$    & $-$    & \check & \check & $-$    & $-$    & \check & \check & \check & \check & \check & $-$    
  \\ \hline
  \FF{F63c}  & \norep & $-$    & $-$    & $-$    & $-$    & $-$    & $-$    & $-$    & $-$    & $-$    & $-$    & $-$    & $-$    & $-$    & $-$    
  \\ \hline
  \FF{F64a}  & \check & $-$    & $-$    & $-$    & $-$    & \check & \check & $-$    & \check & \check & \check & \check & \check & \check & $-$    
  \\ \hline
  \FF{F74a}  & \check & \check & $-$    & $-$    & $-$    & \check & \check & $-$    & \check & \check & \check & \check & \check & \check & $-$    
  \\ \hline
  \FF{F74b}  & \check & $-$    & $-$    & $-$    & $-$    & \check & \check & \check & \check & \check & \check & \check & \check & \check & $-$    
  \\ \hline
  $\H_4$     & \check & \check & $-$    & $-$    & $-$    & \check & \check & \check & \check & \check & \check & \check & \check & \check & $-$    
  \\ \hline
 \FF{FPappus}& \check & $-$    & $-$    & $-$    & \check & $-$    & \check & \check & \check & \check & \check & \check & \check & \check & $-$    
  \\ \hline
 \end{tabular}
\end{table}

\subsection{Numerical description of foundations of \texorpdfstring{$3$}{3}-connected matroids on \texorpdfstring{$7$}{7} and \texorpdfstring{$8$}{8} elements}
\label{subsection: foundations of matroids on 7 elements}

The pastures we've encountered already which appear as foundations of 3-connected matroids on 7 elements are $\F_2$, \ $\D$, \ $\K_2$, \ $\H_4$, \ $\U^2_7$, \ and \ $\U^3_7$. There are $4$ additional pastures that appear as foundation of $3$-connected matroids on $7$ elements; they occur for the following rank $3$ matroids:
\[
  \begin{array}{cccc}
   \quad 
   \beginpgfgraphicnamed{tikz/fig50}
   \begin{tikzpicture}[x=0.3cm,y=0.3cm]
    \filldraw (  0:0) circle (2pt);  
    \filldraw (  0:3.464) circle (2pt);  
    \filldraw (  0:6.928) circle (2pt);  
    \filldraw ( 60:3.464) circle (2pt);  
    \filldraw ( 60:6.928) circle (2pt);  
    \filldraw ( 30:3.464) circle (2pt);  
    \filldraw ( 30:6.928) circle (2pt);  
    \draw [thick] (  0:0) -- (  0:6.928);
    \draw [thick] (  0:0) -- ( 30:6.928);
    \draw [thick] (  0:0) -- ( 60:6.928);
   \end{tikzpicture}
   \endpgfgraphicnamed
   \quad & \quad
   \beginpgfgraphicnamed{tikz/fig51}
   \begin{tikzpicture}[x=0.3cm,y=0.3cm]
    \filldraw (  0:0) circle (2pt);  
    \filldraw ( 30:2) circle (2pt);  
    \filldraw (150:2) circle (2pt);  
    \filldraw (270:2) circle (2pt);  
    \filldraw ( 90:4) circle (2pt);  
    \filldraw (210:4) circle (2pt);  
    \filldraw (330:4) circle (2pt);  
    \draw [thick] ( 90:4) -- (210:4);
    \draw [thick] (210:4) -- (330:4);
    \draw [thick] (90:4) -- (330:4);
   \end{tikzpicture}
   \endpgfgraphicnamed
   \quad & \quad
   \beginpgfgraphicnamed{tikz/fig52}
   \begin{tikzpicture}[x=0.3cm,y=0.3cm]
    \filldraw (  0:0) circle (2pt);  
    \filldraw (  0:3.464) circle (2pt);  
    \filldraw (  0:6.928) circle (2pt);  
    \filldraw ( 60:3.464) circle (2pt);  
    \filldraw ( 60:6.928) circle (2pt);  
    \filldraw ( 20:6.928) circle (2pt);  
    \filldraw ( 40:6.928) circle (2pt);  
    \draw [thick] (  0:0) -- (  0:6.928);
    \draw [thick] (  0:0) -- ( 60:6.928);
   \end{tikzpicture}
   \endpgfgraphicnamed
   \quad & \quad
   \beginpgfgraphicnamed{tikz/fig53}
   \begin{tikzpicture}[x=0.3cm,y=0.3cm]
    \filldraw (  0:0) circle (2pt);  
    \filldraw (  0:3.464) circle (2pt);  
    \filldraw ( 36:3.464) circle (2pt);  
    \filldraw ( 72:3.464) circle (2pt);  
    \filldraw (108:3.464) circle (2pt);  
    \filldraw (144:3.464) circle (2pt);  
    \filldraw (180:3.464) circle (2pt);  
    \draw [thick] (  0:3.464) -- (180:3.464);
   \end{tikzpicture}
   \endpgfgraphicnamed
   \quad \\[5pt]
   (18,10) & (22,10)& (41,15) & (69,21)
  \end{array}
\]
These matroids are labeled by the numerical type $(h,r)$ of their foundations (where $h$ is the number of hexagons and $r$ is the rank), which are too large for a meaningful explicit description. In each case, the torsion is generated by $-1$ and $-1\neq 1$. 

An exhaustive search on $3$-connected matroids on $8$ elements reveals (at least) $196$ different isomorphism classes of foundations. For up to $20$ hexagons, we verified that there are precisely $66$ isomorphism types, but beyond $20$ hexagons, the available computational power limited us to a search for numerical types: there are $130$ different ones, and these might very well split into different isomorphism classes. This means that there might be more than $196$ isomorphism classes.

\subsection{Large non-representable foundations}

According to Nelson's theorem (\cite{Nelson18}), most matroids are not representable. Accordingly, we expect that most foundations are not representable. We have seen already a significant number of small non-representable foundations in \autoref{subsection: some small foundations}: $\K$, $\S$, \FF{F20a}, \FF{F21e}, \FF{F21f}, \FF{F21g}, \FF{F21h}, \FF{F32a}, \FF{F42a}, \FF{F53b}, \FF{F53c}, \FF{F63a}, and \FF{F63c}. 

The foundations of some prominent non-representable matroids are much larger than the examples in this appendix: the non-Pappus matroid has numerical type $(29,9)$; the V{\'a}mos matroid has numerical type $(76,21)$; the Desarguesian matroid has numerical type $(95,17)$.

In so far, we wonder if the analog of Nelson's theorem for foundations holds:

\begin{problem}
 Are almost all foundations non-representable? This is, let $\cF(n)$ be set of isomorphism classes of foundations of matroids on $n$ elements and $\cN(n)\subset\cF(n)$ the subset of non-representable classes. Is 
 \[
  \lim\limits_{n\to\infty} \ \ \frac{\#\cN(n)}{\#\cF(n)} \ \ = \ \  1 \ ?
 \]
\end{problem}

Many examples of non-representable foundations contain $1$ as a fundamental element, which is an obstruction for representability. A minimal size non-representable matroid for which $1$ is not a fundamental element is the matroid $R_9^A$ (see \cite{Sage} for the notation), as observed in \cite[Section 5]{Chen-Zhang}: it has foundation is $\pastgenn{\H}{\epsilon,x}{\epsilon^2-1,\ S}$, where $S$ consists of 
\[
\begin{array}{llllll}
   \epsilon + \zeta_6\epsilon x - 1, \
 & \zeta_6^2-1-1, \
 & \zeta_6^2\epsilon x-1-1, \
 & \epsilon/x+1/x-1,  \
 & -\epsilon -\epsilon x-1. \
 \\
   \epsilon - \zeta_6\epsilon x - 1, \
 & -\zeta_6^2 \epsilon -\zeta_6-1, \
 & -\zeta_6\epsilon x-1-1, \
 & \zeta_6/x+\zeta_6/x-1, \
\end{array} 
\]
and where $\zeta_6$ is a fundamental element of $\H$ (cf.\ \autoref{Fhexagonal}). The element $1$ does not appear as a fundamental element; the reason for non-representability are the relations
\[
 \epsilon + \zeta_6\epsilon x - 1, \qquad \epsilon - \zeta_6\epsilon x - 1, \qquad \zeta_6^2-1-1.
\]
Indeed, suppose we had a homomorphism $\varphi : F_{R_9^A} \to K$ from the foundation of $R_9^A$ to a field $K$. The first two displayed relations imply that $2\varphi(\zeta_6\epsilon x)=0$, and thus $2=0$ in $K$, while the third relation implies that $2=\varphi(\zeta_6^2)\neq0$ in $K$, a contradiction.

Chen and the third author of this paper find in \cite{Chen-Zhang} a criterion for non-representability that holds in all known cases. Namely, consider 
\[
 P \ = \ \pastgenn{\Funpm}{x,y,z,w}{x+y-1, \ \ x+z-1, \ \ w+y/z-1},
\]
which is a pasture that does not map into any field, but does map into every non-representable foundation known to us. The first part of the following problem is stated in \cite[Section 5]{Chen-Zhang}.

\begin{problem}
 Is there a morphism $P\to F$ for every non-representable foundation $F$? If not, can we describe an explicit (and possibly even finite?) list of pastures $P_1,P_2,\dotsc$ such that a foundation $F$ is non-representable if and only if there is a morphism $P_i\to F$ for some $i$?
\end{problem}

\begin{small}
 \bibliographystyle{plain}
 \bibliography{matroid}
\end{small}

\begin{comment}
\newpage
\section{List of tasks}

Tianyi, I have collected all the questions that I still have in this outsourced part of the paper.

\subsection{Ranks of rank 3 matroids on 8 elements}

I left a ``curious observation'' in the section on larger oundations on 7 and 8 elements. To get more evidence, could you list the numerical data for all $3$-connected rank 3 matroids on 8 elements? This is, I would need their short circuits, the rank of the foundation and the number of hexagons. 

\subsection{Important, but more time intense tasks}

\subsubsection{Small foundations on 9+ elements}

It would be great to extend our list by small foundations of $3$-connected matroids on $9$ or more elements. But we can always extend the list at a later point---so if you don't have time before mid-October, then it's still interesting to do this after we published a first version. I copy the original text below:

\bigskip\noindent\textbf{[Task 4b]} 
Tianyi, could you continue to search for small foundations (say, $\leq5$ hexagons) of $3$-connected matroids on 9 elements? Maybe you can do this with an automatized search within a few thousand of such matroids at a time? Start with sending me the really small foundations, and depending on their number, we might stop at some point. \\
(In terms of importance, I would say, this is task 4b. But since this is a longer computation and requires a lot of editing, you might continue with the other tasks first...?

When you do this check, could you verify at once if the foundation maps into the (finitely generated) pastures from \autoref{table: morphisms of foundations into other pastures}? Could you also verify at some point if the other entries (which I computed by hand) are OK?

\subsubsection{Double check the larger examples}

I went over all examples and checked carefully that they agree with the data that you sent me and worked out ``nice'' presentations. Most examples make sense (in the right shape they exhibit some symmetry or the universal ring is of a partricular simple shape), but it would be good if you could do the work and do the following two sanity checks: (1) check that the presentation that I wrote down is indeed isomorphic to the foundation; (2) check if the columns of the finite pastures $\F_2,\dotsc,\F_9$ and $\S$ of \autoref{table: morphisms of foundations into other pastures} are correct.

There is one case, which looks suspicious: the foundation $\FF{F64a}$ is equal to $\FF{F74a}$, but for one missing relation. Could you check these two foundations again? Note that $\FF{F64a}$ was already wrong in your initial email (it contained an additional relation, which we removed later), so I wouldn't be surprised if the correction went the wrong way. 

\subsubsection{Are all foundations on 7 elements partial fields?}

This is true for F2, D, K2, H4, U27. Presumably also for U37. And the other 4? Can you check this somehow (without too much effort)?

\subsection{Mnev}

Text from my email: let me already catch up on the higher rank version of Mnev's construction. What we need matroids with suitable modular flats that allows us to perform the operations $x+1$, $x+y$, $x*y$ and $x=y$, and these matroids should be representable over most fields in order to be useful. Thus we need matroids M with the following properties:

(x+1) M has a modular flat N (preferably U24) that contains a cross ratio x, and M has a cross ratio z (not in the modular flat) such that z=x+1 in $F_M$.

   [My guess: the first matroid in A.3.12, with foundation $\K_2$, satisfies this property. The modular flat is the circle in the middle] 

(x+y) M has a modular flat N (preferably U24+U24 or $R6=U24+_2 U24$) that contains cross ratios x and y, and M has a cross ration z (not in the modular flat) such that z=x+y in $F_M$.

(x*y) M has a modular flat N (preferably U24+U24 or R6) that contains cross ratios x and y, and M has a cross ration z (not in the modular flat) such that z=x*y in $F_M$.

   [Note: U25 and U35 have cross ratios $x_1$, $x_3$ and $y_2$ with $y_2=x_1*x_3$. But I fear that we won't find a suitable modular flat since these matroids are too small. But maybe suitable parallel-serial extensions work to find sufficiently distant U24s? Another candidate is $\H_4$; cf.\ \autoref{FHydra4}.]

(x=y) M has a modular flat N (preferably U24+U24 or R6) that contains cross ratios x and y, and x=y in $F_M$.

There are also some alternative operations that could be very useful, in particular if we are restricted with the above operations in some sense. For instance operations of the type: z=x-y, z=x/y, $x^n=-1$, $z=x^2$ or similar. Just in case, you stumble over relations like this---this could be very helpful!

Does this make sense? We could also have a zoom meeting if you want more details on this.


\end{document}